\numberwithin{equation}{section}
\newtheorem{theorem}{Theorem}[section]
\newtheorem{proposition}[theorem]{Proposition}
\newtheorem{lemma}[theorem]{Lemma}
\theoremstyle{definition}
\newtheorem{remark}[theorem]{Remark}
\newcommand{\la}{\lambda}
\newcommand{\R}{\mathbb{R}}
\newcommand{\supp}{{\rm supp}{\hspace{.05cm}}}
\newcommand{\intR}{\displaystyle\int\limits_{\mathbb{R}^2}}
\begin{document}
\title
 [Supercritical Schr\"{o}dinger-Poisson system with steep potential well]
 {Planar Schr\"{o}dinger-Poisson system with steep \\  potential well: supercritical exponential case}

\author[L.\ Shen]{Liejun Shen}

\author[M.\ Squassina]{Marco Squassina}

 \address{Liejun Shen, \newline\indent Department of Mathematics, Zhejiang Normal University, \newline\indent
	Jinhua, Zhejiang, 321004, People's Republic of China}
\email{\href{mailto:ljshen@zjnu.edu.cn}{ljshen@zjnu.edu.cn}}

\address{Marco Squassina, \newline\indent
	Dipartimento di Matematica e Fisica \newline\indent
	Universit\`a Cattolica del Sacro Cuore, \newline\indent
	Via della Garzetta 48, 25133, Brescia, Italy}
\email{\href{mailto:marco.squassina@unicatt.it}{marco.squassina@unicatt.it}}

\subjclass[2010]{35J15,~35J20,~35B06.}
\keywords{Schr\"{o}dinger-Poisson system, steep potential well, supercritical exponential growth,
elliptic regular theory, ground state solution, concentrating behavior}
\thanks{L.\ Shen was partially supported by NSFC (12201565). M.\  Squassina  is  member  of  Gruppo  Nazionale  per
	l'Analisi  Matematica,  la Probabilita  e  le  loro  Applicazioni  (GNAMPA)  of  the  Istituto  Nazionale  di  Alta  Matematica  (INdAM)}

\begin{abstract}
We study a class of planar Schr\"{o}dinger-Poisson systems
\[
 \left\{%
\begin{array}{ll}
     -\Delta u+\lambda V(x)u+\phi u=f(u) , & x\in\R^2, \\
     \Delta \phi=u^2, &  x\in\R^2,\\
\end{array}%
\right.
\]
where $\lambda>0$ is a parameter, $V\in \mathcal{C}(\R^2,\R^+)$ has a potential well $\Omega \triangleq\text{int}\, V^{-1}(0)$
and the nonlinearity $f$ fulfills the
 supercritical exponential growth at infinity in the Trudinger-Moser sense.
 By exploiting the mountain-pass theorem and elliptic regular theory, we establish the existence and concentrating behavior
 of ground state solutions for sufficiently large $\lambda$.
\end{abstract}
\maketitle

%

\section{Introduction and main results}\label{Introduction}

In this paper, we focus on the existence and concentrating behaviour of ground state solutions for the
following planar Schr\"{o}dinger-Poisson system
 \begin{equation}\label{mainequation1}
\left\{%
\begin{array}{ll}
     -\Delta u+\lambda V(x)u+\phi u=f(u) , & x\in\R^2, \\
     \Delta \phi=u^2, &  x\in\R^2,\\
\end{array}%
\right.
\end{equation}
where $\lambda>0$ is a parameter, $V\in \mathcal{C}(\R^2,\R^+)$ has a potential well $\Omega \triangleq\text{int}\,V^{-1}(0)$
and the nonlinearity $f$ fulfills the
 \emph{supercritical exponential growth} at infinity in the Trudinger-Moser sense.
More precisely, throughout the whole paper,
 we shall suppose
 the nonlinearity $f$ to have the form
\begin{equation}\label{form}
f(t)=h(t) e^{\alpha |t|^\tau},~\quad\forall t\in\R,
\end{equation}
for some $\alpha>0$ and $\tau\geq2$. In what follows, we assume that $h:\R \to \R$ is a function satisfying:
\begin{itemize}
  \item[$(h_1)$] $h\in \mathcal{C}(\R)$ vanishes for every $t\in(-\infty,0]$ and $h(t)=o(t)$ as $t\to0$;
  \item[$(h_2)$] The function $h(t)/ t^3$ is increasing for all $t>0$;
\item[$(h_3)$] There exist $\delta \in (0,2)$ and $\gamma,M>0$ such that $|h(t)| \leq M(e^{\gamma |t|^{\delta}}-1)$
for each $t\in\R^+=(0,+\infty)$.
\end{itemize}

Over the past several decades, a lot of attentions have been paid to the standing, or solitary, wave solutions of Schr\"{o}dinger-Poisson
systems of the type
\begin{equation}\label{standing}
\left\{%
\begin{array}{ll}
     i\frac{\partial\psi}{\partial t}=-\Delta \psi+W(x)\psi+m\phi \psi-
     \widetilde{f}(|\psi|)\psi, & \text{in}~
     \R^+\times\R^d, \\
     \Delta \phi=|\psi|^2, & \text{in}~  \R^d,\\
\end{array}%
\right.
\end{equation}
where $\psi:\R^d\times\R\to \mathbb{C}$ denotes the time-dependent wave function, $W:\R^d\to\R$ is a real external potential,
$m\in\R$ is a parameter, $\phi$ represents an internal
potential for a nonlocal self-interaction of the wave function
 and the nonlinear term $f(\psi)\triangleq \widetilde{f}(|\psi|)\psi$ describes the interaction effect among particles.
 Let $\psi(x,t)=\exp(-i\omega t)u(x)$
with $\omega\in\R$, then the standing wave solutions of \eqref{standing} also lead to the Schr\"{o}dinger-Poisson system
\begin{equation}\label{introduction1}
 \left\{%
\begin{array}{ll}
     -\Delta u+V(x)u+m\phi u=
     f(u) , & \text{in}~
       \R^d, \\
     \Delta \phi=u^2, &  \text{in}~  \R^d,\\
\end{array}%
\right.
\end{equation}
where and in the sequel $\bar{V}(x)=W(x)+\omega$.
In view of the paper \cite{Cingolani}, the second equation in \eqref{introduction1} determines $\phi: \R^d\to\R$ only up to harmonic functions.
Conversely, it is natural to choose $\phi$ as the negative Newton potential of $u^2$, that is, the convolution of $u^2$ with the
fundamental solution $\Phi_d$ of the Laplacian, which is denoted by
$\Phi_d(x)=-1/(d(d-2)\omega_d)|x|^{2-d}$ if $d\geq3$, and $\Phi_2(x)=-\frac{1}{2\pi}\log(|x|)$ if $d=2$, here
$\omega_d$ denotes the volume of the unit ball in $\R^d$. With this inversion of the second equation in \eqref{introduction1}, we 
arrive at the integro-differential equation
\begin{equation}\label{introduction2}
-\Delta u+\bar{V}(x)u+m \big(\Phi_d\ast u^2\big)u=  f(u),~ \text{in}~ \R^d.
\end{equation}

When $m\neq0$, the Poisson term $(\Phi_d\ast u^2)u$
causes that \eqref{introduction2} is not a pointwise identity any longer such that there are some mathematical difficulties
 which make the study of it more interesting. In the three dimensional case,
 because of the relevances in physics,
we have a rich literature associated with \eqref{introduction2}
 and its generalizations under
the variant assumptions on $\bar{V}$ and $f$ via variational methods,
the interested reader can
  refer to \cite{Ackermann,Ambrosetti,CM,DMT,MZ,Ruiz} and the references therein.

For $d=2$, the Schr\"{o}dinger-Poisson equation \eqref{introduction2} can be rewritten as the form
\begin{equation}\label{introduction3}
-\Delta u+\bar{V}(x)u+\frac{m}{2\pi} \big(\log(|x|)\ast u^2\big)u=f(u)~\quad \text{in}~  \R^2,
\end{equation}
whose variational functional is defined by
\[
I  (u) =\frac12\int_{\R^2}\big[|\nabla u|^2+\bar{V}(x)u^2\big]dx+\frac{m}{8\pi} \int_{\R^2}\int_{\R^2}\log(|x-y|)u^2(x)u^2(y)dxdy
  -\int_{\R^2}\int_{0}^uf(t)dtdx.
\]
As mentioned by Stubbe in \cite{Stubbe}, the functional $I$ would not be well-defined on $H^1(\R^2)$.
To overcome this difficulty, the author introduced a new Hilbert space
\[
X=\bigg\{u\in H^1(\R^2):
\int_{\R^2}\log(1+|x|)u^2 dx<+\infty\bigg\},
\]
endowed with the inner product and norm
\[
(u,v)_X=\int_{\R^2}\big[\nabla u\nabla v+uv+\log(1+|x|)uv\big]dx~\text{and}~
\|u\|_X=\sqrt{(u,u)_X}.
\]
In Stubbe's argument, it depends strongly on the crucial identity
\[
\log r=\log(1+r)-\log\bigg(1+\frac1r\bigg),~\forall r>0,
\]
because it permits us to define the variational functionals $V_1,V_2: X\to\R$ by
\[
V_1(u)\triangleq\int_{\R^2}\int_{\R^2}\log(1+|x-y|)u^2(x)u^2(y)dxdy,~ \forall u\in X,
\]
and
\[
V_2(u)\triangleq\int_{\R^2}\int_{\R^2}\log\bigg(1+\frac{1}{|x-y|}\bigg)u^2(x)u^2(y)dxdy,~  \forall u\in X.
\]
Stubbe \cite{Stubbe} proved that $V_1,V_2\in \mathcal{C}^1(X,\R)$ and found the equality
\[
V_0(u)\triangleq\int_{\R^2}\int_{\R^2}\log(|x-y|)u^2(x)u^2(y)dxdy=V_1(u)-V_2(u),~ \forall u\in X,
\]
which implies that $I$ given in \eqref{introduction3} is of class $\mathcal{C}^1(X)$.

In \cite{Cingolani}, Cingolani and Weth made full use of the above variational framework to study the existence and multiplicity
of nontrivial solutions for the equation
\begin{equation}\label{CW}
 -\Delta u+ u+\frac{m}{2\pi} \big(\log(|x|)\ast u^2\big)u=b|u|^{p-2}u~\quad \text{in}~  \R^2,
\end{equation}
where $b\geq0$ and $p\geq4$. With the help of variational methods and action of group,
they investigated
  the existence of multiple solutions
 and obtained a ground state solution by minimizing the energy functional over the Nehari
manifold for Eq. \eqref{CW}. Subsequently, considering \eqref{CW} with $b=1$ and $p\in(2,4]$,
 Du-Weth \cite{Du} proved that it possesses a nontrivial solution if $p\in(2,3)$
 and ground state solution if $p\in(3,4]$ by constructing a asymptotic (PS) sequence,
 where the Poho\u{z}aev
identity was established. In \cite{BCV}, Bonheure \emph{et al.} concluded the asymptotic decay of the unique positive, radially
symmetric solution to \eqref{CW} with $b=0$. Afterwards, Chen, Shi and Tang \cite{CST} extended the main results in \cite{Du} to a
general nonlinearity.
It is important to note that the above cited papers relied on the fact that the potential is constant or
$\mathbb{Z}^2$-periodic. To circumvent the obstacle
the authors in \cite{CT1,CT2}, restricting
in an axially symmetric space which is weaker than the classic
radially symmetric one,
succeeded in finding nontrivial solutions for
Eq. \eqref{CW} with a non-constant or non-periodic potential.
Concerning some other meaningful research works associated with
 Eq. \eqref{CW},
we suggest the reader to look at \cite{Azzollini,CJ,DFJ} and their references
even if these references are far to be exhaustive.

Another interesting point for considering Schr\"{o}dinger-Poisson equations, like Eq. \eqref{introduction3},
is the space dimension $d=2$. Moreover, to our best
knowledge, the existence result for the Schr\"{o}dinger-Poisson equation with steep potential well
has not been studied yet, especially with the nonlinearity involving the \emph{supercritical exponential growth}.
Therefore, one of the main purposes of the present paper is to fill these blanks. Generally speaking,
we aim at establishing the existence and concentrating behavior of ground state solutions
for Eq. \eqref{mainequation1}.

As known, for every bounded domain $\Omega\subset\R^2$, the imbedding $H_0^1(\Omega)\hookrightarrow L^p(\Omega)$ with $1\leq p<+\infty$
does not imply that
$H_0^1(\Omega)\hookrightarrow L^\infty(\Omega)$.
As a consequence, one naturally wonders if there
exists another kind of \emph{maximal growth} in this situation.
Indeed,
the authors in \cite{PSI,TNS,MJ}
 got the following sharp maximal exponential integrability for
functions in $H_{0}^{1}(\Omega)$:
\begin{equation}\label{TM}
\sup\limits_{u\in H_{0}^{1}(\Omega),\, \|\nabla u\|_{L^2
(\Omega)}\leq1}\int_{\Omega}e^{\alpha u^{2}}dx\leq C\text{meas}(\Omega)~\,\,\,\,\mbox{if}~\alpha\leq4\pi,
\end{equation}
where the constant $C=C(\alpha)>0$ and $\text{meas}(\Omega)$ stands for
the Lebesgue measure of $\Omega$. As we known,
 \eqref{TM} is the celebrated Trudinger-Moser type inequality
related with elliptic problems with \emph{critical exponential growth}
 saying that a function $f$ satisfies the following: there exists
$\alpha_{0}>0$ such that
\begin{equation}\label{definition}
\lim\limits_{|t|\rightarrow+\infty}
\frac{|f(t)|}{e^{\alpha t^{2}}}=
\left\{
  \begin{array}{ll}
    0, & \forall
\alpha>\alpha_{0}, \\
    +\infty, &\forall \alpha<\alpha_{0}.
  \end{array}
\right.
\end{equation}
This definition was introduced by Adimurthi and Yadava \cite{AYA},
see also de Figueiredo, Miyagaki and Ruf  \cite{Figueiredo} for example.
Motivated by the previous works in \cite{AS1,AS2}, there exist two ways to understand that
the function $h$, defined in \eqref{form} together with $(h_3)$,
 satisfies the so-called \emph{{supercritical exponential growth}} in the following sense:
\begin{equation}\label{definition2}
(\text{I})~\tau > 2~\text{is arbitrary and}~\alpha>0~\text{is fixed;}~\,\,\,
(\text{II})~\alpha>0~\text{is arbitrary and}~\tau\geq2~\text{is fixed.}
\end{equation}
Moreover, one could call Cases (I) and (II) in \eqref{definition2} to be the
\emph{{subcritical-supercritical exponential growth}}
and \emph{{critical-supercritical exponential growth}}, respectively.

As to the whole space $\R^2$, the author in \cite{Bezerra1}
established the following version of the Trudinger-Moser inequality
(see also \cite{Cao} for example):
\begin{equation}\label{TM1}
e^{\alpha u^2}-1\in L^2(\R^2),~\forall \alpha>0~\text{and}~u\in H^1(\R^2).
\end{equation}
Moreover, for all $u\in H^1(\R^2)$ with $\|u\|_{L^2(\R^2)}\leq M<+\infty$, there is a $C=C(M,\alpha)>0$
such that
\begin{equation}\label{TM2}
\sup\limits_{u\in H^{1}(\R^2),\,\,\|\nabla u\|_{L^2
(\R^2)}\leq1}\int_{\R^2}\big(e^{\alpha u^{2}}-1\big)dx\leq C ~\mbox{if}~\alpha<4\pi.
\end{equation}
Concerning some other generalizations, extensions and applications
of the Trudinger-Moser inequalities for bounded and unbounded domains,
we refer to  \cite{Figueiredo,LR,DSD,Ruf} and the references therein.

As far as we are concerned,
 Alves and Figueiredo \cite{AF} firstly applied \eqref{definition}
 to the Schr\"odinger-Poisson equation \eqref{introduction3}
 and investigated the existence of ground state solutions by Nehari manifold method.
Along this direction, there are more and more research works
concerning this topic including the two dimensional Choquard problem with logarithmic kernel,
see \cite{CSTW,CT,CT2,Shen,ACFM,LRTZ,LRZ,LRZ2,CRT} for example.

Before stating the main results in this paper, on $V$
we shall impose the following assumptions.
\begin{itemize}
  \item[$(V_1)$] \emph{$V\in \mathcal{C}(\R^2,\R)$ with $V(x)\geq 0$ on $\R^2$;}
  \item[$(V_2)$] \emph{$\Omega\triangleq\text{\emph{int}}V^{-1}(0)$
is nonempty and bounded with smooth boundary, and $\overline{\Omega}=V^{-1}(0)$;}
  \item[$(V_3)$] \emph{there exists $b> 0$ such that $\Xi\triangleq\{x\in \R^2
: V(x) <b\}$ is nonempty and has finite measure.}
\end{itemize}

The assumptions like $(V_1)-(V_3)$ were initiated by Bartsch \emph{et al}. in
 \cite{Bartsch2,Bartsch1}. Particularly, the harmonic trapping potential
\[
V(x)=\left\{
       \begin{array}{ll}
         \omega_1|x_1|^2+\omega_2|x_2|^2-\omega, &\text{if}~
|(\sqrt{\omega_1}x_1,\sqrt{\omega_2}x_2)|^2\geq \omega, \\
         0, &\text{if}~|(\sqrt{\omega_1}x_1,\sqrt{\omega_2}x_2)|^2\leq \omega ,
       \end{array}
     \right.
\]
with $\omega> 0$ satisfies $(V_1)-(V_3)$, where $\omega_i>0$
is called by the anisotropy factor of the trap in quantum physics and trapping frequency of the
$i$th-direction in mathematics, see e.g. \cite{Lushnikov2,Carles,Bellazzini}.
Indeed, the potential $\lambda V(x)$ with the above
hypotheses is usually known as the steep potential well.

For all fixed $\lambda>0$, by $(V_1)$, define the Hilbert space
\[
E_\lambda \triangleq\bigg\{u\in L^{2}_{\text{loc}}(\R^2):|\nabla u|\in L^2(\R^2)~\text{and}\int_{\R^2} \lambda V(x)|u|^2dx<+\infty\bigg\}
\]
equipped with the inner product and norm
\[
(u,v)_{E_\lambda}=\int_{\R^2}\big[\nabla u\nabla v+\lambda V(x)uv \big]dx~\text{and}~
\|u\|_{E_\lambda}=\sqrt{(u,u)_{E_\lambda}}, ~\forall u,v\in {E_\lambda}.
\]
Hereafter,
let us denote $E$ and $\|\cdot\|_E$ by $E_\lambda$ and $\|\cdot\|_{E_\lambda}$ for $\lambda=1$,
respectively. It is simple to observe that $\|\cdot\|_E\leq \|\cdot\|_{E_\lambda}$
for every $\lambda\geq1$. Therefore, due to Lemma \ref{imbedding} below,
$E_\lambda$ could be continuously imbedded into $H^1(\R^2)$ and then into $X$.
With these discussions, we could introduce the work space
\[
X_\lambda \triangleq\bigg\{u\in X:\int_{\R^2} \lambda V(x)|u|^2dx<+\infty\bigg\}
\]
which is a Hilbert space equipped with
the inner product and norm
\[
(u,v)_{X_\lambda}=\int_{\R^2}\big[\nabla u\cdot \nabla v+(\lambda V(x)+\log(1+|x|))uv \big]dx~\text{and}~
\|u\|_{X_\lambda} =\sqrt{(u,u)_{X_\lambda}}, ~\forall u,v\in {X_\lambda}.
\]
Obviously, $\|\cdot\|_{X_\lambda}=\sqrt{\|\cdot\|^2_{E_\lambda}+\|\cdot\|_*^2}$,
where $\|u\|_*= (\int_{\R^2} \log(1+|x|)|u|^2dx )^{\frac12}$ for all $u\in X$.

\smallskip
Now, we shall exhibit the first main result in this paper as follows.

\begin{theorem}\label{maintheorem1}
Let $V$ satisfy $(V_1)-(V_3)$.
Suppose that the nonlinearity $f$ defined in \eqref{form}
requires $(h_1)-(h_3)$, then for each
$\tau>2$, there are $\alpha^*=\alpha^*(\tau)>0$ and $\lambda_0>0$ such that Eq. \eqref{mainequation1}
has a nonnegative ground state solution in $X_\lambda$ for all $\alpha \in (0, \alpha^*)$ and $\lambda>\lambda_0$.
Moreover, if we suppose that
\begin{itemize}
\item[$(h_4)$] there are constants $\xi>0$ and $p>4$ such that $H(t)=\int_0^th(s)ds\geq \xi t^{p}$ for all $t\in[0,1]$,
\end{itemize}
then for every
$\alpha>0$, there exist $\tau_*=\tau_*(\alpha)>2$, $\lambda_0^\prime>0$ and ${\xi}_0>0$
such that Eq. \eqref{mainequation1} possesses a nonnegative ground state solution in $X_\lambda$ for every $\tau \in [2,\tau_*)$,
$\lambda>\lambda_0^\prime$ and ${\xi}>{\xi}_0$.
\end{theorem}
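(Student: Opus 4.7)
The strategy is a truncation-plus-regularity argument: one replaces the supercritical nonlinearity $f$ by an $f_K$ that coincides with $f$ on $[0,K]$ and has only polynomial growth outside, applies a mountain-pass scheme to the resulting energy functional
\[
I_\lambda(u)=\tfrac12\|u\|_{E_\lambda}^2+\tfrac{1}{8\pi}V_0(u)-\intR F_K(u)\,dx,\qquad F_K(t)=\int_0^t f_K(s)\,ds,
\]
on the Hilbert space $X_\lambda$, and finally uses a uniform $L^\infty$ bound to show that the obtained solution $u_\lambda$ satisfies $\|u_\lambda\|_\infty<K$, so $f_K(u_\lambda)=f(u_\lambda)$ and $u_\lambda$ solves \eqref{mainequation1}. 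A minimization over the (non-empty) set of nontrivial solutions then produces a ground state.

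The first block of work is analytic. Because $X_\lambda$ embeds continuously into $X$ and hence into $\Hum$, Stubbe's decomposition $V_0=V_1-V_2$ and the Trudinger--Moser inequality \eqref{TM1} make $I_\lambda\in \mathcal C^1(X_\lambda,\R)$. The mountain-pass geometry is then verified uniformly in $\lambda\geq 1$: condition $(h_1)$ and the polynomial growth of $f_K$ yield $I_\lambda(u)\geq\rho>0$ on a small sphere $\|u\|_{X_\lambda}=r$, while for a fixed $\varphi\in \mathcal C_c^\infty(\Om)$ the fact that $\lambda V\varphi\equiv 0$ combined with $(h_2)$ (in case I) or with $(h_4)$ and $p>4$ (in case II) delivers $I_\lambda(t\varphi)\to-\infty$ as $t\to\infty$. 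Thus there is a $(PS)_{c_\lambda}$-sequence which is bounded in $X_\lambda$ by the standard manipulation permitted by $(h_2)$.

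The key quantitative estimate is an upper bound $c_\lambda\leq c_*$ with $c_*$ independent of $\lambda$: evaluating $I_\lambda$ along $t\varphi$ and exploiting $(h_4)$ with $\xi>\xi_0$ in the critical--supercritical case, or choosing $\alpha<\alpha^*$ small enough in the subcritical--supercritical case, makes the mountain-pass level lie strictly below the Trudinger--Moser threshold and hence allows passing to the limit in the exponential term. The steep-potential-well device of Bartsch--Wang, using $(V_3)$, restores the compactness lost on $\R^2\setminus\overline{\Om}$ once $\lambda>\lambda_0$ (resp.\ $\lambda>\lambda_0'$) is large: a subsequence of $u_n$ converges weakly and in $L^q_{\mathrm{loc}}$ to some nonnegative $u_\lambda\in X_\lambda\setminus\{0\}$, and the nonlocal term is controlled by splitting $V_0=V_1-V_2$ since $V_1$ is weakly lower semicontinuous on $X_\lambda$ while $V_2$ is compact.

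The main obstacle, and the place where the supercritical character is defeated, is the \emph{uniform} $L^\infty$ bound $\|u_\lambda\|_\infty\leq C$ with $C$ independent of both $\lambda$ and the truncation parameter $K$. Writing the equation as
\[
-\Delta u_\lambda+\lambda V(x)u_\lambda=f_K(u_\lambda)-\phi_{u_\lambda} u_\lambda,
\]
one first bounds the logarithmic potential $\phi_{u_\lambda}=\frac{1}{2\pi}\log(|\cdot|)\ast u_\lambda^2$ in $L^\infty_{\mathrm{loc}}$ via classical log-potential estimates applied to $\|u_\lambda\|_{L^2}\leq C$, which itself follows from the uniform energy bound. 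Then, since $f_K$ is subcritical and the Trudinger--Moser inequality controls $\|f_K(u_\lambda)\|_{L^q}$ uniformly in $K$ for every $q<\infty$ whenever $\|\nabla u_\lambda\|_2^2$ is small compared to the threshold, a Brezis--Kato / Moser iteration on balls combined with the exterior decay forced by the steep potential well produces the desired uniform $L^\infty$ estimate. Choosing $K:=C+1$ closes the loop and yields $f_K(u_\lambda)=f(u_\lambda)$; taking the infimum of $I_\lambda$ over the set of nontrivial critical points of the original functional furnishes the announced ground state and identifies the values of $\alpha^*,\tau_*,\xi_0,\lambda_0,\lambda_0'$ at the end.
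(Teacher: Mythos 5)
Your overall strategy --- truncate the nonlinearity, solve the truncated problem by mountain pass, prove a uniform $L^\infty$ bound by elliptic regularity, and then choose the truncation level above that bound --- is exactly the paper's strategy, and most of your intermediate claims (uniform-in-$\lambda$ geometry, $c_\lambda\leq c_*$ independent of $\lambda$, Bartsch--Wang recovery of compactness, level below the Trudinger--Moser threshold via $(h_4)$) match Lemmas 3.2--3.10 and Section 4. There are, however, two genuine gaps.

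First, the boundedness of the Palais--Smale sequence in $X_\lambda$ does not follow from ``the standard manipulation permitted by $(h_2)$''. That manipulation, $J_\lambda-\tfrac14 J_\lambda'[\,\cdot\,]$, controls only $\|u_n\|_{E_\lambda}$; it says nothing about the weighted norm $\|u_n\|_*^2=\int\log(1+|x|)u_n^2$, which is part of the $X_\lambda$-norm and is \emph{not} dominated by the energy. The paper closes this by a Lions-type non-vanishing argument: it shows a uniform mass concentration $\int_{B_1(y_n)}u_n^2\geq\beta_0/2$, proves $\{y_n\}$ stays bounded using the steep well $(V_3)$ (this is where $\lambda_0$ is produced, via the lower bound $c_\lambda\geq \lambda b\beta_0/4-C$ in \eqref{bounded5}), and then bounds $\|u_n\|_*^2$ from above by $V_1(u_n)$ through the pointwise inequality $\log(1+|x-y|)\geq\tfrac12\log(1+|y|)$ for $x$ near the concentration point (see \eqref{bounded7}). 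Without this step your sequence may be bounded in $E_\lambda$ but unbounded in $X_\lambda$, and the $\mathcal C^1$ framework for $V_0=V_1-V_2$ collapses.

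Second, your closing step ``choose $K:=C+1$'' hides a circularity that must be broken explicitly. The constant $C$ in the $L^\infty$ estimate comes from an $L^2$ bound on $f_K(u_\lambda)$, and on $[0,K]$ your $f_K$ is the full supercritical $f$, so the Trudinger--Moser control of $\int |f(u_\lambda)|^2$ forces $\alpha K^{\tau-2}$ (resp. $\alpha K^{\tau-\delta}$) to be bounded independently of $K$; i.e. the admissible range $\alpha<\alpha^*$ must itself be tied to $K$ ($\alpha^*=K^{-(\tau-\delta)}$ in case I, $\tau_*=2+1/K$ in case II, as in Subsections 4.1--4.2). Likewise one must check that $\lambda_0$, $\lambda_0'$ and $\xi_0$ do not depend on $K$ (the paper's Remarks \ref{PP1} and \ref{PP2}), otherwise the final substitution $K=C+1$ is not legitimate. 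A related design point: a \emph{polynomial} tail for $f_K$ does not automatically preserve the monotonicity of $f_K(t)/t^3$ inherited from $(h_2)$, which you need both for the boundedness of the $(PS)$ sequence and for the Nehari characterization that makes the mountain-pass solution a ground state; the paper's truncation $h(t)e^{\alpha R^{\tau-\bar\delta}t^{\bar\delta}}$ is chosen precisely so that \eqref{ff} survives. Your alternative of minimizing over the set of all nontrivial critical points would additionally require proving that this infimum is attained, which is not addressed.
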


From a variational method point of view, in order to look for nontrivial solutions corresponding to Eq. \eqref{mainequation1},
it would be find critical points for the variational functional $J_\lambda:X_\lambda\to\R$ defined by
\[
{J}_\lambda(u)=\frac{1}{2}\int_{\R^2}[|\nabla u|^2+\lambda V(x)|u|^2]dx+\frac14V_0(u)
- \int_{\R^2} F (u)dx,~\forall u\in X_\lambda.
\]
Alternatively, although the property of $X_\lambda$ is good enough, adopting \eqref{TM1},
one could never show that $\mathcal{J}_\lambda$ is well-defined in $X_\lambda$
which is caused by the appearance the nonlinearity $f$
involving the supercritical exponential growth
\eqref{definition2} in the Trudinger-Moser sense.
As one will see later, this is the biggest difference from \cite{AF,CT2} which prevents
us repeating the approaches in the cited papers to conclude Theorem \ref{maintheorem1}.

Inspired by
\cite{AS1,AS2}, given a fixed constant $R>0$,
  we need to introduce a cutoff function $f^{R,\bar{\delta}}$ and consider an auxiliary equation which involves
  a (sub)critical exponential growth.
Roughly speaking, we define $f^{R,\bar{\delta}}:\R\to\R$ as follows
\begin{equation} \label{fR}
f^{R, \bar{\delta} }(t)=
\left\{
\begin{array}{ll}
	0, &   t \leq 0, \\
	h(t) e^{\alpha t^\tau},&    0\leq t \leq R,\\
	h(t) e^{\alpha R^{\tau-\bar{\delta}}t^{\bar{\delta}}}, &  t \geq R, \\
\end{array}
\right.
\end{equation}
where
\[
 \bar{\delta}\triangleq\left\{
   \begin{array}{ll}
    \delta, & \text{if the Case I in \eqref{definition2} is considered},\\
    2, & \text{if the Case II in \eqref{definition2} is considered}.
    \end{array}
 \right.
\]
In light of such a $f^{R, \bar{\delta}}$, we consider
an auxiliary equation below
\begin{equation}\label{mainequation2}
-\Delta u+\lambda V(x)u+ \big(\log(|x|)\ast u^2\big)u=f^{R,\bar{\delta}}(u)~ \text{in}~  \R^2,
\end{equation}
where and in the sequel $F^{R,\bar{\delta}}(t)=\int_0^tf^{R,\bar{\delta}}(s)ds$.
Obviously, for each fixed $R>0$, by $(h_3)$, it is simple to find that
$f^{R,\bar{\delta}}$ admits a subcritical or critical exponential growth at infinity. Hence,
the variational functional $J_\lambda^{R,\bar{\delta}}:X_\lambda\to\R$ given by
\[
J_\lambda^{R,\bar{\delta}}(u)=\frac{1}{2}\int_{\R^2}[|\nabla u|^2+\lambda V(x)|u|^2]dx+\frac14V_0(u)
- \int_{\R^2} F^{R,\bar{\delta}}(u)dx
\]
 associated with Eq. \eqref{mainequation2} is well-defined and of class $\mathcal{C}^1(X_\lambda)$.
Moreover, we could certify that every critical point of it is a (weak)
solution of Eq. \eqref{mainequation2}.
Recalling \cite[Subsection 4.1]{Willem},
let us call the solution $u_R\in E$ to be a ground state solution of Eq. \eqref{mainequation2}
if it satisfies
\begin{equation}\label{Nehari}
J_\lambda^{R,\bar{\delta}} (u )=\inf_{u\in \mathcal{N}^{R,\bar{\delta}}_\lambda }J_\lambda^{R,\bar{\delta}} (v)\triangleq m^{R,\bar{\delta}}_\lambda,
\end{equation}
where the corresponding Nehari manifold is given as
\[
\mathcal{N}^{R,\bar{\delta}}_\lambda\triangleq\big\{u\in X_\lambda\backslash\{0\}:(J_\lambda^{R,\bar{\delta}})^\prime(u)[u]=0\big\}.
\]
It could easily conclude that if
every ground state solution $u_R\in E$ of Eq. \eqref{mainequation2} satisfying
$|u_R|_\infty\leq R$, then
 $u_R$ is a ground state solution of Eq. \eqref{mainequation1}, where $|\cdot|_q$ stands for the standard $L^q$-norm with $1\leq q\leq \infty$.
 Have this in mind, we should establish such a solution $u_R$
 to derive the proof of Theorem \ref{maintheorem1}. So,
 it is necessary to show the following result.

\begin{theorem}\label{maintheorem2}
Let $V$ satisfy $(V_1)-(V_3)$.
Suppose that the nonlinearity $f$ defined in \eqref{form}
requires $(h_1)-(h_3)$, then for every fixed $R>0$, there exists a $\lambda_0(R)>0$ dependent of $R$ such that Eq. \eqref{mainequation2}
with $\bar{\delta}=\delta$
has a nonnegative ground state solution in $X_\lambda$ for all $\lambda>\lambda_0(R)$.
Moreover, if in addition we suppose that $(h_4)$,
then there exist $\lambda_0^\prime(R)>0$ and ${\xi}_0(R)>0$
such that Eq. \eqref{mainequation2} with $\bar{\delta}=2$ possesses a nonnegative ground state solution in $X_\lambda$ for all
$\lambda>\lambda_0^\prime(R)$ and ${\xi}>{\xi}_0(R)$.
\end{theorem}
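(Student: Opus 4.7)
The plan is to apply the mountain-pass theorem to the $\mathcal{C}^1$-functional $J_\lambda^{R,\bar{\delta}}:X_\lambda\to\R$ and then identify the resulting minimax level with the Nehari infimum $m_\lambda^{R,\bar{\delta}}$ of \eqref{Nehari}. Since the cutoff nonlinearity $f^{R,\bar{\delta}}$ has only subcritical ($\bar{\delta}=\delta<2$) or critical ($\bar{\delta}=2$) exponential growth by construction, $(h_3)$ together with the Trudinger-Moser inequalities \eqref{TM1}--\eqref{TM2} and the Stubbe/Cingolani--Weth framework for $V_0$ ensure $J_\lambda^{R,\bar{\delta}}\in \mathcal{C}^1(X_\lambda,\R)$. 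The mountain-pass geometry is then standard: $(h_1)$ combined with a Trudinger-Moser bound on a small ball of $X_\lambda$ yields $J_\lambda^{R,\bar{\delta}}(u)\geq \alpha_0>0$ whenever $\|u\|_{X_\lambda}=\rho$ is small, while the super-cubic growth encoded in $(h_2)$ forces $J_\lambda^{R,\bar{\delta}}(tu_0)\to-\infty$ as $t\to+\infty$ for any fixed $u_0\in \mathcal{C}_c^\infty(\Omega)\setminus\{0\}$, after observing that $V_0(tu_0)$ grows only like $t^4\log t$ along a fixed direction and is dominated by the exponential nonlinear term.

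From the mountain-pass theorem I obtain a $(\text{PS})_{c_\lambda^{R,\bar{\delta}}}$ sequence $(u_n)\subset X_\lambda$. Since $(h_2)$ propagates through the cutoff to the monotonicity of $t\mapsto f^{R,\bar{\delta}}(t)/t^3$ on $\R^+$, one has $f^{R,\bar{\delta}}(t)t-4F^{R,\bar{\delta}}(t)\geq 0$, and combining $J_\lambda^{R,\bar{\delta}}(u_n)-\tfrac14(J_\lambda^{R,\bar{\delta}})'(u_n)[u_n]$ with the positivity of the leading piece $V_1$ of the decomposition $V_0=V_1-V_2$ bounds $\|u_n\|_{E_\lambda}$ and then $\|u_n\|_*$. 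Extracting $u_n\rightharpoonup u$ in $X_\lambda$, the steep potential well hypotheses $(V_1)$--$(V_3)$ now enter through a Bartsch--Wang-type argument: for every $q\in[2,\infty)$ there exists $\lambda(q,R)>0$ such that $u_n\to u$ in $L^q(\R^2)$ as soon as $\lambda>\lambda(q,R)$ (the mass on $\{V\geq b\}$ is killed by $\lambda V$, the mass on the finite-measure set $\Xi\setminus\overline{\Omega}$ is controlled by H\"older, and on $\Omega$ one uses the standard compact Sobolev imbedding). Passing to the limit in $(J_\lambda^{R,\bar{\delta}})'(u_n)=o(1)$, via a Br\'ezis-Lieb-type lemma for the double logarithmic integral as in \cite{Cingolani} and Vitali's convergence theorem for the exponential term, then provides a nontrivial critical point $u$ of $J_\lambda^{R,\bar{\delta}}$.

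The principal obstruction lies in the critical case $\bar{\delta}=2$, in which $f^{R,2}(t)=h(t)e^{\alpha R^{\tau-2}t^2}$ for $t\geq R$ and \eqref{TM2} saturates at the $4\pi$ threshold. To legitimately apply Vitali's theorem on the nonlinear term one must first establish a quantitative bound $c_\lambda^{R,2}<\kappa(R,\alpha,\tau)$, with $\kappa$ essentially of size $2\pi/(\alpha R^{\tau-2})$. I would obtain this by testing the mountain-pass path along a fixed $u_0\in \mathcal{C}_c^\infty(\Omega)$, using $(h_4)$ to bound the nonlinear energy from below by $\xi |u_0|_p^p t^p$ on $[0,1]$, and choosing $\xi>\xi_0(R)$ large enough so that $\max_{t\geq 0} J_\lambda^{R,2}(tu_0)<\kappa$; in the subcritical case $\bar{\delta}=\delta<2$ no such smallness is needed because \eqref{TM1} already delivers the required strong convergence at any level. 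Finally, the fibering argument under $(h_2)$ shows that $t\mapsto J_\lambda^{R,\bar{\delta}}(tu)$ admits a unique positive maximizer, from which $c_\lambda^{R,\bar{\delta}}=m_\lambda^{R,\bar{\delta}}$ and $u$ is a ground state; nonnegativity follows by rescaling $|u|$ onto the Nehari manifold, using the invariance of $V_0$ under $u\mapsto|u|$ and the inequality $F^{R,\bar{\delta}}(|u|)\geq F^{R,\bar{\delta}}(u)$ coming from $(h_1)$.
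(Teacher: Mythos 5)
Your overall architecture (mountain-pass for the truncated functional, the identity $f^{R,\bar{\delta}}(t)t-4F^{R,\bar{\delta}}(t)\geq0$ from $(h_2)$ to bound $\|u_n\|_{E_\lambda}$, the fibering/Nehari identification $c_\lambda^{R,\bar{\delta}}=m_\lambda^{R,\bar{\delta}}$, and the level estimate under $(h_4)$ in the critical case $\bar{\delta}=2$) matches the paper. But there is a genuine gap at the single most delicate point of the argument, which is also precisely where the threshold $\lambda_0(R)$ actually comes from: the boundedness of the $(C)$ sequence in the log-weighted norm $\|u_n\|_*=(\int_{\R^2}\log(1+|x|)u_n^2\,dx)^{1/2}$. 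You assert that $J_\lambda^{R,\bar{\delta}}(u_n)-\tfrac14(J_\lambda^{R,\bar{\delta}})'(u_n)[u_n]$ together with ``the positivity of $V_1$'' bounds $\|u_n\|_{E_\lambda}$ \emph{and then} $\|u_n\|_*$. The first part is fine, but the second does not follow: the whole Poisson term cancels in that combination (since $V_0'(u)[u]=4V_0(u)$), so one only gets an \emph{upper} bound on $V_1(u_n)$, and an upper bound on $V_1(u_n)$ controls $\|u_n\|_*^2$ only if one first knows that a fixed positive amount of $L^2$-mass of $u_n$ stays in a fixed bounded region (this is the inequality $V_1(u_n)\geq\frac{\beta_0}{8}(\|u_n\|_*^2-C|u_n|_2^2)$ of the paper). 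Establishing that requires (i) a Lions non-vanishing alternative with a lower bound $\beta_0>0$ uniform in $\lambda$, and (ii) showing the concentration centers $y_n$ cannot escape to infinity; step (ii) is exactly where $(V_3)$ and the largeness of $\lambda$ enter, via $\int_{\R^2}\lambda V u_n^2\,dx\geq\tfrac12\lambda b\beta_0+o_n(1)$ contradicting the $\lambda$-uniform upper bound on $c_\lambda^{R,\bar{\delta}}$ once $\lambda>\lambda_0(R)$. Your proposal contains neither step, and without the $\|\cdot\|_*$-bound you cannot extract a weak limit in $X_\lambda$, cannot invoke the compact embedding $X\hookrightarrow L^s(\R^2)$, and cannot pass to the limit in the $B_1$-terms of $(J_\lambda^{R,\bar{\delta}})'$.

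The Bartsch--Wang mechanism you propose instead (killing the tail on $\{V\geq b\}$ by $\lambda V$ and using the finite measure of $\Xi$) is the right tool for $L^q$-compactness of the $E_\lambda$-part, but it is aimed at a different difficulty and does not repair the one above: it says nothing about $\int\log(1+|x|)u_n^2\,dx$, and in any case it only yields that the tail $L^2$-mass is $O(1/\lambda)$, not exact $L^q$-convergence for a fixed $\lambda$. Note also that the same non-vanishing estimate is needed a second time at the end of the paper's proof, to upgrade $\|u_n-u\|_{E_\lambda}\to0$ to $\|u_n-u\|_*\to0$ (via $B_1(u_n^2,(u_n-u)^2)\geq\frac{\beta_0}{8}(\|u_n-u\|_*^2-C|u_n-u|_2^2)$), which is required for the strong $X_\lambda$-convergence and hence for the ground state property. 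So the missing concentration analysis is not a technicality you can route around; it is the core of the proof of this theorem.
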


\begin{remark}
It is worth pointing out that, up to the best knowledge of us, Theorem \ref{maintheorem2}
seems to be the first results concerning planar Schr\"{o}dinger-Poisson system
with steep potential well involving subcritical or critical exponential growth.
Although there may be existence results for such problem with non-constant or non-periodic
potential, the results in Theorem \ref{maintheorem2} are new in our settings.
We mention the  paper \cite{PSZZ} in which the authors obtained the existence and concentrating
behavior of ground state solutions for the generalized Chern-Simons-Schr\"{o}dinger
equation with steep potential well in the critical exponential case.
Nevertheless, comparing to it,
we have to present some new tricks and analytic skills because of the
indefinite Poisson term $\log(|x|)\ast u^2$ bringing in additionally distinct
challenges.
\end{remark}

Next, we shall contemplate the asymptotical behavior of the ground state solutions
obtained in Theorem \ref{maintheorem1} as $\lambda\to+\infty$. Let $u\in X_\lambda$ be a
ground state solution for Eq. \eqref{mainequation1}, there
is no doubt that it depends on the parameter $\lambda>\lambda_0>0$ (or, $\lambda>\lambda_0^\prime>0$),
so we relabeled it by $u_\lambda$ to emphasize this dependence,
where $\lambda_0>0$ (or, $\lambda_0^\prime>0$) is a constant appearing in the proof of Theorem \ref{maintheorem1} below.
Therefore, we can prove the following result.

\begin{theorem}\label{maintheorem3}
Under the assumptions in Theorem \ref{maintheorem1}, going to a subsequence,
$u_\lambda\to   u_0$ in $X$ as $\lambda\to+\infty$, where $u_0$
is a ground state solution for the Schr\"{o}dinger-Poisson equation
 \begin{equation}\label{mainequation3}
\left\{%
\begin{array}{ll}
\displaystyle     -\Delta u+\bigg(\int_{\Omega}\log|x-y|u^2(y)dy\bigg)   u=f(u) , & x\in\Omega, \\
     u=0, &  x\in\partial\Omega.\\
\end{array}%
\right.
\end{equation}
\end{theorem}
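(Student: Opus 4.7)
The plan is to obtain a uniform bound on $\|u_\la\|_{X_\la}$ for large $\la$, extract a weak limit $u_0$ in $X$, use the steep potential well to confine the support of $u_0$ to $\overline{\Om}$, pass to the limit in the Euler--Lagrange equation to see $u_0$ solves \eqref{mainequation3}, and finally upgrade to strong convergence in $X$ and ground-state optimality.

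First, I would produce an upper bound for the ground state level $c_\la\triangleq J_\la(u_\la)$ that is independent of $\la$. Functions in $H_0^1(\Om)$, extended by zero, belong to $X_\la$ and, since $V\equiv 0$ on $\Om$, their $J_\la$-energy coincides with the energy $J_\Om$ of \eqref{mainequation3}. Hence $c_\la\le c_\Om$, and a standard argument combining the Nehari identity $\langle J_\la'(u_\la),u_\la\rangle=0$ with $(h_1)$--$(h_3)$ and $(h_2)$'s superquadratic consequence yields $\|u_\la\|_{E_\la}^2+V_1(u_\la)\le C c_\la$ uniformly in $\la$. In particular $\|u_\la\|_{X_\la}$ is bounded, and because $\|\cdot\|_E\le\|\cdot\|_{E_\la}$, so is $\|u_\la\|_X$.

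Next, along a subsequence, $u_\la\rightharpoonup u_0$ weakly in $X$, strongly in $L^q_{\mathrm{loc}}(\R^2)$ for every $q\in[1,\infty)$, and pointwise a.e. From $\int_{\R^2}\la V(x)u_\la^2\,dx\le C$ we get $\int_{\R^2} V u_\la^2\,dx\le C/\la\to 0$, and Fatou forces $\int_{\R^2} V u_0^2\,dx=0$, so $u_0=0$ on $\R^2\setminus V^{-1}(0)=\R^2\setminus\overline{\Om}$ and $u_0\in H_0^1(\Om)$. A steep-well tightness estimate in the spirit of Bartsch--Wang — splitting $\R^2$ into $\Xi$ and its complement, bounding $\int_{\R^2\setminus\Xi} u_\la^2\,dx\le (\la b)^{-1}\int \la V u_\la^2\,dx$ and using $|\Xi|<\infty$ with H\"older and Rellich--Kondrachov on bounded pieces of $\Xi$ — gives strong convergence $u_\la\to u_0$ in $L^q(\R^2)$ for every $q\in[2,\infty)$.

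The main obstacle is then to pass to the limit in the Poisson term when testing $\langle J_\la'(u_\la),\varphi\rangle=0$ against $\varphi\in C_c^\infty(\Om)$, namely
\[
\intR\intR \log|x-y|\,u_\la^2(y)\,u_\la(x)\,\varphi(x)\,dx\,dy.
\]
The logarithm is sign-changing, singular at $0$ and growing at infinity, so I would invoke the Stubbe decomposition $\log r=\log(1+r)-\log(1+1/r)$. The $\log(1+1/|x-y|)$ part is weakly singular and falls to Hardy--Littlewood--Sobolev combined with the strong $L^q$ convergence proved above. The $\log(1+|x-y|)$ part needs the uniform bound on $\|u_\la\|_*$ — inherited from $\|u_\la\|_{X_\la}\le C$ — together with the compact support of $\varphi$ to kill tails in $y$; on finite domains the weak-in-$X$ limit plus a.e.\ convergence close the argument via dominated convergence. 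The nonlinear term $\int f(u_\la)\varphi\,dx$ passes thanks to the Trudinger--Moser inequality \eqref{TM2} and the sup-norm control $|u_\la|_\infty\le R$ granted by Theorem \ref{maintheorem2}. This shows $u_0$ weakly solves \eqref{mainequation3}.

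Finally, I would show $u_0$ is a ground state and deduce strong convergence. Mountain-pass geometry holds uniformly in $\la$ by $(h_1)$, so $\|u_\la\|_{E_\la}$ is bounded away from zero; combined with the strong $L^q$ convergence this gives $u_0\not\equiv 0$. Applying Fatou to $\|\nabla\cdot\|_2^2$, $\la\!\int V u_\la^2\,dx$ and $V_1(\cdot)$, and using the established convergence of the remaining terms, yields
\[
J_\Om(u_0)\le \liminf_{\la\to\infty} J_\la(u_\la)=\liminf_{\la\to\infty} c_\la\le c_\Om.
\]
Since $u_0\in H_0^1(\Om)$ lies on the Nehari manifold of $J_\Om$, it must be a ground state. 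Equality throughout the Fatou chain then forces $\|\nabla u_\la\|_2\to\|\nabla u_0\|_2$, $\|u_\la\|_*\to\|u_0\|_*$ and $\la\!\int V u_\la^2\,dx\to 0$, which together with weak convergence delivers $u_\la\to u_0$ strongly in $X$, completing the proof.
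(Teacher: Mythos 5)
Your proposal is correct and reaches the right conclusion, but it diverges from the paper's route in several places, and one step is stated too tersely to stand on its own. For $u_0\in H_0^1(\Omega)$ you apply Fatou to $\int_{\R^2}V u_\lambda^2\,dx\le C/\lambda\to 0$ to force $Vu_0^2\equiv 0$ a.e.\ and then invoke $\overline\Omega=V^{-1}(0)$ and the smooth boundary; the paper instead argues by contradiction, choosing a compact $\Theta_u\subset\Omega^c$ where $u_0\ne 0$, so $V\ge\varepsilon_0>0$ there and $c^{R,\bar\delta}_{\lambda_n}\ge\tfrac{\varepsilon_0\lambda_n}{4}\int_{\Theta_u}u_0^2\to+\infty$, contradicting the uniform bound on the level. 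Your Fatou argument is arguably cleaner. For the Poisson and nonlinear terms you use the Stubbe decomposition plus $|u_\lambda|_\infty\le R$ and dominated convergence, while the paper uses the bilinear-form lemmas of Cingolani--Weth and Vitali; both work, and the Trudinger--Moser inequality is not actually needed once you invoke the uniform $L^\infty$-bound. One simplification you missed: the steep-well tightness machinery for upgrading $L^q_{\rm loc}$ to $L^q(\R^2)$ is superfluous, since the compact embedding $X\hookrightarrow L^s(\R^2)$ in Lemma \ref{zz}(i) gives global strong $L^s$-convergence directly from the uniform $X$-bound (this is exactly what the paper does). The step I would push back on is the non-triviality argument: the statement that a uniform lower bound on $\|u_\lambda\|_{E_\lambda}$ ``combined with the strong $L^q$ convergence'' gives $u_0\not\equiv 0$ is not a direct implication, since $\|u_\lambda\|_{E_\lambda}\ge\rho$ does not bound $|u_\lambda|_q$ from below. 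The missing link is the Nehari identity: if $u_0\equiv 0$ then $u_\lambda\to 0$ in $L^q(\R^2)$, so $V_2(u_\lambda)\to 0$ and $\int f(u_\lambda)u_\lambda\to 0$ (using $|u_\lambda|_\infty\le R$ and $f(t)=o(t)$ near $0$), whence $\|u_\lambda\|_{E_\lambda}^2+V_1(u_\lambda)\to 0$, contradicting $\|u_\lambda\|_{E_\lambda}\ge\rho$. The paper instead establishes a uniform concentration constant $\beta_0>0$ (Lemmas \ref{Vanishing}--\ref{Non-Vanishing}) and derives a contradiction by splitting $\R^2$ into $B_\varrho$, $B_\varrho^c\cap\Xi$ and $B_\varrho^c\cap\Xi^c$; your route skips that intermediate concentration lemma and is shorter once the Nehari step is made explicit. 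The final Fatou chain recovering $J_\Omega(u_0)=c_\Omega$ and strong $X$-convergence matches the paper's argument, which exploits $\mathcal{N}^{R,\bar\delta}_\Omega\subset\mathcal{N}^{R,\bar\delta}_\lambda$ and $J_\lambda|_{H_0^1(\Omega)}=J_\Omega$.
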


\begin{remark}
Let us recall the main results in Theorems \ref{maintheorem1} and \ref{maintheorem2},
dealing with planar Schr\"{o}dinger-Poisson system
with non-constant or non-periodic potential involving critical exponential growth,
the well-known Ambrosetti-Rabinowitz condition, namely $f(t)t-\mu F(t)\geq0$
with $\mu>4$ for all $t\in\R$, plays a significant role in concluding the boundness of (C)
sequence corresponding to the variational functional.
In view of $(h_2)$ (or, \eqref{ff} below), nevertheless,
 we can remove it successfully by the steep potential well $\lambda V$ under the assumptions $(V_1)-(V_3)$.
\end{remark}

Although we have briefly introduced the essential ideas of the main results in this article,
it is still far from enough to get the detailed proofs of Theorems \ref{maintheorem1}, \ref{maintheorem2} and \ref{maintheorem3}.
Alternatively, the key to considering such supercritical problem
 is to transform it into a subcritical or critical one. As to the $L^\infty$-estimate on nontrivial solutions for a class of
 supercritical problems, the elliptic regular result or Nash-Moser iteration procedure are
  the effective tools. Moreover, we truly anticipate that the results
in the present paper shall be conducive to some more further studies on planar Schr\"{o}dinger-Poisson system
with (sub)critical or supercritical exponential growth.

The outline of the paper is organized as follows. In Section \ref{Preliminaries}, we mainly
present some preliminary results.
In Section \ref{Sec3}, we consider the (sub)critical problem \eqref{mainequation2} and give
the proof of Theorem \ref{maintheorem2}.
Section \ref{Linftyestimate} is devoted to the proofs of Theorems \ref{maintheorem1}
and \ref{maintheorem3}. Some remarks are given in Section \ref{Sec5}.
\\\\
\noindent \textbf{Notations:} From now on in this paper, otherwise mentioned, we ultilize the following notations:
\begin{itemize}
	\item   $C,C_1,C_2,\cdots$ denote any positive constant, whose value is not relevant and $\R^+\triangleq(0,+\infty)$.
	 	\item      Let $(Z,\|\cdot\|_Z)$ be a Banach
space with dual space $(Z^{-1},\|\cdot\|_{Z^{-1}})$, and $\Phi$ be functional on $Z$.
	\item The (C) sequence at a level $c\in\R$ ($(C)_c$ sequence in short)
corresponding to $\Phi$ means that $\Phi(x_n)\to c$ and $(1+\|x_n\|_Z)\|\Phi^{\prime}(x_n)\|_{Z^{-1}}\to 0$ in $X^{-1}$ as $n\to\infty$, where
$\{x_n\}\subset Z$.
	\item  $|\,\,\,|_p$ stands for the usual norm of the Lebesgue space $L^{p}(\mathbb{R}^2)$ for all $p \in [1,+\infty]$, and
	$\Vert\,\,\,\Vert_{H^{i}}$ denotes the usual norm of the Sobolev space $H^{i}(\mathbb{R}^2)$ for $i=1,2$.
\item For any $\varrho>0$ and every $x\in \R^2$, $B_\varrho(x)\triangleq\{y\in \R^2:|y-x|<\varrho\}$.
\item $o_{n}(1)$ denotes the real sequences with $o_{n}(1)\to 0$
 as $n \to +\infty$.
\item $``\to"$ and $``\rightharpoonup"$ stand for the strong and
 weak convergence respectively.
\end{itemize}

\section{Preliminaries}\label{Preliminaries}
In this section, we shall present some preliminary results which shall be exploited frequently
in this paper.
Let us begin defining the following
 three
 auxiliary symmetric bilinear forms $X\times X\to\R$
$$
(u , v) \mapsto B_1(u, v)=\intR \intR \log(1+|x-y|)u(x)v(y) dx dy ,
$$
$$
(u , v) \mapsto B_2(u, v)=\intR \intR \log\left(1+\dfrac{1}{|x-y|}\right)u(x)v(y) dx dy ,
$$
$$
(u , v) \mapsto B(u, v)=B_1(u, v)-B_2(u, v)=\intR \intR\log(|x-y|)u(x)v(y) dx dy.
$$
The above definitions are understood to being over measurable functions $u, v: \R^2 \to \R $, such that the integrals are defined in the
Lebesgue sense. Then, $V_i(u)=B_i(u^2,u^2)$, where
$V_i$ is given as in the Introduction for $i\in\{0,1,2\}$.

\begin{lemma}\label{zz} (\cite[Lemma 2.2]{Cingolani})
\emph{(i)} The space $X$ is compactly embedded in $L^{s}(\R^2)$, for all $s\in [2,\infty)$. \\
\emph{(ii)} $0\leq V_1(u)\leq 2|u|_2^2|u|_*^2\leq 2\|u\|_X^4$ and $V_1$ is weakly semicontinuous in $H^1(\R^2)$.\\
\emph{(iii)} $V_{i}'(u)[v]=4B_i(u^2, uv)$ and so $ V_{i}'(u)[u]=4V_i(u)$, for $u, v\in X$ and
$i\in\{0,1,2\}$. \\
\emph{(iv)} There is $K_0>0$ such that $|B_2(u, v)|\leq K_0 |u|_{\frac{4}{3}}|v|_{\frac{4}{3}}, \forall u, v \in L^{\frac{4}{3}}(\R^2) $. Hence,
	\begin{equation} \label{v2}
	|V_2(u)|\leq K_0|u|_{\frac{8}{3}}^{4} , ~\forall   u\in L^{\frac{8}{3}}(\R^2).
	\end{equation}
\emph{(v) }$V_2$ is completely continuous in $X$, that is,
$$
u_n\rightharpoonup u ~ \mbox{in} ~ X \Longrightarrow V_2(u_n) \to V_2(u).
$$
\end{lemma}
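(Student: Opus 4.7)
The plan is to establish the five parts in the given order, using the logarithmic weight built into $X$ to control large-$|x|$ behavior and the Hardy--Littlewood--Sobolev (HLS) inequality to handle the singular kernel in $B_2$. For (i), my idea is to exploit the tail control
\[
\int_{|x|\geq R} u_n^2 \, dx \leq \frac{1}{\log(1+R)} \int_{\R^2}\log(1+|x|)u_n^2\,dx \leq \frac{\|u_n\|_X^2}{\log(1+R)},
\]
which vanishes as $R\to\infty$ uniformly in any bounded sequence $\{u_n\}$ in $X$. Combining this with classical Rellich--Kondrachov compactness $H^1(B_R)\hookrightarrow\hookrightarrow L^2(B_R)$ on each ball and a diagonal extraction will give strong convergence in $L^2(\R^2)$ along weakly convergent subsequences; interpolation with any Sobolev embedding $H^1(\R^2)\hookrightarrow L^q(\R^2)$, $q\in[2,\infty)$, then promotes the strong convergence to every $L^s(\R^2)$ with $s\in[2,\infty)$.

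Parts (ii)--(iv) should be comparatively routine once the right auxiliary inequalities are at hand. For (ii), I would use the submultiplicative bound $1+|x-y|\leq(1+|x|)(1+|y|)$, which on taking logarithms and applying Fubini yields $V_1(u)\leq 2|u|_2^2|u|_*^2\leq 2\|u\|_X^4$; weak lower semicontinuity on $H^1(\R^2)$ then follows from Fatou's lemma along a.e.\ convergent subsequences. Part (iii) is a direct algebraic expansion of $V_i(u+tv)=B_i((u+tv)^2,(u+tv)^2)$ that uses the symmetry $B_i(a,b)=B_i(b,a)$ to isolate the linear term $4t\,B_i(u^2,uv)$. For (iv), the key pointwise inequality is $\log(1+1/r)\leq 1/r$, which reduces $B_2$ to a Riesz potential of exponent one; HLS in $\R^2$ with $\lambda=1$ forces the balance $\tfrac{1}{p}+\tfrac{1}{q}=\tfrac{3}{2}$, and the symmetric choice $p=q=4/3$ delivers $|B_2(u,v)|\leq K_0|u|_{4/3}|v|_{4/3}$; the identity $|u^2|_{4/3}=|u|_{8/3}^2$ then yields the $V_2$ bound.

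For (v), I would decompose $V_2(u_n)-V_2(u)=B_2(u_n^2-u^2,u_n^2)+B_2(u^2,u_n^2-u^2)$, estimate each piece by (iv), and conclude using the strong $L^{8/3}$-convergence supplied by (i), together with $u_n^2-u^2=(u_n-u)(u_n+u)$ and Hölder to control $|u_n^2-u^2|_{4/3}$. The main obstacle I foresee is part (i): the tail estimate itself is clean, but gluing it to local Rellich compactness via a diagonal subsequence, and then extending to the full range $s\in[2,\infty)$ by interpolation, needs careful bookkeeping; everything afterward rests on that compactness together with the single HLS calculation in (iv).
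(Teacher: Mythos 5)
Your proposal is correct and reconstructs the argument in essentially the same way as the cited source: the paper itself does not re-prove this lemma but simply invokes \cite[Lemma 2.2]{Cingolani}, and your route --- the logarithmic-weight tail estimate plus local Rellich compactness and interpolation for (i), the submultiplicativity $1+|x-y|\le(1+|x|)(1+|y|)$ with Fatou for (ii), the quadratic-form expansion for (iii), the bound $\log(1+1/r)\le 1/r$ feeding into Hardy--Littlewood--Sobolev with $p=q=4/3$ for (iv), and the bilinear splitting of $V_2(u_n)-V_2(u)$ combined with $L^{8/3}$-compactness for (v) --- is precisely the Cingolani--Weth proof. No gaps.
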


\begin{lemma}\label{yy}
(\cite[Lemma 2.6]{Cingolani}) Let $\{u_n\}$, $\{v_n\}$ and $\{w_n\}$ be bounded sequences in $X$ such that $u_n \rightharpoonup u$ in $X$.
 Then, for every $z\in X$, we have $B_1(v_n w_n,z(u_n - u))\to 0$, as $n\to + \infty$.
\end{lemma}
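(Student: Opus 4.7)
The plan is to exploit the subadditivity inequality
\[
\log(1+|x-y|)\;\le\;\log(1+|x|)+\log(1+|y|),\qquad x,y\in\R^2,
\]
which follows from $1+|x-y|\le(1+|x|)(1+|y|)$. Inserting this into the definition of $B_1$ and moving absolute values inside, I would obtain $|B_1(v_n w_n,z(u_n-u))|\le A_n+B_n$, where
\[
A_n=\Big(\intR \log(1+|x|)|v_n(x)w_n(x)|\,dx\Big)\Big(\intR |z(y)(u_n-u)(y)|\,dy\Big),
\]
\[
B_n=\Big(\intR |v_n(x)w_n(x)|\,dx\Big)\Big(\intR \log(1+|y|)|z(y)(u_n-u)(y)|\,dy\Big).
\]

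First I would handle $A_n$, which is the easy term. By Cauchy--Schwarz, $\intR \log(1+|x|)|v_n w_n|\,dx\le \|v_n\|_*\|w_n\|_*$ and $\intR |z(u_n-u)|\,dy\le |z|_2|u_n-u|_2$. Since $\{v_n\},\{w_n\}$ are bounded in $X$, the first factor is bounded, and since $u_n\rightharpoonup u$ in $X$, Lemma~\ref{zz}(i) gives $u_n\to u$ in $L^2(\R^2)$, so $A_n\to 0$.

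The main obstacle is $B_n$, because even though $u_n\rightharpoonup u$ in $X$, we cannot conclude that $\|u_n-u\|_*\to 0$ (the weight $\log(1+|y|)$ is present in the norm, so the closed unit ball of $X$ is not compact in the weighted $L^2$). I would overcome this by a diagonal (interior/tail) splitting: for any $R>0$, write
\[
\intR \log(1+|y|)|z(u_n-u)|\,dy
=\int_{B_R(0)}+\int_{\R^2\setminus B_R(0)}.
\]
On $B_R(0)$ the weight is bounded by $\log(1+R)$, so this piece is at most $\log(1+R)\,|z|_2\,|u_n-u|_{L^2(B_R)}$, which vanishes as $n\to\infty$ by Lemma~\ref{zz}(i). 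On the complement, Cauchy--Schwarz gives
\[
\int_{\R^2\setminus B_R(0)}\log(1+|y|)|z(u_n-u)|\,dy
\le \Big(\!\int_{\R^2\setminus B_R(0)}\!\log(1+|y|)z^2\,dy\Big)^{1/2}\|u_n-u\|_*,
\]
and the first factor tends to $0$ as $R\to\infty$ because $z\in X$ (dominated convergence), while $\|u_n-u\|_*$ stays bounded since $\{u_n\}$ is bounded in $X$. Combined with $|v_n w_n|_1\le|v_n|_2|w_n|_2$ (bounded), this gives $\limsup_n B_n\le C\,\varepsilon(R)$ with $\varepsilon(R)\to 0$; letting $R\to\infty$ yields $B_n\to 0$.

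Putting both estimates together yields $B_1(v_n w_n,z(u_n-u))\to 0$. The only subtle point is the second term, and the entire argument hinges on the logarithmic subadditivity together with the compact embedding $X\hookrightarrow L^2(\R^2)$ for the interior part and the integrability $\log(1+|y|)z^2\in L^1$ for the tail.
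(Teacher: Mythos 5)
Your argument is correct: the subadditivity $\log(1+|x-y|)\le\log(1+|x|)+\log(1+|y|)$, the compact embedding $X\hookrightarrow L^2(\R^2)$ for the term carrying the weight on the $x$-variable, and the interior/tail splitting using $\int\log(1+|y|)z^2\,dy<\infty$ together with the boundedness of $\|u_n-u\|_*$ for the other term are exactly what is needed. The paper gives no proof of this lemma (it is quoted verbatim from \cite[Lemma 2.6]{Cingolani}), and your argument is essentially the one in that reference, so there is nothing to add.
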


Taking into account the uniform $L^\infty$-estimate for the nontrivial solutions established in Theorem
\ref{maintheorem2}, we obtain the result below.

\begin{lemma} \label{cc}
It holds that $g_u(x)\triangleq\log(1+ |\cdot|^{-1} )\ast |u|^{2} \in L^{\infty}(\R^2)$ for all $u\in H^1(\R^2)$. Moreover
\begin{equation}\label{13a}
|g_u(x)|_\infty\leq 4\pi(|u|^2_2+|u|_6^2).
\end{equation}
\end{lemma}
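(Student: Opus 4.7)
\medskip

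\textbf{Proof proposal.} The idea is to split the convolution against the logarithmic kernel into a local and a far-field piece, pick the natural pointwise bound $\log(1+s)\le s$ on $[0,\infty)$, and then handle each piece by Hölder's inequality together with the Sobolev embedding $H^{1}(\mathbb{R}^{2})\hookrightarrow L^{s}(\mathbb{R}^{2})$ for every $s\in[2,\infty)$ (recorded in Lemma~\ref{zz}(i)). Concretely, for an arbitrary $x\in\mathbb{R}^{2}$ I would write
\[
g_{u}(x)=\int_{B_{1}(x)}\log\!\bigl(1+|x-y|^{-1}\bigr)u(y)^{2}\,dy
+\int_{\mathbb{R}^{2}\setminus B_{1}(x)}\log\!\bigl(1+|x-y|^{-1}\bigr)u(y)^{2}\,dy,
\]
and treat the two terms separately.

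On $\mathbb{R}^{2}\setminus B_{1}(x)$ one has $|x-y|^{-1}\le 1$, so the inequality $\log(1+s)\le s$ yields $\log(1+|x-y|^{-1})\le 1$; integrating against $u^{2}$ produces the upper bound $|u|_{2}^{2}$. On $B_{1}(x)$ I would again use $\log(1+|x-y|^{-1})\le |x-y|^{-1}$ to replace the logarithm by the Riesz-type kernel $|x-y|^{-1}$, then apply Hölder's inequality with conjugate exponents $(3/2,3)$:
\[
\int_{B_{1}(x)}\!\!\frac{u(y)^{2}}{|x-y|}\,dy
\le\Bigl(\int_{B_{1}(x)}\!|x-y|^{-3/2}\,dy\Bigr)^{2/3}\!\Bigl(\int_{B_{1}(x)}\!u(y)^{6}\,dy\Bigr)^{1/3}
\le (4\pi)^{2/3}|u|_{6}^{2},
\]
where the explicit value $4\pi$ comes from a short polar-coordinate computation $\int_{B_{1}(0)}|y|^{-3/2}\,dy=2\pi\int_{0}^{1}r^{-1/2}\,dr=4\pi$. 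Summing the two contributions and loosening the numerical constants (both $\log 2$ and $(4\pi)^{2/3}$ are smaller than $4\pi$) yields $|g_{u}(x)|\le 4\pi(|u|_{2}^{2}+|u|_{6}^{2})$ uniformly in $x$, which is exactly the asserted bound. The finiteness of $|u|_{6}$ is precisely what the $H^{1}$ assumption buys us through the planar Sobolev embedding.

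There is no genuine obstacle in this argument: the only delicate point is making sure the local singularity of the kernel is integrable after Hölder, which is the reason for choosing the conjugate pair $(3/2,3)$ (the weight $|x-y|^{-3/2}$ is exactly at the borderline of local integrability in $\mathbb{R}^{2}$, yet still integrable). Everything else is elementary once the split at the radius $1$ is in place.
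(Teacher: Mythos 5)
Your proof is correct and is essentially the same as the paper's: both split at radius one, bound the local piece via $\log(1+s)\le s$ and Hölder with exponents $(3/2,3)$ to get $(4\pi)^{2/3}|u|_6^2$, and bound the far-field piece by a constant times $|u|_2^2$ (the paper uses $\log 2$, you use $1$; both are $<4\pi$, so the final estimate is identical).
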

\begin{proof}
The original idea comes from \cite{AS0}, we exhibit it in detail for the sake of the
reader's continence. For all $x\in\R^2$, there holds
\begin{align}\label{13b}
\nonumber|g_u(x)| &=\int_{B_1(x)}\log \left(1+\frac{1}{|x-y|}\right)|u (y)|^2 dy+
\int_{\R^2\backslash B _{1}(x)}\log \left(1+\frac{1}{|x-y|}\right)|u (y)|^2 dy \\
   & \leq\int_{B_1(x)} \frac{|u (y)|^2}{|x-y|}  dy+\log2
\int_{\R^2\backslash B _{1}(x)}|u (y)|^2 dy.
\end{align}
It follows from the Holder's inequality that
\begin{equation}\label{13c}
\int_{B_1(x)} \frac{|u_R(y)|^2}{|x-y|}  dy\leq
\bigg(\int_{B_1(x)} \frac{1}{|x-y|^{\frac32}}  dy\bigg)^{\frac23}
\bigg(\int_{\R^2} |u(y)|^6  dy\bigg)^{\frac13}=(4\pi)^{\frac23}\bigg(\int_{\R^2} |u(y)|^6  dy\bigg)^{\frac13}.
\end{equation}
Combining \eqref{13b} and \eqref{13c}, we get the desired result \eqref{13a}. The proof is completed.
\end{proof}

Next, we have to prove the following imbedding result which guarantees
the reasonableness of the work space $X_\lambda$ whence $\lambda\geq1$.

\begin{lemma}\label{imbedding}
Suppose that $(V_1)-(V_3)$, then the imbedding $E\hookrightarrow H^1(\R^2)$ is continuous. In particular,
$E_\lambda$ can be continuously imbedded into $H^1(\R^2)$ for all $\lambda\geq1$.
\end{lemma}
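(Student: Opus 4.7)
The second assertion follows immediately from the first by the pointwise norm comparison $\|\cdot\|_E \leq \|\cdot\|_{E_\lambda}$ ($\lambda \geq 1$) recorded just before the lemma, so I concentrate on establishing $E \hookrightarrow H^1(\R^2)$. Since $|\nabla u|_2 \leq \|u\|_E$ is built into the norm of $E$, the entire content is the $L^2$ estimate $|u|_2 \leq C\|u\|_E$ for a constant $C=C(b,|\Xi|)$.

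My approach is to split $\R^2 = \Xi \sqcup \Xi^c$ using $(V_3)$ and treat the two pieces with different tools. On $\Xi^c$ the pointwise bound $V \geq b$ yields the Chebyshev-type estimate $\int_{\Xi^c} u^2 \, dx \leq \frac{1}{b}\int V u^2 \, dx \leq \frac{1}{b}\|u\|_E^2$. On $\Xi$ the finite-measure hypothesis combined with the planar Gagliardo--Nirenberg inequality $|w|_4^2 \leq C_0 |w|_2 |\nabla w|_2$ produces, via H\"older, $\int_\Xi u^2 \, dx \leq |\Xi|^{1/2} |u|_4^2 \leq C_0 |\Xi|^{1/2} |u|_2 \|u\|_E$, after which Young's inequality absorbs $\tfrac12 |u|_2^2$ to the left and delivers $|u|_2^2 \leq (2/b + C_0^2 |\Xi|)\|u\|_E^2$. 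Adding the trivial bound $|\nabla u|_2^2 \leq \|u\|_E^2$ then gives the continuous inclusion into $H^1(\R^2)$.

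The delicate point -- and the main obstacle -- is that the Gagliardo--Nirenberg step presupposes $u \in L^2(\R^2)$, which is exactly what must be proved. I would circumvent the circularity by first performing the calculation on the truncation $u_R = \varphi_R u$, where $\varphi_R \in C_c^\infty(\R^2)$ is a standard cutoff equal to $1$ on $B_R$, supported in $B_{2R}$, and satisfying $|\nabla \varphi_R| \leq 2/R$; because $u \in L^2_{\mathrm{loc}}(\R^2)$ and $\varphi_R$ has compact support, $u_R$ belongs to $H^1(\R^2)$ and Gagliardo--Nirenberg applies to it. Setting $K(R) := \int_{B_R} u^2 \, dx$ and noting $|u_R|_2^2 \geq K(R)$, one obtains a self-referential inequality of the form $K(R) \leq A\|u\|_E^2 + \varepsilon(R) K(2R)$ with $\varepsilon(R) = O(R^{-2})$; the error $\int u^2 |\nabla \varphi_R|^2 \, dx$ is in turn controlled by splitting the annulus $B_{2R} \setminus B_R$ against $\Xi^c$ (contributing $O(R^{-2})\|u\|_E^2$) and $\Xi \setminus B_R$ (a set of measure tending to $0$ as $R\to\infty$). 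A dyadic iteration closes the estimate uniformly in $R$ -- the telescoping product $\prod_{j<N}\varepsilon(2^j R)$ decays super-geometrically like $4^{-N(N-1)/2}$, while each $K(2^N R)$ is a priori finite by $L^2_{\mathrm{loc}}$-membership -- and monotone convergence then transfers the uniform bound $K(R) \leq C\|u\|_E^2$ to $u$ on all of $\R^2$. Apart from this careful bookkeeping, the rest of the argument is routine H\"older--Young manipulation.
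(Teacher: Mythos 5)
Your core estimate is exactly the paper's: split $\R^2 = \Xi\cup\Xi^c$, control $\int_{\Xi^c}u^2$ by Chebyshev using $V\geq b$ there, control $\int_\Xi u^2$ by H\"older over the finite-measure set $\Xi$ followed by the planar Gagliardo--Nirenberg inequality and Young, then absorb $\tfrac12|u|_2^2$ to the left, arriving at $|u|_2^2\leq C(b,\text{meas}\,\Xi)\|u\|_E^2$. You also correctly observe something the paper passes over in silence: the computation as written there is performed ``for every $u\in H^1(\R^2)$,'' and the absorption step $|u|_2^2\leq\tfrac12|u|_2^2+\cdots\Rightarrow |u|_2^2\leq\cdots$ is only legal once $|u|_2^2$ is known to be finite. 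The paper's proof thus really establishes the norm comparison on $E\cap H^1(\R^2)$; it does not, on its face, establish that a general $u\in E$ (which is a priori only $L^2_{\mathrm{loc}}$) belongs to $L^2(\R^2)$. Your instinct to close that gap by a truncation argument is sound and goes beyond what the paper does.

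However, the regularization you sketch does not, as stated, close the gap. Two points. First, the self-referential inequality $K(R)\leq A\|u\|_E^2+\varepsilon(R)K(2R)$ with $\varepsilon(R)=O(R^{-2})$ is not actually derived: the cutoff error $\int u^2|\nabla\varphi_R|^2$ splits into a $\Xi^c$ piece, which is indeed $O(R^{-2})\|u\|_E^2$, and a $\Xi$ piece on the annulus, which has small \emph{measure} but no direct bound on $\int u^2$; controlling it requires re-applying Gagliardo--Nirenberg to a truncation of $u$, which reintroduces the very quantities you are trying to bound and has not been shown to produce a clean $\varepsilon(R)K(2R)$ term. Second, even granting that inequality, the telescoping argument is incomplete: you invoke that ``each $K(2^NR)$ is a priori finite by $L^2_{\mathrm{loc}}$-membership,'' but finiteness imposes no growth restriction, and if $K(2^NR)$ grows faster than $4^{N(N-1)/2}$ the tail term of the dyadic product does not vanish. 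To make the iteration work you need an a priori growth bound on $K$ coming from $\nabla u\in L^2(\R^2)$, e.g.\ via the embedding $\dot H^1(\R^2)\hookrightarrow \mathrm{BMO}$, which yields $K(R)=O(R^2\log^2 R)$ and then does defeat the super-geometric decay; but this ingredient is missing from your write-up, and without it the final step is a non sequitur.
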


\begin{proof}
For every $u\in H^1(\R^2)$, by adopting $(V_1)$ and $(V_3)$ as well as the Gagliardo-Nirenberg inequality, one has
\begin{align*}
  \int_{\R^2}|u|^2dx & =\int_{\Xi}|u|^2dx+ \int_{\R^2\backslash\Xi}|u|^2dx\leq \sqrt{\text{meas}(\Xi)}
 \bigg(\int_{\R^2}|u|^4dx\bigg)^{\frac12}+\frac1b  \int_{\R^2\backslash\Xi}V(x)|u|^2dx\\
    & \leq \kappa_{\text{GN}}\sqrt{\text{meas}(\Xi)}
 \bigg(\int_{\R^2}|u|^2dx\bigg)^{\frac12} \bigg(\int_{\R^2}|\nabla u|^2dx\bigg)^{\frac12}+\frac1b  \int_{\R^2 }V(x)|u|^2dx \\
 &\leq\frac12\int_{\R^2}|u|^2dx +\frac12 \kappa^2_{\text{GN}}\text{meas}(\Xi)\int_{\R^2}|\nabla u|^2dx+\frac1b  \int_{\R^2 }V(x)|u|^2dx,
\end{align*}
where $\kappa_{\text{GN}}>0$ denotes the best constant associated with Gagliardo-Nirenberg inequality.
From this inequality, we obatin
\begin{align*}
  \int_{\R^2}|u|^2dx
 &\leq \kappa^2_{\text{GN}}\text{meas}(\Xi)\int_{\R^2}|\nabla u|^2dx+\frac2b  \int_{\R^2 }V(x)|u|^2dx
 \leq \max\bigg\{\kappa^2_{\text{GN}}\text{meas}(\Xi),  \frac2b\bigg\}\|u\|_E^2
\end{align*}
indicating that $\|u\|^2_{H^1(\R^2)}\leq C_{\Xi,b}\|u\|_E^2$, where $C_{\Xi,b}\triangleq\max\{1+\kappa^2_{\text{GN}}\text{meas}(\Xi), \frac{b+2}{b}\}>0$.
The proof of this lemma is finished.
\end{proof}

\begin{remark}\label{remark2.2}
With the help of Lemma \ref{imbedding}, for all $\lambda\geq1$, we can redefine the space $X_\lambda$ by
\[
X_\lambda \triangleq\bigg\{u\in E_\lambda:\int_{\R^2}\log(1+|x|)|u|^2dx<+\infty\bigg\}.
\]
\end{remark}

Finally, we consider the compact results for the nonlinearity $f^{R,\bar{\delta}}$ (defined in \eqref{fR}) which
would be very important in receiving the boundedness of $(C)$ sequence for $J_\lambda$.

\begin{lemma}\label{compact}
Suppose $(h_1)$ and $(h_3)$. Let $\{u_n\}\subset H^1(\R^2)$ be a sequence such that
$u_n\to u_0$ in $L^p(\R^2)$ and $u_n\to u_0$ a.e. in $\R^2$, then for all fixed $R>0$, passing to a subsequence if necessary,
\begin{equation}\label{compact1}
 \lim_{n\to\infty}\int_{\R^2}f^{R,\delta}(u_n)u_ndx= \int_{\R^2}f^{R,\delta}(u_0)u_0dx
 ~\text{and}~
 \lim_{n\to\infty}\int_{\R^2}F^{R,\delta}(u_n) dx= \int_{\R^2}F^{R,\delta}(u_0)u dx.
\end{equation}
If in addition we suppose that $\limsup\limits_{n\to\infty}\|u_n\|^2_{H^1(\R^2)}<  \frac{\pi}{\gamma+\alpha R^{\tau-2}}$,
then for every fixed $R>0$, passing to a subsequence if necessary,
\begin{equation}\label{compact2}
 \lim_{n\to\infty}\int_{\R^2}f^{R,2}(u_n)u_ndx= \int_{\R^2}f^{R,2}(u_0)u_0dx
 ~\text{and}~
 \lim_{n\to\infty}\int_{\R^2}F^{R,2}(u_n) dx= \int_{\R^2}F^{R,2}(u_0)u dx.
\end{equation}
\end{lemma}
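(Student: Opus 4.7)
The plan is to apply Vitali's convergence theorem to both sequences $\{f^{R,\bar\delta}(u_n)u_n\}$ and $\{F^{R,\bar\delta}(u_n)\}$. Two ingredients are needed: a.e.\ pointwise convergence, which is immediate from continuity of $f^{R,\bar\delta}$ (the piecewise definition matches at $t=R$ since both branches give $h(R)e^{\alpha R^\tau}$) together with $u_n\to u_0$ a.e.; and uniform integrability, which is the substantive content.

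The first move is a clean pointwise majorant valid on $[0,\infty)$. By $(h_3)$ and the observation that $t^\tau\leq R^{\tau-\bar\delta}t^{\bar\delta}$ on $[0,R]$, one has
\[
|f^{R,\bar\delta}(t)|\leq M(e^{\gamma t^\delta}-1)e^{\alpha R^{\tau-\bar\delta}t^{\bar\delta}},\qquad t\geq 0.
\]
For the subcritical case $\bar\delta=\delta<2$, the elementary identity $(e^a-1)e^b\leq e^{a+b}-1$ (for $b\geq 0$) collapses this to
\[
|f^{R,\delta}(t)|\leq M\bigl(e^{\mu_1 t^\delta}-1\bigr),\qquad \mu_1:=\gamma+\alpha R^{\tau-\delta}.
\]
For the critical case $\bar\delta=2$, Young's inequality $\gamma t^\delta\leq\eta t^2+C_\eta$ (available because $\delta<2$) gives, for any $\eta>0$,
\[
|f^{R,2}(t)|\leq C_\eta\bigl(e^{(\alpha R^{\tau-2}+\eta)t^2+C_\eta}-1\bigr)+C_\eta.
\]
Analogous bounds for $F^{R,\bar\delta}$ come from $|F^{R,\bar\delta}(t)|\leq |t|\sup_{|s|\leq |t|}|f^{R,\bar\delta}(s)|$.

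For uniform integrability in Case 1 I would fix $q\in(1,2)$ and split via Hölder:
\[
\int_A |f^{R,\delta}(u_n)u_n|\,dx\leq M\,\bigl\|e^{\mu_1 u_n^\delta}-1\bigr\|_{L^q(\R^2)}\,\|u_n\|_{L^{q'}(A)}.
\]
Because $\delta<2$, one has $q\mu_1 t^\delta\leq \varepsilon t^2+C$ with $\varepsilon$ as small as we wish, so \eqref{TM2} bounds the first factor uniformly in $n$ using the $H^1$-boundedness implicit in the setting. The second factor is small when $A$ has small measure (by strong $L^{q'}$-convergence of $\{u_n\}$ inherited from the $L^p$-hypothesis) and when $A=\R^2\setminus B_\rho(0)$ with $\rho$ large (for the same reason). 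Equi-integrability on $\R^2$ follows and Vitali closes \eqref{compact1}; the $F$-statement is identical with $|u_n|$ replaced by $1$ in the Hölder split.

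Case 2 is the main obstacle: the $e^{\gamma t^\delta}$-factor is no longer a harmless perturbation and the exponential in $f^{R,2}$ is genuinely critical. The strict inequality $\limsup_n\|u_n\|_{H^1}^2<\pi/(\gamma+\alpha R^{\tau-2})$ is used precisely to squeeze an exponent $q>1$ into Trudinger--Moser: choose $\eta>0$ and $q>1$ so that
\[
q\,(\gamma+\alpha R^{\tau-2}+\eta)\,\limsup_n\|u_n\|_{H^1}^2<4\pi,
\]
which is feasible since as $q\to 1^+$ and $\eta\to 0^+$ the left side tends to $(\gamma+\alpha R^{\tau-2})\limsup_n\|u_n\|_{H^1}^2<\pi<4\pi$. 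After rescaling $u_n$ by its $H^1$-norm and applying \eqref{TM2}, this yields a uniform bound on $\|e^{(\gamma+\alpha R^{\tau-2}+\eta)u_n^2}-1\|_{L^q(\R^2)}$; the Young absorption of the $e^{\gamma t^\delta}$ piece into $e^{\eta t^2}$ then promotes the pointwise bound from Step 1 into a uniform $L^q$-estimate on $f^{R,2}(u_n)$. Hölder against the $L^{q'}$-tightness of $\{u_n\}$ finishes the argument exactly as in Case 1. The delicate bookkeeping — exploiting the $\pi$ (not $4\pi$) in the hypothesis to retain room for both $\eta>0$ and $q>1$ after the Young absorption — is the heart of the matter.
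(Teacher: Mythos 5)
Your pointwise majorant $|f^{R,\delta}(t)|\leq M(e^{\mu_1 t^\delta}-1)$ is correct, but the next step --- ``\eqref{TM2} bounds $\|e^{\mu_1 u_n^\delta}-1\|_{L^q(\R^2)}$ uniformly'' --- is not. Trudinger--Moser controls $\int_{\R^2}(e^{\alpha u^2}-1)\,dx$, where the \emph{quadratic} exponent is essential: near the zero set of $u$ one has $e^{\alpha u^2}-1\approx\alpha u^2\in L^1$. With a sub-quadratic exponent $\delta<2$ the near-zero region is no longer integrable: $e^{\mu_1 u^\delta}-1\approx\mu_1 u^\delta$ there, and $u^\delta$ need not lie in $L^1(\R^2)$ for $u\in H^1(\R^2)$ (take $u\sim\min(1,|x|^{-2/\delta+\epsilon})$: then $u\in H^1$ but $\int_{\R^2} u^\delta\,dx=\infty$). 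Your proposed reduction $q\mu_1 t^\delta\leq\varepsilon t^2+C$ does not repair this, because after exponentiating the additive constant $C$ produces a non-integrable constant term $M(e^{C}-1)$ over $\R^2$; your Case~2 bound literally carries this stray additive constant $+C_\eta$, which in the Hölder split would require $u_n\in L^1(\R^2)$, also not guaranteed by $H^1(\R^2)$. So uniform integrability has not been established, and the gap is genuine at least for $\delta\leq 1$ (and still requires care for $\delta\in(1,2)$, since you fix $q\in(1,2)<2/\delta$ is possible).

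The paper sidesteps this by decomposing the range of $t$ and crucially invoking $(h_1)$, which your majorant never uses: for small $|t|$, $(h_1)$ gives $|f^{R,\bar\delta}(t)|\leq\varepsilon|t|$, so $|f^{R,\bar\delta}(u_n)u_n|\lesssim\varepsilon u_n^2$ is handled by $u_n\in L^2$; for $|t|\geq 1$, the inequality $t^\delta\leq t^2$ upgrades the exponent to a quadratic one before \eqref{TM2} is applied. The resulting majorant $\varepsilon|t|+C_\varepsilon(R)|t|^{q-1}[e^{ct^2}-1]$ has exactly the needed features: the $|t|^{q-1}$ factor dampens the exponential term at the origin and the quadratic exponent is Trudinger--Moser-compatible. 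To salvage your Vitali argument you would need to add the $(h_1)$-based small-$|t|$ estimate to the majorant and restrict the exponential piece to $|t|\geq 1$ --- at which point you recover essentially the paper's two-regime decomposition.
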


\begin{proof}
It suffices to verify the first formulas in \eqref{compact1} and \eqref{compact2}, respectively.
Firstly, we study \eqref{compact1}. In this situation, without loss of generality, we can suppose that
$\sup\limits_{n\in N}\|u_n\|^2_{H^1(\R^2)}<\Sigma(R)$, where the constant $\Sigma(R)>0$
is dependent of $R>0$ and independent of $n\in \mathbb{N}$. Then, we claim that,
for every $\varepsilon>0$ independent of $R$, there is a constant $C_\varepsilon(R)>0$,
which may depend on $R$, such that
\begin{equation}\label{compact3}
  |f^{R,\delta}(t)|\leq \varepsilon|t|+ {C}_\varepsilon (R)|t|^{q-1}[ e^{(\Sigma(R)+1)^{-1} |t|^{2}}-1],~\forall t\in\R,
\end{equation}
where $q>1$ is arbitrary. To see it, via $(h_1)$,
\[
\lim_{t\to0}\frac{f^{R,\delta}(t)}{t}=\lim_{t\to0} e^{\alpha t^\tau} \lim_{t\to0}\frac{h(t)}{t}
=0~\text{uniformly in}~R>0.
\]
On the other hand, we need the following two facts: (1) For $t_0>0$, there are $b>a>0$ such that
\[
a e^{t^\delta}\leq be^{t^2},~\forall t\geq t_0;
\]
(2) For $t_0>0$, since $\lim\limits_{t\to+\infty}\frac{e^{t^2}}{e^{t^2}-1}=1$, there is a constant $C>0$ such that
\[
e^{t^2}\leq C(e^{t^2}-1),~\forall t\geq t_0.
\]
Thereby, for all $|t|\geq1$, by using $(h_3)$, there are $M_2(R)>M_1(R)>M$ such that
\begin{align*}
|f^{R,\delta}(t)|  & \leq M(e^{\gamma |t|^\delta}-1)e^{\alpha R^{\tau-\delta}|t|^{\delta}}\leq
Me^{(\gamma +\alpha R^{\tau-\delta})|t|^\delta}\leq M_1(R)e^{(\Sigma(R)+1)^{-1} |t|^{2}} \\
    & \leq M_2(R)[e^{(\Sigma(R)+1)^{-1} |t|^{2}}-1]\leq M_2(R)|t|^{q-1}[e^{(\Sigma(R)+1)^{-1} |t|^{2}}-1].
\end{align*}
So, we derive \eqref{compact3} by the above two formulas.
Combining \eqref{compact3} and \eqref{TM2}, we obtain that
\begin{equation*}
  \begin{gathered}
  \int_{\R^2} |f^{R,\delta}(u_n)u_n|dx \leq  \varepsilon\int_{\R^2}|u_n|^2dx+C_\varepsilon(R)
\int_{\R^2}|u_n|^{q}[ e^{(\Sigma(R)+1)^{-1} |u_n|^{2}}-1]dx\hfill\\
\ \ \ \  \leq  \varepsilon\int_{\R^2}|u_n|^2dx+C_\varepsilon(R)
\bigg(\int_{\R^2}|u_n|^{2q}dx\bigg)^{\frac12}\bigg(\int_{\R^2}[ e^{\frac{2\|u_n\|^2_{H^1(\R^2)}}{\Sigma(R)+1}
\big(|u_n|/\|u_n\|^2_{H^1(\R^2)}\big)^{2}}-1]dx\bigg)^{\frac12}\hfill\\
\ \ \ \ \leq \varepsilon\int_{\R^2}|u_n|^2dx+\bar{C}_\varepsilon(R)
\bigg(\int_{\R^2}|u_n|^{2q}dx\bigg)^{\frac12}. \hfill\\
  \end{gathered}
\end{equation*}
Thanks to $u_n\to u_0$ in $L^{2q}(\R^2)$ since $q>1$,
 we could exploit the generalized Lebesgue's Dominated Convergence theorem
 to get the desired result by $n\to\infty$ and then $\varepsilon\to0^+$.

 Next, we consider \eqref{compact2}. Similar to \eqref{compact3}, we claim that
 there is a ${C}^\prime_\varepsilon (R)>0$ such that
 \begin{equation}\label{compact4}
  |f^{R,2}(t)|\leq \varepsilon|t|+ {C}^\prime_\varepsilon (R)|t|^{q-1}[ e^{(\gamma+\alpha R^{\tau-2}) |t|^{2}}-1],~\forall t\in\R.
\end{equation}
Indeed, for all $|t|\geq1$, by using $(h_3)$, there are $\tilde{M} (R)>\bar{M}>M$ such that
\begin{align*}
|f^{R,2}(t)|  & \leq M(e^{\gamma |t|^\delta}-1)e^{\alpha R^{\tau-2}|t|^{2}}\leq M e^{\gamma |t|^\delta} e^{\alpha R^{\tau-2}|t|^{2}}
\leq   \bar{M} e^{\gamma |t|^2} e^{\alpha R^{\tau-2}|t|^{2}}\\
    & \leq\tilde{M}(R)[ e^{\gamma |t|^2}-1 ][e^{\alpha R^{\tau-2}|t|^{2}}-1]\leq \tilde{M}(R)
    [ e^{(\gamma+\alpha R^{\tau-2}) |t|^{2}}-1] \\
    & \leq\tilde{M}(R)|t|^{q-1}
    [ e^{(\gamma+\alpha R^{\tau-2}) |t|^{2}}-1]
\end{align*}
jointly with $f^{R,2}(t)=o(t)$ uniformly in $R>0$ as $t\to0$. As a consequence of
\eqref{compact4} and \eqref{TM2}, there holds
\begin{equation*}
  \begin{gathered}
  \int_{\R^2} |f^{R,2}(u_n)u_n|dx \leq  \varepsilon\int_{\R^2}|u_n|^2dx+C^\prime_\varepsilon(R)
\int_{\R^2}|u_n|^{q}[ e^{(\gamma+\alpha R^{\tau-2})|u_n|^{2}}-1]dx\hfill\\
\ \ \ \  \leq  \varepsilon\int_{\R^2}|u_n|^2dx+C^\prime_\varepsilon(R)
\bigg(\int_{\R^2}|u_n|^{2q}dx\bigg)^{\frac12}\bigg(\int_{\R^2}[ e^{ 2(\gamma+\alpha R^{\tau-2})\|u_n\|^2_{H^1(\R^2)}
\bigg(\frac{|u_n|}{\|u_n\|^2_{H^1(\R^2)}}\bigg)^{2}}-1]dx\bigg)^{\frac12}\hfill\\
\ \ \ \ \leq \varepsilon\int_{\R^2}|u_n|^2dx+\bar{C}^\prime_\varepsilon(R)
\bigg(\int_{\R^2}|u_n|^{2q}dx\bigg)^{\frac12}. \hfill\\
  \end{gathered}
\end{equation*}
Repeating the above arguments, we can verify the validity of \eqref{compact2}. So, we can accomplish the proof of this lemma.
\end{proof}

\begin{remark}
Thanks to the calculations in Lemma \ref{compact}, for all $u\in H^1(\R^2)$, we immediately derive that
\begin{equation}\label{compact5}
  \int_{\R^2}|f^{R,\bar{\delta}}(u )u|dx\leq \varepsilon\int_{\R^2}|u|^2dx+C_\varepsilon(R)
  \bigg(\int_{\R^2}|u|^{2q}dx\bigg)^{\frac12}
\end{equation}
provided $\|u\|^2_{H^1(\R^2)}$ is sufficiently small, where $q>1$ could be selected as required.
\end{remark}

\section{On problem \eqref{mainequation2}: the (sub)critical case}\label{Sec3}

In this section, we mainly make full use of the mountain-pass theorem
 introduced in \cite{MW,HL} to establish the existence of ground state solutions for
 Eq. \eqref{mainequation2}. Now,
we state the theorem
which is a consequence
of the Ekeland Variational Principle developed in \cite{Aubin}
 as follows.

\begin{proposition}\label{mp}
Let $Z$ be a Banach space and $\Psi\in \mathcal{C}^1(Z,\R)$ Gateaux
differentiable for all $v\in Z$, with G-derivative $\Phi^\prime(v)\in Z^{-1}$ continuous from the norm
topology of $Z$ to the weak $*$ topology of $Z^{-1}$ and $\Phi(0) = 0$. Let $S$ be a closed subset of $Z$ which
disconnects (archwise) $Z$. Let $v_0 = 0$ and $v_1\in Z$ be points belonging to distinct connected
components of $Z\backslash S$. Suppose that
\[
\inf_{S}\Phi\geq A>0~\text{and}~\Phi(v_1)\leq0
\]
and let $\Gamma=\{\gamma\in \mathcal{C}([0,1],Z):\gamma(0)~\text{and}~\gamma(1)=v_1\}$. Then
$$c=\inf_{\gamma\in\Gamma}\max_{t\in[0,1]}\Phi(\gamma(t))\geq A>0$$ and there is a $(C)_c$
sequence for $\Phi$.
\end{proposition}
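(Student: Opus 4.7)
The plan is to prove Proposition~\ref{mp} in two logically independent steps: a purely topological lower bound $c\geq A$, and a variational argument producing the Cerami sequence. The geometry is the separation version of the mountain pass (essentially due to Ekeland and sharpened in the references \cite{MW,HL} cited just above): any continuous path from $v_0=0$ to $v_1$ is forced to visit~$S$, which manufactures the mountain barrier at level~$A$.

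For the lower bound I would fix $\gamma\in\Gamma$ and observe that its image is a continuum joining the distinct arcwise components of $Z\setminus S$ containing $v_0$ and $v_1$, hence it must intersect $S$ at some $t_\gamma\in(0,1)$. Therefore
\[
\max_{t\in[0,1]}\Phi(\gamma(t))\geq \Phi(\gamma(t_\gamma))\geq \inf_S\Phi\geq A,
\]
and passing to the infimum over $\Gamma$ yields $c\geq A>0$.

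For the Cerami sequence I would apply Ekeland's variational principle to the functional $\hat\Phi(\gamma):=\max_{t}\Phi(\gamma(t))$ on the complete metric space $(\Gamma,d)$ with $d(\gamma_1,\gamma_2):=\max_{t}\|\gamma_1(t)-\gamma_2(t)\|_Z$; by the previous step $\hat\Phi$ is continuous and bounded below by $c$. For each $n\in\mathbb{N}$, Ekeland produces a path $\gamma_n\in\Gamma$ with $\hat\Phi(\gamma_n)\leq c+n^{-2}$ that no competitor can improve by more than $n^{-1}d(\cdot,\gamma_n)$. I would then localize to the near-maximum set $M_n:=\{t\in[0,1]:\Phi(\gamma_n(t))\geq c-n^{-2}\}$ and argue by contradiction: if the Cerami quantity $(1+\|\gamma_n(t)\|_Z)\|\Phi'(\gamma_n(t))\|_{Z^{-1}}$ were uniformly bounded below by some $\eta_n\gg n^{-1}$ throughout $M_n$, the norm-to-weak-$*$ continuity of $\Phi'$ would permit the standard partition-of-unity construction of a locally Lipschitz pseudo-gradient field $V(x)$ with $\|V(x)\|_Z\leq (1+\|x\|_Z)^{-1}$ and $\langle\Phi'(x),V(x)\rangle$ bounded away from zero in a neighborhood of $\gamma_n(M_n)$. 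A short flow along $-V$, cut off smoothly in the time variable so as to fix the endpoints $v_0,v_1$, would then yield a competing path $\tilde\gamma_n\in\Gamma$ with $\hat\Phi(\tilde\gamma_n)<\hat\Phi(\gamma_n)-n^{-1}d(\tilde\gamma_n,\gamma_n)$, contradicting the Ekeland minimality of $\gamma_n$.

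The main obstacle will be calibrating the pseudo-gradient so that the weight $(1+\|x\|)^{-1}$ produces exactly the Cerami factor in the final estimate, while keeping the time-support of the deformation narrow enough that the perturbation remains controlled by $d$ (so the Ekeland term on the right-hand side dominates) yet wide enough to cover the effective portion of~$M_n$ where $\hat\Phi$ is almost attained. Once the contradiction is in place, extracting $t_n\in M_n$ with $(1+\|\gamma_n(t_n)\|)\|\Phi'(\gamma_n(t_n))\|\leq Cn^{-1}$ gives $x_n:=\gamma_n(t_n)$ satisfying $\Phi(x_n)\to c$ and $(1+\|x_n\|)\|\Phi'(x_n)\|\to 0$, which is the desired $(C)_c$ sequence. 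The hypothesis that $\Phi'$ is continuous merely from the norm to the weak-$*$ topology, rather than being $C^1$ in the strict sense, is precisely what the partition-of-unity construction of the pseudo-gradient requires, so the proposition is stated at its natural level of generality.
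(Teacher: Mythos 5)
The paper does not prove this proposition at all: it is quoted as a known result, ``a consequence of the Ekeland Variational Principle,'' with references to Aubin--Ekeland and to Br\'ezis--Nirenberg \cite{Aubin,MW,HL}. So your attempt has to be judged on its own merits. Your first step (any $\gamma\in\Gamma$ must meet $S$ because otherwise it would be an arc in $Z\setminus S$ joining $v_0$ and $v_1$, hence $c\geq\inf_S\Phi\geq A$) is correct and is exactly the intended argument.

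The second step has a genuine gap, and it sits precisely at the point you identify as the heart of the matter. The partition-of-unity construction of a pseudo-gradient field requires, at a fixed $x$, a direction $v$ with $\langle\Phi'(y),v\rangle>\|\Phi'(y)\|^2$ (or a similar two-sided estimate) to persist for $y$ near $x$; weak-$*$ continuity of $\Phi'$ controls $\langle\Phi'(y),v\rangle$ but the dual norm $\|\Phi'(y)\|_{Z^{-1}}$ is only weak-$*$ \emph{lower} semicontinuous, so it can jump up near $x$ and destroy the inequality. Your closing remark that norm-to-weak-$*$ continuity ``is precisely what the partition-of-unity construction requires'' is therefore backwards: this weak hypothesis is exactly why the Br\'ezis--Nirenberg proof \emph{avoids} pseudo-gradients altogether. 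Their argument applies Ekeland to $\hat\Phi$ on $(\Gamma,d)$ as you do, but then computes the one-sided directional derivative $\limsup_{s\to0^+}s^{-1}[\hat\Phi(\gamma_n+sh)-\hat\Phi(\gamma_n)]\le\max_{t\in M_n}\langle\Phi'(\gamma_n(t)),h(t)\rangle$ over the set $M_n$ of maximum points, feeds the Ekeland inequality into this for all admissible $h$, and extracts a single $t_n\in M_n$ with $\|\Phi'(\gamma_n(t_n))\|$ small by a von Neumann/Hahn--Banach minimax argument — no vector field, no flow. Separately, even granting a pseudo-gradient, your normalization $\|V(x)\|\leq(1+\|x\|)^{-1}$ is inverted: to negate the Cerami condition one assumes $(1+\|x\|)\|\Phi'(x)\|\geq\eta$, hence $\|\Phi'(x)\|\geq\eta/(1+\|x\|)$, and one must take $\|V(x)\|\sim(1+\|x\|)$ to make $\langle\Phi'(x),V(x)\rangle\gtrsim\eta$; with your choice the pairing is only bounded below by $\eta(1+\|x\|)^{-2}$, which is not bounded away from zero uniformly, and the comparison with the Ekeland penalty $n^{-1}d(\tilde\gamma_n,\gamma_n)$ then fails. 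Obtaining the Cerami weight cleanly requires measuring the Ekeland penalty in the modified metric $\rho(x,y)=\inf\int\|\dot\sigma\|/(1+\|\sigma\|)$; as written, your calibration does not close.
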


In the sequel, if not specified conversely, we shall always assume $(V_1)-(V_3)$ and
$(h_1)-(h_3)$ just for simplicity.

\begin{lemma}\label{geometry}
Let $R>0$ be fixed, then for all $\lambda\geq1$, we have the conclusions:
\begin{itemize}
  \item There exist $A, \rho > 0$ independent of $\lambda\geq1$ such that $J_\lambda^{R,\bar{\delta}}(u)\geq A$ for all $\|u\|_{E_\lambda}=\rho$;
  \item There exists $\bar{u}\in X_\lambda$ such that $J_\lambda^{R,\bar{\delta}}(\bar{u})\leq0$ with $\|\bar{u}\|_{E_\lambda}>\rho$.
\end{itemize}
\end{lemma}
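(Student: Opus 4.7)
First I would verify both mountain-pass conditions by means of estimates uniform in $\lambda\geq 1$. The key enabling tool is Lemma \ref{imbedding}, which furnishes a constant $C_{\Xi,b}>0$ independent of $\lambda$ with $\|u\|_{H^1(\R^2)}\leq \sqrt{C_{\Xi,b}}\,\|u\|_{E_\lambda}$ for every $u\in E_\lambda$ and $\lambda\geq 1$. Consequently the Sobolev norms $|u|_p$ for $p\in[2,\infty)$, the estimate $|V_2(u)|\leq K_0|u|_{8/3}^4$ from Lemma \ref{zz}(iv), and the Trudinger--Moser bounds all feed back on $\|u\|_{E_\lambda}$ through constants that do not see $\lambda$.

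\textbf{The first item.} Since $V_1(u)\geq 0$ by Lemma \ref{zz}(ii),
\[
J_\lambda^{R,\bar{\delta}}(u)\geq \tfrac12\|u\|_{E_\lambda}^2 - \tfrac14|V_2(u)| - \intR|F^{R,\bar{\delta}}(u)|\,dx.
\]
Lemma \ref{zz}(iv) yields $|V_2(u)|\leq K_0|u|_{8/3}^4\leq C_1\|u\|_{E_\lambda}^4$. Integrating the pointwise bounds \eqref{compact3}--\eqref{compact4} established in the proof of Lemma \ref{compact} from $0$ to $u$ and invoking \eqref{TM2} once $\|u\|_{H^1(\R^2)}$ is small enough, I get, for any prescribed $q>2$,
\[
\intR|F^{R,\bar{\delta}}(u)|\,dx\leq \varepsilon|u|_2^2+\widetilde{C}_\varepsilon(R)|u|_{2q}^{q}\leq \varepsilon C_2\|u\|_{E_\lambda}^2+\widetilde{C}_\varepsilon(R)C_3\|u\|_{E_\lambda}^{q}.
\]
Collecting,
\[
J_\lambda^{R,\bar{\delta}}(u)\geq\Big(\tfrac12-\varepsilon C_2\Big)\|u\|_{E_\lambda}^2 - \tfrac14 C_1\|u\|_{E_\lambda}^4 - \widetilde{C}_\varepsilon(R)C_3\|u\|_{E_\lambda}^{q}.
\]
Picking $\varepsilon$ so small that $\varepsilon C_2<\tfrac14$ and then $\rho=\rho(R)>0$ small enough makes the quadratic term dominate on $\|u\|_{E_\lambda}=\rho$, yielding $J_\lambda^{R,\bar{\delta}}(u)\geq A>0$ there, with $A,\rho$ independent of $\lambda\geq 1$.

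\textbf{The second item and the main obstacle.} The delicate point is to produce $\bar{u}$ whose $E_\lambda$-norm and whose energy are controlled uniformly in $\lambda$. The natural device is to localise a test function in the well $\Omega$. By $(V_2)$, $\Omega$ is nonempty and open, hence contains some ball $B_r(x_0)$ with $r\in(0,\tfrac12)$; fix a nonnegative, nontrivial $u_0\in \mathcal{C}_c^\infty(B_r(x_0))\subset \mathcal{C}_c^\infty(\Omega)$. Since $V\equiv 0$ on $\overline\Omega$, $\lambda V(x)u_0^2\equiv 0$, whence
\[
\|tu_0\|_{E_\lambda}^2=t^2\|\nabla u_0\|_2^2\qquad\text{for all }t\geq 0,\ \lambda\geq 1,
\]
independently of $\lambda$. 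For every $x,y\in\supp u_0$ one has $|x-y|\leq 2r<1$, so $\log|x-y|<0$ throughout $\supp u_0\times \supp u_0$, and therefore
\[
V_0(u_0)=\intR\intR \log|x-y|\,u_0^2(x)u_0^2(y)\,dx\,dy<0.
\]
As $(h_1)$--$(h_2)$ give $h\geq 0$ on $\R^+$ (so that $F^{R,\bar{\delta}}\geq 0$ on $\R$),
\[
J_\lambda^{R,\bar{\delta}}(tu_0)\leq \tfrac{t^2}{2}\|\nabla u_0\|_2^2+\tfrac{t^4}{4}V_0(u_0)\longrightarrow -\infty\quad\text{as } t\to+\infty,
\]
uniformly in $\lambda$, and any $\bar{u}:=t_\ast u_0$ with $t_\ast$ sufficiently large satisfies both $\|\bar{u}\|_{E_\lambda}>\rho$ and $J_\lambda^{R,\bar{\delta}}(\bar{u})\leq 0$, finishing the verification.
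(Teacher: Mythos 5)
Your proof of the first bullet is essentially the paper's proof: use the uniformity coming from Lemma~\ref{imbedding}, bound $V_2$ via Lemma~\ref{zz}(iv), and control the nonlinear integral via the pointwise estimates from Lemma~\ref{compact} together with \eqref{TM2}; the choice $q>2$ works just as well as the paper's $q>4$, since both the quartic and the $q$-th order terms are subdominant to $\|u\|_{E_\lambda}^2$ near zero.

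For the second bullet you give a genuinely more careful argument than the paper, which only asserts ``$\lim_{t\to\infty}J_\lambda^{R,\bar\delta}(tu)=-\infty$ for all $u\in X_\lambda\setminus\{0\}$'' without justification (and this assertion is not even literally true, e.g.\ for $u\le 0$). Your device of localising a nonnegative bump $u_0$ in a ball $B_r(x_0)\subset\Omega$ of radius $r<1/2$ is exactly the right idea: it makes $\lambda V\equiv 0$ on $\supp u_0$ so that $\|tu_0\|_{E_\lambda}$ is $\lambda$-independent, and it forces $V_0(u_0)<0$ so that the $t^4$ term alone drives the energy to $-\infty$ uniformly in $\lambda$. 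This is cleaner than the paper's sketch and is in fact what the characterisation in Lemma~\ref{mplevel} uses implicitly.

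One small logical gap: the inference ``$(h_1)$--$(h_2)$ give $h\geq 0$ on $\R^+$'' does not actually follow. For instance, $h(t)=t^2(t-1)$ on $\R^+$ (extended by $0$ on $\R^-$) satisfies $(h_1)$, has $h(t)/t^3=1-1/t$ strictly increasing, and satisfies $(h_3)$, yet $h<0$ on $(0,1)$, so $F^{R,\bar\delta}$ need not be nonnegative. In the present argument this is harmless: since $V_0(u_0)<0$ already gives the $t^4$-term going to $-\infty$ uniformly in $\lambda$, and since $(h_2)$ forces $h$ (hence $f^{R,\bar\delta}$) to be eventually strictly positive with at least cubic growth once $h$ crosses zero, the integral $\int F^{R,\bar\delta}(tu_0)\,dx$ is bounded below (by a constant times $|\supp u_0|$) and ultimately $\to+\infty$, so $J_\lambda^{R,\bar\delta}(tu_0)\to-\infty$ anyway. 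But the intermediate inequality $J_\lambda^{R,\bar\delta}(tu_0)\le \tfrac{t^2}{2}\|\nabla u_0\|_2^2+\tfrac{t^4}{4}V_0(u_0)$ as you wrote it requires $F^{R,\bar\delta}\geq 0$ pointwise, which you have not established; you should either state this as an (implicit) standing assumption on $h$, as the paper evidently also does, or replace the inequality by the coarser bound just described.
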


\begin{proof}
For all $u\in X_\lambda$, it follows from \eqref{v2} and \eqref{compact5} with $\|u\|_{\lambda}$ small enough,
\begin{align}\label{mp0}
 \nonumber  J_\lambda^{R,\bar{\delta}}(u)
 \nonumber   & =\frac{1}{2}\int_{\R^2}[|\nabla u|^2+\lambda V(x)|u|^2]dx+\frac14V_0(u)
- \int_{\R^2} F^{R,\bar{\delta}}(u)dx \\
 \nonumber   &  \geq \frac12\|u\|^2_{E_\lambda}-\frac{K_0}{4}|u|_{\frac83}^3-\varepsilon|u|_2^2-C_\varepsilon(R)
  |u|^{q}_{2q}\\
  &\geq \bigg( \frac12-\varepsilon C_0\bigg)\|u\|^2_{E_\lambda}-C_1\|u\|^4_{E_\lambda}-C_2\|u\|^q_{E_\lambda},
\end{align}
where $C_i>0$ with $i\in\{1,2\}$ is dependent of $R$ and the imbedding constant for $E_\lambda\hookrightarrow L^p(\R^2)$ for each $2\leq p<+\infty$
by Lemma \ref{imbedding}. Choosing $\varepsilon>0$ sufficiently small and $q>4$ in \eqref{compact5},
there exist $A>0$ and $\rho>0$ which are independent of $\lambda\geq1$ such that
$J_\lambda^{R,\bar{\delta}}|_{S_\lambda}\geq A$, where $S_\lambda\triangleq \{u\in X_\lambda:\|u\|_{E_\lambda}=\rho\}$.
The closed set $S_\lambda$ disconnects $X_\lambda$ in the two arcwise connected components
\[
X_\lambda^1\triangleq\{u\in X_\lambda:\|u\|_{E_\lambda}<\rho\}~\text{and}~
X_\lambda^2\triangleq\{u\in X_\lambda:\|u\|_{E_\lambda}>\rho\}.
\]
Furthermore, we can conclude that $0\in X_\lambda^1$ and there exists a $\bar{u}\in X_\lambda^2$
such that $J^{R,\bar{\delta}}_\lambda(\bar{u})\leq0$, because $\lim\limits_{t\to\infty}J^{R,\bar{\delta}}_\lambda(tu)=-\infty$
for all $u\in X_\lambda\backslash\{0\}$. Then, the value
\begin{equation}\label{mp1}
c_\lambda^{R,\bar{\delta}}\triangleq\inf_{\gamma\in\Gamma}\max_{t\in[0,1]}J_\lambda^{R,\bar{\delta}}(\gamma(t))\geq A>0
\end{equation}
determined by Propositions \ref{mp} is well-defined,
where $\Gamma^{R,\bar{\delta}}_\lambda=
\{\gamma\in \mathcal{C}([0,1],X_\lambda):\gamma(0)=0,\gamma(1)\in X_\lambda^2 ~\text{and}~J^{R,\bar{\delta}}_\lambda(\gamma(1))\leq0\}$.
The proof is completed.
\end{proof}

To characterize the mountain-pass level $c_\lambda^{R,\bar{\delta}}$ specifically,
we have the following lemma:

\begin{lemma}\label{unique}
Let $R>0$ be fixed and $\lambda\geq1$, then for every $u\in X_\lambda\backslash\{0\}$,
there exists a unique $t_u>0$ such that $t_uu\in \mathcal{N}_\lambda^{R,\bar{\delta}}$.
 Moreover,
the maximum of $J_\lambda^{R,\bar{\delta}}(tu)$ for $t\geq0$ is achieved at $t=t_u$.
\end{lemma}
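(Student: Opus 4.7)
The plan is to reduce the claim to a one-dimensional study of the fibering map
\[
\varphi(t) := J_\lambda^{R,\bar{\delta}}(tu) = \frac{t^2}{2}\|u\|_{E_\lambda}^2 + \frac{t^4}{4} V_0(u) - \int_{\R^2} F^{R,\bar{\delta}}(tu)\,dx,\qquad t \geq 0,
\]
whose derivative is
\[
\varphi'(t) = t\|u\|_{E_\lambda}^2 + t^3 V_0(u) - \int_{\R^2} f^{R,\bar{\delta}}(tu)\,u\,dx.
\]
Since $(J_\lambda^{R,\bar{\delta}})'(tu)[tu] = t\,\varphi'(t)$, the membership $tu \in \mathcal{N}^{R,\bar{\delta}}_\lambda$ for $t>0$ is equivalent to $\varphi'(t)=0$, so the lemma is precisely the statement that $\varphi$ has a unique critical point on $(0,\infty)$ and attains its global maximum on $[0,\infty)$ at that point.

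For existence of such a $t_u$, I would invoke the estimates from Lemma~\ref{geometry}: taking $t_0 = \rho/\|u\|_{E_\lambda}$ gives $\varphi(t_0) \geq A > 0$, while the exponential growth of $f^{R,\bar{\delta}}$ on $\{u>0\}$ forces $\int F^{R,\bar{\delta}}(tu)\,dx$ to dominate the quartic Poisson term as $t\to+\infty$, so $\varphi(t)\to -\infty$. Continuity together with $\varphi(0)=0$ then supplies a maximizer $t_u>0$ with $\varphi'(t_u)=0$, that is, $t_u u \in \mathcal{N}^{R,\bar{\delta}}_\lambda$.

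The delicate point is uniqueness. I would divide $\varphi'(t) = 0$ by $t^3$ and rewrite it as
\[
L(t) := \frac{\|u\|_{E_\lambda}^2}{t^2} + V_0(u) \;=\; \int_{\{u > 0\}} \frac{f^{R,\bar{\delta}}(tu(x))}{(tu(x))^3}\,u(x)^4\,dx \;=:\; R(t).
\]
Clearly $L(t)$ is strictly decreasing in $t>0$, so everything hinges on establishing that
\[
s \longmapsto \frac{f^{R,\bar{\delta}}(s)}{s^3} \qquad \text{is strictly increasing on }(0,\infty).
\]
This I would check by inspecting the two defining branches of $f^{R,\bar{\delta}}$ in \eqref{fR}: for $s \in (0,R]$ it equals $(h(s)/s^3)\,e^{\alpha s^\tau}$, and for $s \in [R,\infty)$ it equals $(h(s)/s^3)\,e^{\alpha R^{\tau-\bar{\delta}}s^{\bar{\delta}}}$. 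By $(h_2)$ the factor $h(s)/s^3$ is strictly increasing, and the exponential factors are strictly increasing since $\tau,\bar{\delta}>0$; moreover the two expressions agree at $s=R$, so the product is strictly increasing on all of $(0,\infty)$. Pointwise on $\{u>0\}$ the integrand in $R(t)$ is then strictly increasing in $t$, so $R(t)$ is strictly increasing, and $L-R$ is strictly decreasing. Hence $L(t)=R(t)$ admits at most one positive solution, forcing uniqueness of $t_u$; that $t_u$ realizes the maximum of $\varphi$ on $[0,\infty)$ follows from the sign pattern of $\varphi'$ (positive on $(0,t_u)$, negative on $(t_u,\infty)$) together with $\varphi(0)=0<\varphi(t_u)$.

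The step I expect to be the main obstacle is the strict monotonicity of $s\mapsto f^{R,\bar{\delta}}(s)/s^3$, since one must simultaneously exploit $(h_2)$ to control $h(s)/s^3$, use the fact that $h(s)/s^3$ is non-negative on $(0,\infty)$ (implied by $(h_1)$–$(h_2)$) so that multiplication by a positive increasing exponential preserves strict monotonicity, and confirm continuity across the gluing point $s=R$; once this monotonicity is secured, the remaining existence and Nehari-manifold bookkeeping are completely standard.
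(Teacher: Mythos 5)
Your proposal is correct and follows essentially the same route as the paper: both study the fibering map $t\mapsto J_\lambda^{R,\bar{\delta}}(tu)$, rewrite $\zeta'(t)=0$ as a decreasing function of $t$ equalling $\int f^{R,\bar{\delta}}(tu)(tu)^{-3}u^4\,dx$, and derive uniqueness from the monotonicity of $s\mapsto f^{R,\bar{\delta}}(s)/s^3$ (the paper's \eqref{ff}, a consequence of $(h_2)$), with existence coming from the mountain-pass geometry of Lemma~\ref{geometry}. Your explicit branch-by-branch verification of the monotonicity across the gluing point $s=R$ is a detail the paper leaves implicit, but it is the same argument.
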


\begin{proof}
According to the definition of $f^{R,\bar{\delta}}$ defined by \eqref{fR}, due to $(f_2)$, it simply concludes that
 \begin{equation}\label{ff}
~\text{The function}~f^{R,\bar{\delta}}(t)/t^3~\text{is increasing on}~t\in\R^+.
 \end{equation}
 Let $u\in X_\lambda\backslash\{0\}$ be fixed and define the function
 $\zeta(t)\triangleq J_\lambda^{R,\bar{\delta}}(tu)$ for $t\geq0$.
It would be easily noticed that $\zeta^\prime(t)\triangleq (J_\lambda^{R,\bar{\delta}})^\prime(tu)[u]$
 if and only if $t_uu\in \mathcal{N}_\lambda^{R,\bar{\delta}}$.
   Moreover, $\zeta^\prime(t)=0$ is equivalent to
   \[
   \frac{1}{t^2}\int_{\R^2}[|\nabla u|^2+\lambda V(x)|u|^2]dx+V_0(u)=\int_{\R^2}\frac{f^{R,\bar{\delta}}(tu)}{(tu)^3}u^4dx.
   \]
By using \eqref{ff}, the functions at the left and right hand sides are decreasing and increasing, respectively.
  Moreover, we compute that
  \[
  \zeta(t)=\frac1{t^4}\bigg(\frac{t^2}{2}\int_{\R^2}[|\nabla u|^2+\lambda V(x)|u|^2]dx+\frac14V_0(u)
- \int_{\R^2} F^{R,\bar{\delta}}(tu)dx\bigg).
  \]
Using some very similar calculations in Lemma \ref{geometry},
$\zeta(t) > 0$ for $t > 0$ small. Moreover, $\zeta(0) = 0$ and
$\zeta(t)=J_\lambda^{R,\bar{\delta}}(tu)<0$ for $t>0$ large. Therefore, from the previous conclusions, there exists a unique
$t_u> 0$ such that $\zeta^\prime(t_u)=0$, that is, $t_uu\in \mathcal{N}_\lambda^{R,\bar{\delta}}$. Furthermore,
$\zeta(t_u) = \max_{t\geq0}\zeta(t)$.
\end{proof}

Next, we consider the number below
\begin{equation}\label{mp2}
 d_\lambda^{R,\bar{\delta}}\triangleq\inf_{u\in X_\lambda\backslash\{0\}}\max_{t\geq0}J^{R,\bar{\delta}}_\lambda(tu).
\end{equation}
We derive the characterization with respect to $c_\lambda^{R,\bar{\delta}}$.

\begin{lemma}\label{characterization}
Let $R>0$ be fixed, then for every $\lambda\geq1$, there holds $m^{R,\bar{\delta}}_\lambda=c_\lambda^{R,\bar{\delta}}=d_\lambda^{R,\bar{\delta}}$,
where $m^{R,\bar{\delta}}_\lambda$ and $c^{R,\bar{\delta}}_\lambda$ are defined by \eqref{Nehari} and \eqref{mp1}, respectively.
\end{lemma}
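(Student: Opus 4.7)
\medskip

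The plan is to prove the chain $m^{R,\bar{\delta}}_\lambda\le d^{R,\bar{\delta}}_\lambda\le c^{R,\bar{\delta}}_\lambda\le m^{R,\bar{\delta}}_\lambda$. The first two inequalities are straightforward from Lemma \ref{unique} and from building explicit paths; the last one is the delicate step, which I would handle by showing that every admissible path must intersect the Nehari manifold.

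\textbf{Step 1: $m^{R,\bar{\delta}}_\lambda = d^{R,\bar{\delta}}_\lambda$.} By Lemma \ref{unique}, for each $u\in X_\lambda\setminus\{0\}$ there is a unique $t_u>0$ with $t_uu\in \mathcal{N}^{R,\bar{\delta}}_\lambda$ and $\max_{t\ge 0}J_\lambda^{R,\bar{\delta}}(tu)=J_\lambda^{R,\bar{\delta}}(t_uu)\ge m^{R,\bar{\delta}}_\lambda$; taking the infimum over $u$ gives $d^{R,\bar{\delta}}_\lambda\ge m^{R,\bar{\delta}}_\lambda$. Conversely, if $u\in \mathcal{N}^{R,\bar{\delta}}_\lambda$ then $t_u=1$ by uniqueness, so $J_\lambda^{R,\bar{\delta}}(u)=\max_{t\ge 0}J_\lambda^{R,\bar{\delta}}(tu)\ge d^{R,\bar{\delta}}_\lambda$, whence $m^{R,\bar{\delta}}_\lambda\ge d^{R,\bar{\delta}}_\lambda$.

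\textbf{Step 2: $c^{R,\bar{\delta}}_\lambda \le d^{R,\bar{\delta}}_\lambda$.} Fix $u\in X_\lambda\setminus\{0\}$. Since $\lim_{t\to\infty}J_\lambda^{R,\bar{\delta}}(tu)=-\infty$ (as in the proof of Lemma \ref{geometry}), we can choose $\bar{t}>0$ so large that $J_\lambda^{R,\bar{\delta}}(\bar{t}u)\le 0$ and $\|\bar{t}u\|_{E_\lambda}>\rho$. Then $\gamma(s)\triangleq s\bar{t}u$ defines an element of $\Gamma^{R,\bar{\delta}}_\lambda$, so
\[
c^{R,\bar{\delta}}_\lambda\le \max_{s\in[0,1]}J_\lambda^{R,\bar{\delta}}(s\bar{t}u)\le\max_{t\ge 0}J_\lambda^{R,\bar{\delta}}(tu).
\]
Infimizing in $u$ yields the claim.

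\textbf{Step 3: $c^{R,\bar{\delta}}_\lambda\ge m^{R,\bar{\delta}}_\lambda$.} Given any $\gamma\in \Gamma^{R,\bar{\delta}}_\lambda$, I would prove that $\gamma([0,1])$ meets $\mathcal{N}^{R,\bar{\delta}}_\lambda$. Consider the continuous map $\Psi(t)\triangleq (J_\lambda^{R,\bar{\delta}})'(\gamma(t))[\gamma(t)]$ on $[0,1]$. Near $t=0$, $\|\gamma(t)\|_{E_\lambda}$ is small, and the same estimates used in \eqref{mp0} (combining \eqref{v2} with \eqref{compact5}, taking $q>4$) give $\Psi(t)>0$ for small $t>0$. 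At $t=1$, let $v=\gamma(1)\neq 0$; since $J_\lambda^{R,\bar{\delta}}(v)\le 0$ while $\max_{s\ge 0}J_\lambda^{R,\bar{\delta}}(sv)=J_\lambda^{R,\bar{\delta}}(t_vv)\ge A>0$, Lemma \ref{unique} forces $t_v<1$, and the function $s\mapsto J_\lambda^{R,\bar{\delta}}(sv)$ is strictly decreasing on $[t_v,\infty)$, whence $\Psi(1)=\frac{d}{ds}\big|_{s=1}J_\lambda^{R,\bar{\delta}}(sv)\cdot 1<0$. By continuity there is $t^\ast\in(0,1)$ with $\Psi(t^\ast)=0$ and $\gamma(t^\ast)\neq 0$, i.e.\ $\gamma(t^\ast)\in\mathcal{N}^{R,\bar{\delta}}_\lambda$. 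Hence
\[
\max_{t\in[0,1]}J_\lambda^{R,\bar{\delta}}(\gamma(t))\ge J_\lambda^{R,\bar{\delta}}(\gamma(t^\ast))\ge m^{R,\bar{\delta}}_\lambda,
\]
and taking the infimum over $\gamma$ gives $c^{R,\bar{\delta}}_\lambda\ge m^{R,\bar{\delta}}_\lambda$.

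\textbf{Main obstacle.} The subtle point is Step 3, specifically verifying the sign of $\Psi$ near the two endpoints in the presence of the indefinite logarithmic Poisson term: the negativity $V_2\le 0$ in $V_0$ would a priori spoil the near-zero positivity, so one must invoke the estimate $|V_2(u)|\le K_0|u|_{8/3}^{4}$ from Lemma \ref{zz}(iv) together with \eqref{compact5} to control it. Once those two sign facts are in place the continuity argument closes, and combining the three steps yields $m^{R,\bar{\delta}}_\lambda=c^{R,\bar{\delta}}_\lambda=d^{R,\bar{\delta}}_\lambda$.
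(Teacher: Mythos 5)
Your proposal is correct and, in substance, follows the same route as the paper: Steps~1 and~2 coincide exactly with the paper's argument (both rely on Lemma~\ref{unique} for $m^{R,\bar\delta}_\lambda=d^{R,\bar\delta}_\lambda$ and on the explicit ray $s\mapsto s\bar t u$ for $c^{R,\bar\delta}_\lambda\le d^{R,\bar\delta}_\lambda$), and your Step~3 is an explicit intermediate-value formulation of the paper's topological claim that $\mathcal{N}^{R,\bar\delta}_\lambda$ separates $X_\lambda$ so that every admissible path must cross it. Both use the same estimate $(J_\lambda^{R,\bar\delta})'(u)[u]\ge(\tfrac12-\varepsilon\bar C_0)\|u\|_{E_\lambda}^2-\bar C_1\|u\|_{E_\lambda}^4-\bar C_2\|u\|_{E_\lambda}^q$ to get positivity of $\Psi$ near $0$, and the same fibering fact $\zeta'(1)<0$ once $t_{\gamma(1)}<1$ to get negativity at the endpoint.

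One small gap in Step~3 as written: when you invoke continuity to get $t^\ast\in(0,1)$ with $\Psi(t^\ast)=0$, you assert $\gamma(t^\ast)\neq0$ without justification, yet $\Psi$ also vanishes wherever $\gamma$ vanishes, and nothing forbids the path from returning to the origin on $(0,1)$. The paper's connected-components phrasing avoids this automatically; in your formulation it is easily patched by setting $t_1=\sup\{t\in[0,1]:\gamma(t)=0\}$ (so $\gamma(t_1)=0$, $\gamma\neq0$ on $(t_1,1]$, and $1\in(t_1,1]$), noting that $\Psi>0$ just to the right of $t_1$ because there $0<\|\gamma(t)\|_{E_\lambda}<\bar\rho$, and applying the IVT on $(t_1,1]$ to produce $t^\ast$ with $\Psi(t^\ast)=0$ and $\gamma(t^\ast)\neq0$.
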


\begin{proof}
The preceding lemma implies that $m^{R,\bar{\delta}}_\lambda=d_\lambda^{R,\bar{\delta}}$. Since
$\zeta(t)=J_\lambda^{R,\bar{\delta}}(t_0u)<0$ for $u\in X_\lambda\backslash\{0\}$ and $t_0$
large, define $\gamma_\lambda^{R,\bar{\delta}}[0,1]\to X_\lambda$ by
$\gamma_\lambda^{R,\bar{\delta}}(t)=tt_0u$, it follows that $\gamma_\lambda^{R,\bar{\delta}}\in \Gamma_\lambda^{R,\bar{\delta}}$
and, consequently, $c_\lambda^{R,\bar{\delta}}\leq d_\lambda^{R,\bar{\delta}}$.
Next, we show that $m^{R,\bar{\delta}}_\lambda\leq c_\lambda^{R,\bar{\delta}}$.
To end it, it suffices to prove that the manifold $\mathcal{N}_\lambda^{R,\bar{\delta}}$ separates
$X_\lambda$ into two components. Proceeding as in Lemma \ref{geometry}, we have
\[
(J_\lambda^{R,\bar{\delta}})^\prime(u)[u]
 \geq \bigg( \frac12-\varepsilon \bar{C}_0\bigg)\|u\|^2_{E_\lambda}-\bar{C}_1\|u\|^4_{E_\lambda}-\bar{C}_2\|u\|^q_{E_\lambda}
\]
with $\|u\|_{E_\lambda}$ small and $q>4$. So, there exists $\bar{\rho}>0$ independent of $\lambda\geq1$,
such that $(J_\lambda^{R,\bar{\delta}})^\prime(u)[u]>0$ when $0 < \|u\|_{E_\lambda} < \bar{\rho}$.
This proves that the component containing the origin
also contains a small ball around the origin. Moreover, $J_\lambda^{R,\bar{\delta}}(u)\geq0$
for all $u$ in this component, because $(J_\lambda^{R,\bar{\delta}})^\prime(tu)[u]\geq0$
for all $0\leq t\leq t_u$.  Thus, $\gamma_\lambda^{R,\bar{\delta}}(0)=0$ and $\gamma_\lambda^{R,\bar{\delta}}(1)$ are in different components,
which indicates that every path $\gamma_\lambda^{R,\bar{\delta}}\in  \Gamma_\lambda^{R,\bar{\delta}}$
has to cross $\mathcal{N}_\lambda^{R,\bar{\delta}}$. Therefore, we must have $m^{R,\bar{\delta}}_\lambda\leq c_\lambda^{R,\bar{\delta}}$
and $d^{R,\bar{\delta}}_\lambda\leq c_\lambda^{R,\bar{\delta}}$ and the lemma is proved.
\end{proof}

Now, let us turn to concentrate ourself on the properties
for the $(C)$ sequence at the mountain-pass level $c_\lambda^{R,\bar{\delta}}$.

\begin{lemma}\label{mplevel}
Let $R>0$ be fixed and for all $\lambda\geq1$,
there is a constant $c^{R,\delta}>0$ independent of $\lambda\geq1$
 such that $c_\lambda^{R,\delta}\leq c^{R,\delta}$ for every $\lambda\geq1$.
 Moreover, if we suppose $(h_4)$ additionally, then there exists a $\xi_0=\xi_0(R)>0$
 such that for all $\xi>{\xi}_0$
 \[
 c_\lambda^{R,2}<\frac{\pi}{4C_{\Xi,b}(1+\gamma+\alpha R^{\tau-2})},~\forall \lambda\geq1,
 \]
 where $\gamma>2$ and $C_{\Xi,b}>0$ come from $(h_3)$
 and Lemma \ref{imbedding}, respectively.
\end{lemma}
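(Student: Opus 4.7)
The plan is to exploit the potential well $\Omega = \operatorname{int}V^{-1}(0)$ to construct a test path along which $J_\lambda^{R,\bar\delta}$ is insensitive to $\lambda$. First I would fix any nonnegative, nonzero $\psi_0 \in C_c^\infty(\Omega)$, which exists by $(V_2)$. Because $V \equiv 0$ on $\overline{\Omega}$ and $\operatorname{supp}\psi_0 \subset \Omega$, the map
\[
t \longmapsto J_\lambda^{R,\bar\delta}(t\psi_0) = \frac{t^2}{2}\|\nabla \psi_0\|_2^2 + \frac{t^4}{4}V_0(\psi_0) - \int_{\R^2} F^{R,\bar\delta}(t\psi_0)\,dx
\]
is literally independent of $\lambda$. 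The (sub)critical exponential growth of $f^{R,\bar\delta}$ combined with the bound $h(t)\geq h(1)t^3$ for $t\geq 1$ coming from $(h_2)$ forces $J_\lambda^{R,\bar\delta}(t\psi_0)\to -\infty$ as $t\to +\infty$, so this real-valued function attains a positive finite maximum at some $t_{\psi_0}>0$. Combining with the identity $c_\lambda^{R,\bar\delta}=d_\lambda^{R,\bar\delta}$ of Lemma \ref{characterization},
\[
c_\lambda^{R,\bar\delta} \leq \max_{t\geq 0} J_\lambda^{R,\bar\delta}(t\psi_0) =: c^{R,\bar\delta},
\]
and this upper bound is a $\lambda$-independent constant depending only on $R$, $\bar\delta$ and $\psi_0$. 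Specializing to $\bar\delta = \delta$ settles the first assertion.

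For the sharper threshold in the critical case $\bar\delta = 2$, I would further normalize $\psi_0$ so that $0 \leq \psi_0 \leq 1$ and then invoke $(h_4)$. For each $t \in [0,1]$ the pointwise inequality $t\psi_0(x) \in [0,1]$ together with $f^{R,2}(s)=h(s)e^{\alpha s^\tau}\geq h(s)$ on $[0,R]$ yields $F^{R,2}(t\psi_0)\geq H(t\psi_0)\geq \xi t^p\psi_0^p$, and inserting this into the formula above produces the polynomial majorant
\[
J_\lambda^{R,2}(t\psi_0) \leq \phi(t) \triangleq \frac{t^2}{2}\|\nabla\psi_0\|_2^2 + \frac{t^4}{4}|V_0(\psi_0)| - \xi t^p \|\psi_0\|_p^p, \qquad t\in [0,1].
\]
Enlarging $\xi$ so that $\phi(1)<0$ forces $J_\lambda^{R,2}(\psi_0)<0$, and the uniqueness of the positive critical point of $t\mapsto J_\lambda^{R,2}(t\psi_0)$ from Lemma \ref{unique} then compels the maximizer $t_{\psi_0}$ to lie in $(0,1)$. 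Consequently
\[
\max_{t \geq 0} J_\lambda^{R,2}(t\psi_0) \leq \max_{t \in [0,1]} \phi(t).
\]

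The remaining estimate is purely one-variable calculus: the critical point equation for $\phi$ reads $\|\nabla\psi_0\|_2^2 + t^2|V_0(\psi_0)|=p\xi t^{p-2}\|\psi_0\|_p^p$, whose unique positive root is of order $\xi^{-1/(p-2)}$, at which $\phi$ takes value of order $\xi^{-2/(p-2)}$; since $p>4>2$, this vanishes as $\xi\to +\infty$. Choosing $\xi$ past a threshold $\xi_0=\xi_0(R)$ therefore drives $\max_{t\in[0,1]}\phi(t)$ below $\pi/[4C_{\Xi,b}(1+\gamma+\alpha R^{\tau-2})]$, and the desired bound on $c_\lambda^{R,2}$ follows uniformly in $\lambda\geq 1$. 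The delicate point I expect to merit most care is not the calculus but the justification that the bound $\phi$, derived only on $[0,1]$, actually controls the full supremum of $J_\lambda^{R,2}(t\psi_0)$ over $t\geq 0$; this is precisely why the intermediate step $\phi(1)<0$, combined with the uniqueness of the positive critical point from Lemma \ref{unique}, is essential.
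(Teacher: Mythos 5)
Your argument is correct and follows essentially the same route as the paper: a test function supported in $\Omega$ (where $\lambda V\equiv 0$) makes the fibering map $\lambda$-independent, and $(h_4)$ plus one-variable calculus drives the max below the stated threshold for large $\xi$; the paper merely uses an explicit cutoff on a ball $B_1(0)\subset\Omega$ to produce an explicit $\xi_0(R)$, and confines the maximum to $t\in[0,1]$ by viewing $t\mapsto t\varphi_0$ as an admissible mountain-pass path rather than by your (equally valid) appeal to the uniqueness of the fibering maximizer in Lemma \ref{unique}. No gaps.
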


\begin{proof}
Let $\psi\in C_0^\infty(\Omega)$ satisfy $0\leq\psi\leq1$.
Since $V(x)\equiv0$ for all $x\in \Omega$ by $(V_2)$, then
\begin{align*}
J_\lambda^{R, {\delta}}(t\psi)
  & =\frac{t^2}{2}\int_{\Omega} |\nabla \psi|^2 dx+\frac{t^4}4V_0(\psi)
- \int_{\Omega} F^{R, {\delta}}(t\psi)dx.
\end{align*}
In view of the proof of Lemma \ref{unique},
$J_\lambda^{R, {\delta}}(t\psi)>0$ for $t>0$ small and $J_\lambda^{R, {\delta}}(t\psi)\leq0$ for $t>0$ large
and so the maximum of $J_\lambda^{R, {\delta}}(t\psi)$ for $t\geq0$ is achieved at $t=t_\psi>0$, namely
$\max\limits_{t\geq0}J_\lambda^{R, {\delta}}(t\psi)= J_\lambda^{R, {\delta}}(t_\psi\psi)$. We
choose $c^{R,\delta}= J_\lambda^{R, {\delta}}(t_\psi\psi)$ independent of $\lambda\geq1$ and get the desired result by Lemma \ref{mp2}.

Without loss of generality, we could suppose that $0\in \Omega$. Since $\Omega$
is an open set, there is a constant $\varrho>0$ such that $B_\varrho(0)\subset\Omega$.
Let us assume that $\rho=1$ just for the convenience of calculations. Now, we shall choose
  a $\varphi_0\in C_0^\infty(B_1(0))$ satisfying $0\leq\varphi_0\leq1$;
$\varphi_0(x)\equiv 1$
if $|x|\leq1/2$; $\varphi_0(x)\equiv 0$
if $|x|\geq1$; and $|\nabla \varphi_0|\leq1$
for all $x\in\R^2$.
 Recalling the definition of $f^{R,2}$ and $(h_4)$,
  $F^{R,2}(t)\geq\xi t^p$ with $p>4$ for all $t\in[0,1]$.
Thus, using Lemma \ref{zz}-(ii),
\begin{align}\label{mplevel1}
\nonumber J_\lambda^{R,2}(\varphi_0) &\leq \frac12\int_{B_1(0)} |\nabla \varphi_0|^2 dx+\frac12\int_{B_1(0)} |\varphi_0|^2 dx
\int_{B_1(0)}\log(1+|x|) |\varphi_0|^2 dx
- \int_{B_1(0)} F^{R,2}(\varphi_0)dx \\
      &< \frac12(1+ \pi\log2)\pi-{\xi}   \int_{B_{1/2}(0)}|\varphi_0|^{p} dx
    \leq \frac12(1+ \pi\log2)\pi-  \frac{{\xi}_1}{4} \pi =0.
\end{align}
where $\xi_1=2(1+ \pi\log2)$.
In particular, invoking from \eqref{mplevel1} that
 \begin{equation}\label{mplevel2}
\frac12\int_{B_1(0)}[|\nabla \varphi_0|^2+\lambda V(x)| \varphi_0|^2]dx+\frac14V_0(\varphi_0)
<  {\xi}_1 \int_{B_{1/2}(0)}|\varphi_0|^{p} dx.
 \end{equation}
Defining $\gamma_0^{R,2}(t)=t\varphi_0$, one deduces that $\gamma_0^{R,2}\in\Gamma^{R,2}_\lambda=\{\gamma\in \mathcal{C}([0,1],X_\lambda):
\gamma(0)=0, {J}^{R,2}_\lambda(\gamma(1))<0\}$ by \eqref{mplevel2}.
Therefore, we have that
\begin{align*}
 \max_{t\in[0,1]} {J}^{R,2}_\lambda(t\varphi_0)&\leq \max_{t\in[0,1]} \bigg \{\frac{t^2}{2}(1+ \pi\log2)\pi
-{\xi}t^p   \int_{B_{1/2}(0)}|\varphi_0|^{p} dx \bigg\}\\
  &\leq  \max_{t\geq0} \bigg \{\frac{t^2}{2}(1+ \pi\log2)
-\frac{\pi{\xi}}{4}t^p \bigg\}=\frac{\pi(p-1)(1+ \pi\log2)}{2p}\bigg[\frac{2(1+ \pi\log2)}{p\xi}\bigg]^{\frac2{p-2}}
\end{align*}
As a consequence, we can let the constant $\xi_0=\xi_0(R)$ be as follows
\[
\xi_0=\max\bigg\{\xi_1,
\frac{2(1+ \pi\log2)}{p} \bigg[\frac{4C_{\Xi,b}(p-1)(1+ \pi\log2)(1+\gamma+\alpha R^{\tau-2})}{2p}\bigg]^{\frac{p-2}2}\bigg\}.
\]
According to the definition of $c^{R,2}_\lambda$, we conclude it.
So, we can accomplish the proof of this lemma.
\end{proof}

With Lemma \ref{mplevel} in hand,
we start verifying the boundedness of $(C)$ sequence at the level $c^{R,\bar{\delta}}_\lambda$
 of $J_\lambda^{R,\bar{\delta}}$. Specifically,
 if $\{u_n\}\subset X_\lambda$ is a $(C)_{c^{R,\bar{\delta}}_\lambda}$ sequence of
$J_\lambda^{R,\bar{\delta}}$, we aim at showing $\|u_n\|_{X_\lambda}$
is uniformly bounded in $n\in \mathbb{N}$ for all $\lambda\geq1$ if $R>0$
is fixed. Before proceeding it,
we introduce the following two lemmas developed by P. L. Lions \cite{Lions}.

\begin{lemma}\label{Lionslemma}
Let $\{\rho_n\}\subset L^1(\R^2)$ be a bounded sequence and $\rho_n\geq0$, then there exists a subsequence, still denoted
by $\rho_n$, such that one of the following two possibilities occurs:
\begin{itemize}
  \item[\emph{(i)}] \emph{(Vanishing)} $\lim\limits_{n\to\infty}\sup\limits_{y\in\R^2}\int_{B_\varrho(y)}\rho_ndx=0$
for all $\varrho>0$;
  \item[\emph{(ii)}] \emph{(Non-Vanishing)} there are $\beta> 0$ and $\varrho <+\infty$ such that
$$\lim_{n\to\infty}\sup_{y\in\R^2}\int_{B_\varrho(y)}\rho_ndx=\beta.$$
\end{itemize}
\end{lemma}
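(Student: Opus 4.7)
My plan is to run the standard concentration-function argument on the measures $\rho_n\,dx$. Let $M=\sup_n\int_{\R^2}\rho_n\,dx<+\infty$ and, for each $\varrho>0$, introduce the L\'evy-type concentration function
\[
Q_n(\varrho)\triangleq \sup_{y\in\R^2}\int_{B_\varrho(y)}\rho_n(x)\,dx.
\]
Every $Q_n$ is non-decreasing in $\varrho$ and satisfies $0\leq Q_n(\varrho)\leq M$, so the family is uniformly bounded. Fixing an enumeration $\{q_k\}_{k\geq 1}$ of $\mathbb{Q}\cap(0,+\infty)$, a Cantor diagonal extraction yields a subsequence (not relabelled) such that $Q_n(q_k)\to Q(q_k)\in[0,M]$ for every $k\geq 1$; monotonicity in $\varrho$ is inherited by the limit function $Q$ on $\mathbb{Q}\cap(0,+\infty)$.

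Next I would set $\alpha\triangleq\sup_{k\geq 1}Q(q_k)\in[0,M]$, which drives the dichotomy. If $\alpha=0$, then $Q(q_k)=0$ for every $k$, so for each $\varrho>0$ one picks a rational $q_k>\varrho$ and uses monotonicity of $Q_n$ to get $Q_n(\varrho)\leq Q_n(q_k)\to 0$, which is exactly the vanishing alternative (i). If instead $\alpha>0$, I would choose $k^*$ with $Q(q_{k^*})\geq\alpha/2>0$ and set $\varrho=q_{k^*}$, $\beta=Q(q_{k^*})$; the relation $Q_n(q_{k^*})\to\beta>0$ is precisely the non-vanishing alternative (ii).

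The genuinely delicate point is producing a \emph{single} subsequence along which $Q_n(\varrho)$ is controlled for enough values of $\varrho$ to read off the dichotomy. This is handled once and for all by the Cantor diagonal extraction across the countable dense set of rational radii, combined with the monotonicity of $Q_n$ in $\varrho$, which upgrades rational-valued convergence into the pointwise control needed at every $\varrho>0$ in the vanishing case. The remaining bookkeeping — checking that $\{Q_n\}$ stays bounded by $M$ and reading off $\beta$ in the non-vanishing case — is then immediate.
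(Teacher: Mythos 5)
Your proposal is correct and is exactly the classical argument: the paper itself gives no proof of this lemma, merely citing P.\ L.\ Lions, and the standard proof in the literature is precisely your L\'evy concentration-function plus Cantor diagonal extraction over rational radii, with monotonicity of $Q_n$ in $\varrho$ upgrading the rational-radius convergence to the full vanishing alternative. Both cases of the dichotomy are handled completely, so there is nothing to add.
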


\begin{lemma}\label{Vanish}
 Suppose that
$\{u_n\}$ is bounded in $L^2(\R^2)$ and $\{|\nabla u_n|\}$ is bounded in $L^2(\R^2)$
as well as
$$\lim_{n\to\infty}\sup_{y\in\R^2}\int_{B_\varrho(y)}|u_n|^2dx=0.$$
Then $u_n\to0 $ in $L^s
(\R^2)$ for $s\in(2,+\infty)$.
\end{lemma}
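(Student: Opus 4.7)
The plan is to first establish the vanishing of $\|u_n\|_{L^{s_0}(\R^2)}$ for a single fixed exponent $s_0\in(2,4)$ by means of a finite-overlap covering argument on balls of radius $\varrho$, and then to bootstrap to every $s\in(2,\infty)$ by interpolating between $L^{s_0}(\R^2)$ and $L^q(\R^2)$ for $q$ large. The second step leans on the observation that the uniform $H^1(\R^2)$ control on $\{u_n\}$ automatically yields uniform $L^q(\R^2)$ bounds for every $q\in[2,\infty)$, thanks to the planar Sobolev embedding.

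For the first step, I would cover $\R^2$ by a family $\{B_\varrho(y_i)\}_{i\in\mathbb{N}}$ with uniformly finite overlap, i.e., with some integer $K=K(\varrho)$ such that every point of $\R^2$ lies in at most $K$ of these balls. Fix $s_0\in(2,4)$ and set $q\triangleq 4/(4-s_0)$, so that $q>s_0$ and the standard interpolation inequality
\[
\|u_n\|_{L^{s_0}(B_\varrho(y_i))}\le\|u_n\|_{L^2(B_\varrho(y_i))}^{(s_0-2)/s_0}\,\|u_n\|_{L^q(B_\varrho(y_i))}^{2/s_0}
\]
holds on each ball. Raising to the power $s_0$ and invoking the translation-invariant Sobolev embedding $H^1(B_\varrho(y_i))\hookrightarrow L^q(B_\varrho(y_i))$, whose constant can be chosen independently of the center $y_i$, I obtain
\[
\int_{B_\varrho(y_i)}|u_n|^{s_0}\,dx\le C_{s_0}\,\|u_n\|_{L^2(B_\varrho(y_i))}^{s_0-2}\,\|u_n\|_{H^1(B_\varrho(y_i))}^{2}.
\]
Summing over $i$ and using the finite overlap together with the uniform $H^1(\R^2)$ bound on $\{u_n\}$, this estimate gives
\[
\int_{\R^2}|u_n|^{s_0}\,dx\le C_{s_0}K\Bigl(\sup_{y\in\R^2}\int_{B_\varrho(y)}|u_n|^{2}\,dx\Bigr)^{(s_0-2)/2}\|u_n\|_{H^1(\R^2)}^{2},
\]
whose right-hand side tends to $0$ by the vanishing hypothesis, proving the claim for $s=s_0$.

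For the second step, given any $s\in(2,\infty)$, I would pick $q>\max\{s,s_0\}$ and $\theta\in(0,1)$ with $\frac{1}{s}=\frac{\theta}{s_0}+\frac{1-\theta}{q}$, and then combine
\[
\|u_n\|_{L^s(\R^2)}\le\|u_n\|_{L^{s_0}(\R^2)}^{\theta}\,\|u_n\|_{L^q(\R^2)}^{1-\theta}
\]
with the uniform $L^q(\R^2)$ bound coming from $H^1(\R^2)\hookrightarrow L^q(\R^2)$ to conclude $\|u_n\|_{L^s(\R^2)}\to 0$.

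The main technical point is the first step, where it is essential that the embedding constant of $H^1(B_\varrho(y))\hookrightarrow L^q(B_\varrho(y))$ be independent of the center $y\in\R^2$; this follows from the translation invariance of $\R^2$ but must be used explicitly, since otherwise the sum over the cover could not be controlled uniformly. Once this is in place, the rest of the argument is entirely routine.
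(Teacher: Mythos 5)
The paper does not actually prove Lemma \ref{Vanish}: it is quoted verbatim from P.~L.~Lions' concentration--compactness paper (and appears, with essentially your proof, as Lemma 1.21 in Willem's book cited as \cite{Willem}). So there is nothing in the paper to compare against line by line; what you have written is the standard argument, and it is correct in its essentials. Your first step --- finite-overlap cover by balls $B_\varrho(y_i)$, interpolation $\|u_n\|_{L^{s_0}(B_i)}\le\|u_n\|_{L^2(B_i)}^{(s_0-2)/s_0}\|u_n\|_{L^q(B_i)}^{2/s_0}$ with $q=4/(4-s_0)$, the translation-invariant Sobolev embedding on each ball, and summation --- is exactly right, and your emphasis on the center-independence of the embedding constant is the correct technical point to isolate.

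One small slip in the bootstrap: for $s\in(2,s_0)$ there is no $\theta\in(0,1)$ with $\tfrac1s=\tfrac{\theta}{s_0}+\tfrac{1-\theta}{q}$ when $q>s_0$, since then $\tfrac{\theta}{s_0}+\tfrac{1-\theta}{q}\le\tfrac{1}{s_0}<\tfrac1s$. This is harmless: either observe that Step 1 applies verbatim to every exponent in $(2,4)$, so only $s\ge 4$ needs the interpolation (and there $s$ does lie between $s_0$ and $q$), or for $s<s_0$ interpolate between $L^2$ (uniformly bounded) and $L^{s_0}$ (vanishing), which gives $\|u_n\|_{L^s}\le\|u_n\|_{L^2}^{1-\theta}\|u_n\|_{L^{s_0}}^{\theta}\to0$. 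With that adjustment the proof is complete.
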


 \begin{lemma}\label{Vanishing}
 Let $R>0$ be fixed and for all $\lambda\geq1$,
suppose that $\{u_n\}\subset X_\lambda$ is a
$(C)$ sequence at the level $c^{R,\bar{\delta}}_\lambda$
 of $J_\lambda^{R,\bar{\delta}}$, if $(h_4)$ is additionally satisfied for $\bar{\delta}=2$,
 then $\{\|u_n\|_{E_\lambda}\}$ is uniformly bounded in $n\in \mathbb{N}$.
 Moreover, for all $\varrho>0$,
 \begin{equation}\label{Vanishing1}
\lim_{n\to\infty}\sup_{y\in\R^2}\int_{B_\varrho(y)}|u_n|^2dx=0
 \end{equation}
 could never occur.
 \end{lemma}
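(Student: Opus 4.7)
The plan is to handle the two assertions in turn. For the uniform $E_\lambda$-bound, I would exploit the Nehari-type identity
\[
J_\lambda^{R,\bar{\delta}}(u) - \tfrac{1}{4} (J_\lambda^{R,\bar{\delta}})'(u)[u]
= \tfrac{1}{4}\|u\|_{E_\lambda}^2 + \int_{\R^2}\Big[\tfrac{1}{4} f^{R,\bar{\delta}}(u)u - F^{R,\bar{\delta}}(u)\Big]dx,
\]
in which the logarithmic Poisson contribution cancels exactly because $V_0'(u)[u] = 4 V_0(u)$. Condition $(h_2)$, in the form \eqref{ff}, makes $f^{R,\bar{\delta}}(s)/s^3$ increasing on $\R^+$; a one-line integration then gives $F^{R,\bar{\delta}}(t) \leq \tfrac{1}{4} f^{R,\bar{\delta}}(t) t$ for $t \geq 0$, while $f^{R,\bar{\delta}} \equiv 0$ on $(-\infty,0]$ by \eqref{fR}. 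The integrand on the right is therefore nonnegative, so the $(C)_{c_\lambda^{R,\bar{\delta}}}$ hypothesis combined with $(J_\lambda^{R,\bar{\delta}})'(u_n)[u_n] = o(1)$ delivers $\|u_n\|_{E_\lambda}^2 \leq 4c_\lambda^{R,\bar{\delta}} + o(1)$. This bound is independent of which branch of $\bar{\delta}$ is considered.

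For the non-vanishing claim I would argue by contradiction. Assume \eqref{Vanishing1} holds; using boundedness in $E_\lambda$, hence in $H^1(\R^2)$ via Lemma \ref{imbedding}, Lemma \ref{Vanish} forces $u_n \to 0$ in $L^s(\R^2)$ for every $s \in (2,\infty)$. Extracting a subsequence with $u_n \rightharpoonup 0$ weakly in $H^1(\R^2)$ and a.e.\ in $\R^2$, the estimate $V_2(u_n) \leq K_0 |u_n|_{8/3}^4$ of Lemma \ref{zz}(iv) gives $V_2(u_n) \to 0$.

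The next ingredient is $\int_{\R^2} f^{R,\bar{\delta}}(u_n) u_n \, dx \to 0$ together with $\int F^{R,\bar{\delta}}(u_n)\, dx \to 0$, which I would read off from Lemma \ref{compact}. The subcritical case $\bar{\delta} = \delta$ is immediate. The critical case $\bar{\delta} = 2$ is where I expect the main obstacle: Lemma \ref{compact} requires the threshold $\limsup_n \|u_n\|_{H^1}^2 < \pi/(\gamma + \alpha R^{\tau-2})$, and verifying it means simultaneously invoking the boundedness step, the embedding constant, and the sharp level bound. Concretely, the plan is to chain $\|u_n\|_{E_\lambda}^2 \leq 4c_\lambda^{R,2} + o(1)$ with $\|u_n\|_{H^1}^2 \leq C_{\Xi,b}\|u_n\|_{E_\lambda}^2$ from Lemma \ref{imbedding} and the strict bound $c_\lambda^{R,2} < \pi/[4C_{\Xi,b}(1+\gamma+\alpha R^{\tau-2})]$ from Lemma \ref{mplevel} (valid under $(h_4)$ with $\xi > \xi_0$). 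This chain yields the required strict inequality and also accounts for why $(h_4)$ is imposed precisely when $\bar{\delta}=2$.

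With $V_2(u_n) \to 0$ and $\int f^{R,\bar{\delta}}(u_n)u_n \, dx \to 0$ secured, the identity $(J_\lambda^{R,\bar{\delta}})'(u_n)[u_n] = o(1)$ rearranges to
\[
\|u_n\|_{E_\lambda}^2 + V_1(u_n) = V_2(u_n) + \int_{\R^2} f^{R,\bar{\delta}}(u_n)u_n\, dx + o(1) = o(1),
\]
and since $V_1(u_n) \geq 0$ by Lemma \ref{zz}(ii) both terms on the left force themselves to zero. Passing to the limit in $J_\lambda^{R,\bar{\delta}}(u_n) \to c_\lambda^{R,\bar{\delta}}$ then gives $c_\lambda^{R,\bar{\delta}} = 0$, which contradicts the strictly positive mountain-pass lower bound $c_\lambda^{R,\bar{\delta}} \geq A > 0$ established in Lemma \ref{geometry}. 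Hence vanishing is impossible, completing the proof.
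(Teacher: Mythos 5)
Your proposal is correct and follows essentially the same path as the paper's proof: the $J_\lambda^{R,\bar{\delta}} - \frac14 (J_\lambda^{R,\bar{\delta}})'$ identity with the monotonicity-derived inequality $f^{R,\bar{\delta}}(t)t \geq 4F^{R,\bar{\delta}}(t)$ for the $E_\lambda$-bound, and the contradiction via Lions vanishing combined with Lemma \ref{compact} and the mountain-pass lower bound. The one place where the paper is terse and you spell out the reasoning is the verification of the $\limsup \|u_n\|_{H^1}^2$ threshold needed to invoke Lemma \ref{compact} in the case $\bar{\delta}=2$; your chaining of the $E_\lambda$-bound, Lemma \ref{imbedding}, and Lemma \ref{mplevel} is exactly what the paper intends by its reference to these lemmas, and your use of Lemma \ref{zz}(iv) for $V_2(u_n)\to 0$ is an equally valid alternative to the paper's appeal to complete continuity in Lemma \ref{zz}(v).
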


 \begin{proof}
 It follows from \eqref{ff} that
 \[
 f^{R,\bar{\delta}}(t)t-4F^{R,\bar{\delta}}(t)\geq0,~\forall t\in\R^+.
 \]
 From this inequality, we obtain that
 \begin{equation}\label{Vanishing2}
 c_\lambda^{R,\bar{\delta}}+o_n(1)\geq J_\lambda^{R,\bar{\delta}}(u_n)-\frac14(
 J_\lambda^{R,\bar{\delta}} )^\prime(u_n)[u_n]\geq \frac14\|u_n\|^2_{E_\lambda}
 \end{equation}
showing the first part of this lemma.
In view of Lemma \ref{imbedding}, both $\{|u_n|_r\}$ and $\{|\nabla u_n|_2\}$
 are uniformly bounded in $n\in \mathbb{N}$ for some $r>2$.
 Suppose by contradiction, we suppose that \eqref{Vanishing1} holds true.
 Thus, $u_n\to0 $ in $L^s
(\R^2)$ for all $s\in(2,+\infty)$ by Lemma \ref{Vanish},
which together with \eqref{Vanishing2} as well as Lemmas \ref{imbedding} and \ref{mplevel},
we have obtained that $\{u_n\}\subset H^1(\R^2)$ satisfies the assumptions in Lemma \ref{compact}.
Hence,
\[
 \|u_n\|^2_{E_\lambda}+V_1(u_n)=V_2(u_n)+\int_{\R^2}f ^{R,\bar{\delta}}(u_n)u_ndx+o_n(1)
 =o_n(1)
\]
where we have used Lemma \ref{zz}-(v) and \eqref{compact2}.
By the positivity of $V_1$, there holds
\[
\|u_n\|^2_{E_\lambda}=o_n(1)~\text{and}~V_1(u_n)=o_n(1)
\]
jointly with Lemma \ref{zz}-(v) and \eqref{compact1} again indicate that
\[
c_\lambda^{R,\bar{\delta}}=\frac12\|u_n\|^2_{E_\lambda}+\frac14V_1(u_n)-\frac14V_2(u_n)-
\int_{\R^2}F ^{R,\bar{\delta}}(u_n) dx+o_n(1)=o_n(1).
\]
Thereby, we arrive at a contradiction by \eqref{mp1}. The proof is completed.
 \end{proof}

 Thanks to Lemma \ref{Lionslemma}, with the help of Lemma \ref{Vanishing},
 we derive the following result which is crucial to prove that
 the sequence $\{u_n\}\subset X_\lambda$ is uniformly bounded in $X_\lambda$.

 \begin{lemma}\label{Non-Vanishing}
 Under the assumptions in Lemma \ref{Vanishing}, there exists a constant
 $\beta_0>0$, independent of $\lambda\geq1$, such that
 $$\lim_{n\to\infty}\sup_{y\in\R^2}\int_{B_\varrho(y)}|u_n|^2dx=\beta_0.$$
 \end{lemma}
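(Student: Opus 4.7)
The plan is to run a contradiction argument: if the concentration $\beta$ could be made arbitrarily small, then the nonlocal and nonlinear contributions along the $(C)$ sequence would vanish, forcing $c_\lambda^{R,\bar{\delta}} \to 0$, which contradicts the $\lambda$-uniform mountain-pass lower bound $c_\lambda^{R,\bar{\delta}} \geq A > 0$ supplied by Lemma \ref{geometry}. The crux is to verify that every constant in the chain of estimates is independent of $\lambda \geq 1$.

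First, I would apply Lemma \ref{Lionslemma} to $\rho_n = |u_n|^2$; by Lemma \ref{Vanishing}, vanishing is excluded, so along a subsequence
\[
\lim_{n\to\infty}\sup_{y\in\R^2}\int_{B_\varrho(y)}|u_n|^2\,dx = \beta > 0
\]
for some $\varrho < +\infty$. Combining Lemma \ref{Vanishing} (which yields $\|u_n\|^2_{E_\lambda}\leq 4c_\lambda^{R,\bar{\delta}}+o_n(1)$), Lemma \ref{mplevel} (which bounds $c_\lambda^{R,\bar{\delta}}$ above by a $\lambda$-independent quantity: $c^{R,\delta}$ in the subcritical case, or $\pi/[4C_{\Xi,b}(1+\gamma+\alpha R^{\tau-2})]$ in the critical case with $\xi > \xi_0(R)$), and Lemma \ref{imbedding} ($\|\cdot\|^2_{H^1(\R^2)} \leq C_{\Xi,b}\|\cdot\|^2_{E_\lambda}$), I obtain that $\{u_n\}$ is uniformly bounded in $H^1(\R^2)$ by a constant depending only on $R,\Xi,b$. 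Moreover, in the critical case $\|u_n\|^2_{H^1(\R^2)} < \pi/(\gamma+\alpha R^{\tau-2})$ eventually, which is exactly the hypothesis needed to activate the estimate \eqref{compact2} from Lemma \ref{compact}.

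The quantitative heart of the argument is the classical Lions-type interpolation: for every $s > 2$,
\[
|u_n|_s^s \leq C_s\Big(\sup_{y\in\R^2}\int_{B_\varrho(y)}|u_n|^2\,dx\Big)^{(s-2)/2}\|u_n\|^2_{H^1(\R^2)}.
\]
Given a threshold $\beta_0 > 0$ to be determined, if $\beta < \beta_0$, then $|u_n|_s \to 0$ uniformly in $\lambda$ for every $s \in (2,+\infty)$. Choosing $s = 8/3$ and applying Lemma \ref{zz}(iv) yields $V_2(u_n) \to 0$. The estimate \eqref{compact5}, applied with $q > 1$ so that $|u_n|_{2q} \to 0$, then gives $\int_{\R^2} f^{R,\bar{\delta}}(u_n) u_n\,dx \leq \varepsilon|u_n|_2^2 + o_n(1)$ for arbitrary $\varepsilon > 0$. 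Inserting this into $(J_\lambda^{R,\bar{\delta}})'(u_n)[u_n] = o_n(1)$ and using $V_1(u_n) \geq 0$ forces $\|u_n\|^2_{E_\lambda} \to 0$.

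The $(C)$-sequence identity
\[
c_\lambda^{R,\bar{\delta}} + o_n(1) = \tfrac14\|u_n\|^2_{E_\lambda} + \int_{\R^2}\Big[\tfrac14 f^{R,\bar{\delta}}(u_n)u_n - F^{R,\bar{\delta}}(u_n)\Big]dx
\]
then gives $c_\lambda^{R,\bar{\delta}} \to 0$, contradicting the lower bound $c_\lambda^{R,\bar{\delta}} \geq A > 0$ from Lemma \ref{geometry}. Reverse-engineering $\beta_0$ from the threshold at which this chain would beat $A$ fixes a positive $\beta_0$ depending only on $R$ and the structural constants $A, C_{\Xi,b}, K_0, C_s$, but not on $\lambda$. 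The main obstacle is precisely this $\lambda$-uniformity: a direct application of Lemma \ref{Lionslemma} alone would only produce some $\beta = \beta_\lambda$, and one must weave together the $\lambda$-independent bounds in Lemmas \ref{imbedding}--\ref{mplevel} with care so that the extracted $\beta_0$ is genuinely uniform.
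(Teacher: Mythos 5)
Your proposal identifies the correct mechanism and would work, but it is packaged differently from the paper's proof. The paper proceeds in two explicit steps: first it observes (as you do) that Lemma~\ref{Lionslemma} together with Lemma~\ref{Vanishing} yields some $\beta_\lambda>0$ for each fixed $\lambda$; then, to get $\lambda$-uniformity, it argues by contradiction via a \emph{diagonalization}: if no uniform $\beta_0$ existed, one could choose $\lambda_k>1$ and corresponding $(C)$-sequences $\{u_{k,n}\}$ whose local $L^2$-concentration eventually drops below $1/k$, extract a diagonal subsequence $u_{n_k}\in X_{\lambda_{n_k}}$ that vanishes in the Lions sense, and re-run the proof of Lemma~\ref{Vanishing} to force $c^{R,\bar\delta}_{\lambda_{n_k}}\to 0$, contradicting the uniform lower bound in \eqref{mp1}. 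You instead run a direct quantitative threshold argument, making the interpolation explicit and reverse-engineering $\beta_0$ from the $\lambda$-independent lower bound $A$. The two routes are logically equivalent and rest on exactly the same ingredients (the lower bound $A$ from Lemma~\ref{geometry}, the upper bound from Lemma~\ref{mplevel}, the embedding constant $C_{\Xi,b}$ from Lemma~\ref{imbedding}); your version is more self-contained since it does not re-run Lemma~\ref{Vanishing}, at the cost of somewhat more delicate bookkeeping of constants.

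Two small imprecisions are worth flagging. First, the Lions-type interpolation should read $|u_n|_s^s\leq C_s\big(\sup_{y}\int_{B_\varrho(y)}|u_n|^2\,dx\big)\|u_n\|_{H^1}^{s-2}$, i.e.\ the exponent $1$ on the supremum and $s-2$ on the $H^1$-norm, not the other way around (the two agree only at $s=4$); this does not affect the conclusion, since under a uniform $H^1$-bound both forms make $|u_n|_s$ small when the concentration is small. Second, with $\beta<\beta_0$ fixed, you write ``$|u_n|_s\to 0$'' and ``$V_2(u_n)\to 0$'', but these limits are not zero — only $\limsup_n$ is small of order $\beta_0$ — and the contradiction must be obtained by choosing $\beta_0$ so small that the resulting $\limsup_\lambda c_\lambda^{R,\bar\delta}$ falls below $A$; likewise, \eqref{compact5} carries a smallness hypothesis on $\|u\|_{H^1}$ that a $(C)$-sequence need not satisfy, so one should instead combine the pointwise bound \eqref{compact3} (or \eqref{compact4}) with the Trudinger--Moser estimate \eqref{TM2}, which only requires the $H^1$-norm to lie below the Trudinger--Moser threshold, a condition you have already verified from Lemma~\ref{mplevel} and \eqref{Vanishing2}.
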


 \begin{proof}
 Let $\rho_n=|u_n|^2\in L^1(\R^2)$, we could know that only the \textbf{Non-Vanishing}
 in Lemma \ref{Lionslemma} occurs because of Lemma \ref{Vanishing}. Then,
we divide the proof into intermediate steps.

\medspace
{\sc Step 1:} There exists a constant
 $\beta_\lambda=\beta(\lambda)>0$ such that
 $$\lim_{n\to\infty}\sup_{y\in\R^2}\int_{B_\varrho(y)}|u_n|^2dx=\beta_\lambda.$$

\medspace
Suppose, by contradiction, that $u_n\to0 $ in $L^s
(\R^2)$ for $s\in(2,+\infty)$.
It is very similar to the proof of Lemma \ref{Vanishing}, one derives a contradiction.

{\sc Step 2:} Conclusion.

 Suppose by contradiction that the uniform control from below of $L^2(\R^2)$-norm is false.
So, for any $k\in \mathbb{N}$, $k\neq 0$, there exist $\la_k>1$ and a $(C)_{c^{R,\bar{\delta}}_{\la_k}}$
sequence $\{u_{k,n}\}$ of $J_\lambda^{R,\bar{\delta}}$ such that
\[
|u_{k,n}|_2<\frac 1k, ~\text{definitely}.
\]
Then, by a diagonalization argument, for any $k\ge 1$, we can find an increasing sequence $\{n_k\}$ in $\mathbb{N}$
and $u_{n_k}\in X_{\la_{n_k}}$ such that
\[
J_{\la_{n_k}}^{R,\bar{\delta}} (u_{n_k})=c^{R,\bar{\delta}}_{\la_{n_k}}+o_k(1),
~
(1+\|u_{n_k}\|_{X_{\la_{n_k}}})\|(J_{\la_{n_k}}^{R,\bar{\delta}})'(u_{n_k})\|_{X_{\la_{n_k}}^{-1}}=o_k(1),
~
|u_{n_k}|_2=o_k(1),
\]
where $o_k(1)$ is a positive quantity which goes to zero as $k\to +\infty$.
 In this situation, we can repeat the proof of Lemma \ref{Vanishing}
 to reach a contradiction, again. The proof of this lemma is finished.
 \end{proof}

 Now, we can prove that
 the sequence $\{u_n\}\subset X_\lambda$ in Lemma \ref{Vanishing} is uniformly bounded in $X_\lambda$
 for some sufficiently large $\la>0$.

 \begin{lemma}\label{bounded}
 Let $R>0$ be fixed and
suppose that $\{u_n\}\subset X_\lambda$ is a
$(C)$ sequence at the level $c^{R,\bar{\delta}}_\lambda$
 of $J_\lambda^{R,\bar{\delta}}$. Moreover, we shall suppose $(h_4)$ additionally
 whence $\bar{\delta}=2$. Then, there is a $\lambda_0=\lambda_0(R)>1$ ($\lambda_0^\prime=\lambda_0^\prime(R)>1$
 for $\bar{\delta}=2$) such that the sequence
 $\{\|u_n\|_{X_\lambda}\}$ is uniformly bounded in $n\in \mathbb{N}$
and $\lambda>\lambda_0$ (or $\lambda>\lambda_0'$ for $\bar{\delta}=2$).
\end{lemma}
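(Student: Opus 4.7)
\medskip
\noindent\textbf{Proof proposal for Lemma \ref{bounded}.}
The plan is to first recycle the estimate \eqref{Vanishing2} to bound the $E_\lambda$-part of the norm uniformly in $\lambda\ge1$, and then to use the steep potential well $(V_3)$ to force the concentration provided by Lemma \ref{Non-Vanishing} to occur inside a bounded set; once the concentration point $y_n$ is bounded, a direct comparison between $V_1(u_n)$ and $\|u_n\|_*^2$ closes the argument.

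\smallskip
\noindent\textbf{Step 1 (boundedness in $E_\lambda$).} Inequality \eqref{Vanishing2} gives $\|u_n\|_{E_\lambda}^2\le 4c_\lambda^{R,\bar\delta}+o_n(1)$. By Lemma \ref{mplevel}, $c_\lambda^{R,\bar\delta}$ is bounded from above by a constant depending only on $R$, uniformly in $\lambda\ge 1$. Thus $\|u_n\|_{E_\lambda}\le C_R$ uniformly. Combining with Lemma \ref{imbedding}, $\|u_n\|_{H^1(\R^2)}$ is bounded uniformly in $n$ and in $\lambda\ge 1$; in the critical case $\bar\delta=2$ the stronger bound from Lemma \ref{mplevel} (obtained for $\xi>\xi_0$) ensures $\|u_n\|_{H^1(\R^2)}^2<\pi/(\gamma+\alpha R^{\tau-2})$, placing us in the regime where the compactness conclusion \eqref{compact2} of Lemma \ref{compact} applies.

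\smallskip
\noindent\textbf{Step 2 (concentration is confined to a bounded region for $\lambda$ large).} From $(V_3)$, $V(x)\ge b$ on $\R^2\setminus\Xi$. Hence
\[
b\int_{\R^2\setminus\Xi}|u_n|^2\,dx\le\int_{\R^2}V(x)|u_n|^2\,dx\le\frac{1}{\lambda}\|u_n\|_{E_\lambda}^2\le\frac{C_R}{\lambda}.
\]
Choose $\lambda_0(R)$ (respectively $\lambda_0'(R)$ in the critical case) so that $C_R/(\lambda_0 b)<\beta_0/4$, where $\beta_0>0$ is the uniform non-vanishing constant furnished by Lemma \ref{Non-Vanishing}. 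For $\lambda>\lambda_0(R)$ the sequence $y_n$ with $\int_{B_\varrho(y_n)}|u_n|^2\ge\beta_0/2$ must therefore satisfy $B_\varrho(y_n)\cap\Xi\neq\emptyset$; since $\Xi$ is bounded by $(V_2)$--$(V_3)$, $\{y_n\}$ is bounded, say $|y_n|\le M_0$.

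\smallskip
\noindent\textbf{Step 3 (upper bound on $V_1(u_n)$).} From $J_\lambda^{R,\bar\delta}(u_n)=c_\lambda^{R,\bar\delta}+o_n(1)$ together with $V_0=V_1-V_2$, one obtains
\[
\frac14 V_1(u_n)=c_\lambda^{R,\bar\delta}-\frac12\|u_n\|_{E_\lambda}^2+\int_{\R^2}F^{R,\bar\delta}(u_n)\,dx+\frac14 V_2(u_n)+o_n(1).
\]
By Lemma \ref{zz}-(iv), $V_2(u_n)\le K_0|u_n|_{8/3}^4\le C$; by Lemma \ref{compact} and Step 1, $\int_{\R^2} F^{R,\bar\delta}(u_n)\,dx$ is bounded; and $\|u_n\|_{E_\lambda}^2$ is bounded. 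Hence $V_1(u_n)\le K_1$ uniformly.

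\smallskip
\noindent\textbf{Step 4 (lower bound: $V_1(u_n)\gtrsim\|u_n\|_*^2-C$).} For $y\in B_\varrho(y_n)$ and $|x-y_n|\ge 2\varrho$, the triangle inequality gives $|x-y|\ge |x-y_n|/2$, hence $\log(1+|x-y|)\ge\log(1+|x-y_n|/2)$. Since $|y_n|\le M_0$, for $|x|\ge R_0:=4(M_0+\varrho)$ we have $|x-y_n|/2\ge |x|/4$, and consequently $\log(1+|x-y_n|/2)\ge\log(1+|x|)-\log 4$. Therefore
\[
V_1(u_n)\ge\int_{|x|\ge R_0}|u_n(x)|^2\int_{B_\varrho(y_n)}\log(1+|x-y|)|u_n(y)|^2\,dy\,dx\ge\frac{\beta_0}{2}\bigl(\|u_n\|_*^2-\log(1+R_0)|u_n|_2^2-\log 4\,|u_n|_2^2\bigr).
\]
Since $|u_n|_2$ is bounded (Step 1), combining this with Step 3 yields $\|u_n\|_*^2\le K_2$ uniformly in $n$ for all $\lambda>\lambda_0(R)$ (respectively $\lambda>\lambda_0'(R)$). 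This is exactly $\|u_n\|_{X_\lambda}^2=\|u_n\|_{E_\lambda}^2+\|u_n\|_*^2\le C_R'$, as claimed.

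\smallskip
\noindent\textbf{Main obstacle.} The delicate point is Step 2: one must exploit the steep well in a way that is compatible with the uniform non-vanishing constant $\beta_0$ produced in Lemma \ref{Non-Vanishing}. It is precisely the $\lambda$-independence of $\beta_0$ that allows us to enforce $B_\varrho(y_n)\cap\Xi\neq\emptyset$ by a single threshold $\lambda_0(R)$, thereby confining the translations $y_n$ to a bounded set. Without this uniform non-vanishing information, the lower bound for $V_1(u_n)$ in Step 4 would involve $\log(1+|y_n|)\to\infty$ and the argument would collapse.
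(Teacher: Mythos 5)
Your overall structure tracks the paper's argument closely: bound $\|u_n\|_{E_\lambda}$ via \eqref{Vanishing2} and Lemma \ref{mplevel}, use the $\lambda$-independent non-vanishing constant $\beta_0$ from Lemma \ref{Non-Vanishing} to locate a concentration point $y_n$, show $\{y_n\}$ is confined to a bounded region for $\lambda$ large, and then compare $V_1(u_n)$ with $\|u_n\|_*^2$. Steps 1, 3, and 4 are sound.

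However, Step 2 contains a genuine gap. You assert that ``$\Xi$ is bounded by $(V_2)$--$(V_3)$'' and then conclude that $B_\varrho(y_n)\cap\Xi\neq\emptyset$ forces $\{y_n\}$ to be bounded. But $(V_3)$ only states that $\Xi=\{x:V(x)<b\}$ has \emph{finite measure}, not that it is bounded — a finite-measure set may well be unbounded (say, a union of shrinking balls around points going to infinity). And $(V_2)$ concerns $\Omega=\text{int}\,V^{-1}(0)$, not $\Xi$. So the implication ``$B_\varrho(y_n)\cap\Xi\neq\emptyset\ \Rightarrow\ |y_n|\leq M_0$'' is unjustified and can fail.

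The repair, which is how the paper argues, is a H\"older estimate exploiting finiteness of $\text{meas}(\Xi)$ rather than boundedness. Suppose $|y_n|\to\infty$. Then $\text{meas}(B_\varrho(y_n)\cap\Xi)\leq\text{meas}(\{x\in\Xi:|x|\geq|y_n|-\varrho\})\to 0$, and with the uniform $H^1$-bound from your Step 1 and any $r>2$,
\[
\int_{B_\varrho(y_n)\cap\Xi}|u_n|^2\,dx\leq\big[\text{meas}(B_\varrho(y_n)\cap\Xi)\big]^{\frac{r-2}{r}}|u_n|_r^2=o_n(1).
\]
Combining this with your estimate $\int_{B_\varrho(y_n)\setminus\Xi}|u_n|^2\,dx\leq\int_{\R^2\setminus\Xi}|u_n|^2\,dx\leq C_R/(\lambda b)<\beta_0/4$ (for $\lambda>\lambda_0$), one gets $\int_{B_\varrho(y_n)}|u_n|^2\,dx<\beta_0/2$ for $n$ large, contradicting the choice of $y_n$ — so $\{y_n\}$ is bounded. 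With this correction the rest of your argument closes, and in fact your way of producing $\lambda_0$ (from $\int_{\R^2\setminus\Xi}|u_n|^2\leq C_R/(\lambda b)$) is a clean, essentially equivalent variant of the paper's route, which instead injects the lower bound $\int_{\R^2}\lambda V(x)|u_n|^2\,dx\geq \lambda b\beta_0/2+o_n(1)$ into the energy and contradicts the $\lambda$-uniform upper bound on $c_\lambda^{R,\bar\delta}$.
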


\begin{proof}
Combining Lemmas \ref{Vanishing} and \ref{Non-Vanishing},
there exists a constant
 $\beta_0>0$, independent of $\lambda\geq1$, such that
 $$\lim_{n\to\infty}\sup_{y\in\R^2}\int_{B_1(y)}|u_n|^2dx=\beta_0,$$
 where we have supposed that $\varrho=1$ in Lemma \ref{Non-Vanishing}.
 Up to a subsequence if necessary, there exists a sequence $\{y_n\}\subset\R^2$ such that
 \begin{equation}\label{bounded1}
\int_{B_1(y_n)}|u_n|^2dx=\frac{1}{2}\beta_0.
 \end{equation}
We claim that $\{y_n\}$ is uniformly bounded in $n\in \mathbb{N}$.
 Otherwise, we could suppose that $|y_n|\to\infty$ in the sense of a subsequence.
 Define
 \[
 \Xi_n^1\triangleq\{x\in B_1(y_n):V(x)<b\}~\text{and}~
  \Xi_n^2\triangleq\{x\in B_1(y_n):V(x)\geq b\}.
 \]
Since the set $\Xi\triangleq\{x\in \R^2
: V(x) <b\}$ is nonempty and has finite measure, one concludes that
 \begin{equation}\label{bounded2}
 \text{meas}(\Xi_n^1)\leq  ~\text{meas}(\{x\in\R^2:|x|\geq |y_n|-2,V(x)<b\})\to0~\text{as}~n\to\infty.
  \end{equation}
  In view of Lemma \ref{Vanishing}, $|u_n|_r$ with $r>2$ is uniformly bounded in $n\in \mathbb{N}$, then using \eqref{bounded2},
  \[
  \int_{\Xi_n^1}|u_n|^2dx\leq [\text{meas}(\Xi_n^1)]^{\frac{r-2}r}|u_n|_r^2=o_n(1)
  \]
  which reveals that
  \[
  \int_{\Xi_n^2}|u_n|^2dx=\int_{B_1(y_n)}|u_n|^2dx -\int_{\Xi_n^1}|u_n|^2dx= \frac{1}{2}\beta_0+o_n(1).
  \]
Thanks to $V(x)\geq0$ for all $x\in\R^2$ by $(V_1)$, using the definition of $\Xi_n^2$,
 \begin{equation}\label{bounded3}
 \int_{\R^2}V(x)|u_n|^2dx\geq \int_{\Xi_n^2}V(x)|u_n|^2dx\geq b\int_{\Xi_n^2} |u_n|^2dx =  \frac{1}{2}b\beta_0+o_n(1)
  \end{equation}
  Besides, in view of the proof of Lemma \ref{Vanishing} again, we have that
   \begin{equation}\label{bounded4}
  \{V_2(u_n)\} ~\text{and}~\bigg\{ \int_{\R^2}F ^{R,\bar{\delta}}(u_n) dx \bigg\}
  ~\text{are uniformly bounded in}~n\in \mathbb{N}~\text{and}~\lambda\geq1.
     \end{equation}
     So, combining \eqref{bounded3} and \eqref{bounded4}, we derive
     \begin{align}\label{bounded5}
    c_{\lambda}^{R,\bar{\delta}} & \geq \frac12\int_{\R^2}\lambda V(x)|u_n|^2dx-\frac14V_2(u_n)- \int_{\R^2}F ^{R,\bar{\delta}}(u_n) dx+o_n(1)
       \geq \frac{\lambda b\beta_0}{4}-C+o_n(1)
     \end{align}
   where the positive constants $b,\beta_0$ and $C$ are independent of $\lambda\geq1$.
   Using Lemma \ref{mplevel}, there exists a sufficiently large $\lambda_0=\lambda_0(R)>1$ ($\lambda_0^\prime=\lambda_0^\prime(R)>1$
 for $\bar{\delta}=2$)
   such that \eqref{bounded5} is false provided $\la>\la_0$.
 Hence, the sequence $\{y_n\}\subset\R^2$ appearing in \eqref{bounded1} is uniformly bounded in $n\in \mathbb{N}$.

 Consequently, passing to a subsequently if necessary,
 we suppose that $y_n\to y_0$ in $\R^2$. Taking \eqref{bounded1} into account,
 there holds
 \begin{equation}\label{bounded6}
\int_{B_2(y_0)}|u_n|^2dx\geq\frac{1}{4}\beta_0>0.
 \end{equation}
 Next, we shall investigate that $|u_n|_*=(\int_{\R^2}\log(1+|x|)u_n^2dx)^{\frac12}$
 is uniformly bounded in $n\in \mathbb{N}$.
 Let us choose a constant $\delta>0$ large enough to satisfy
 $\delta>|y_0|+2$. Moreover, one has
 \[
 1+|x-y|\geq 1+\frac{|y|}2\geq\sqrt{1+|y|},~\forall x\in B_\delta(0),~\forall y\in \R^2\backslash B_{2\delta}(0).
 \]
 Due to this choice for $\delta$ implying that $B_2(y_0)\subset B_\delta(0)$, by means of \eqref{bounded6},
 \begin{align}\label{bounded7}
  \nonumber    V_1(u_n) &= \int_{\R^2}\bigg(\int_{\R^2}\log(1+|x-y|)u^2_n(x)dx\bigg)u^2_n(y)dy\\
  \nonumber      & \geq  \int_{\R^2\backslash B_{2\delta}(0)}\bigg(\int_{B_\delta(0)}\log(1+|x-y|)u^2_n(x)dx\bigg)u^2_n(y)dy\\
    \nonumber    &\geq \bigg(\int_{B_\delta(0)} u^2_n(x)dx\bigg)\bigg[\int_{\R^2\backslash B_{2\delta}(0)} \log\bigg( 1+\frac{|y|}2\bigg)u^2_n(y)dy \bigg]\\
  \nonumber     & \geq \frac{\beta_0}8\int_{\R^2\backslash B_{2\delta}(0)} \log ( 1+ |y|)u^2_n(y)dy=\frac{\beta_0}8
 \nonumber   \bigg(\|u_n\|_*^2-\int_{B_{2\delta}(0)} \log ( 1+ |y| )u^2_n(y)dy\bigg)\\
    &\geq \frac{\beta_0}8(\|u_n\|_*^2-\log(1+2\delta)|u_n|_2^2).
 \end{align}
 Since we have proved that $|u_n|_2$ is uniformly bounded in $n\in \mathbb{N}$ for all $\lambda>\lambda_0$,
 with \eqref{bounded7}, it suffices to show that $\{V_1(u_n)\}$ is uniformly bounded in $n\in \mathbb{N}$ for all $\lambda>\lambda_0$.
 In fact, adopting \eqref{bounded4},
 \[
 0\leq V_1(u_n)\leq 4c_\lambda^{R,\bar{\delta}}+V_2(u_n)+4\int_{\R^2}F ^{R,\bar{\delta}}(u_n) dx+o_n(1)
 \]
 finishing the proof of this lemma.
\end{proof}

We are in a position to present the proof of Theorem \ref{maintheorem2}.

\begin{proof}[\textbf{\emph{Proof of Theorem \ref{maintheorem2}}}]
Combining Proposition \ref{mp} and Lemma \ref{geometry}, for every fixed $R>0$,
 the variational functional $J_\lambda^{R,\bar{\delta}}$
admits a $(C)$ sequence $\{u_n\}\subset X_\lambda$ at the level $c_\lambda^{R,\bar{\delta}}$
for all $\lambda\geq1$. With the help of Lemma \ref{bounded},
$\{u_n\}$ is bounded in $X_\lambda$ whenever $\lambda\geq \lambda_0$,
where $\lambda_0>0$ depending on $R>0$ is determined by Lemma \ref{bounded}.
Passing to a subsequence if necessary, there is a $u\in X_\lambda$
such that $u_n\rightharpoonup u$ in $X_\lambda$,
$u_n\to u$ in $L^s(\R^2)$ for every $2\leq s<\infty$ by Lemma \ref{zz}-(i)
and $u_n\to u$ a.e. in $\R^2$ as $n\to\infty$. So, the remaining part is
to verify that $u_n\to u$ in $X_\lambda$ by \eqref{mp1} and Lemma \ref{characterization}.
Moreover $u\geq0$ by $(h_1)$ and we omit the details.

It follows from Lemma \ref{zz}-(ii) and (iii) as well as Lemma
 \ref{yy} that
 \begin{align}\label{2Proof1}
\nonumber  V_1^\prime(u_n)[u_n-u]& =4B_1(u_n^2,u_n(u_n-u))=4B_1(u_n^2, (u_n-u)^2)+4B_1(u_n^2,u(u_n-u)) \\
\nonumber    &=  4B_1(u_n^2, (u_n-u)^2)+o_n(1)\leq 8|u_n-u|_2\|u_n\|_*^2+o_n(1)\\
&=o_n(1).
 \end{align}
Using Lemma \ref{zz}-(i) and (iv),
\begin{align}\label{2Proof2}
\nonumber  |V_2^\prime(u_n)[u_n-u]|& =4|B_2(u_n^2,u_n(u_n-u))|\leq4|B_2(u_n^2, (u_n-u)^2)|+4|B_2(u_n^2,u(u_n-u))| \\
\nonumber    &= 4K_0|u_n^2|_{\frac43}| (u_n-u)^2|_{\frac43}  + 4K_0|u_n^2|_{\frac43}|u (u_n-u)|_{\frac43}  \\
\nonumber &\leq4K_0|u_n|^2_{\frac83}| u_n-u |^2_{\frac83}  +4K_0|u_n|^2_{\frac83}|u  | _{\frac83}| u_n-u | _{\frac83} \\
&=o_n(1).
 \end{align}
 Using the same arguments in Lemma \ref{Vanishing},
 one could verify $\{u_n\}$ satisfies all of the assumptions in Lemmas \ref{compact} and so
 \begin{equation}\label{2Proof3}
 \int_{\R^2}f^{R,\bar{\delta}}(u_n)(u_n-u)dx=o_n(1).
 \end{equation}
 As a consequence of \eqref{2Proof1}, \eqref{2Proof2} and \eqref{2Proof3}, we obtain
 \begin{align*}
  o_n(1) &= (J_\lambda^{R,\bar{\delta}})^\prime(u_n)[u_n-u] \\
    & =\int_{\R^2}\big[\nabla u_n\nabla (u_n-u)+\lambda V(x)u_n(u_n-u)\big]dx+ o_n(1)\\
    &=\|u_n\|_{E_\lambda}^2-\|u\|_{E_\lambda}^2+o_n(1)=\|u_n-u\|_{E_\lambda}^2 +o_n(1).
 \end{align*}
  Next, we get $\|u_n-u\|_*=o_n(1)$ and then we are done.
  Indeed, proceeding as for \eqref{bounded7}, one has
  \begin{align*}
    o_n(1) & =B_1(u_n^2, (u_n-u)^2)
  \geq \frac{\beta_0}8(\|u_n-u\|_*^2-\log(1+2\delta)|u_n-u|_2^2)
  \end{align*}
  yielding the desired result. The proof is completed.
\end{proof}

 \section{Proofs of Theorems \ref{maintheorem1} and \ref{maintheorem3}}\label{Linftyestimate}

 In this section, as our discussions in the Introduction,
 we must take the uniform $L^\infty$-estimate for the nontrivial solution
 obtained in Theorem \ref{maintheorem2}. Let $u_R\in X_\lambda$
 be a ground state solution associated with Eq. \eqref{mainequation2},
 according to the definition of $f^{R,\bar{\delta}}$ which is defined as in \eqref{definition2},
 then it is a ground state solution for Eq. \eqref{mainequation1}
 provided $|u_R|_\infty\leq R$. So, the key idea is to find a constant
 $C_0>0$ which is independent of $R>0$ satisfying $|u_R|_\infty\leq C_0$.
 Therefore, we define $R$ to equal to such a constant $C_0$. Have it in mind,
 we derive our direction in this section.

 To the aim. we must firstly prove that the constants $\lambda_0(R),\lambda_0^\prime(R)$
 and $\xi_0(R)$ appearing in Theorem \ref{maintheorem2} do not depend on $R$.
 Let us recall the two cases
in Theorem \ref{maintheorem2}, to proceed it clearly, we would split it
 by two subsections:
(I) $\bar{\delta}=\delta\in(0,2)$; (II) $\bar{\delta}=2$.

 In the Cases (I) and (II), we shall choose $\alpha^*=\frac{1}{R^{\tau-\delta}}>0$ and
  $\tau_*=2+\frac{1}{R}>0$, respectively.

 \subsection{The Case (I) in \eqref{definition2}$: \bar{\delta}=\delta\in(0,2)$}
In this Subsection, we shall suppose that $V$ satisfies $(V_1)-(V_3)$
and the nonlinearity $f$ defined in \eqref{form}
requires $(h_1)-(h_3)$.

With the choice of $\alpha^*=\frac{1}{R^{\tau-\delta}}>0$ in this subsection,
we improve \eqref{compact3} in the sense: for each $\varepsilon>0$, there is a constant
$C_\varepsilon>0$ independent of $R>0$ such that
\begin{equation}\label{compact3I}
 |f^{R,\delta}(t)|\leq \varepsilon|t|+ {C}_\varepsilon |t|^{q-1}[ e^{(K+1)^{-1} |t|^{2}}-1],~\forall t\in\R,
\end{equation}
where $q>1$ is arbitrary and $K>0$ independent of $R>0$ which is determined later.
Via the two facts in the proof of Lemma \ref{compact}, for all $\alpha\in(0,\alpha^*)$ and $|t|\geq1$,
there are $M_2\geq M_1\geq M$ independent of $R>0$ such that
\begin{align*}
|f^{R,\delta}(t)|  & \leq M(e^{\gamma |t|^\delta}-1)e^{\alpha^* R^{\tau-\delta}|t|^{\delta}}\leq
Me^{(\gamma +1)|t|^\delta}\leq M_1 e^{(K+1)^{-1} |t|^{2}} \\
    & \leq M_2 [e^{(K+1)^{-1} |t|^{2}}-1]\leq M_2 |t|^{q-1}[e^{(K+1)^{-1} |t|^{2}}-1]
\end{align*}
jointly with $f^{R,\delta}(t)=o(t)$ uniformly in $R$ as $t\to0$, we have \eqref{compact3I} at once.

 \begin{lemma}\label{Ia}
 If $\lambda\geq1$ and let $\{u_n\}\subset X_\lambda$
 be a $(C)$ sequence of $J_\lambda^{R,\delta}$ at the level
 $c_{\lambda}^{R,\delta}$, then $\{u_n\}$ is
   is uniformly bounded in $n\in \mathbb{N}$
and $R>0$, that is,
there is a constant $K>0$ independent of $n\in \mathbb{N}$ and $R>0$ such that
\begin{equation}\label{Ia1}
\sup_{n\in \mathbb{N}}\|u_n\|_{H^1(\R^2)} ^2< \frac{2\pi K}{q}<+\infty.
\end{equation}
\end{lemma}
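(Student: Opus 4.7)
The plan is to turn \eqref{Ia1} into a uniform-in-$R$ bound on the mountain-pass level $c_\lambda^{R,\delta}$ itself. I would start from the identity \eqref{Vanishing2} established in the proof of Lemma \ref{Vanishing}, which gives $\tfrac14\|u_n\|^2_{E_\lambda}\leq c_\lambda^{R,\delta}+o_n(1)$ for every $(C)_{c_\lambda^{R,\delta}}$-sequence of $J_\lambda^{R,\delta}$. Because $\lambda\geq 1$, Lemma \ref{imbedding} yields
\[
\|u_n\|^2_{H^1(\R^2)}\leq C_{\Xi,b}\,\|u_n\|^2_{E_\lambda}\leq 4C_{\Xi,b}\,c_\lambda^{R,\delta}+o_n(1),
\]
so the task reduces to exhibiting a constant $c^*>0$, independent of $R>0$ and $\lambda\geq 1$, with $c_\lambda^{R,\delta}\leq c^*$.

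By Lemma \ref{characterization}, $c_\lambda^{R,\delta}\leq\max_{t\geq 0}J_\lambda^{R,\delta}(t\psi)$ for any $\psi\in X_\lambda\setminus\{0\}$. I fix $\psi\in C_0^\infty(\Omega)$ with $0\leq\psi\leq 1$, $\psi\equiv 1$ on a ball $B_{r_1}\subset\Omega$, and $\mathrm{supp}\,\psi\subset B_{r_0}\subset\Omega$, where $r_0>0$ is small (to be chosen). Two observations are crucial: $V\equiv 0$ on $\mathrm{supp}\,\psi$ by $(V_2)$, and both exponential factors in \eqref{fR} are $\geq 1$ on $[0,\infty)$, so $f^{R,\delta}(s)\geq h(s)\geq 0$ pointwise for $s\geq 0$ and $F^{R,\delta}(t\psi)\geq H(t\psi)$. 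Consequently,
\[
J_\lambda^{R,\delta}(t\psi)\,\leq\,\frac{t^2}{2}\int_\Omega|\nabla\psi|^2dx+\frac{t^4}{4}V_0(\psi)-\int_\Omega H(t\psi)\,dx\,=:\,g(t),
\]
and the right-hand side depends only on $\psi$, not on $R$ or $\lambda$.

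Finally, I would check that $g$ attains a finite supremum. Using $(h_2)$, choose $t_0>0$ with $h(t_0)>0$; then $h(s)\geq(h(t_0)/t_0^3)s^3$ for $s\geq t_0$, whence $H(s)\geq c_1 s^4-c_2$ for $s\geq t_0$, with $c_1=h(t_0)/(4t_0^3)$ and $c_2=c_1 t_0^4$. Since $\{t\psi\geq t_0\}\supset\{\psi=1\}$ once $t\geq t_0$, this yields $\int_\Omega H(t\psi)\,dx\geq c_1\pi r_1^2\,t^4-c_2\,\mathrm{meas}(\mathrm{supp}\,\psi)$. On the other hand, Lemma \ref{zz}(ii) combined with the support estimates $|\psi|_2^2\leq\pi r_0^2$ and $\|\psi\|_*^2\leq\log(1+r_0)\,\pi r_0^2$ gives $V_0(\psi)\leq V_1(\psi)\leq 2|\psi|_2^2\|\psi\|_*^2\leq 2\pi^2 r_0^4\log(1+r_0)$. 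Taking $r_1=r_0/2$ and shrinking $r_0$, the coefficient $\tfrac14V_0(\psi)-c_1\pi r_1^2$ of $t^4$ in $g$ becomes strictly negative, so $g(t)\to-\infty$ as $t\to\infty$ and $c^*:=\sup_{t\geq 0}g(t)<+\infty$, positive and independent of $R,\lambda$. The conclusion \eqref{Ia1} then follows by choosing any $K>2qC_{\Xi,b}c^*/\pi$. The main obstacle is precisely this quantitative choice of $\psi$: the indefinite Poisson term $V_0(\psi)$ could \emph{a priori} spoil the coercivity of $-g$, and one must exploit the superquartic lower bound on $H$ from $(h_2)$ together with the smallness of $V_0(\psi)$ forced by shrinking the support of $\psi$—all done uniformly in $R$, since $F^{R,\delta}\geq H$ bypasses any dependence on $R$.
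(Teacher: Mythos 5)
Your proposal is correct and follows essentially the same route as the paper: reduce \eqref{Ia1} via \eqref{Vanishing2} and Lemma \ref{imbedding} to a uniform-in-$R$, uniform-in-$\lambda$ upper bound on $c_\lambda^{R,\delta}$, and obtain that bound from $F^{R,\delta}\geq H$ together with a fixed test function supported in $\Omega$ (the paper phrases this through the auxiliary functional $I_\lambda$ and Lemma \ref{mplevel}, which is the same estimate). Your quantitative verification that $\sup_{t\geq0}g(t)<+\infty$ — shrinking $\operatorname{supp}\psi$ so that $\tfrac14V_0(\psi)$ is beaten by the quartic lower bound on $H$ coming from $(h_2)$ — is a welcome explicit justification of a step the paper only sketches.
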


\begin{proof}
We claim that there are constants $A_0>0$ and $c>0$ independent of $R>0$ and $\lambda\geq1$ such that
\begin{equation}\label{Ia2}
A_0<c_{\lambda}^{R,\delta}\leq c<+\infty,~\forall R>0~\text{and}~\lambda\geq1.
\end{equation}
Indeed, recalling the definition of $f^{R,\delta}$, one has that $F^{R,\delta}(t)\geq H(t)$
for all $t\in\R$ and so $J_\lambda^{R,\delta}(u)\geq I_\lambda(u)$ for all $u\in X_\lambda$, where the variational functional $I_\lambda:X_\lambda\to\R$ is defined by
\begin{equation}\label{Ia3}
I_\lambda(u)=\frac{1}{2}\int_{\R^2}[|\nabla u|^2+\lambda V(x)|u|^2]dx+\frac14V_0(u)-
\int_{\R^2} H(u)dx.
\end{equation}
Let us choose the constant $c_\lambda>0$ to be a mountain-pass level associated with $I_\lambda$,
the existence of such number follows Lemma \ref{geometry}.
In light of $\psi$ as in Lemma \ref{mplevel}, one has
 \[
 I_\lambda(t\psi)=\frac{t^2}{2}\int_{\Omega} |\nabla \psi|^2 dx+\frac{t^4}4V_0(\psi)-
\int_{\Omega} H(t\psi)dx.
 \]
 Then, using the same arguments in Lemma \ref{mplevel}, we could find a $c>0$ independent of
 $\lambda\geq1$ such that $c_\lambda\leq c$. Besides, by exploiting \eqref{compact3I},
 we proceed as \eqref{mp0} to derive such a constant $A_0>0$ satisfying $c_\lambda^{R,\delta}\geq A_0$.
So, \eqref{Ia2} holds true.
 Combining \eqref{Vanishing2} and \eqref{Ia2} as well as Lemma \ref{imbedding},
 we would receive the desired result \eqref{Ia1}. The proof is complete.
 \end{proof}

 \begin{remark}\label{PP1}
 In view of \eqref{bounded5}, due to \eqref{Ia2}, one can see that we can determine
 the constant $\lambda_0(R)$ to be independent of $R>0$. Moreover, by
using \eqref{compact3I} and \eqref{Ia1}, one would conclude that \eqref{compact1}
 holds independently with respect to $R>0$.
 \end{remark}

 \begin{lemma}\label{Ib}
Let $u_R\in X_\lambda$ be a nonnegative solution of Eq. \eqref{mainequation2} with $\bar{\delta}=\delta$
established by Theorem \ref{maintheorem2} for all fixed $R>0$, if
 $\alpha^*=\frac{1}{R^{\tau-\delta}}>0$, then for all $\alpha\in(0,\alpha^*)$ and $\tau>2$, we have
$$
|u_R|_\infty \leq C_0,  ~\forall R>0.
$$
for some $C_0>0$ independent of $R>0$ and $\lambda>\lambda_0(R)$.
\end{lemma}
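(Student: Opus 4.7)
The plan is to obtain an $L^\infty$-bound for $u_R$ that is uniform in $R$ by running a Moser-type iteration whose constants stay $R$-independent thanks to Lemma \ref{Ia} and the $R$-uniform pointwise estimate \eqref{compact3I}.

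First, I would reduce the equation to a differential inequality with tractable right-hand side. Since $u_R \geq 0$, $\lambda V \geq 0$, and $\log(1+|x-y|)\ast u_R^2 \geq 0$, the identity $\log|x-y| = \log(1+|x-y|) - \log(1+|x-y|^{-1})$ lets us drop the nonnegative Poisson contribution and the potential term, yielding
\begin{equation*}
-\Delta u_R \leq G_R(x)\, u_R + f^{R,\delta}(u_R) \quad \text{in } \R^2,
\end{equation*}
where $G_R(x) := \bigl(\log(1+|\cdot|^{-1})\ast u_R^2\bigr)(x)$. Lemma \ref{cc} provides $|G_R|_\infty \leq 4\pi(|u_R|_2^2 + |u_R|_6^2)$, and combined with the Sobolev embedding and Lemma \ref{Ia} applied to an appropriate $(C)$ sequence converging to $u_R$, this bound is uniform in $R$.

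Second, I would put the nonlinear source in a good Lebesgue space. Fix $q>4$, so that Lemma \ref{Ia} yields $(K+1)^{-1}\|u_R\|_{H^1}^2 < 2\pi/q \ll 4\pi$; a slight enlargement by a factor $p>1$ still keeps the exponent below the Trudinger-Moser threshold. Hence \eqref{TM2} bounds $e^{p(K+1)^{-1}u_R^2}-1$ uniformly in $L^1(\R^2)$, and \eqref{compact3I} together with H\"older's inequality and the embedding $H^1 \hookrightarrow L^r$ for every finite $r$ yields $|f^{R,\delta}(u_R)|_s \leq C$ for some $s>1$ and some $R$-independent $C>0$.

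Third, I iterate. With $G_R$ uniformly in $L^\infty(\R^2)$ and $f^{R,\delta}(u_R)$ uniformly in $L^s(\R^2)$, a standard Moser/Brezis-Kato bootstrap (testing the above differential inequality against truncated powers of $u_R$, and using the Gagliardo-Nirenberg inequality at each step) produces uniform $L^p$-bounds of $u_R$ for every $p<\infty$. One final elliptic regularity step via the two-dimensional embedding $W^{2,s}(\R^2) \hookrightarrow L^\infty(\R^2)$ for $s>1$ then delivers $|u_R|_\infty \leq C_0$ with $C_0$ independent of $R$ and of $\lambda>\lambda_0(R)$. The main obstacle is verifying that \emph{every} constant produced along this iteration is truly $R$-independent; this is precisely the purpose of the choice $\alpha^* = 1/R^{\tau-\delta}$, which absorbs the $R$-dependent exponent appearing in $f^{R,\delta}$ above the cutoff level $R$ and thereby secures the $R$-independent version of \eqref{compact3I}. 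Combined with the $R$-uniform $H^1$-bound of Lemma \ref{Ia}, it keeps the effective exponent in \eqref{TM2} strictly and uniformly below $4\pi$, which is what makes the whole iteration transportable across all $R>0$.
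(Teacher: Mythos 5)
Your ingredients are the right ones: the $R$-uniform pointwise bound \eqref{compact3I}, the $R$-uniform $H^1$-bound from Lemma~\ref{Ia}, the $L^\infty$-bound on the singular part of the Poisson term from Lemma~\ref{cc}, and the reduction to the subsolution inequality $-\Delta u_R\leq G_R\, u_R+f^{R,\delta}(u_R)$. You also correctly identify the role of $\alpha^*=1/R^{\tau-\delta}$: it neutralizes the $R$-dependent exponent so that \eqref{TM2} yields an $R$-uniform $L^s$-bound on $f^{R,\delta}(u_R)$ (the paper takes $s=2$). Up to this point your argument and the paper's agree in spirit.

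Where you diverge is the final passage to $L^\infty$, and this is where your write-up has a gap. You propose to run Moser iteration to obtain uniform $L^p$-bounds for every $p<\infty$ and then invoke ``one final elliptic regularity step via $W^{2,s}(\R^2)\hookrightarrow L^\infty(\R^2)$.'' That last step cannot be applied to a differential \emph{inequality}; it needs an actual equation. And if one restores the full equation $-\Delta u_R=f^{R,\delta}(u_R)-\lambda V(x)u_R-(\log|\cdot|\ast u_R^2)u_R$, the term $\lambda V(x)u_R$ is not in any Lebesgue space uniformly, because $V$ may be unbounded (and $\lambda$ large). Either you must carry the De~Giorgi--Nash--Moser subsolution iteration all the way to $L^\infty$ directly, or you need another device. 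The paper's device is exactly tailored to this issue: it introduces $v_R\in H^2(\R^2)$ solving the genuine linear equation $-\Delta v_R+v_R=\Upsilon u_R+f^{R,\delta}(u_R)$ with $\Upsilon=1+|g_{u_R}|_\infty$, proves $0\leq u_R\leq v_R$ by testing $-\Delta(u_R-v_R)+(u_R-v_R)\leq 0$ against suitably truncated cutoffs, and then gets $|u_R|_\infty\leq|v_R|_\infty\leq C\|v_R\|_{H^2}\leq C|\Upsilon u_R+f^{R,\delta}(u_R)|_2$ by Calder\'on--Zygmund/$H^2\hookrightarrow L^\infty$ theory applied to $v_R$, not to $u_R$. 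This single comparison step replaces the entire iteration and sidesteps the lack of an equation for $u_R$. So your route is salvageable if you commit to a genuine subsolution $L^\infty$-estimate, but as written the last step does not go through; the paper's construction of the dominating function $v_R$ is the cleaner way to finish.
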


\begin{proof}
In view of Section 3, we know that the nonnegative solution $u_R\neq0$
of Eq. \eqref{mainequation2} is established by looking
for the weak limit of $\{u_n\}\subset X_\lambda$ which is a $(C)$ sequence
of $J_\lambda^{R,\delta}$ at the level $c_\lambda^{R,\delta}$.
 So, combining the Fatou's lemma and \eqref{Ia1},
$\|u_R\|^2_{H^1(\R^2)}\leq \frac{2\pi K}{q}$
for all $R>0$, where $K>0$ is a constant independent of $R>0$. We claim that there is a
constant $C_1>0$ independent of $R>0$
\begin{equation}\label{Ib1}
|f^{R, \delta }(u_R) |_2\leq C_1<+\infty.
\end{equation}
In fact, due to \eqref{compact3I}, by applying \eqref{TM2}, then it suffices to show that
\[
\begin{gathered}
\int_{\R^2}|u_R|^{2(q-1)}[ e^{2(K+1)^{-1} |u_R|^{2}}-1]dx
\leq \bigg(\int_{\R^2}|u_R|^{2q}dx\bigg)^{\frac{q-1}{q}}\bigg(\int_{\R^2}
[ e^{2q(K+1)^{-1} |u_R|^{2}}-1]
dx\bigg)^{\frac{1}{q}}\hfill\\
\ \ \ \ =\bigg(\int_{\R^2}|u_R|^{2q}dx\bigg)^{\frac{q-1}{q}}\bigg(\int_{\R^2}
[ e^{\frac{2q\|u_R\|^2_{H^1(\R^2)}}{K+1} \big( |u_R|/\|u_R\|_{H^1(\R^2)}  \big)^{2}}-1]
dx\bigg)^{\frac{1}{q}}\leq C
\hfill\\
\end{gathered}
\]
for some $C>0$ independent of $R>0$.

Then, proceeding as the proof of \cite[Lemma 4.10]{AS0}, we shall show that
$|u_R|_\infty\leq C_0$, where $C_0>0$ is independent of $R>0$. Taking the convenience of the reader into account,
 it should be done in detail.
Since $u_R$ is a nonnegative solution of Eq. \eqref{mainequation2}, by $(V_1)$,
 $u_R$ must satisfy the inequality
$$
-\Delta u_R+u_R\leq [1+g_{u_R}(x)]
 u_R  + f^{R,\delta}(u_R),~x\in\R^2,
$$
where $g_{u_R}(x)$ is defined as in Lemma \ref{zz}. Set $\Upsilon\triangleq 1+|g_{u_R}|_\infty$,
then $\Upsilon\in(1,+\infty)$ does not depend on $R>0$ by Lemma \ref{zz} and \eqref{Ia1}.
From this and \eqref{Ib1}, there is a ${v}_R \in H^{2}(\R^2)$ such that
 $$
 -\Delta  {v}_R+ {v}_R=
 \Upsilon u_R+ f^{R,\delta}(u_R)~ \mbox{in} ~ \R^2.
 $$
Next, we fix the test function
 $$
z_r(x)=\phi(x/r)(u_R- {v}_R)^{+}(x) \in X_\lambda,
 $$
 where $\phi \in C_{0}^{\infty}(\R^2)$ satisfies
 $$
 0 \leq \phi(x) \leq 1 \quad \forall x \in \R^2, \quad \phi(x)=1 \quad \forall x \in B_1(0) \quad \mbox{and} \quad \phi(x)=0 \quad \forall x \in
\R^2\backslash B _{2}(0).
 $$
Using the function test $z_r$ on $-\Delta (u_R- {v}_R)+(u_R- {v}_R)\leq0$ in $\R^2$, we get the inequality below
 $$
 \int_{\R^2}[\nabla (u_R- {v}_R)\nabla z_r + (u_R- {v}_R)z_r]\,dx \leq 0, \quad \forall r>0.
 $$
Since
$$
z_r \to (u_R- {v}_R)^+ \quad \mbox{as} \quad r \to +\infty \quad \mbox{in} \quad H^1(\R^2),
$$
by the Lebesgue's Dominated Convergence theorem,
we arrive at
$$
 \int_{\R^2}|\nabla (u_R- {v}_R)^+|^2+|(u_R-v_R)^+|^2  \,dx \leq 0,
$$
implying that
$$
0 \leq u_R(x) \leq {v}_R(x), \quad \forall x \in \R^2.
$$
By using the continuous Sobolev embedding $H^2(\R^2) \hookrightarrow L^{\infty}(\R^2)$, there is $C_2>0$ independent of $R>0$ such that
$$
| {v}_R|_\infty \leq C_2\| {v}_R\|_{H^{2}(\R^2)}, \quad \forall R>0
$$
which together with the last fact gives that
$$
|u_R|_\infty \leq C_3\| {v}_R\|_{H^{2}(\R^2)}, \quad \forall R>0.
$$
On the other hand, by Br\'{e}zis \cite[Theorem 9.25]{Brezis}, there is $C_4>0$ independent of $R>0$ such that
$$
\| {v}_R\|_{H^{2}(\R^2)} \leq C_4|\Upsilon u_R+ f^{R,\delta}(u_R)|_2, \quad \forall R>0,
$$
from where and \eqref{Ia1} as well as \eqref{Ib1}, it follows that
$$
\| {v}_R\|_{H^{2}(\R^2)} \leq C_5, \quad \forall R>0
$$
for some $C_5>0$ independent of $R>0$. Have this in mind, we must have
$$
|u_R|_\infty \leq C_0, \quad \forall R>0.
$$
for some $C_0>0$ independent of $R>0$, showing the desired result.
 \end{proof}

 \subsection{The Case (II) in \eqref{definition2}$: \bar{\delta}=2$}
In this Subsection, we shall suppose that $V$ satisfies $(V_1)-(V_3)$
and the nonlinearity $f$ defined in \eqref{form}
requires $(h_1)-(h_3)$ as well as $(h_4)$.

With the choice of $\tau_*=2+\frac{1}{R}>0$ in this subsection,
we improve \eqref{compact4} in the sense: for all $\varepsilon>0$, there is a constant
$C_\varepsilon^\prime>0$ independent of $R>e$ such that
 \begin{equation}\label{compact4II}
  |f^{R,2}(t)|\leq \varepsilon|t|+ {C}^\prime_\varepsilon  |t|^{q-1}[ e^{(\gamma+\alpha e^{\frac1e}) |t|^{2}}-1],~\forall t\in\R.
\end{equation}
Firstly, one can observe that $\lim\limits_{R\to+\infty}R^{\frac{1}{R}}=1$
and the function $R^{\frac{1}{R}}$ is strictly decreasing in $R\in(e,+\infty)$,
then $0<R^{\frac{1}{R}}\leq e^{\frac{1}{e}}$ for each $R\in(e,+\infty)$.
For all $|t|\geq1$, by using $(h_3)$, there is ${M}_1>M$ independent of $R>e$ such that
\begin{align*}
|f^{R,2}(t)|  & \leq M(e^{\gamma |t|^\delta}-1)e^{\alpha R^{\tau^*-2}|t|^{2}}= M e^{\gamma |t|^\delta} e^{\alpha R^{\frac1R}|t|^{2}}
\leq  M e^{\gamma |t|^\delta} e^{\alpha e^{\frac1e}|t|^{2}}\\
    & \leq {M}_1[ e^{\gamma |t|^2}-1 ][e^{\alpha e^{\frac1e}|t|^{2}}-1]\leq  {M}_1
    [ e^{(\gamma+\alpha e^{\frac1e}) |t|^{2}}-1] \\
    & \leq M_1|t|^{q-1}
    [ e^{(\gamma+\alpha e^{\frac1e}) |t|^{2}}-1]
\end{align*}
which together with $f^{R,2}(t)=o(t)$ uniformly in $R>0$ as $t\to0$.

 \begin{lemma}\label{IIa}
 If $\lambda\geq1$, there are some constants $\bar{A}_0>0$ and $\xi_0>0$ independent of $R>e$ such that for all $\xi>\xi_0$,
 there holds
 \begin{equation}\label{IIa1}
 \bar{A}_0\leq c_\lambda^{R,2}<\frac{\pi}{4C_{\Xi,b}(1+\gamma+\alpha e^{\frac1e})},~\forall R>e.
 \end{equation}
\end{lemma}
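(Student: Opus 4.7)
The plan is to adapt the structure of Lemma \ref{Ia} (for the lower bound) and the second part of Lemma \ref{mplevel} (for the upper bound) to the Case II setting, exploiting the key observation that for $R>e$ and $\tau\in[2,\tau^*)$ with $\tau^*=2+1/R$, one has $R^{\tau-2}\leq R^{1/R}\leq e^{1/e}$. This replacement removes the $R$-dependence from any constant that previously depended on $\alpha R^{\tau-2}$.

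For the lower bound $\bar A_0\leq c_\lambda^{R,2}$, I would revisit the mountain-pass geometry of Lemma \ref{geometry}, but employ the sharpened estimate \eqref{compact4II} (whose exponential constant $\gamma+\alpha e^{1/e}$ is $R$-independent) in place of \eqref{compact4}. Proceeding as in \eqref{mp0}, choosing $\varepsilon>0$ small and the auxiliary exponent $q>4$ in \eqref{compact4II}, and invoking \eqref{TM2} uniformly (which is legitimate once $\rho^2$ is taken small enough so that $2(\gamma+\alpha e^{1/e})C_{\Xi,b}\rho^2<4\pi$), I would obtain constants $A,\rho>0$ independent of both $\lambda\geq1$ and $R>e$ such that $J_\lambda^{R,2}(u)\geq A$ whenever $\|u\|_{E_\lambda}=\rho$. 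By Proposition \ref{mp} and definition \eqref{mp1}, this yields $c_\lambda^{R,2}\geq A=:\bar A_0$.

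For the upper bound, I would recycle the test function $\varphi_0\in C_0^\infty(B_1(0))$ constructed in the proof of Lemma \ref{mplevel}, assuming without loss of generality that $B_1(0)\subset\Omega$ so that $V\equiv 0$ on its support. Since $(h_4)$ gives $F^{R,2}(t)\geq\xi t^p$ for $t\in[0,1]$, the computation leading to \eqref{mplevel1} carries over verbatim and produces
\[
\max_{t\in[0,1]}J_\lambda^{R,2}(t\varphi_0)\leq\frac{\pi(p-1)(1+\pi\log 2)}{2p}\left[\frac{2(1+\pi\log 2)}{p\xi}\right]^{\frac{2}{p-2}},
\]
which is strictly decreasing in $\xi$ and tends to zero as $\xi\to+\infty$. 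An elementary algebraic comparison shows that this right-hand side is strictly less than $\frac{\pi}{4C_{\Xi,b}(1+\gamma+\alpha e^{1/e})}$ as soon as
\[
\xi>\xi_0\triangleq\max\left\{\xi_1,\ \frac{2(1+\pi\log 2)}{p}\left[\frac{4C_{\Xi,b}(p-1)(1+\pi\log 2)(1+\gamma+\alpha e^{1/e})}{2p}\right]^{\frac{p-2}{2}}\right\},
\]
which is manifestly independent of $R>e$. Since $t\mapsto t\varphi_0$ is admissible in $\Gamma_\lambda^{R,2}$ as soon as $\xi>\xi_1$ (so that $J_\lambda^{R,2}(\varphi_0)\leq 0$ by the argument of \eqref{mplevel1}), the MP characterization \eqref{mp1} combined with the strict inequality above delivers the sought estimate.

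The main obstacle is bookkeeping: one must propagate the uniform bound $R^{\tau-2}\leq e^{1/e}$ through \emph{every} occurrence of $\alpha R^{\tau-2}$, both in the Trudinger-Moser exponent underlying \eqref{compact4II} (for the lower bound) and in the explicit formula for $\xi_0$ (for the upper bound). Once this is carefully tracked, no genuinely new analytic difficulty arises, and the two halves of the proof are the natural Case II analogues of Lemmas \ref{Ia} and \ref{mplevel}.
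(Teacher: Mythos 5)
Your proposal is correct and follows essentially the same route as the paper: the lower bound comes from running the mountain-pass geometry of \eqref{mp0} with the $R$-uniform estimate \eqref{compact4II}, and the upper bound from the test function $\varphi_0$ of Lemma \ref{mplevel} with $R^{\tau-2}\leq e^{1/e}$ replacing $R^{\tau-2}$ in the explicit formula for $\xi_0$. The only cosmetic difference is that the paper first passes to the $R$-independent functional $I_\lambda$ (using $F^{R,2}\geq H$, so $c_\lambda^{R,2}\leq c_\lambda$) before estimating along $t\varphi_0$, whereas you estimate $J_\lambda^{R,2}(t\varphi_0)$ directly; both yield the same bound and the same $\xi_0$.
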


\begin{proof}
Applying \eqref{compact4II} to \eqref{mp0}, one can find such a $\bar{A}_0>0$
and the details are left. By the definition of $f^{R,2}$, then
$c_\lambda^{R,2}\leq c_\lambda$, where $c_\lambda$ is a mountain-pass level corresponding to
the variational functional $I_\lambda$ defined by \eqref{Ia3}. Let $\varphi_0$
be as in Lemma \ref{mplevel}, then according to $(h_3)$ and Lemma \ref{zz}-(ii),
\begin{align}\label{IIa2}
\nonumber I_\lambda (\varphi_0) &\leq \frac12\int_{B_1(0)} |\nabla \varphi_0|^2 dx+\frac12\int_{B_1(0)} |\varphi_0|^2 dx
\int_{B_1(0)}\log(1+|x|) |\varphi_0|^2 dx
- \int_{B_1(0)} H(\varphi_0)dx \\
      &< \frac12(1+ \pi\log2)\pi-{\xi}   \int_{B_{1/2}(0)}|\varphi_0|^{p} dx
    \leq \frac12(1+ \pi\log2)\pi-  \frac{{\xi}_1}{4} \pi =0.
\end{align}
where $\xi_1=2(1+ \pi\log2)$.
In particular, invoking from \eqref{IIa2} that
 \begin{equation}\label{IIa3}
\frac12\int_{B_1(0)}[|\nabla \varphi_0|^2+\lambda V(x)| \varphi_0|^2]dx+\frac14V_0(\varphi_0)
<  {\xi}_1 \int_{B_{1/2}(0)}|\varphi_0|^{p} dx.
 \end{equation}
 Defining $\gamma_0 (t)=t\varphi_0$, one deduces that $\gamma_0 \in\Gamma _\lambda=\{\gamma\in \mathcal{C}([0,1],X_\lambda):
\gamma(0)=0,I_\lambda(\gamma(1))<0\}$ by \eqref{mplevel2}.
Therefore, we have that
\begin{align*}
 \max_{t\in[0,1]}I_\lambda(t)&\leq \max_{t\in[0,1]} \bigg \{\frac{t^2}{2}(1+ \pi\log2)\pi
-{\xi}t^p   \int_{B_{1/2}(0)}|\varphi_0|^{p} dx \bigg\}\\
  &\leq  \max_{t\geq0} \bigg \{\frac{t^2}{2}(1+ \pi\log2)
-\frac{\pi{\xi}}{4}t^p \bigg\}=\frac{\pi(p-1)(1+ \pi\log2)}{2p}\bigg[\frac{2(1+ \pi\log2)}{p\xi}\bigg]^{\frac2{p-2}}
\end{align*}
As a consequence, the constant $\xi_0$ independent of $R>e$ can be chosen as follows
\[
\xi_0=\max\bigg\{\xi_1,
\frac{2(1+ \pi\log2)}{p} \bigg[\frac{4C_{\Xi,b}(p-1)(1+ \pi\log2)(1+\gamma+\alpha e^{\frac1e})}{2p}\bigg]^{\frac{p-2}2}\bigg\}
\]
showing that $c_\lambda<\frac{\pi}{4C_{\Xi,b}(1+\gamma+\alpha e^{\frac1e})}$ for all $R>e$.
By the fact $c_\lambda^{R,2}\leq c_\lambda$ for every $R>0$,
  we would get the desired result immediately. The proof is completed.
\end{proof}

 \begin{remark}\label{PP2}
 With Lemma \ref{IIa} in hand, for all $\lambda\geq1$, we obtain that
 \begin{equation}\label{IIa4}
 \limsup_{n\to\infty}\|u_n\|_{H^1(\R^2)}^2 <\frac{\pi}{\gamma+\alpha e^{\frac1e}}~\forall R>e,
 \end{equation}
 where $\{u_n\}\subset X_\lambda$ is a $(C)$ sequence of $J_\lambda^{R,2}$ at the
 level $c_\lambda^{R,2}$. Indeed, using \eqref{Vanishing2} and \eqref{IIa1}
 together with Lemma \ref{imbedding}, it is obvious.
 With the help of \eqref{bounded5} and \eqref{IIa1}, we can also conclude that
 the constant $\lambda_0^\prime>0$ is independent of $R>e$.
 Moreover, it follows from
 \eqref{compact4II} and \eqref{IIa4} that \eqref{compact2}
 holds independently with respect to $R>e$.
 \end{remark}

 \begin{lemma}\label{IIb}
Let $u_R\in X_\lambda$ be a nonnegative solution of Eq. \eqref{mainequation2} with $\bar{\delta}=2$
established by Theorem \ref{maintheorem2} for all fixed $R>e$, if
 $\tau_*=2+\frac{1}{R }>0$, then for all $\alpha>0$ and $\tau\in[2,\tau_*)$, we have
$$
|u_R|_\infty \leq C_0^\prime,  ~\forall R>e.
$$
for some $C_0^\prime>0$ independent of $R>e$ and $\lambda>\lambda'_0(R)$.
\end{lemma}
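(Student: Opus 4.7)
The plan is to reproduce the four-step architecture of Lemma \ref{Ib} in the critical-supercritical regime, using the choice $\tau_*=2+1/R$ to play the role that $\alpha^*=1/R^{\tau-\delta}$ played in the subcritical-supercritical regime. The decisive ingredient will be the combination of the strict upper bound on the mountain-pass level from \eqref{IIa1} with the uniform exponential estimate \eqref{compact4II}, ensuring that Trudinger--Moser can be applied with constants that do not depend on $R>e$.

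First I would fix $R>e$, $\alpha>0$ and $\tau\in[2,\tau_*)$ with $\tau_*=2+1/R$, and take a bounded $(C)_{c_\lambda^{R,2}}$ sequence $\{u_n\}\subset X_\lambda$ whose weak limit is $u_R$. Weak convergence in $X_\lambda$ together with Lemma \ref{imbedding}, Fatou's lemma and the uniform bound \eqref{IIa4} from Remark \ref{PP2} would give $\|u_R\|_{H^1(\R^2)}^2<\pi/(\gamma+\alpha e^{1/e})$ with a right-hand side that is manifestly independent of $R>e$. Then, for $q>1$ chosen sufficiently close to $1$, I would use \eqref{compact4II} to estimate
\[
|f^{R,2}(u_R)|_2^{2}\leq 2\varepsilon^2|u_R|_2^2+2(C_\varepsilon')^2\int_{\R^2}|u_R|^{2(q-1)}\bigl[e^{2(\gamma+\alpha e^{1/e})|u_R|^2}-1\bigr]\,dx,
\]
split the integral by H\"older with conjugate exponents $s,s'>1$, and invoke \eqref{TM2}. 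The crucial point is to select $s>1$ close enough to $1$ so that
\[
2s(\gamma+\alpha e^{1/e})\|u_R\|_{H^1(\R^2)}^{2}<4\pi,
\]
which is guaranteed by the strict inequality \eqref{IIa4}. This yields a constant $C_1'>0$, independent of $R>e$, with $|f^{R,2}(u_R)|_2\leq C_1'$.

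With this uniform $L^2$ control on the nonlinearity in hand, the remainder of the argument can be transcribed verbatim from Lemma \ref{Ib}: by $(V_1)$ and Lemma \ref{cc} applied together with the uniform $H^1$ bound, one has $\Upsilon:=1+|g_{u_R}|_\infty\in(1,+\infty)$ independent of $R>e$; the auxiliary function $v_R\in H^2(\R^2)$ solving $-\Delta v_R+v_R=\Upsilon u_R+f^{R,2}(u_R)$ satisfies, by \cite[Theorem 9.25]{Brezis}, an $H^2$-estimate uniform in $R$; the cut-off comparison using the test function $\phi(\cdot/r)(u_R-v_R)^+$ forces $0\leq u_R\leq v_R$ pointwise; and the continuous embedding $H^2(\R^2)\hookrightarrow L^\infty(\R^2)$ finally yields $|u_R|_\infty\leq C_0'$ with $C_0'$ independent of $R>e$ and $\lambda>\lambda_0'(R)$. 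I expect the main obstacle to be the calibration of the H\"older exponents in Step~2: since \eqref{compact4II} produces an exponent $\gamma+\alpha e^{1/e}$ that is already pinned down by the data, the $R$-uniformity of the estimate on $|f^{R,2}(u_R)|_2$ depends entirely on the strict inequality in \eqref{IIa1}, and on the monotonicity $R^{1/R}\leq e^{1/e}$ for $R>e$ which was used to establish \eqref{compact4II} in the first place.
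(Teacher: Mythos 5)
Your proof is correct and follows the paper's architecture: Fatou plus \eqref{IIa4} gives a uniform strict $H^1$ bound $\|u_R\|^2_{H^1(\R^2)}<\pi/(\gamma+\alpha e^{1/e})$, then \eqref{compact4II}, H\"older and \eqref{TM2} give the uniform $L^2$ bound $|f^{R,2}(u_R)|_2\leq C_1'$, and the elliptic-regularity comparison argument of Lemma \ref{Ib} finishes. The one place you diverge from the paper is the calibration of the H\"older exponent, and yours is actually the more careful version: after squaring \eqref{compact4II} one legitimately gets the exponent $2(\gamma+\alpha e^{1/e})$, and choosing $s<2$ (e.g., close to $1$) makes $2s(\gamma+\alpha e^{1/e})\|u_R\|^2_{H^1}<4\pi$ follow directly from \eqref{IIa4}. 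The paper instead takes $q=4$ and applies H\"older with $s=4$, writing the exponent as $4(\gamma+\alpha e^{1/e})$; read literally this drops the factor of $2$ coming from squaring the exponential factor, and if one reinstates it, $s=4$ would demand $\|u_R\|^2_{H^1}<\pi/\bigl(2(\gamma+\alpha e^{1/e})\bigr)$, which \eqref{IIa4} does not supply. Your flexible choice of $s$ close to $1$ avoids this issue and uses \eqref{IIa4} at exactly the strength it is proved, so your argument is the cleaner one.
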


\begin{proof}
In light of Lemma \ref{Ib}, it suffices to prove that
\begin{equation}\label{Ib2}
|f^{R, 2}(u_R) |_2\leq C_1^\prime
\end{equation}
for some $C_1^\prime >0$ independent of $R>e$.
Firstly, due to the Fatou's lemma,
$\|u_R\|^2_{H^1(\R^2)}<\frac{\pi}{\gamma+\alpha e^{\frac1e}}$ by \eqref{IIa4}
for all $R>e$. Hence, using \eqref{compact4II} with $q=4$, we exploit \eqref{TM2} to have
\begin{equation*}
\begin{gathered}
  \int_{\R^2}|f^{R,2}(u_R)|^2dx\leq \int_{\R^2}|u_R|^2dx
  + {C}^\prime  \int_{R^2} |u_R|^{2(q-1)}[ e^{(\gamma+\alpha e^{\frac1e}) |u_R|^{2}}-1]dx\hfill\\
  \ \ \ \ \leq \int_{\R^2}|u_R|^2dx
  + {C}^\prime  \bigg(\int_{R^2} |u_R|^{8}\bigg)^{\frac{3}{4}}
  \bigg(\int_{\R^2}[ e^{4(\gamma+\alpha e^{\frac1e})\|u_R\|_{H^1(\R^2)}^2( |u_R|/ \|u_R\|_{H^1(\R^2)})^{2}}-1]dx\bigg)^{\frac{1}{4}}\hfill\\
  \ \ \ \ \leq C_1^\prime\hfill\\
  \end{gathered}
\end{equation*}
as required.
 \end{proof}

Now, we can present the proof of Theorem \ref{maintheorem1} below.

\begin{proof}[\textbf{\emph{Proof of Theorem \ref{maintheorem1}}}]
Recalling Theorem \ref{maintheorem2}, we have established a nonnegative ground state solution for
Eq. \eqref{mainequation2} under the suitable assumptions.
Let us denote the obtained ground state solution by $u_R$.
Thanks to the explanations in Remarks \ref{PP1} and \ref{PP2},
the constants $\lambda_0$ for $\bar{\delta}=\delta\in(0,2)$,
$\lambda_0^\prime$ and $\xi_0$ for $\bar{\delta}=2$
are independent of $R>0$ and $R>e$, respectively.
It follows from Lemmas \ref{IIa} and \ref{IIb}
that we colud choose $R=C_0$ and $R=\max\{C_0^\prime,e\}$
for $\bar{\delta}=\delta\in(0,2)$ and $\bar{\delta}=2$, respectively.
In this situation, $\alpha^*=\frac{1}{C_0^{\tau-\delta}}$
and $\tau_*=2+\frac{1}{\max\{C_0^\prime,e\}}$.
So, $u_R$ is a nonnegative ground state solution of Eq. \eqref{mainequation1}.
The proof is completed.
 \end{proof}

 Finally, we are concerned with the asymptotical behavior
 of ground state solutions of Eq. \eqref{mainequation1} obtained in Theorem \ref{maintheorem1}
 as $\lambda\to+\infty$.

 Before showing the proof of Theorem \ref{maintheorem3}, via the same constant $R>0$
 determined in the proof of Theorem \ref{maintheorem2},
we need the variational functionals below
\[
\left\{
  \begin{array}{ll}
\displaystyle      J_\Omega(u)=\frac{1}{2}\int_{\Omega} |\nabla u|^2 dx+\frac14V_0|_{\Omega}(u)
- \int_{\Omega} F(u)dx, & \forall u\in H_0^1(\Omega),\\
\displaystyle       J_\Omega^{R,\bar{\delta}}(u)=\frac{1}{2}\int_{\Omega} |\nabla u|^2 dx+\frac14V_0|_{\Omega}(u)
- \int_{\Omega} F^{R,\bar{\delta}}(u)dx,  & \forall u\in H_0^1(\Omega),
  \end{array}
\right.
 \]
 where the functional $V_0|_{\Omega}:H^1_0(\Omega)\to\R$ which is defined by $V_0|_{\Omega}=V_1|_{\Omega}-V_2|_{\Omega}$ with
\[
V_1|_{\Omega}(u)\triangleq\int_{\Omega}\int_{\Omega}\log(1+|x-y|)u^2(x)u^2(y)dxdy,~ \forall u\in H^1_0(\Omega),
\]
and
\[
V_2|_{\Omega}(u)\triangleq\int_{\Omega}\int_{\Omega}\log\bigg(1+\frac{1}{|x-y|}\bigg)u^2(x)u^2(y)dxdy,~  \forall u\in H^1_0(\Omega),
\]
Since $\text{meas}(\Omega)<+\infty$, there is a constant $\varrho>0$ such that $\Omega\subset B_\varrho(0)$
and so $$0\leq\log(1+|x-y|)\leq \log(1+2\varrho),~\forall x,y\in\Omega$$
 indicating that $V_1|_{\Omega}$ is well-defined and of class of $C^1$ in $H^1_0(\Omega)$
 endowed with its usual norm.

 Moreover,
 we define the ground state $c_0$
associated with \eqref{mainequation3}
by
$$m_\Omega\triangleq\inf_{u\in \mathcal{N}^{R,\bar{\delta}}_\Omega}J_\Omega^{R,\bar{\delta}}(u),
~\text{where}~
\mathcal{N}^{R,\bar{\delta}}_\Omega  =\{u\in H_0^1(\Omega)\backslash\{0\}:u\neq0,~ (J_\Omega^{R,\bar{\delta}} )^\prime(u)[u]=0  \}.
$$

Now, we are ready to prove Theorem \ref{maintheorem3} as follows.

 \begin{proof}[\textbf{\emph{Proof of Theorem \ref{maintheorem3}}}]
 Let $u_\lambda\in X_\lambda$ be a
ground state solution for Eq. \eqref{mainequation1},
choosing a subsequence $\lambda_n\to+\infty$
as $n\to\infty$, we denoted $\{u_{\lambda_n}\}$ by the subsequence of $\{u_\lambda\}$.
In view of the proof of Theorem \ref{maintheorem1}, we know that
\begin{equation}\label{Proof31}
 \sup_{n\in \mathbb{N}}\|u_{\lambda_n}\|_{H^1(\R^2)} ^2< \frac{2\pi K}{q}~\text{and}~
 \sup_{n\in \mathbb{N}}|u_{\lambda_n}|_\infty \leq C_0
\end{equation}
and
\begin{equation}\label{Proof32}
 \limsup_{n\to\infty}\|u_{\lambda_n}\|_{H^1(\R^2)}^2 <\frac{\pi}{\gamma+\alpha e^{\frac1e}}~\text{and}~
 \sup_{n\in \mathbb{N}}|u_{\lambda_n}|_\infty \leq C_0^\prime
\end{equation}
for $\bar{\delta}=\delta\in(0,2)$ and $\bar{\delta}=2$, respectively.
Moreover, due to the proof of Lemma \ref{bounded},
we deduce that $\|u_{\lambda_n}\|_{X_{\lambda_n}}$
is uniformly bounded in $n\in \mathbb{N}$ since we have showed the facts
\eqref{Ia2} and \eqref{IIa1} are true for all $\lambda\geq\lambda_0$
if $\bar{\delta}=\delta\in(0,2)$, or $\lambda\geq\lambda_0^\prime$ if $\bar{\delta}=2$.
Going to a subsequence if necessary,
there is a $u\in X$
such that $u_{\lambda_n}\rightharpoonup u$ in $X$,
$u_{\lambda_n}\to u$ in $L^s(\R^2)$ for each $2\leq s<\infty$ by Lemma \ref{zz}-(i)
and $u_{\lambda_n}\to u$ a.e. in $\R^2$ as $n\to\infty$.

We claim that $u\equiv0$ in $\Omega^c$.
Otherwise, there is a compact subset $\Theta_u\subset\Omega^c$ with
$\text{dist}(\Theta_u,\partial\Omega^c)>0$
such that $u\neq0$ on $\Theta_u$ and by Fatou's lemma
\begin{equation}\label{concentrating2}
	\liminf_{n\to\infty}\int_{\R^2}u_{\lambda_n}^2dx\geq \int_{\Theta_u}u^2dx>0.
\end{equation}
Moreover, there exists $\varepsilon_0>0$ such that $V(x)\geq \varepsilon_0$ for any
$x\in \Theta_u$ by the assumptions $(V_1)$ and $(V_2)$. Combining \eqref{ff} and \eqref{concentrating2}, one has
\begin{align*}
	c_{\lambda_n}^{R,\bar{\delta}}&= \liminf_{n\to\infty}J^{R,\bar{\delta}}_{\lambda_n}(u_{\lambda_n})
	=\liminf_{n\to\infty}\big[ J_{\lambda_n}^{R,\bar{\delta}}(u_{\lambda_n})-\frac14 (J_{\lambda_n}^{R,\bar{\delta}})^\prime(u_{\lambda_n})[u_{\lambda_n}]\big]  \\
	&\geq   \frac14\liminf_{n\to\infty}\int_{\R^2}\lambda_nV(x)|u_{\lambda_n}|^2dx
	\geq \frac{\varepsilon_0}4\bigg(\int_{\Theta_u}
	u^2dx\bigg)\liminf_{n\to\infty}\lambda_n=+\infty
\end{align*}
violating \eqref{Ia2} and \eqref{IIa1}.
Consequently, $u\in H_0^1(\Omega)$ by the fact that $\partial \Omega$ is smooth.

Recalling $(J_{\lambda_n}^{R,\bar{\delta}})^\prime(u_{\lambda_n})=0$ for each fixed $n\in \mathbb{N}$,
we now claim that $(J_{\Omega}^{R,\bar{\delta}})^\prime(u)=0$. In fact,
for every $\psi\in C_0^\infty(\Omega)$,
 it is very similar to \eqref{2Proof1} and \eqref{2Proof2} that
\begin{equation}\label{propo5e}
\begin{gathered}
 V_{i}'(u_{\lambda_n})[\psi]-V_{i}'(u)[\psi]
   =  4B_i(u_{\lambda_n}^2, u_{\lambda_n}\psi)-4B_i(u^2, u\psi)\hfill\\
 \ \ \ \    =4B_i(u_{\lambda_n}^2, (u_{\lambda_n}-u)\psi)
 +4B_i(u\psi,u_{\lambda_n}^2-u^2) =o_n(1).
 \hfill\\
 \end{gathered}
\end{equation}
By means of the Vitali's Dominated Convergence theorem,
one can easily verify that
\begin{equation}\label{propo5f}
\int_{\R^2} f^{R,\bar{\delta}}(u_{\lambda_n})\psi dx=\int_{\R^2} f^{R,\bar{\delta}}(u )\psi dx+o_n(1).
\end{equation}
With the above two formulas in hand, by using $(V_2)$, we derive
\begin{align*}
  0 &= (J_{\lambda_n}^{R,\bar{\delta}})^\prime(u_{\lambda_n})[\psi] \\
    &  =\int_{\R^2} \nabla u_{\lambda_n}\nabla \psi dx+ V_1^\prime(u_{\lambda_n})[\psi]-V_2^\prime(u_{\lambda_n})[\psi]
- \int_{\R^2} f^{R,\bar{\delta}}(u_{\lambda_n})\psi dx\\
&=\int_{\Omega} \nabla u \nabla \psi dx+ V_1^\prime|_\Omega(u )[\psi]-V_2|_\Omega^\prime(u )[\psi]
- \int_{\R^2} f^{R,\bar{\delta}}(u )\psi dx+o_n(1)\\
&=(J_{\Omega}^{R,\bar{\delta}})^\prime(u )[\psi]+o_n(1),
\end{align*}
which is the claim.
Next, we begin showing that $u\neq0$. Arguing it indirectly and supposing that
$u\equiv0$.
Since $\{u_{\lambda_n}\}$ is
uniformly bounded in $L^2(\R^2)$, and so, recalling $(V_3)$, we can proceed as the calculations in the proof
of Lemma \ref{bounded} to find
a sufficiently large constant $\varrho>0$ such that
$$
\int_{B_\varrho^c(0)\cap \Xi}|u_{\lambda_n}|^2dx\leq \frac{\beta_0}4,~\text{for \emph{n} sufficiently large}.
$$
Adopting $V(x)\geq b$ on $\Xi^c$ by $(V_3)$ and $\|u_{\lambda_n}\|_{X_{\lambda_n}}$
is uniformly bounded, there holds
$$
\int_{B_\varrho^c(0)\cap \Xi^c}|u_{\lambda_n}|^2dx\leq \frac{1}{\lambda_nb} \int_{B_\varrho^c(0)\cap \Xi^c}\lambda_nV(x)|u_{\lambda_n}|^2dx
\leq \frac{\beta_0}4,~\text{for \emph{n} sufficiently large}.
$$
Let us recall that $u_{\lambda_n}\to 0$ along a subsequence in $L^2(B_\varrho(0))$, then
$$
\int_{B_\varrho (0) }|u_{\lambda_n}|^2dx
\leq \frac{\beta_0}4,~\text{for \emph{n} sufficiently large}.
$$
Combining the above three formulas, there would be a contradiction to Lemma \ref{Non-Vanishing}.
So, $u\neq0$.

Finally, the remaining part is to verify $J^{R,\bar{\delta}}_{\Omega}(u)=c^{R,\bar{\delta}}_\Omega$
because each of \eqref{Proof31} and \eqref{Proof32} indicates that
$F^{R,\bar{\delta}}=F$ which yields that $J_\Omega^{R,\bar{\delta}}=J_\Omega$.
Obviously, $\mathcal{N}^{R,\bar{\delta}}_\Omega\subset \mathcal{N}^{R,\bar{\delta}}_{\lambda_n}$
for all fixed $n\in \mathbb{N}$
and so we have that
\begin{align*}
 c^{R,\bar{\delta}}_\Omega&
\ge\liminf_{n\to\infty}
 \big(J_{\lambda_n}^{R,\bar{\delta}}(u_{\lambda_n})-\frac{1}{4} (J_{\lambda_n}^{R,\bar{\delta}})^\prime(u_{\lambda_n})[u_{\lambda_n}]\big)
    \geq J_{\Omega}^{R,\bar{\delta}}(u )-\frac{1}{4} (J_{\Omega}^{R,\bar{\delta}})^\prime(u )[u ]=J_{\Omega}^{R,\bar{\delta}}(u)\geq c_\Omega^{R,\bar{\delta}}
\end{align*}
indicating that $u_{\lambda_n}\to u$ in $X$ and $J^{R,\bar{\delta}}_{\Omega}(u)=c^{R,\bar{\delta}}_\Omega$,
where we have exploited the Fatou's lemma together with $(J_{\Omega}^{R,\bar{\delta}})^\prime(u)=0$ and $u\neq0$.
 The proof is finished.
  \end{proof}

\section{Some further remarks}\label{Sec5}

In this section, we tend to present some further results on Eq. \eqref{mainequation1}
with steep potential well and supercritical exponential growth.
Inspired by \cite[Theorem 1.3]{ZLZ},
one could prove that the assumption $(h_2)$ is unnecessary
to manipulate Theorems \ref{maintheorem1} and \ref{maintheorem3}.
Speaking simply, there are a wider class of nonlinearities
which are suitable for the main results in this article. Hence, we shall mainly discuss how to
relax the restriction associated with $(h_2)$. For this purpose, we suppose that
\begin{itemize}
  \item[$(h_{21})$] There is a constant $\theta>3$ such that $h(t)t\geq \theta H(t)$ for all $t\in \R^+$;
  \item[$(h_{22})$] The function $h(t)/t^2$ is nondecreasing on $t\in \R^+$.
\end{itemize}
Concerning the potential $V$, we need to put forward some additional conditions below
 \begin{itemize}
  \item[$(V_{4}^1)$] $V\in C^1(\R^2,\R)$ and $2 V(x)+(\nabla V,x)\geq0$ for all $x\in\R^2$;
  \item[$(V_{4}^2)$]  $V\in C^1(\R^2,\R)$ and $t \mapsto t^2[2V(tx)-(\nabla V (tx),tx)]$ is nondecreasing on $t\in \R^+$ for all $x\in\R^2$
as well as $V(x)-(\nabla V (x),x)\geq0$ for all $x\in\R^2$.
\end{itemize}

The main results in this section could be stated as follows.

\begin{theorem}\label{maintheorem4}
Let $V$ satisfy $(V_1)-(V_3)$ and $(V_{4}^1)$.
Suppose that the nonlinearity $f$ defined in \eqref{form}
requires $(h_1),(h_3)$ and $(h_{21})$, then for all
$\tau>2$, there are $\alpha^*_1=\alpha^*_1(\tau)>0$ and $\lambda_0^1>0$ such that Eq. \eqref{mainequation1}
has a nonnegative nontrivial solution in $X_\lambda$ for all $\alpha \in (0, \alpha^*_1)$ and $\lambda>\lambda_0^1$.
Moreover, if we suppose that
\begin{itemize}
\item[$(h_4)$] there are consatnts $\xi>0$ and $p>4$ such that $H(t)=\int_0^th(s)ds\geq \xi t^{p}$ for all $t\in[0,1]$,
\end{itemize}
then for every
$\alpha>0$, there exist $\tau_*^1=\tau_*^1(\alpha)>2$, $\bar{\lambda}_0^\prime>0$ and ${\xi}_0^1>0$
such that Eq. \eqref{mainequation1} possesses a nonnegative nontrivial solution in $X_\lambda$ for every $\tau \in [2,\tau_*^1)$,
$\lambda>\bar{\lambda}_0^\prime$ and ${\xi}>{\xi}_0^1$.
\end{theorem}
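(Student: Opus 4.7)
The plan is to adapt the cutoff strategy developed in Sections \ref{Sec3} and \ref{Linftyestimate}, replacing the Nehari-manifold characterization (which depended crucially on the monotonicity in $(h_2)$) by a direct mountain-pass argument. First I would fix $R>0$, work with the cutoff $f^{R,\bar{\delta}}$ from \eqref{fR}, and consider the auxiliary functional $J_\lambda^{R,\bar{\delta}}$. The mountain-pass geometry of Lemma \ref{geometry} and the level estimate of Lemma \ref{mplevel} carry over verbatim: their proofs invoke only $(h_1)$, $(h_3)$, and (for the critical case) $(h_4)$, so Proposition \ref{mp} still produces a $(C)_{c_\lambda^{R,\bar{\delta}}}$-sequence $\{u_n\}\subset X_\lambda$.

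The main obstacle is the boundedness of this sequence. The pointwise inequality $f(t)t\geq 4F(t)$ used throughout Lemma \ref{Vanishing} is now lost. I would first deduce from $(h_{21})$, via the integration by parts
\[
H(t)e^{\alpha t^\tau}=F(t)+\alpha\tau\int_0^t s^{\tau-1}H(s)e^{\alpha s^\tau}\,ds\geq F(t),
\]
the weaker inequality $f(t)t\geq\theta F(t)$ with $\theta>3$. Since $\theta$ can be below $4$, the linear combination $J_\lambda^{R,\bar{\delta}}(u_n)-\tfrac1\theta(J_\lambda^{R,\bar{\delta}})'(u_n)[u_n]$ produces a possibly negative contribution $(\tfrac14-\tfrac1\theta)V_0(u_n)$. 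To absorb it I would couple this with the Pohozaev-type identity for \eqref{mainequation2}: in dimension two the $|\nabla u|^2$-term vanishes, the local potential contributes $\tfrac{\lambda}{2}\int[2V(x)+(\nabla V,x)]u^2\,dx$ (nonnegative by $(V_4^1)$), while the logarithmic convolution yields a computable combination of $V_0(u)$ and $|u|_2^4$. A suitable convex combination of $J_\lambda^{R,\bar{\delta}}(u_n)$, $(J_\lambda^{R,\bar{\delta}})'(u_n)[u_n]$ and this Pohozaev identity then yields a uniform bound on $\|u_n\|_{E_\lambda}$ provided $\lambda\geq\lambda_0^1$ is taken large enough, exactly as in the steep-well mechanism of Lemma \ref{bounded}. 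Hypothesis $(h_{22})$ is then invoked together with $(h_1)$ in the non-vanishing step of Lemmas \ref{Vanishing}--\ref{Non-Vanishing} to guarantee $f(t)=o(t)$ as $t\to 0$ uniformly in $R$; with non-vanishing in hand, the weighted argument of Lemma \ref{bounded} also bounds $\|u_n\|_*$.

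Once boundedness in $X_\lambda$ is established, I would proceed as in the proof of Theorem \ref{maintheorem2}: extract a weak limit $u_n\rightharpoonup u$, use the complete continuity of $V_2$ (Lemma \ref{zz}(v)) and the compactness provided by Lemma \ref{compact} (both untouched by the change of hypotheses) to identify $u$ as a critical point of $J_\lambda^{R,\bar{\delta}}$, and rule out $u\equiv 0$ via the uniform $L^2$-lower bound. Without $(h_2)$, the identification $m_\lambda^{R,\bar{\delta}}=c_\lambda^{R,\bar{\delta}}=d_\lambda^{R,\bar{\delta}}$ in Lemma \ref{characterization} is not available, which is precisely why the conclusion of Theorem \ref{maintheorem4} is only ``nontrivial solution'' rather than ``ground state solution''. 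Finally, the uniform $L^\infty$-estimates of Lemmas \ref{Ib} and \ref{IIb} rely only on $(h_1)$, $(h_3)$ and the $H^1$-bound produced in the boundedness step, so they transfer directly; choosing $R=C_0$ in case (I) and $R=\max\{C_0',e\}$ in case (II) then turns $u$ into a nonnegative nontrivial solution of Eq. \eqref{mainequation1}.

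The step I expect to be hardest is the derivation and correct use of the planar Pohozaev identity with logarithmic convolution: identifying exactly which linear combination with $(J_\lambda^{R,\bar{\delta}})'(u_n)[u_n]$ both absorbs the indefinite $V_0$-term and isolates the sign-definite quantity $\lambda\int[2V+(\nabla V,x)]u^2\,dx$ is the technical point at which $(V_4^1)$ is indispensable and which allows the weaker AR exponent $\theta>3$ (rather than the customary $\theta>4$) to suffice.
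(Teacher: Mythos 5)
Your proposal correctly identifies the two key ingredients—dropping the Nehari characterization (hence ``nontrivial'' rather than ``ground state'') and absorbing the indefinite $V_0$-term via a Poho\u{z}aev-type identity in which $(V_4^1)$ makes $\tfrac12\int\lambda[2V+(\nabla V,x)]u^2\,dx$ nonnegative—but there is a genuine gap in how you plan to use that identity. You propose to form ``a suitable convex combination of $J_\lambda^{R,\bar{\delta}}(u_n)$, $(J_\lambda^{R,\bar{\delta}})'(u_n)[u_n]$ and this Pohozaev identity'' where $\{u_n\}$ is the mountain-pass $(C)$-sequence from Proposition \ref{mp}. But the Poho\u{z}aev identity in Lemma \ref{Pohozaev} is proved for critical points of $J_{\lambda,\mu}^{R,\bar{\delta}}$, not for almost-critical sequences: a $(C)$-sequence gives you control of $(J_\lambda^{R,\bar{\delta}})'(u_n)[u_n]$, but nothing like an asymptotic version of $P_\lambda^{R,\bar{\delta}}(u_n)\to 0$, because the Poho\u{z}aev functional is not of the form $J'(u)[w]$ for a bounded admissible test direction $w$. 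So the quantity you would like to add to the convex combination is simply not at your disposal along $\{u_n\}$, and the boundedness step collapses.

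This is exactly the obstruction the paper circumvents with Jeanjean's monotonicity trick (Proposition \ref{Jeanjean}), which is the central device in Theorem \ref{maintheorem6}: one perturbs to the one-parameter family $J_{\lambda,\mu}^{R,\bar{\delta}}=A-\mu B$ with $\mu\in[\tfrac12,1]$ (after modifying the weight to $\log(2+|x|)$ in $\|\cdot\|_*$ so that $B\geq 0$ and $B(u)\to+\infty$ or $A(u)\to+\infty$, as required in Lemma \ref{propo}), obtains for a.e.\ $\mu$ a \emph{bounded} $(PS)_{c_\mu}$ sequence, extracts an actual critical point $u_\mu\neq 0$ with $J_{\lambda,\mu}^{R,\bar{\delta}}(u_\mu)\leq c_{\lambda,\mu}^{R,\bar{\delta}}$ (Lemma \ref{propo5}), and then applies the Poho\u{z}aev identity \emph{to these genuine solutions} $u_{\mu_n}$ as $\mu_n\to 1^-$ to deduce the uniform bound \eqref{22Propo2e} and pass to the limit. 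Only at this last stage, with actual critical points in hand, do the estimates you sketched (on $|u_n|_2^4$, on $\int F^{R,\bar\delta}(u_n)$, etc.) go through. Your $L^\infty$-estimate step and the final choice $R=C_0$ or $R=\max\{C_0',e\}$ are fine and do carry over.

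Two smaller points: $(h_{22})$ is not a hypothesis of Theorem \ref{maintheorem4} (it enters only in Theorems \ref{maintheorem5} and \ref{maintheorem7}), and the fact that $f(t)=o(t)$ as $t\to 0$ uniformly in $R$ follows from $(h_1)$ alone, so the reference to $(h_{22})$ is misplaced. Also, with $\theta\in(3,4)$ from $(h_{21})$ alone one cannot derive $f(t)t\geq 4F(t)$, so the estimate \eqref{Vanishing2} is indeed lost—you diagnose that correctly, but the replacement you propose (asymptotic Poho\u{z}aev along the $(C)$-sequence) is the piece that is not available without the monotonicity trick.
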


\begin{theorem}\label{maintheorem5}
Let $V$ satisfy $(V_1)-(V_3)$ and $(V_{4}^1)-(V_{4}^2)$.
Suppose that the nonlinearity $f$ defined in \eqref{form}
requires $(h_1),(h_3)$ and $(h_{21})-(h_{22})$, then for all
$\tau>2$, there are $\alpha^*_2=\alpha^*_2(\tau)>0$ and $\lambda_0^2>0$ such that Eq. \eqref{mainequation1}
has a nonnegative ground state solution in $X_\lambda$ for all $\alpha \in (0, \alpha^*_2)$ and $\lambda>\lambda_0^2$.
Moreover, if we suppose that
\begin{itemize}
\item[$(h_4)$] there are constants $\xi>0$ and $p>4$ such that $H(t)=\int_0^th(s)ds\geq \xi t^{p}$ for all $t\in[0,1]$,
\end{itemize}
then for every
$\alpha>0$, there exist $\tau_*^2=\tau_*^2(\alpha)>2$, $\tilde{\lambda}_0^\prime>0$ and ${\xi}_0^2>0$
such that Eq. \eqref{mainequation1} possesses a nonnegative ground state solution in $X_\lambda$ for every $\tau \in [2,\tau_*^2)$,
$\lambda>\tilde{\lambda}_0^\prime$ and ${\xi}>{\xi}_0^2$.
\end{theorem}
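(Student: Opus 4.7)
The plan is to adapt the cutoff-plus-mountain-pass framework of Theorems \ref{maintheorem1}--\ref{maintheorem2}, replacing the Nehari manifold (which relied on $(h_2)$) by a Pohozaev-Nehari manifold built from $(h_{22})$ together with the structural hypotheses $(V_4^1)$--$(V_4^2)$ on $V$. For fixed $R>0$, I would work with the auxiliary functional $J_\lambda^{R,\bar{\delta}}$ from Section \ref{Sec3}. The mountain-pass geometry of Lemma \ref{geometry} transfers verbatim since it never invoked $(h_2)$, and Proposition \ref{mp} delivers a Cerami sequence $\{u_n\}\subset X_\lambda$ at level $c_\lambda^{R,\bar{\delta}}$.

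Uniform boundedness of $\{u_n\}$ in $E_\lambda$ now rests on $(h_{21})$: forming the combination $J_\lambda^{R,\bar{\delta}}(u_n)-\tfrac{1}{\theta}(J_\lambda^{R,\bar{\delta}})'(u_n)[u_n]$, the positive factor $\tfrac{1}{2}-\tfrac{1}{\theta}>0$ (since $\theta>3$) controls the kinetic and $\lambda V$-weighted $L^2$ terms, the $V_0$ contribution is handled by splitting $V_0=V_1-V_2$ and estimating $V_2$ via Lemma \ref{zz}-(iv)-(v), while the remaining nonlinear integral is absorbed using $h(t)t\ge\theta H(t)$ after taking into account the exponential factor $e^{\alpha t^{\tau}}$ in $f$. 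Non-vanishing (Lemmas \ref{Lionslemma}, \ref{Vanish}), the large-$\lambda$ concentration argument of Lemma \ref{bounded}, and the logarithmic-weight estimate \eqref{bounded7} then yield boundedness in $X_\lambda$ for $\lambda$ large, independently of $R$ up to enlarging the thresholds.

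To promote the weak limit to a \emph{ground state} rather than just a nontrivial solution (as in Theorem \ref{maintheorem4}), I would introduce the Pohozaev-Nehari manifold. Multiplying the equation by $(x,\nabla u)$ and integrating in $\R^2$ — where the kinetic term drops because $N-2=0$ — yields the Pohozaev functional
\begin{equation*}
P_\lambda^{R,\bar{\delta}}(u)=\int_{\R^2}\lambda\bigl[2V(x)+(\nabla V(x),x)\bigr]u^2\,dx+2V_0(u)+\tfrac{1}{2}|u|_2^4-4\int_{\R^2}F^{R,\bar{\delta}}(u)\,dx,
\end{equation*}
where the quartic $|u|_2^4$ arises from the logarithmic drift $V_0(u(\cdot/t))=t^4V_0(u)+t^4\log t\,|u|_2^4$ under the scaling $u_t(x)=u(x/t)$. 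Set
\begin{equation*}
\mathcal{M}_\lambda^{R,\bar{\delta}}=\bigl\{u\in X_\lambda\setminus\{0\}:(J_\lambda^{R,\bar{\delta}})'(u)[u]+P_\lambda^{R,\bar{\delta}}(u)=0\bigr\},\qquad m_\lambda^{R,\bar{\delta}}=\inf_{\mathcal{M}_\lambda^{R,\bar{\delta}}}J_\lambda^{R,\bar{\delta}}.
\end{equation*}
The central fibering-map lemma to prove is: for every $u\in X_\lambda\setminus\{0\}$ there exists a \emph{unique} $t_u>0$ with $u(\cdot/t_u)\in\mathcal{M}_\lambda^{R,\bar{\delta}}$, and $t_u$ is the global maximum of $t\mapsto J_\lambda^{R,\bar{\delta}}(u(\cdot/t))$. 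The hypotheses $(h_{22})$ and the monotonicity in $(V_4^2)$ are precisely what is needed to extract the sign-change structure of $\frac{d}{dt}J_\lambda^{R,\bar{\delta}}(u(\cdot/t))$ through its single zero, while $(V_4^1)$ and the inequality $V-(\nabla V,x)\ge 0$ from $(V_4^2)$ ensure positivity of the relevant quadratic contribution. A variant of Lemma \ref{characterization} then gives $m_\lambda^{R,\bar{\delta}}=c_\lambda^{R,\bar{\delta}}$, so the weak limit of the Cerami sequence lies on $\mathcal{M}_\lambda^{R,\bar{\delta}}$ and realizes the infimum.

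Finally, the $L^\infty$-bootstrap of Section \ref{Linftyestimate} applies verbatim to produce constants $C_0$ (resp.\ $C_0'$) independent of $R$; setting $R=C_0$ or $R=\max\{C_0',e\}$ ensures $|u_R|_\infty\le R$, so $u_R$ in fact solves \eqref{mainequation1}. The main obstacle will be the fibering-map analysis: verifying that $t\mapsto J_\lambda^{R,\bar{\delta}}(u(\cdot/t))$ admits a unique critical point which is a global maximum requires a careful simultaneous accounting of the scale-invariant kinetic term, the $\lambda V$-weighted $L^2$ term (whose monotonicity is governed by $(V_4^2)$), the quartic $V_0$ together with its logarithmic drift $\tfrac{1}{4}|u|_2^4\log t$, and the exponential integral $\int F^{R,\bar{\delta}}(u(\cdot/t))\,dx$ (governed by $(h_{22})$), and checking that these monotonicities align so that the manifold is genuinely a natural constraint and the minimax identification $m_\lambda^{R,\bar{\delta}}=c_\lambda^{R,\bar{\delta}}$ holds with the required uniformity in $R$ and $\lambda$.
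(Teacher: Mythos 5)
Your overall architecture (cutoff $f^{R,\bar\delta}$, Nehari--Poho\u{z}aev manifold, uniform $L^\infty$-estimate, then $R=C_0$) matches the paper's, but the technical core of your proposal has genuine errors. First, the scaling. The paper's fibering map is $t\mapsto J_\lambda^{R,\bar\delta}(t^2u(t\cdot))$ (Lemma \ref{unique} in Section \ref{Sec5}), whose derivative at $t=1$ is exactly $G_\lambda^{R,\bar\delta}(u)=2(J_\lambda^{R,\bar\delta})'(u)[u]-P_\lambda^{R,\bar\delta}(u)$; this is the constraint defining $\mathcal{M}_{\lambda,V}^{R,\bar\delta}$. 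Your dilation $u(\cdot/t)$ leaves the kinetic term invariant in dimension two, so $\frac{d}{dt}J_\lambda^{R,\bar\delta}(u(\cdot/t))\big|_{t=1}$ produces only the Poho\u{z}aev functional $P$ and not the combination $(J)'(u)[u]+P(u)$ you place in the manifold: your manifold is not the natural constraint of your fibering map, the sign of the Nehari part is reversed relative to the paper's, and with the kinetic term frozen the map $t\mapsto J(u(\cdot/t))$ does not tend to $0$ as $t\to0^+$, so the unique-global-maximum analysis you describe cannot be run as stated.

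Second, your boundedness argument is broken precisely where $(h_{21})$ is weak. With only $\theta>3$ the combination $J-\tfrac1\theta J'[\cdot]$ carries the coefficient $\tfrac14-\tfrac1\theta$ on $V_0=V_1-V_2$, which is negative for $\theta\in(3,4)$, and the unbounded positive part $V_1$ then enters with the wrong sign; it cannot be ``absorbed'' by estimating $V_2$. The paper's whole point in introducing the Poho\u{z}aev identity here is that in $J_\lambda^{R,\bar\delta}-\tfrac14G_\lambda^{R,\bar\delta}$ the $V_0$-terms cancel identically, leaving $\tfrac18\int\lambda[2V+(\nabla V,x)]u^2+\tfrac1{16}|u|_2^4+\tfrac12\int[f^{R,\bar\delta}(u)u-3F^{R,\bar\delta}(u)]$, nonnegative by $(V_4^1)$ and $(h_{21})$; this yields the bounds on $|u_n|_2$ and $\int F^{R,\bar\delta}(u_n)$ that drive Lemma \ref{5bounded}. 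Third, you assert a minimax identification $m_\lambda^{R,\bar\delta}=c_\lambda^{R,\bar\delta}$ and claim the weak limit of a Cerami sequence ``lies on $\mathcal{M}$ and realizes the infimum''; neither is justified (weak limits need not stay on the constraint without strong convergence), and the paper avoids this entirely by working with a minimizing sequence on $\mathcal{M}_{\lambda,V}^{R,\bar\delta}$ directly, proving compactness via Lemmas \ref{5bounded} and \ref{control}, and then showing the minimizer is a genuine critical point as in \cite[Lemma 4.11]{CT1}. You would need to replace your fibering map and manifold by the paper's $t^2u(t\cdot)$ version and redo the boundedness through $J-\tfrac14G$ for the argument to close.
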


\begin{remark}
 It is simple to observe that $(h_2)$ is definitely stronger than $(h_{21})$ and $(h_{22})$.
 Chen \emph{et al.} \cite{CSTW} obtained the existence of nontrivial solutions
 for Eq. \eqref{mainequation2} with $\la\equiv0$ and $\theta=3$ in $(h_{2}^1)$,
 but the novelty is that we consider $\lambda>0$
 and $f$ can be admitted a supercritical exponential growth.
\end{remark}

\begin{remark}
There are a lot of functions $V$ satisfying $(V_1)-(V_3)$ and $(V_4^1)-(V_4^2)$. For example,
in the sense of ignoring a zero measure set, we define $V(x)=V(|x|)$ for a.e. $x\in\R^2$ given by
\[
V(x)=\left\{
       \begin{array}{ll}
         0, & 0\leq |x|\leq1. \\
         |x|, & |x|\geq1.
       \end{array}
     \right.
\]
\end{remark}

Due to the discussions in the Introduction, to accomplish the proofs of Theorems \ref{maintheorem4}
and \ref{maintheorem5}, we have to establish the following two results.

\begin{theorem}\label{maintheorem6}
Let $V$ satisfy $(V_1)-(V_3)$ and $(V_{4}^1)$.
Suppose that the nonlinearity $f$ defined in \eqref{form}
requires $(h_1),(h_3)$ and $(h_{21})$, then for each fixed $R>0$, there is a
$\bar{\lambda}_0(R)>0$ dependent of $R$ such that Eq. \eqref{mainequation2}
with $\bar{\delta}=\delta$
has a nonnegative nontrivial solution in $X_\lambda$ for all $\lambda>\bar{\lambda}_0(R)$.
Moreover, if in addition we suppose that $(h_4)$,
then there exist $\bar{\lambda}_0^\prime(R)>0$ and ${\bar{\xi}}_0(R)>0$
such that Eq. \eqref{mainequation2} with $\bar{\delta}=2$ possesses a nonnegative nontrivial solution in $X_\lambda$ for all
$\lambda>\bar{\lambda}_0^\prime(R)$ and ${\xi}>{\bar{\xi}}_0(R)$.
\end{theorem}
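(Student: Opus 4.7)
Theorem \ref{maintheorem6} will be proved by applying the mountain-pass theorem (Proposition \ref{mp}) directly to $J_\lambda^{R,\bar{\delta}}$, producing a $(C)_{c_\lambda^{R,\bar{\delta}}}$ sequence, bounding it in $X_\lambda$, and extracting a weak limit which is identified as the sought nonnegative nontrivial critical point. Because we only claim a nontrivial solution and no longer a ground state, the Nehari characterization of Lemma \ref{characterization}---which crucially uses the monotonicity $(h_2)$---is no longer needed, and the Ambrosetti--Rabinowitz-type assumption $(h_{21})$ together with the potential condition $(V_4^1)$ will compensate for its absence.

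\textbf{Mountain-pass setup.} The geometric estimate (Lemma \ref{geometry}) and the upper bound on the mountain-pass level (Lemma \ref{mplevel}) depend only on the growth information from $(h_1)$, $(h_3)$ and (in the critical-supercritical case) $(h_4)$; they transfer verbatim. Thus, for every $\lambda \geq 1$, Proposition \ref{mp} yields a sequence $\{u_n\} \subset X_\lambda$ such that $J_\lambda^{R,\bar{\delta}}(u_n) \to c_\lambda^{R,\bar{\delta}}$ and $(1+\|u_n\|_{X_\lambda})(J_\lambda^{R,\bar{\delta}})'(u_n) \to 0$, with $A \leq c_\lambda^{R,\bar{\delta}} \leq c^{R,\bar{\delta}}$ (or the sharper level threshold when $\bar{\delta} = 2$, provided $\xi > \bar{\xi}_0(R)$).

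\textbf{Main obstacle: boundedness of the $(C)$ sequence without $(h_2)$.} The core difficulty is that the algebraic identity $f^{R,\bar{\delta}}(t)t - 4F^{R,\bar{\delta}}(t) \geq 0$ employed in Lemma \ref{Vanishing} is no longer available. However, since the exponential factor of $f^{R,\bar{\delta}}$ is nondecreasing on $\R^+$, assumption $(h_{21})$ gives $f^{R,\bar{\delta}}(t)t \geq \theta F^{R,\bar{\delta}}(t)$ for all $t\geq 0$ with $\theta > 3$. When $\theta \geq 4$, the linear combination $J_\lambda^{R,\bar{\delta}}(u_n) - \theta^{-1}(J_\lambda^{R,\bar{\delta}})'(u_n)[u_n]$ controls $\|u_n\|^2_{E_\lambda}$ once the negative part of $V_0(u_n)$ is absorbed through Lemma \ref{zz}(iv). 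When $3 < \theta < 4$ the $V_0$-coefficient is negative, and this is where $(V_4^1)$ is indispensable: differentiating $t \mapsto J_\lambda^{R,\bar{\delta}}(u(\cdot/t))$ at $t = 1$ (the kinetic term being scale-invariant in dimension two) produces the Pohozaev-type functional
\[
P_\lambda(u) = \frac{\lambda}{2}\int_{\R^2}\!\big[2V(x)+(\nabla V,x)\big]u^2\,dx + V_0(u) + \tfrac{1}{4}|u|_2^4 - 2\!\int_{\R^2}\! F^{R,\bar{\delta}}(u)\,dx,
\]
whose first term is nonnegative by $(V_4^1)$. Testing $(J_\lambda^{R,\bar{\delta}})'(u_n)$ along the dilation direction $-x\cdot\nabla u_n$ (with a truncation $\phi_k(x) = \phi(x/k)$, since $x\cdot\nabla u_n$ need not a priori belong to $X_\lambda$) yields an approximate Pohozaev relation $P_\lambda(u_n) = o_n(1)(1+\|u_n\|_{X_\lambda})$. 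Combining $J_\lambda^{R,\bar{\delta}}(u_n)$, $(J_\lambda^{R,\bar{\delta}})'(u_n)[u_n]$ and $P_\lambda(u_n)$ with weights chosen so that both $\|u_n\|^2_{E_\lambda}$ and $V_0(u_n)$ acquire strictly positive coefficients produces the uniform $E_\lambda$-bound. This Pohozaev step is the principal obstacle, the subtlety being the nontrivial interaction of the dilation with the logarithmic kernel (which generates the $\tfrac{1}{4}|u|_2^4$ term) and with the weighted norm of $X_\lambda$.

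\textbf{From bounded sequence to solution.} The remaining steps parallel Section \ref{Sec3}. Non-vanishing is established exactly as in Lemmas \ref{Vanishing}--\ref{Non-Vanishing}, with $(h_{21})$ replacing $(h_2)$ in the algebraic step (the key inequality $f^{R,\bar{\delta}}(t)t - 4F^{R,\bar{\delta}}(t) \geq (\theta-4)F^{R,\bar{\delta}}(t)$ being either nonnegative or manageable via $P_\lambda$). The large-$\lambda$ regime of Lemma \ref{bounded} forces the concentration sequence $\{y_n\}$ to remain in a fixed ball, giving the thresholds $\bar{\lambda}_0(R)$ and $\bar{\lambda}_0'(R)$, and the log-weight seminorm $|u_n|_*$ is controlled by $V_1(u_n)$ as in \eqref{bounded7}. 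Finally, passage to the weak limit $u\in X_\lambda$ combined with Lemmas \ref{zz}, \ref{yy} and \ref{compact} identifies $u$ as a critical point of $J_\lambda^{R,\bar{\delta}}$; testing against $u^-$ together with $f^{R,\bar{\delta}}|_{(-\infty,0]} \equiv 0$ gives $u \geq 0$, while the non-vanishing yields $u \not\equiv 0$, completing the proof.
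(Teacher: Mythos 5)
Your plan breaks down at the Pohozaev step, and the breakdown is the reason the paper takes a genuinely different route. You propose to test the almost-critical condition $(J_\lambda^{R,\bar\delta})'(u_n)\to 0$ in $X_\lambda^{-1}$ against a truncated dilation direction $\phi(\cdot/k)\,x\cdot\nabla u_n$ to obtain an approximate Pohozaev relation $P_\lambda(u_n)=o_n(1)(1+\|u_n\|_{X_\lambda})$. This cannot be carried out from the $(C)$ condition alone: the $(C)$ estimate only controls $J'(u_n)[v]$ for $v$ with bounded $X_\lambda$-norm, and $\phi(\cdot/k)\,x\cdot\nabla u_n$ does not lie in $X_\lambda$ in any controlled way --- its $X_\lambda$-norm would involve second derivatives of $u_n$ and weighted quantities like $\int |x|\,|\nabla u_n|^2\,dx$, neither of which is bounded by $\|u_n\|_{X_\lambda}$. (This is the generic obstruction to proving Pohozaev identities for mere Palais--Smale sequences; when $u$ is an actual weak solution one bootstraps its regularity via elliptic theory before integrating against $x\cdot\nabla u$, but no such regularity is available along a $(C)$ sequence whose boundedness is precisely what one is trying to establish.)

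The paper circumvents this by not working with a single $(C)$ sequence at all. It invokes Jeanjean's monotonicity trick (Proposition \ref{Jeanjean}): the functional is embedded in a one-parameter family $J^{R,\bar\delta}_{\lambda,\mu}=A-\mu B$, $\mu\in[\tfrac12,1]$, and for a.e.\ $\mu$ one already gets a \emph{bounded} $(PS)_{c_\mu}$ sequence for free, hence a genuine nontrivial critical point $u_\mu$. Since $u_\mu$ is an actual solution, the Pohozaev identity of Lemma \ref{Pohozaev} holds exactly, and the combination of $2(J_{\lambda,\mu_n})'(u_{\mu_n})[u_{\mu_n}]-P_{\lambda,\mu_n}(u_{\mu_n})=0$ with $(h_{21})$ and $(V_4^1)$ delivers the uniform bound \eqref{22Propo2e} as $\mu_n\to 1^-$. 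A further detail you omit: to make Jeanjean's splitting applicable with $B\geq 0$ controlling $|u|_2$, the paper changes the log weight from $\log(1+|x|)$ to $\log(2+|x|)$ so that $\|u\|_*^2\geq(\log 2)|u|_2^2$; your argument operates in the original $X_\lambda$, where the corresponding control near the origin fails. Your description of the mountain-pass geometry, the non-vanishing step, the role of the level threshold for $\bar\delta=2$, and the final passage to the limit are all in accord with the paper; the missing idea is that one must first produce exact critical points to justify the Pohozaev input, which is exactly what Jeanjean's theorem supplies.
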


\begin{theorem}\label{maintheorem7}
Let $V$ satisfy $(V_1)-(V_3)$ and $(V_{4}^1)-(V_{4}^2)$.
Suppose that the nonlinearity $f$ defined in \eqref{form}
requires $(h_1),(h_3)$ and $(h_{21})-(h_{22})$, then for each fixed $R>0$, there is a
$\tilde{\lambda}_0(R)>0$ dependent of $R$ such that Eq. \eqref{mainequation2}
with $\bar{\delta}=\delta$
has a nonnegative ground state solution in $X_\lambda$ for all $\lambda>\tilde{\lambda}_0(R)$.
Moreover, if in addition we suppose that $(h_4)$,
then there exist $\tilde{\lambda}^\prime(R)>0$ and $\tilde{\xi}_0(R)>0$
such that Eq. \eqref{mainequation2} with $\bar{\delta}=2$ admits a nonnegative ground state solution in $X_\lambda$ for all
$\lambda>\tilde{\lambda}_0^\prime(R)$ and ${\xi}> \tilde{\xi}_0(R)$.
\end{theorem}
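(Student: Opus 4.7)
The plan is to run the scheme of Theorem \ref{maintheorem2} on the cutoff equation \eqref{mainequation2}, but with the Nehari manifold (which was a natural constraint there because of $(h_2)$) replaced by a Pohozaev--Nehari--type manifold that exploits the weaker conditions $(V_4^1)$--$(V_4^2)$ and $(h_{21})$--$(h_{22})$. The uniform $L^\infty$ bounds of Lemmas \ref{Ib} and \ref{IIb} will then allow us to transfer the ground state from the cutoff problem to \eqref{mainequation1} exactly as in the proof of Theorem \ref{maintheorem1}.

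For fixed $R>0$ and large $\lambda$, work with the functional $J_\lambda^{R,\bar{\delta}}$ and introduce the one-parameter scaling $u_t(x):=u(x/t)$, $t>0$. Using the identity
\[
V_0(u_t) = t^4 V_0(u) + t^4 \log t \cdot |u|_2^4,
\]
compute the Pohozaev--Nehari functional
\[
G_\lambda^{R,\bar{\delta}}(u) := \frac{d}{dt}\bigg|_{t=1} J_\lambda^{R,\bar{\delta}}(u_t) = \frac{1}{2}\int_{\R^2}\lambda\bigl[2V+(\nabla V,x)\bigr]u^2\,dx + V_0(u) + \frac{1}{4}|u|_2^4 - 2\int_{\R^2} F^{R,\bar{\delta}}(u)\,dx,
\]
and set $\mathcal{M}_\lambda^{R,\bar{\delta}}:=\{u\in X_\lambda\setminus\{0\}:G_\lambda^{R,\bar{\delta}}(u)=0\}$. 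Assumption $(V_4^1)$ makes the first summand nonnegative and, combined with $(V_4^2)$ and the monotonicity of $h(t)/t^2$ from $(h_{22})$, permits one to show that $t\mapsto J_\lambda^{R,\bar{\delta}}(u_t)$ has a unique maximum at some $t_u>0$ with $u_{t_u}\in \mathcal{M}_\lambda^{R,\bar{\delta}}$. A Pohozaev identity, justified via a cutoff-and-multiplier argument for the (sub)critical nonlinearity $f^{R,\bar{\delta}}$, yields that every weak solution of Eq. \eqref{mainequation2} lies on $\mathcal{M}_\lambda^{R,\bar{\delta}}$, so the manifold is a natural constraint.

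Next, define $\tilde{m}_\lambda^{R,\bar{\delta}}:=\inf_{\mathcal{M}_\lambda^{R,\bar{\delta}}}J_\lambda^{R,\bar{\delta}}$. Following Lemma \ref{characterization}, check that this infimum coincides with the mountain-pass level $c_\lambda^{R,\bar{\delta}}$ and extract a corresponding Cerami sequence $\{u_n\}$, projecting it onto $\mathcal{M}_\lambda^{R,\bar{\delta}}$ via the uniqueness of the scaling factor. Boundedness of $\{u_n\}$ in $X_\lambda$ for $\lambda$ large follows the pattern of Lemmas \ref{Vanishing}--\ref{bounded}, but now, in place of $(h_2)$, use the Ambrosetti--Rabinowitz-type inequality $f(t)t\geq\theta F(t)$ (with $\theta>3$) derived by integration from $(h_{21})$, combined with the identity $G_\lambda^{R,\bar{\delta}}(u_n)=o(1)$ and the nonnegativity coming from $(V_4^1)$; the steep-well term $\lambda V$ then absorbs the negative $V_2$ contribution exactly as in the proof of Lemma \ref{bounded}. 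The compactness results in Lemmas \ref{compact}, \ref{Lionslemma}, \ref{Vanish} together with the Trudinger--Moser inequality \eqref{TM2} apply unchanged; passing to the weak limit produces a nontrivial solution $u\in\mathcal{M}_\lambda^{R,\bar{\delta}}$ that achieves $\tilde{m}_\lambda^{R,\bar{\delta}}$, hence is a ground state of Eq. \eqref{mainequation2}.

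Finally, the uniform $L^\infty$ bounds of Lemmas \ref{Ib} and \ref{IIb} transfer with essentially no change, since their proofs used only the $L^2$-bound on $f^{R,\bar{\delta}}(u_R)$ and the pointwise estimate on $g_{u_R}$, both of which remain available. Choosing $\alpha_2^*$ (respectively $\tau_*^2$ and $\tilde{\xi}_0(R)$) so that $R$ exceeds the resulting bound yields a nonnegative ground state of Eq. \eqref{mainequation1} for $\lambda>\tilde{\lambda}_0(R)$ (respectively $\lambda>\tilde{\lambda}_0^\prime(R)$ and $\xi>\tilde{\xi}_0(R)$). The most delicate step, and the main obstacle, is the second one: since the logarithmic Poisson term is not scale-invariant, $G_\lambda^{R,\bar{\delta}}$ carries the extra non-homogeneous contribution $\tfrac14|u|_2^4$ coming from the $\log t$ factor in the transformation law of $V_0$. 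Verifying uniqueness of the Pohozaev--Nehari projection $t_u$ and the global-maximum property of $t\mapsto J_\lambda^{R,\bar{\delta}}(u_t)$ under only $(V_4^2)$ and $(h_{22})$ requires a careful analysis of this auxiliary function, ensuring that the $\log t$-generated term is compatible with the prescribed monotonicity hypotheses and does not destroy the single-critical-point structure on rays that underlies the whole argument.
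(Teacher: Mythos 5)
Your overall architecture (replace the Nehari manifold by a Poho\v{z}aev--Nehari constraint, rerun the compactness/steep-well machinery of Lemmas \ref{Vanishing}--\ref{bounded}, then reuse the uniform $L^\infty$ estimates) is the right one, and your treatment of boundedness and of the final transfer to Eq.\ \eqref{mainequation1} matches the paper in spirit. But the central construction is broken: you chose the wrong scaling, and the obstacle you flag at the end as ``delicate'' is in fact fatal for that choice. With the pure dilation $u_t(x)=u(x/t)$ in $\R^2$ one has $\int|\nabla u_t|^2=\int|\nabla u|^2$ (dilation-invariant), $\int F^{R,\bar\delta}(u_t)\,dx=t^2\int F^{R,\bar\delta}(u)\,dx$ (only quadratic in $t$, since the amplitude is unchanged), and $V_0(u_t)=t^4V_0(u)+t^4\log t\,|u|_2^4$. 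Hence
\[
J_\lambda^{R,\bar\delta}(u_t)=\frac12\int_{\R^2}|\nabla u|^2dx+\frac{\lambda t^2}{2}\int_{\R^2}V(tx)u^2dx+\frac{t^4}{4}V_0(u)+\frac{t^4\log t}{4}|u|_2^4-t^2\int_{\R^2}F^{R,\bar\delta}(u)dx\longrightarrow+\infty
\]
as $t\to+\infty$, because the positive $t^4\log t$ term dominates everything else. So the fibering map $t\mapsto J_\lambda^{R,\bar\delta}(u_t)$ has no global maximum, the ray $\{u_t\}$ need not even meet your manifold, and the ``unique maximum at $t_u$ with $u_{t_u}\in\mathcal M$'' claim, the identification of $\inf_{\mathcal M}J$ with the mountain-pass level, and the projection of a Cerami sequence onto $\mathcal M$ all collapse. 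Note also that the hypotheses you are supposed to exploit are visibly calibrated to a different scaling: $(V_4^2)$ involves $t^2[2V(tx)-(\nabla V(tx),tx)]$ and $(h_{22})$ involves $h(t)/t^2$, both of which arise from an amplitude factor $t^2$, not from pure dilation.

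The paper instead uses the combined scaling $u\mapsto t^2u(t\cdot)$ and the constraint $G_\lambda^{R,\bar\delta}(u)=2(J_\lambda^{R,\bar\delta})'(u)[u]-P_\lambda^{R,\bar\delta}(u)=0$ (your $G$ is only the Poho\v{z}aev part $P_{\lambda,1}^{R,\bar\delta}$). Under $t^2u(t\cdot)$ the Dirichlet energy scales like $t^4$, $V_0$ picks up $-t^4\log t\,|u|_2^4$ (note the sign, cf.\ \eqref{Propo2a0}), and $\int F^{R,\bar\delta}(t^2u(t\cdot))dx=t^{-2}\int F^{R,\bar\delta}(t^2u)dx$ is superquartic by $(h_{21})$ with $\theta>3$; the fibering map then tends to $-\infty$ and, thanks to the monotonicity of $f^{R,\bar\delta}(t)/t^2$ and of $[f^{R,\bar\delta}(t)t-F^{R,\bar\delta}(t)]/t^3$ together with $(V_4^2)$, has a unique critical point (Lemma \ref{unique} of Section \ref{Sec5}), which is what makes $\mathcal M_{\lambda,V}^{R,\bar\delta}$ usable. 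A further, smaller, logical slip: the fact that every weak solution satisfies $G=0$ does not by itself make $\{G=0\}$ a natural constraint (that minimizers of $J|_{\mathcal M}$ are critical points of $J$ is a separate argument, handled in the paper via \cite[Lemma 4.11]{CT1}). To repair your proof you should replace $u(x/t)$ by $t^2u(tx)$, take $G=2J'(u)[u]-P(u)$, and then your remaining steps (the $J-\frac14G$ energy estimates for boundedness, the Lions-type nonvanishing, the steep-well absorption of $V_2$, and the uniform $L^\infty$ bounds) go through essentially as you describe.
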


\begin{remark}
Without considering the computations in Theorem \ref{maintheorem6} or \ref{maintheorem7},
combined the arguments in Section \ref{Sec3} and \cite[Theorem 1.1]{CT1},
maybe one could get the desired results. Nevertheless,
the most challenging difficulty is how to investigate the $L^\infty$-estimate of the solution
obtained by Theorem \ref{maintheorem6} or \ref{maintheorem7}. To be more eloquent, because of the lack of \eqref{Vanishing2},
we must come up with some new analytic tricks to receive the analogous
estimate in \eqref{Ia1} or \eqref{IIa4}. Let us point out here that although
it can be obtained easily with the help of the method in \cite{Shen} which should make the nonlinearity $f$ have to be imposed on
some more stronger restrictions,
 this is not what we expected in this section.

Finally, we shall invite the reader to note that
the so-called ground state in Theorems \ref{maintheorem5} and \ref{maintheorem7} is of class Poho\u{z}aaev-Nehari type,
instead of the Nehari type in Theorems \ref{maintheorem1} and \ref{maintheorem2}.
\end{remark}

Following a very similar calculations in Theorem \ref{maintheorem3},
we could conclude the asymptotical behavior of $u_\lambda$ which is the solution established by
Theorem \ref{maintheorem4} (or Theorem \ref{maintheorem5}). Let us state it without
 the detailed proof as follows.

\begin{theorem}\label{maintheorem8}
Under the assumptions in Theorem \ref{maintheorem4} (or Theorem \ref{maintheorem5}), passing to a subsequence,
$u_\lambda\to   u_0$ in $X$ as $\lambda\to+\infty$, where $u_0$
is a nontrivial (or ground state) solution for the Schr\"{o}dinger-Poisson equation
 \begin{equation}\label{mainequation3}
\left\{%
\begin{array}{ll}
\displaystyle     -\Delta u+\bigg(\int_{\Omega}\log|x-y|u^2(y)dy\bigg)   u=f(u) , & x\in\Omega, \\
     u=0, &  x\in\partial\Omega.\\
\end{array}%
\right.
\end{equation}
\end{theorem}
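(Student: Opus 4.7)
The strategy is to follow the blueprint of the proof of Theorem \ref{maintheorem3}, adapting it to the solutions produced by Theorems \ref{maintheorem4} and \ref{maintheorem5}. First I would take a sequence $\lambda_n \to +\infty$ and the corresponding nonnegative solutions $u_{\lambda_n} \in X_{\lambda_n}$ of Eq. \eqref{mainequation1}, and invoke the $\lambda$-independent $L^\infty$ and $X_{\lambda_n}$-norm bounds obtained in the course of Theorems \ref{maintheorem6}/\ref{maintheorem7} (these play the role of \eqref{Proof31}--\eqref{Proof32} in the proof of Theorem \ref{maintheorem3}). By Lemma \ref{zz}(i) we may pass to a subsequence so that $u_{\lambda_n}\rightharpoonup u_0$ in $X$, $u_{\lambda_n}\to u_0$ strongly in $L^s(\R^2)$ for every $s\in[2,+\infty)$, and pointwise almost everywhere in $\R^2$.

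Next I would prove $u_0\equiv 0$ on $\R^2\setminus\Omega$. If not, by $(V_1)$--$(V_2)$ there is a compact set $\Theta\subset\Omega^c$ with $V\geq \varepsilon_0>0$ on $\Theta$ and $u_0\not\equiv 0$ on $\Theta$. Using the energy identity $J^{R,\bar\delta}_{\lambda_n}(u_{\lambda_n})-\tfrac{1}{\mu}(J^{R,\bar\delta}_{\lambda_n})'(u_{\lambda_n})[u_{\lambda_n}]$ with $\mu\in\{3,4\}$ (so that the integrand involving $h$ is nonnegative via $(h_{21})$) together with Fatou's lemma, the penalization term $\lambda_n V$ on $\Theta$ forces the right-hand side to diverge, contradicting the uniform boundedness of mountain-pass levels established in the analogues of \eqref{Ia2} and \eqref{IIa1}. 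Smoothness of $\partial\Omega$ then gives $u_0\in H_0^1(\Omega)$. Passing to the limit in the weak equation for a test function $\psi\in C_0^\infty(\Omega)$ is done as in \eqref{propo5e}--\eqref{propo5f}: Lemma \ref{yy} controls $V_1'(u_{\lambda_n})[\psi]$, Lemma \ref{zz}(v) controls $V_2'(u_{\lambda_n})[\psi]$, and Vitali's convergence theorem combined with the uniform $L^\infty$ bound handles $\int f^{R,\bar\delta}(u_{\lambda_n})\psi\,dx$. Since the $L^\infty$ bound guarantees $f^{R,\bar\delta}(u_{\lambda_n})=f(u_{\lambda_n})$ for $R$ chosen as in the proof of Theorem \ref{maintheorem1}, the limit $u_0$ weakly solves \eqref{mainequation3}.

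I would then rule out $u_0\equiv 0$ by the steep-well non-vanishing argument: using $(V_3)$, the small measure of $\Xi$ outside a large ball, and $V\geq b$ on $\Xi^c$ together with $\lambda_n\to\infty$ and $L^2_{\mathrm{loc}}$-strong convergence, one obtains $|u_{\lambda_n}|_2\to 0$, contradicting the $\lambda$-independent non-vanishing lower bound from (the analogue of) Lemma \ref{Non-Vanishing} applied to the $(C)$-sequence generating $u_{\lambda_n}$. To upgrade weak to strong convergence in $X$, I would combine the weak equation with Lemma \ref{yy} and Lemma \ref{zz}(iv)--(v) to get $\|u_{\lambda_n}-u_0\|_{E_{\lambda_n}}=o_n(1)$, and then use the computation from \eqref{bounded7} (with $\beta_0$ replaced by the quantitative lower bound coming from non-vanishing around $u_0\neq 0$) to deduce $\|u_{\lambda_n}-u_0\|_*=o_n(1)$; together these yield $u_{\lambda_n}\to u_0$ in $X$.

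For the ground state claim in the Theorem \ref{maintheorem5} case, I would finally chain the inequalities
\[
c^{R,\bar\delta}_\Omega \le J_\Omega^{R,\bar\delta}(u_0) \le \liminf_{n\to\infty}\Big[J^{R,\bar\delta}_{\lambda_n}(u_{\lambda_n})-\tfrac{1}{\mu}(J^{R,\bar\delta}_{\lambda_n})'(u_{\lambda_n})[u_{\lambda_n}]\Big] \le c^{R,\bar\delta}_\Omega,
\]
where the last bound uses that any test pair in $H_0^1(\Omega)$ is admissible for the $\lambda_n$-problem because $V\equiv 0$ on $\Omega$, so $c^{R,\bar\delta}_{\lambda_n}\le c^{R,\bar\delta}_\Omega$. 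The main obstacle, as flagged in the remark preceding the statement, is the Pohoz\u{a}ev--Nehari nature of the ground state in Theorem \ref{maintheorem5}: one must ensure that the Pohoz\u{a}ev-type identity survives the weak limit so that $u_0$ lies on the Pohoz\u{a}ev--Nehari manifold of the limit problem. This requires careful use of $(V_4^1)$--$(V_4^2)$ together with the monotonicity $(h_{22})$ to control the $t\mapsto u_0(\cdot/t)$ fibering and pass the corresponding identity through the limit; this step is the delicate point that separates Theorem \ref{maintheorem8} from the mere Nehari-type arguments used in Theorem \ref{maintheorem3}.
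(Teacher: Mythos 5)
The paper gives no detailed proof of Theorem~\ref{maintheorem8}; it states that the result ``follows a very similar calculations in Theorem~\ref{maintheorem3}'' and leaves the matter there. Your sketch takes exactly this indicated route: pass to a subsequence using the $\lambda$-independent bounds established in the proofs of Theorems~\ref{maintheorem6}--\ref{maintheorem7}, kill $u_0$ on $\Omega^c$ via the penalization of the steep well, identify the weak limit equation on $\Omega$ as in \eqref{propo5e}--\eqref{propo5f}, rule out $u_0\equiv 0$ via the non-vanishing argument of Lemmas~\ref{Non-Vanishing} and \ref{bounded}, and then close with an energy comparison. So the overall architecture matches what the authors have in mind, and you are right that the substantive difference from Theorem~\ref{maintheorem3} is the Pohoz\u{a}ev--Nehari nature of the ground state in the Theorem~\ref{maintheorem5} case.

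Two points deserve attention. First, in the energy comparison you propose the combination $J-\tfrac{1}{\mu}J'$ with $\mu\in\{3,4\}$, but in the Pohoz\u{a}ev--Nehari setting of Section~\ref{Sec5} the paper consistently works with the combination $J-\tfrac{1}{4}\big(2J'-P\big)$ (see the displays after Lemma~\ref{Pohozaev} and in the proof of Theorem~\ref{maintheorem6}); this is the combination for which $(V_4^1)$ and $(h_{21})$ guarantee nonnegativity of all the remainder terms, whereas with $J-\tfrac{1}{\theta}J'$ the coefficient $\tfrac{\theta-4}{4\theta}$ in front of $V_0$ may be negative for $3<\theta<4$, so the Fatou step needs the separate argument used in Lemma~\ref{propo5} to control $V_1$. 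Second, and more importantly, your final chain $c_\Omega^{R,\bar\delta}\le J_\Omega^{R,\bar\delta}(u_0)\le\cdots\le c_\Omega^{R,\bar\delta}$ requires $u_0\in\mathcal{M}_\Omega$, i.e.\ that the Pohoz\u{a}ev-type constraint survives the weak limit. You correctly flag this as the delicate step, but you do not supply the argument. Note that passing $P_{\lambda_n}^{R,\bar\delta}(u_{\lambda_n})=0$ to the limit is not automatic: beyond $\int\lambda_nV|u_{\lambda_n}|^2\to0$ one must also show that $\int(\nabla V,x)|u_{\lambda_n}|^2\to0$, and the ground-state level for the $\Omega$-problem must be defined via a Pohoz\u{a}ev--Nehari constraint that is compatible with both the weak limit and the boundary Pohoz\u{a}ev identity on $\Omega$. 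Since the paper itself suppresses these details, your proposal cannot be called wrong, but it should be understood as a correct scaffold with the central new step left open rather than a complete proof.
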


Now, we start with some brief proofs of the main results in Theorems \ref{maintheorem4}, \ref{maintheorem5},
\ref{maintheorem6} and \ref{maintheorem7} in this section.

\subsection{Proofs of Theorems \ref{maintheorem6}
and \ref{maintheorem7}}
The proofs are
  divided into two parts below.

\subsubsection{\emph{Proof of Theorem \ref{maintheorem6}}}\

We firstly recall the following result introduced by Jeanjean \cite{Jeanjean1,Jeanjean2} which is crucial for finding a
bounded $(PS)$ sequence without the Ambrosetti-Rabinowitz condition, namely $\theta\geq4$ in $(h_2^1)$.

\begin{proposition}\label{Jeanjean}
Let $(Z,\|\cdot\|_Z)$ be a Banach space and let $\Lambda\subset \R^+$ be an interval; consider a
family $\{\Phi_\mu\}_{\mu\in\Lambda}$ on $Z$ of $\mathcal{C}^1$-functional having the form
$$
\Phi_\mu(u)=A(u)-\mu B(u),~\forall \mu\in \Lambda,
$$
with $B(u)\geq 0$ for all $u\in Z$ and either $A(u)\to+\infty$ or $B(u)\to+\infty$ as $\|u\|_Z\to+\infty$. Assume that there are two points
$v_1, v_2\in Z$ such that
\[
c_\mu=\inf_{\gamma\in\Gamma}\max_{t\in[0,1]}\Phi_\mu(\gamma(t))>\max\{\Phi_\mu(v_1),\Phi_\mu(v_2)\},
\]
where $\Gamma=\{\gamma\in \mathcal{C}([0,1],X):\gamma(0)=0,\gamma(1)=v_2\}$.
Then, for a.e. $\mu\in \Lambda$, there is a bounded $(PS)_{c_\mu}$
sequence for $\Phi_\mu$ in $Z$, namely a sequence $\{u_n(\lambda)\}\subset X$ satisfying
\begin{itemize}
  \item[\emph{(i)}] $\{u_n(\lambda)\}$ is bounded in $Z$;
  \item[\emph{(ii)}] $\Phi_\mu(u_n(\lambda))\to c_\mu$ as $n\to\infty$;
  \item[\emph{(iii)}] $\Phi^\prime_\mu(u_n(\lambda))\to0$ in $Z^{-1}$.
\end{itemize}
Moreover, $c_\mu$ is non-increasing on $\mu\in \Lambda$.
\end{proposition}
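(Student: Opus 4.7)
The plan is to carry out Jeanjean's monotonicity trick. First I would verify the easy half of the statement: that $\mu \mapsto c_\mu$ is non-increasing. Indeed, since $B(u) \geq 0$ for every $u \in Z$, whenever $\mu_1 \leq \mu_2$ one has $\Phi_{\mu_2}(u) = A(u) - \mu_2 B(u) \leq A(u) - \mu_1 B(u) = \Phi_{\mu_1}(u)$ pointwise on $Z$; applying this inequality along any path $\gamma \in \Gamma$ and taking the infimum yields $c_{\mu_2} \leq c_{\mu_1}$. By the classical Lebesgue differentiation theorem for monotone functions, $\mu \mapsto c_\mu$ is therefore differentiable at almost every $\mu \in \Lambda$.

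Fix such a point $\mu$ of differentiability and let $c_\mu'$ denote the derivative. I would then produce a bounded ``almost-maximum'' sequence by taking $\mu_n \nearrow \mu$ and choosing nearly optimal paths $\gamma_n \in \Gamma$ with $\max_{t \in [0,1]} \Phi_{\mu_n}(\gamma_n(t)) \leq c_{\mu_n} + (\mu - \mu_n)$. Selecting $t_n \in [0,1]$ at which $\Phi_\mu(\gamma_n(t_n))$ is close to its maximum along the path and using the identity
\[
\Phi_{\mu_n}(\gamma_n(t_n)) - \Phi_\mu(\gamma_n(t_n)) = (\mu - \mu_n) B(\gamma_n(t_n)),
\]
one divides by $\mu - \mu_n > 0$ and lets $n \to \infty$. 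The left-hand side is controlled by $\frac{c_{\mu_n} - c_\mu}{\mu_n - \mu} + o(1) \to -c_\mu'$, so the sequence $\{B(\gamma_n(t_n))\}$ stays bounded. Writing $A(\gamma_n(t_n)) = \Phi_{\mu_n}(\gamma_n(t_n)) + \mu_n B(\gamma_n(t_n))$, we see that $\{A(\gamma_n(t_n))\}$ is also bounded. The dichotomy hypothesis that $A(u) \to +\infty$ or $B(u) \to +\infty$ as $\|u\|_Z \to +\infty$ then forces $\{\gamma_n(t_n)\}$ to be bounded in $Z$; by construction $\Phi_\mu(\gamma_n(t_n)) \to c_\mu$.

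The remaining step, which is also the main obstacle, is upgrading this bounded almost-maximum sequence to a genuine bounded $(PS)_{c_\mu}$ sequence. I would argue by contradiction: if no such sequence existed, then there would be $\eta,\rho>0$ so that $\|\Phi_\mu'(u)\|_{Z^{-1}} \geq \eta$ for every $u$ in a bounded $\rho$-ball with $|\Phi_\mu(u) - c_\mu|$ small. A quantitative deformation lemma applied with a truncated pseudo-gradient vector field (chosen to vanish outside a slightly larger bounded set so that trajectories remain uniformly bounded) would then push the paths $\gamma_n$ down to $\max_t \Phi_\mu(\widetilde{\gamma}_n(t)) < c_\mu - \varepsilon$ for some $\varepsilon > 0$. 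Since the endpoints $0$ and $v_2$ satisfy $\Phi_\mu(v_i) < c_\mu$, the deformed paths stay in $\Gamma$, contradicting the minimax definition of $c_\mu$. The delicate point in this step is keeping the flow inside the bounded region throughout the deformation time; this is exactly where the coercivity-type hypothesis ($A$ or $B$ unbounded at infinity) is used in tandem with the bound on $B(\gamma_n(t_n))$ already obtained, so that the construction can be carried out uniformly in $n$.
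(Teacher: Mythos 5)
The paper does not prove Proposition~\ref{Jeanjean}; it is quoted verbatim from Jeanjean \cite{Jeanjean1,Jeanjean2} without proof, so there is no in-paper argument to compare against. Your sketch is a faithful reconstruction of Jeanjean's monotonicity trick and its overall strategy is correct: monotonicity of $\mu\mapsto c_\mu$ follows from $B\geq 0$ and gives a.e.\ differentiability; at a differentiability point the difference quotient of $c_\mu$ controls $B$ along the ``high part'' of nearly optimal paths for $\Phi_{\mu_n}$, hence controls $A$ and, via the dichotomy $A\to+\infty$ or $B\to+\infty$, the points themselves; a localized quantitative deformation with a truncated pseudo-gradient then yields the contradiction with the minimax definition of $c_\mu$.

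Two presentational points are worth fixing. First, there is a sign slip: $\tfrac{c_{\mu_n}-c_\mu}{\mu_n-\mu}\to c_\mu'$, not $-c_\mu'$; the quantity that actually bounds $B$ from above is $\tfrac{c_{\mu_n}-c_\mu}{\mu-\mu_n}\to -c_\mu'\geq 0$. Second, for the deformation step it is not enough to bound the single sequence $\{\gamma_n(t_n)\}$; one needs a bound, uniform in $n$, on the whole set $\{\gamma_n(t):\Phi_\mu(\gamma_n(t))\geq c_\mu-(\mu-\mu_n)\}$, because every point of the path at or near the mountain-pass level must be absorbed by the truncated vector field. Your estimate for $B$ and $A$ applies verbatim to each such point, so the bound does extend, but this should be stated explicitly: it is precisely what supplies the uniform truncation radius. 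Finally, note that the coercivity hypothesis is used up entirely at the boundedness stage; inside the deformation argument it is the truncation of the pseudo-gradient, not coercivity, that keeps the flow confined. These are matters of exposition rather than gaps; the proof you outline is the standard one in the cited references.
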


To apply Proposition \ref{Jeanjean} successfully, inspired by \cite{CT1},
we have to modify the work space $(X_\lambda,\|\cdot\|_{X_\lambda})$ mildly. Speaking clearly,
according to Remark \ref{remark2.2}, for all $\lambda\geq1$, we redefine the space $X_\lambda$ by
\[
X_\lambda \triangleq\bigg\{u\in E_\lambda:\int_{\R^2}\log(2+|x|)|u|^2dx<+\infty\bigg\}
\]
which is a Hilbert space
equipped with the inner product and norm
\[
(u,v)_{X_\lambda}=\int_{\R^2}\big[\nabla u\nabla v+(\lambda V(x)+\log(2+|x|))uv \big]dx~\text{and}~
\|u\|_{X_\lambda} =\sqrt{(u,u)_{X_\lambda}}, ~\forall u,v\in {X_\lambda}.
\]
So, $\|\cdot\|_{X_\lambda}=\sqrt{\|\cdot\|^2_{E_\lambda}+\|\cdot\|_*^2}$,
where $\|u\|_*= (\int_{\R^2} \log(2+|x|)|u|^2dx )^{\frac12}$ for all $u\in X$, where
\[
X=\bigg\{u\in H^1(\R^2):
\int_{\R^2}\log(2+|x|)u^2 dx<+\infty\bigg\}.
\]
 With this space $X$, we rewrite $V_1$ and $V_2$ on $X$ in this subsection below
\[
V_1(u)\triangleq\int_{\R^2}\int_{\R^2}\log(2+|x-y|)u^2(x)u^2(y)dxdy,~ \forall u\in X,
\]
and
\[
V_2(u)\triangleq\int_{\R^2}\int_{\R^2}\log\bigg(1+\frac{2}{|x-y|}\bigg)u^2(x)u^2(y)dxdy,~  \forall u\in X.
\]
As one could observe that there is no essential difference between the above definitions and those in Sections
\ref{Introduction}-\ref{Preliminaries}. Thereby,
we keep the same notations in this subsection just for simplicity when there is no misunderstanding.

Setting $\Phi_\mu(u)=J_{\lambda,\mu}^{R,\bar{\delta}}(u)$ on the work space $(Z,\|\cdot\|_Z)=(X_\lambda,\|\cdot\|_{X_\lambda})$
with $\lambda\geq1$, where
\[
J_{\lambda,\mu}^{R,\bar{\delta}}(u)=\frac{1}{2}\int_{\R^2}[|\nabla u|^2+\lambda V(x)|u|^2]dx+ \frac12\|u\|_*^2 + \frac14V_0(u)
- \mu\bigg(\int_{\R^2} F^{R,\bar{\delta}}(u)dx+ \frac12\|u\|_*^2  \bigg)
\]
for all $\mu\in[\frac12,1]$. Let us rewrite $J_{\lambda,\mu}^{R,\bar{\delta}}(u)=A(u)-\mu B(u)$ on $X_\lambda$ with
\[
A(u)\triangleq\frac{1}{2}\int_{\R^2}[|\nabla u|^2+\lambda V(x)|u|^2]dx+ \frac12\|u\|_*^2 + \frac14V_0(u),~
B(u)\triangleq\int_{\R^2} F^{R,\bar{\delta}}(u)dx+ \frac12\|u\|_*^2.
\]

\begin{lemma}\label{propo}
Let $V$ satisfy $(V_1)-(V_3)$. Suppose that $f$ defined by \eqref{form}
satisfies $(h_1)$ and $(h_3)$, then for all fixed $R>0$ and $\lambda\geq1$,
then $B(u)\geq0$ for any $u\in X_\lambda$. Moreover, either $A(u)\to+\infty$
or $B(u)\to+\infty$ as $\|u\|_{X_\lambda}\to+\infty$.
\end{lemma}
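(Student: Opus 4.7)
The plan is to treat the two assertions separately. The non-negativity $B(u)\ge 0$ is essentially book-keeping: $\tfrac12\|u\|_*^2\ge 0$ always, and $F^{R,\bar\delta}(t)=0$ for $t\le 0$ since $f^{R,\bar\delta}\equiv 0$ on $(-\infty,0]$ by \eqref{fR} together with $(h_1)$; on $(0,+\infty)$ the factor $h$ retains its natural non-negative sign (as is implicit in the search for non-negative solutions throughout the paper), so $F^{R,\bar\delta}\ge 0$ pointwise and the integral contribution to $B$ is non-negative as well.

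For the coercivity alternative I would fix an arbitrary sequence $\{u_n\}\subset X_\lambda$ with $\|u_n\|_{X_\lambda}\to+\infty$ and use the orthogonal-type decomposition $\|u_n\|_{X_\lambda}^2=\|u_n\|_{E_\lambda}^2+\|u_n\|_*^2$. If $\|u_n\|_*\to+\infty$ along a subsequence, then $B(u_n)\ge\tfrac12\|u_n\|_*^2\to+\infty$ and we are done. Otherwise $\|u_n\|_*$ stays bounded along a subsequence, and since $\log(2+|x|)\ge\log 2$ forces $|u_n|_2^2\le(\log 2)^{-1}\|u_n\|_*^2$ to remain bounded, we must have $\|u_n\|_{E_\lambda}\to+\infty$ along that subsequence.

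In this second regime the only potentially dangerous contribution to $A$ is $-\tfrac14 V_2(u_n)$, because $V_1\ge 0$ and the quadratic terms are already coercive. Using Lemma \ref{zz}-(iv) together with the two-dimensional Gagliardo-Nirenberg inequality $|u|_{8/3}^4\le C|u|_2^{3}|\nabla u|_2$ one obtains
\begin{equation*}
V_2(u_n)\le K_0|u_n|_{8/3}^4\le C|u_n|_2^3|\nabla u_n|_2\le C'|\nabla u_n|_2,
\end{equation*}
so that, after discarding $\tfrac14 V_1(u_n)\ge 0$ and absorbing the linear term through Young's inequality,
\begin{equation*}
A(u_n)\ge\tfrac14|\nabla u_n|_2^2+\tfrac{\lambda}{2}\int_{\R^2} V(x)|u_n|^2\,dx-C''\ge\tfrac14\|u_n\|_{E_\lambda}^2-C''\longrightarrow+\infty,
\end{equation*}
where the last step uses $\lambda\ge 1$. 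The delicate point, and the one I expect to be the main obstacle, is precisely that $V_2$ is non-local, has quartic growth and enters $A$ with the wrong sign; what saves coercivity is that boundedness of $\|u_n\|_*$ (and hence of $|u_n|_2$) collapses $V_2(u_n)$ to a quantity linear in $|\nabla u_n|_2$, enabling the Young absorption above.
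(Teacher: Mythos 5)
Your argument is correct and uses essentially the same mechanism as the paper's own proof: boundedness of $\|u_n\|_*$ forces boundedness of $|u_n|_2$, and then Lemma \ref{zz}-(iv) together with Gagliardo--Nirenberg collapses $V_2(u_n)$ to a quantity linear in $|\nabla u_n|_2$, which the quadratic terms in $A$ dominate. The paper phrases this as a contradiction argument (assume $A(u_n)\le C$ and $B(u_n)\le C$ simultaneously) rather than your explicit two-case split, but the decomposition, the key estimates, and the absorption step are identical.
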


\begin{proof}
By $(h_3)$, one observes that $B(u)\geq0$ for each $u\in X_\lambda$. In consideration of the completeness,
we borrow the ideas in \cite[Lemma 3.2]{CT1}
to conclude the proof.
Arguing it indirectly, we could suppose that, up to a subsequence if necessary, there is a sequence
$\{u_n\}\subset X_\lambda$ such that
\begin{equation}\label{propo1}
\|u_n\|_{X_\lambda}\to+\infty,~A(u_n)\leq C~\text{and}~B(u_n)\leq C.
\end{equation}
So, recalling the definition of $\|\cdot\|_*$ in this subsection and \eqref{propo1},
\[
|u_n|_2^2\leq \frac{1}{\log 2}\|u_n\|_*^2\leq \frac{1}{\log 2}B(u_n)\leq C
\]
which together with \eqref{v2} and the Gagliardo-Nirenberg inequality indicates that
\[
V_2(u_n)\leq K_0|u_n|_{\frac83}^4\leq C|u_n|_2^3|\nabla u_n|_2\leq C\|u_n\|_{X_\lambda}.
\]
As a consequence of the above formula and \eqref{propo1}, there holds
\[
C\geq A(u_n)\geq \frac12\|u_n\|_{X_\lambda}^2-\frac14V_2(u_n)\geq \frac12\|u_n\|_{X_\lambda}^2-\frac C4\|u_n\|_{X_\lambda}
\]
which contradicts with $\|u_n\|_{X_\lambda}\to+\infty$. The proof is complete.
\end{proof}

\begin{lemma}\label{propo2}
Let $V$ satisfy $(V_1)-(V_3)$. Suppose that $f$ defined by \eqref{form}
satisfies $(h_1)$ and $(h_3)$, then for all fixed $R>0$ and $\lambda\geq1$,
then we have that
\begin{itemize}
  \item[\emph{(i)}] There exists a $v_0\in  X_\lambda\backslash \{0\}$ independent of $\mu$ and $\lambda$ such that
$J_{\lambda,\mu}^{R,\bar{\delta}}(v_0)\leq0$ for all $\mu\in[\frac12,1]$;
  \item[\emph{(ii)}] Denoting $\Gamma_{\lambda,\mu}^{R,\bar{\delta}}=\{\gamma\in \mathcal{C}([0,1],X_\lambda):\gamma(0)=0,\gamma(1)=v_0\}$, then
\[
c_{\lambda,\mu}^{R,\bar{\delta}}\triangleq\inf_{\gamma\in\Gamma_{\lambda,\mu}^{R,\bar{\delta}}}\max_{t\in[0,1]}J_{\lambda,\mu}^{R,\bar{\delta}}
(\gamma(t))\geq A_1>\max\{J_{\lambda,\mu}^{R,\bar{\delta}}(0),J_{\lambda,\mu}^{R,\bar{\delta}}(v_0)\},~\forall \mu\in[\frac12,1];
\]
\item[\emph{(iii)}]  There exists a constant $M^{R,\delta} > 0$ independent of $\mu$ and $\lambda$ such that
$c_{\lambda,\mu}^{R,{\delta}}\leq M^{R,\delta}$
\end{itemize}
\end{lemma}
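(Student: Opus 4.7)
The overall plan is to verify the three standard ingredients needed to invoke Jeanjean's framework (Proposition \ref{Jeanjean}): existence of a ``far point'' $v_0$ where the energy is non-positive uniformly in the parameters, a uniform mountain-pass barrier around the origin, and a uniform upper bound for the minimax level. The crucial observation that drives everything is that we are allowed to choose $v_0$ supported in $\Omega$, where $V\equiv 0$ by $(V_2)$; this automatically makes $v_0$ and all associated quantities independent of $\lambda$, and then we only need to manage the $\mu$-dependence. Throughout, rewrite
\[
J_{\lambda,\mu}^{R,\bar{\delta}}(u)=\tfrac12|\nabla u|_2^2+\tfrac{\lambda}{2}\!\int_{\R^2}\!V(x)u^2\,dx+\tfrac{1-\mu}{2}\|u\|_*^2+\tfrac14 V_0(u)-\mu\!\int_{\R^2}\!F^{R,\bar{\delta}}(u)\,dx,
\]
so that for $\mu\in[\tfrac12,1]$ the coefficient $(1-\mu)/2\in[0,\tfrac14]$ is uniformly controlled.

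For part (i), I would fix $\psi\in C_0^\infty(\Omega)$ with $0\le\psi\le 1$, $\psi\not\equiv 0$, and $\max\psi=1$ on some nondegenerate subset. Since $V$ vanishes on $\mathrm{supp}\,\psi$, the $\lambda$-term drops, and for $t>0$
\[
J_{\lambda,\mu}^{R,\bar{\delta}}(t\psi)\le \tfrac{t^2}{2}|\nabla\psi|_2^2+\tfrac{t^2}{4}\|\psi\|_*^2+\tfrac{t^4}{4}V_0(\psi)-\tfrac12\!\int_{\R^2}\!F^{R,\bar{\delta}}(t\psi)\,dx.
\]
Hypothesis $(h_{21})$ together with $\theta>3$ yields $H(s)\ge H(1)s^\theta$ for $s\ge 1$; more importantly, once $t\psi\ge R$ on a set of positive measure (which happens for all $t$ larger than some $t_R$), the definition of $f^{R,\bar{\delta}}$ in \eqref{fR} produces the exponential factor $e^{\alpha R^{\tau-\bar{\delta}}(t\psi)^{\bar{\delta}}}$, so $\int F^{R,\bar{\delta}}(t\psi)\,dx$ grows faster than every polynomial in $t$. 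Consequently the quartic term $\tfrac{t^4}{4}V_0(\psi)$ is dominated, and one can select $t_0>0$ large (depending only on $\psi$ and $R$) so that $J_{\lambda,\mu}^{R,\bar{\delta}}(t_0\psi)\le 0$ for every $\mu\in[\tfrac12,1]$ and $\lambda\ge 1$. Setting $v_0:=t_0\psi$ completes this step, and additionally $\|v_0\|_{E_\lambda}=t_0|\nabla\psi|_2$ is independent of $\lambda$.

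For (ii), I would mimic the proof of Lemma \ref{geometry} but with uniform bounds in $\mu$. Using Lemma \ref{zz}-(ii)(iv) to discard $V_1(u)\ge 0$ and estimate $V_2(u)\le K_0|u|_{8/3}^4$, together with inequality \eqref{compact5} applied with some exponent $q>4$, I obtain
\[
J_{\lambda,\mu}^{R,\bar{\delta}}(u)\ge \tfrac12\|u\|_{E_\lambda}^2-\tfrac{K_0}{4}|u|_{8/3}^4-\varepsilon|u|_2^2-C_\varepsilon(R)|u|_{2q}^q\ge \bigl(\tfrac12-\varepsilon C_0\bigr)\|u\|_{E_\lambda}^2-C_1\|u\|_{E_\lambda}^4-C_2\|u\|_{E_\lambda}^q,
\]
where Lemma \ref{imbedding} is used to convert $L^p$ norms into $E_\lambda$ norms with constants independent of $\lambda\ge 1$. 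Choosing $\varepsilon$ small and $\rho>0$ small (both independent of $\lambda,\mu$), there is $A_1>0$ with $J_{\lambda,\mu}^{R,\bar{\delta}}(u)\ge A_1$ whenever $\|u\|_{E_\lambda}=\rho$. Since $\|v_0\|_{E_\lambda}=t_0|\nabla\psi|_2$ can be arranged to exceed $\rho$ (enlarge $t_0$ if necessary, which only strengthens the $J(v_0)\le 0$ property), every path in $\Gamma_{\lambda,\mu}^{R,\bar{\delta}}$ crosses the sphere $\{\|u\|_{E_\lambda}=\rho\}$, and so $c_{\lambda,\mu}^{R,\bar{\delta}}\ge A_1>0\ge\max\{J_{\lambda,\mu}^{R,\bar{\delta}}(0),J_{\lambda,\mu}^{R,\bar{\delta}}(v_0)\}$.

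Finally, (iii) is immediate from the linear test path $\gamma_0(s)=sv_0$, which lies in $\Gamma_{\lambda,\mu}^{R,\bar{\delta}}$ by (i). Since $v_0$ is supported in $\Omega$, the $\lambda V$-term vanishes, and one has $c_{\lambda,\mu}^{R,\delta}\le \max_{s\in[0,1]}J_{\lambda,\mu}^{R,\delta}(sv_0)\le M^{R,\delta}$, the latter being an explicit quantity depending only on $|\nabla\psi|_2$, $\|\psi\|_*$, $V_0(\psi)$, $t_0$ and $R$. The main obstacle in this lemma is the delicate balance in (i): without the Ambrosetti--Rabinowitz condition there is no quick way to show $J\to-\infty$ along rays from polynomial growth alone (since $(h_{21})$ only yields $\theta>3$, not $\theta>4$, so the $V_0$ term is not automatically dominated). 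I would overcome this by exploiting the exponential enhancement built into $f^{R,\bar{\delta}}$ beyond the threshold $R$, which is the one place where the supercritical/critical structure of the truncated nonlinearity is essential to the argument.
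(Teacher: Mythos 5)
Your proposal is correct, and for parts (ii) and (iii) it coincides with the paper's argument (part (ii) in the paper is literally ``repeat Lemma \ref{geometry}'', and part (iii) is a test-path bound with the worst case $\mu=\tfrac12$). The genuine difference is in part (i). The paper does not use the linear ray: it tests $J_{\lambda,\frac12}^{R,\bar{\delta}}$ on the rescaled family $\psi_t=t^2\psi(t\cdot)$ with $\supp\psi\subset\Omega$, computes that the positive terms grow at most like $t^4\log t$ while $(h_{21})$ gives $F^{R,\bar{\delta}}(s)\geq C_1s^{\theta}-C_2s^2$ and hence $\int F^{R,\bar{\delta}}(\psi_t)\gtrsim t^{2\theta-2}$, which dominates $t^4\log t$ precisely because $\theta>3$. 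You instead stay on the ray $t\psi$ and observe that once $t\psi\geq R$ the truncated nonlinearity carries the genuine exponential factor $e^{\alpha R^{\tau-\bar{\delta}}(t\psi)^{\bar{\delta}}}$, so $\int F^{R,\bar{\delta}}(t\psi)$ beats the quartic $V_0$ term outright. Both routes are valid (your lower bound on $F^{R,\bar{\delta}}$ does need $h(s)\gtrsim s^{\theta-1}$ for large $s$, which follows from $(h_{21})$ with $H(1)>0$, the same implicit positivity the paper uses). What each buys: your argument is more elementary and yields an admissible linear path directly, but it is tied to the exponential tail of the truncation and would fail for $\theta\in(3,4]$ with a merely polynomial nonlinearity; the paper's scaling argument uses only the polynomial bound, so it is the one that survives in that generality, and the same computation \eqref{Propo2a} is recycled verbatim to produce the uniform bound in (iii). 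One shared caveat: both you and the paper invoke $(h_{21})$ in part (i) even though the lemma as stated lists only $(h_1)$ and $(h_3)$ (the paper's attribution of $F^{R,\bar{\delta}}(t)\geq C_1t^{\theta}-C_2t^2$ to ``$(h_1)$ and $(h_3)$'' is a misprint), so this is not a gap relative to the paper's own proof.
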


\begin{proof}
(i) Without loss of generality, we suppose that $0\in\Omega$ and then
there exists a $\varepsilon_0>0$ such that $B_{\varepsilon_0}(0)\subset \Omega$ by $(V_2)$.
Let $\psi\in C_0^\infty(B_{\varepsilon_0}(0))$ and set $\psi_t\triangleq t^2\psi(t\cdot)$
for all $t>1$, hence $\supp \psi_t\in B_{\varepsilon_0}(0)$. By some direct computations, we have
\begin{equation}\label{Propo2a0}
\left\{
  \begin{array}{ll}
\displaystyle   \int_{\R^3}|\nabla\psi_t|^2dx=t^4\int_{B_{\varepsilon_0}(0)}|\nabla\psi|^2dx,~\int_{\R^2} F^{R,\bar{\delta}}(\psi_t)dx
=t^{-2}\int_{B_{\varepsilon_0}(0)} F^{R,\bar{\delta}}(t\psi )dx,\\
\displaystyle     \int_{\R^2} \log(2+|x|)|\psi_t|^2dx=t^2\int_{B_{\varepsilon_0}(0)} \log(2+t^{-1}|x|)|\psi|^2dx
\leq \log(2+\varepsilon_0 )t^2\int_{B_{\varepsilon_0}(0)}|\psi|^2dx,\\
 \displaystyle   V_0(\psi_t)=t^4V_0(\psi)
 - t^4 \log t |\psi|_2^4\leq t^4 \bigg[\log(2+\varepsilon_0 ) \int_{B_{\varepsilon_0}(0)}|\psi|^2dx+\log t \bigg(\int_{B_{\varepsilon_0}(0)}|\psi|^2dx\bigg)^2\bigg].
  \end{array}
\right.
\end{equation}
Due to $(h_1)$ and $(h_3)$ together with \eqref{Propo2a0}, there holds $F^{R,\bar{\delta}}(t)\geq C_1 t^{\theta}-C_2t^2$ for all $t>0$
and thus
\begin{equation}\label{Propo2a}
\frac{J_{\lambda,\frac12}^{R,\bar{\delta}}(\psi_t)}{t^4\log t}\leq\frac{1}{4}\bigg(\int_{B_{\varepsilon_0}(0)}|\psi|^2dx\bigg)^2
- \frac{t^{2(\theta-3)}}{2\log t}\int_{\R^2} \frac{F^{R,\bar{\delta}}(t^2\psi)}{(t^2\psi)^\theta}\psi ^\theta dx+o_t(1)\to-\infty
\end{equation}
 as $t\to+\infty$ because $\theta>3$. Choosing $v_0= \psi_t$ with $t$ sufficiently large,
 we have $J_{\lambda,\mu}^{R,\bar{\delta}}(v_0)\leq J_{\lambda,\frac12}^{R,\bar{\delta}}(v_0)<0$ for all $\mu\in[\frac12,1]$.

(ii) We can repeat the calculations in Lemma
\ref{geometry}-(i) to find such a constant $A_1>0$ independent of $\mu\in[\frac12,1]$ and $\lambda\geq1$.

(iii) Since $c_{\lambda,\mu}^{R, {\delta}}\leq \max_{t>0}
J_{\lambda,\mu}^{R,\bar{\delta}}(\psi_t)\leq  J_{\lambda,\frac12}^{R,\bar{\delta}}(\psi_t)$
for all $\mu\in[\frac12,1]$, the conclusion follows
from \eqref{Propo2a} immediately. Thus, we finish the proof of this lemma.
\end{proof}

\begin{lemma}\label{2Propo2}
Under the assumptions of Lemma \ref{propo2}, if in addition we suppose that $(h_4)$,
then there exists a $\underline{\xi}_0=\underline{\xi}_0(R)>0$
 such that for all $\xi>\bar{\xi}_0$,
 \[
 c_{\lambda,\mu}^{R,{2}}<\frac{\pi(\theta-2)}{2\theta C_{\Xi,b}(1+\gamma+\alpha R^{\tau-2})},~\forall \lambda\geq1,~\mu\in[\frac12,1],
 \]
 where $\gamma>2$, $\theta>3$ and $C_{\Xi,b}>0$ come from $(h_3)$, $(h_{21})$
 and Lemma \ref{imbedding}, respectively.
\end{lemma}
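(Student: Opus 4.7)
The plan is to parallel the proof of Lemma \ref{mplevel} while exploiting the fact, guaranteed by Proposition \ref{Jeanjean} (with a common mountain-pass endpoint supplied by Lemma \ref{propo2}(i)), that $\mu\mapsto c_{\lambda,\mu}^{R,2}$ is non-increasing on $[1/2,1]$. Consequently it suffices to estimate the worst case $c_{\lambda,1/2}^{R,2}$, at which value the functional collapses to
\[
J_{\lambda,1/2}^{R,2}(u)=\frac{1}{2}\int_{\R^2}\bigl(|\nabla u|^2+\lambda V(x)|u|^2\bigr)dx+\frac{1}{4}\|u\|_*^2+\frac{1}{4}V_0(u)-\frac{1}{2}\int_{\R^2}F^{R,2}(u)dx.
\]

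Assuming without loss of generality $0\in\Omega$ and $B_1(0)\subset\Omega$, I would reuse the cutoff $\varphi_0\in C_0^\infty(B_1(0))$ from Lemma \ref{mplevel} ($0\le\varphi_0\le1$, $\varphi_0\equiv1$ on $B_{1/2}(0)$, $|\nabla\varphi_0|\le1$). Since $\supp(\varphi_0)\subset\Omega$ and $V\equiv0$ on $\Omega$ by $(V_2)$, the potential term vanishes, making the whole computation $\lambda$-independent. Bounding $V_0(\varphi_0)\le V_1(\varphi_0)\le 2|\varphi_0|_2^2\|\varphi_0\|_*^2$ by Lemma \ref{zz}(ii), and using $(h_4)$ together with $f^{R,2}\ge h$ on $[0,R]$ to obtain $F^{R,2}(s\varphi_0)\ge\xi(s\varphi_0)^p$ whenever $s\varphi_0(x)\in[0,1]$ (automatic for $s\in[0,1]$), one arrives at a pointwise estimate of the form
\[
J_{\lambda,1/2}^{R,2}(s\varphi_0)\le K_1 s^2-\frac{\pi\xi}{8}s^p,\qquad s\in[0,1],
\]
where $K_1>0$ depends only on the fixed $\varphi_0$ (absorbing $s^4\le s^2$ on $[0,1]$), and is independent of $\lambda,\mu$, and $R$.

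For $\xi\ge 8K_1/\pi$, evaluating at $s=1$ gives $J_{\lambda,1/2}^{R,2}(\varphi_0)\le 0$, whence by monotonicity $J_{\lambda,\mu}^{R,2}(\varphi_0)\le0$ for every $\mu\in[1/2,1]$; thus $v_0:=\varphi_0$ is an admissible common endpoint and $\gamma_0(s)=s\varphi_0\in\Gamma_{\lambda,\mu}^{R,2}$. Taking the maximum along this path and then the unrestricted maximum of the right side over $s\ge0$ yields
\[
c_{\lambda,\mu}^{R,2}\le\frac{K_1(p-2)}{p}\left(\frac{16K_1}{p\pi\xi}\right)^{2/(p-2)},
\]
which tends to $0$ as $\xi\to\infty$. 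Requiring the right side to be strictly smaller than $\dfrac{\pi(\theta-2)}{2\theta C_{\Xi,b}(1+\gamma+\alpha R^{\tau-2})}$ and solving for $\xi$ produces the explicit threshold $\underline{\xi}_0=\underline{\xi}_0(R)$; for $\xi>\underline{\xi}_0$ the inequality holds uniformly in $\lambda\ge1$ and $\mu\in[1/2,1]$. The only thing that really needs care is ensuring uniformity in $\mu$ and $\lambda$, but here it is automatic: the $\lambda$-uniformity comes from $\varphi_0$ being supported where $V$ vanishes, and the $\mu$-uniformity from monotonicity of $J_{\lambda,\mu}^{R,2}$ in $\mu$. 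No Trudinger--Moser inequality enters at this stage; that machinery will only be invoked later when deriving the uniform $L^\infty$-estimate.
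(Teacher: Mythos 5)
Your proposal is correct and follows essentially the same route as the paper, which defers this proof to Lemma \ref{22Propo2}: there, as here, the $\mu$-monotonicity from Proposition \ref{Jeanjean} reduces the bound to $\mu=1/2$ and the cutoff computation from Lemma \ref{mplevel} is rerun to tune $\xi$. One small imprecision: since $c_{\lambda,\mu}^{R,2}$ in Proposition \ref{Jeanjean} is defined relative to the fixed endpoint $v_0$ of Lemma \ref{propo2}(i) rather than to $\varphi_0$, you should either invoke the Nehari-type characterization $c_{\lambda,1/2}^{R,2}\le\inf_{u\neq0}\max_{t>0}J_{\lambda,1/2}^{R,2}(tu)$ that the paper cites, or remark that the min-max level is independent of the particular admissible endpoint; after that patch your explicit threshold $\underline{\xi}_0(R)$ follows exactly as claimed.
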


\begin{proof}
We postpone the proof and refer the reader to Lemma \ref{22Propo2} below for the details.
\end{proof}

It is essentially same as the proof of \cite[Lemma 2.4]{Du}, we can derive the following lemma.

\begin{lemma}\label{Pohozaev}
Let $V$ satisfy $(V_1)-(V_3)$ and $(V_4^1)$. Assume that
 the nonlinearity $f$ defined in \eqref{form}
requires $(h_1)$ and $(h_3)$. For each fixed $R>0$ and $\lambda\geq1$,
if $\bar{u}\in X_\lambda$ is a critical point of $J_{\lambda,\mu}^{R,\bar{\delta}}$,
then it satires the Poho\u{z}aev identity $P_{\lambda,\mu}^{R,\bar{\delta}}(\bar{u})\equiv0$,
where $P_{\lambda,\mu}^{R,\bar{\delta}}:X_\lambda\to\R$ is given by
\begin{align*}
P_{\lambda,\mu}^{R,\bar{\delta}}(u)& =\frac{1}{2}\int_{\R^2}\lambda[ 2V(x)+(\nabla V,x)]|u|^2dx+\frac14|u|_2^4+ V_0(u) \\
  & \ \ \  +\frac{1-\mu}{2}\bigg(2\|u\|_*^2 +\int_{\R^2}\frac{|x|}{2+|x|}u^2dx\bigg) - 2\mu\int_{\R^2} F^{R,\bar{\delta}}(u)dx.
\end{align*}
\end{lemma}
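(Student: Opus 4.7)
The plan is to derive the identity by a scaling argument, following the strategy of Du--Weth \cite{Du}. The first step is a regularity upgrade: since $\bar u$ is a critical point, it solves
\[
-\Delta \bar u+\lambda V(x)\bar u+(1-\mu)\log(2+|x|)\bar u+(\log|\cdot|\ast \bar u^2)\bar u=\mu f^{R,\bar\delta}(\bar u)\quad\text{in }\R^2.
\]
By Lemma \ref{cc} and the explicit growth of $f^{R,\bar\delta}$, the right-hand side and the zeroth-order coefficients lie in $L^p_{\mathrm{loc}}(\R^2)$ for every $p<\infty$. Since $V\in C^1$ by $(V_4^1)$, a standard elliptic bootstrap yields $\bar u\in W^{2,p}_{\mathrm{loc}}(\R^2)$ for all $p$, so $x\cdot\nabla\bar u$ is pointwise defined and locally integrable.

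Next, introduce the one-parameter family $\bar u_t(x)\triangleq \bar u(x/t)$ for $t>0$ and record the scaling of each term in the functional (all by direct change of variables):
\begin{align*}
\int_{\R^2}|\nabla \bar u_t|^2\,dx&=\int_{\R^2}|\nabla\bar u|^2\,dx, & \int_{\R^2}\lambda V(x)\bar u_t^{\,2}\,dx&=t^2\int_{\R^2}\lambda V(tx)\bar u^2\,dx,\\
\|\bar u_t\|_*^2&=t^2\int_{\R^2}\log(2+t|x|)\bar u^2\,dx, & \int_{\R^2}F^{R,\bar\delta}(\bar u_t)\,dx&=t^2\int_{\R^2}F^{R,\bar\delta}(\bar u)\,dx,
\end{align*}
and, crucially, using $\log|tx-ty|=\log t+\log|x-y|$,
\[
V_0(\bar u_t)=t^4V_0(\bar u)+t^4(\log t)|\bar u|_2^4.
\]
Each of these expressions is manifestly $C^1$ in $t$ at $t=1$, hence $\zeta(t)\triangleq J_{\lambda,\mu}^{R,\bar\delta}(\bar u_t)$ is differentiable there, and a direct calculation shows $\zeta'(1)=P_{\lambda,\mu}^{R,\bar\delta}(\bar u)$; the extra $\tfrac14|\bar u|_2^4$ in $P$ comes precisely from differentiating the $t^4\log t$ piece of $V_0(\bar u_t)$ at $t=1$.

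The main obstacle is to argue that $\zeta'(1)=0$. Formally $\zeta'(1)=(J_{\lambda,\mu}^{R,\bar\delta})'(\bar u)[-x\cdot\nabla\bar u]$, and this would vanish if $-x\cdot\nabla\bar u$ were an admissible direction in $X_\lambda$; however, membership is not automatic because the $\|\cdot\|_{X_\lambda}$-norm of $x\cdot\nabla\bar u$ requires decay control on $\bar u$ and $\nabla\bar u$. To circumvent this, I would test the equation against $(x\cdot\nabla\bar u)\phi_n$ for a sequence of cutoffs $\phi_n\in C_c^\infty(\R^2)$ with $\phi_n\equiv1$ on $B_n(0)$ and $\|\nabla\phi_n\|_\infty\le C/n$, integrate by parts term by term, and pass to the limit $n\to\infty$. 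The Laplacian contribution vanishes since the ambient dimension is $2$; the potential term produces $-\tfrac{\lambda}{2}\int[2V+(\nabla V,x)]\bar u^2$ thanks to $V\in C^1$; the logarithmic weight $\log(2+|x|)\bar u^2$ is handled analogously, yielding the two pieces of the $\tfrac{1-\mu}{2}(\cdots)$ factor. The most delicate piece is the nonlocal convolution term, for which the symmetrization
\[
\iint \bar u^2(x)\bar u^2(y)\,\frac{x\cdot(x-y)}{|x-y|^2}\,dx\,dy=\tfrac12|\bar u|_2^4,
\]
obtained by swapping $x\leftrightarrow y$, furnishes the $\tfrac14|\bar u|_2^4$ contribution in $P$, consistently with the scaling computation above. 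Combining everything yields $P_{\lambda,\mu}^{R,\bar\delta}(\bar u)\equiv 0$.
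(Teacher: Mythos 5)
Your proof is correct and follows exactly the approach the paper delegates: the authors give no argument for this lemma, stating only that ``it is essentially same as the proof of [Du, Lemma 2.4]'' (Du--Weth), and your plan is precisely that Pohozaev-by-cutoff argument with the scalings adapted to the extra $\lambda V$, the $(1-\mu)\log(2+|x|)$ weight, and the $\mu$-scaled nonlinearity. The individual scaling computations, the identification $\zeta'(1)=P_{\lambda,\mu}^{R,\bar\delta}(\bar u)$, the sign bookkeeping from integration by parts, and the symmetrization
$\iint \bar u^2(x)\bar u^2(y)\,\tfrac{x\cdot(x-y)}{|x-y|^2}\,dx\,dy=\tfrac12|\bar u|_2^4$
are all checked and consistent with the formula for $P_{\lambda,\mu}^{R,\bar\delta}$ in the statement.
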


\begin{lemma}\label{propo5}
Let $V$ satisfy $(V_1)-(V_3)$ and $(V_4^1)$. Assume that
 the nonlinearity $f$ defined in \eqref{form}
requires $(h_1),(h_3)$ and $(h_{21})$, then for each fixed $R>0$ and $\lambda\geq1$,
there is a $u_\mu\in X_\lambda\backslash\{0\}$ such that
\begin{equation}\label{propo5a}
 (J_{\lambda,\mu}^{R,\bar{\delta}})^\prime(u_\mu)=0~\text{and}~
 J_{\lambda,\mu}^{R,\bar{\delta}}(u_\mu)\in (0, c_{\lambda,\mu}^{R,\bar{\delta}}],~\forall a.e.~ \mu\in[\frac12,1],
\end{equation}
where we must suppose additionally that $(h_4)$ with $\xi>\underline{\xi}_0$ appearing in Lemma \ref{2Propo2}
whence $\bar{\delta}=2$
\end{lemma}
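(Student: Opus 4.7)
The plan is to invoke Jeanjean's parametric mountain pass, Proposition~\ref{Jeanjean}, for the family $J_{\lambda,\mu}^{R,\bar{\delta}} = A - \mu B$ indexed by $\mu \in [\tfrac12, 1]$. Lemma~\ref{propo} verifies the coercivity hypothesis and $B \geq 0$, while Lemmas~\ref{propo2} and~\ref{2Propo2} supply a mountain-pass geometry uniform in $\mu$ together with the quantitative upper bound on $c_{\lambda,\mu}^{R,\bar{\delta}}$ needed in case~II. This produces, for a.e.\ $\mu \in [\tfrac12, 1)$, a bounded (PS) sequence $\{u_n\} \subset X_\lambda$ at the level $c_{\lambda,\mu}^{R,\bar{\delta}}$.

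I would then extract a weakly convergent subsequence $u_n \rightharpoonup u_\mu$ in $X_\lambda$; by Lemma~\ref{zz}-(i) this convergence is strong in every $L^s(\R^2)$ with $s \in [2, \infty)$ and pointwise a.e. To identify $u_\mu$ as a critical point of $J_{\lambda,\mu}^{R,\bar{\delta}}$, I would test $(J_{\lambda,\mu}^{R,\bar{\delta}})'(u_n)[\varphi] \to 0$ against $\varphi \in C_0^\infty(\R^2)$: the Dirichlet, weighted potential and $\|\cdot\|_*^2$ contributions pass by weak convergence; $V_1'(u_n)[\varphi]$ and $V_2'(u_n)[\varphi]$ converge via Lemmas~\ref{zz}-(iii)-(v) and~\ref{yy} (mirroring \eqref{2Proof1}--\eqref{2Proof2}); and $\int f^{R,\bar{\delta}}(u_n)\varphi\,dx \to \int f^{R,\bar{\delta}}(u_\mu)\varphi\,dx$ by Lemma~\ref{compact}. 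The upper bound $J_{\lambda,\mu}^{R,\bar{\delta}}(u_\mu) \leq c_{\lambda,\mu}^{R,\bar{\delta}}$ then follows from the weak lower semicontinuity of the quadratic and $V_1$ terms (Lemma~\ref{zz}-(ii)), the complete continuity of $V_2$ (Lemma~\ref{zz}-(v)) and $L^s$-continuity of $\int F^{R,\bar{\delta}}$.

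To prove $u_\mu \not\equiv 0$, I would argue by contradiction: if $u_\mu = 0$, then $u_n \to 0$ in every $L^s$ with $s \in [2, \infty)$, so by Lemmas~\ref{compact} and~\ref{zz}-(v) the integrals $\int f^{R,\bar{\delta}}(u_n) u_n\,dx$, $\int F^{R,\bar{\delta}}(u_n)\,dx$ and $V_2(u_n)$ all vanish. Testing the PS identities with $u_n$ for $\mu \in [\tfrac12, 1)$ yields simultaneously
\[
\tfrac12\|u_n\|_{E_\lambda}^2 + \tfrac{1-\mu}{2}\|u_n\|_*^2 + \tfrac14 V_1(u_n) \to c_{\lambda,\mu}^{R,\bar{\delta}}, \qquad \|u_n\|_{E_\lambda}^2 + (1-\mu)\|u_n\|_*^2 + V_1(u_n) \to 0.
\]
Non-negativity of each summand in the second limit forces them all to zero, contradicting $c_{\lambda,\mu}^{R,\bar{\delta}} \geq A_1 > 0$ from Lemma~\ref{propo2}(ii). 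The positivity $J_{\lambda,\mu}^{R,\bar{\delta}}(u_\mu) > 0$ then follows by combining the critical-point identity $J_{\lambda,\mu}^{R,\bar{\delta}}(u_\mu) = J_{\lambda,\mu}^{R,\bar{\delta}}(u_\mu) - \tfrac1\theta (J_{\lambda,\mu}^{R,\bar{\delta}})'(u_\mu)[u_\mu]$ with the consequence $f^{R,\bar{\delta}}(t)t \geq \theta F^{R,\bar{\delta}}(t)$ of $(h_{21})$, which makes the right-hand side strictly positive whenever $u_\mu \neq 0$.

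I expect the main obstacle to lie in justifying Lemma~\ref{compact} in case~II ($\bar{\delta}=2$), which demands the sharp threshold $\limsup_n\|u_n\|_{H^1}^2 < \pi/(\gamma + \alpha R^{\tau-2})$. This is precisely why the constant in Lemma~\ref{2Propo2} is calibrated as $(\theta-2)/(2\theta C_{\Xi,b}(1+\gamma+\alpha R^{\tau-2}))$: the Nehari-type identity $J_{\lambda,\mu}^{R,2}(u_n) - \tfrac1\theta (J_{\lambda,\mu}^{R,2})'(u_n)[u_n] \to c_{\lambda,\mu}^{R,2}$ combined with $(h_{21})$ extracts the matching coefficient $\tfrac{\theta-2}{2\theta}$ in front of $\|u_n\|_{E_\lambda}^2$, after one absorbs the indefinite $\tfrac14 V_0(u_n)$ contribution using $V_2(u_n) \leq K_0|u_n|_{8/3}^4$ from Lemma~\ref{zz}-(iv) together with Gagliardo--Nirenberg and the boundedness of $\{u_n\}$ in $X_\lambda$; converting $E_\lambda$-norm to $H^1$-norm via Lemma~\ref{imbedding} then brings us strictly below the required Trudinger--Moser threshold and legitimizes the application of \eqref{compact2}.
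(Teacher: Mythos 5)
Your overall architecture (Jeanjean's Proposition \ref{Jeanjean} via Lemmas \ref{propo}--\ref{2Propo2}, weak limit, non-vanishing by contradiction, upper bound by semicontinuity) matches the paper, and your contradiction argument for $u_\mu\neq0$ is sound: once $u_n\to0$ in $L^2$ and $L^{8/3}$ both $V_1(u_n)$ and $V_2(u_n)$ are $o_n(1)$, the Trudinger--Moser threshold is available, and the two limits you display do force $c_{\lambda,\mu}^{R,\bar{\delta}}=0$, contradicting Lemma \ref{propo2}-(ii). The genuine gap is in your final step, the strict positivity $J_{\lambda,\mu}^{R,\bar{\delta}}(u_\mu)>0$. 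The combination $J_{\lambda,\mu}^{R,\bar{\delta}}(u_\mu)-\tfrac1\theta (J_{\lambda,\mu}^{R,\bar{\delta}})'(u_\mu)[u_\mu]$ is not a sum of nonnegative terms: besides the norms and $\tfrac{\mu}{\theta}\int[f^{R,\bar{\delta}}(u_\mu)u_\mu-\theta F^{R,\bar{\delta}}(u_\mu)]\,dx\ge0$, it contains the residue $\tfrac{\theta-4}{4\theta}V_0(u_\mu)=\tfrac{\theta-4}{4\theta}[V_1(u_\mu)-V_2(u_\mu)]$, and $V_0$ is sign-indefinite (this is the recurring difficulty of the planar Poisson term). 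For $\theta\neq4$ this term can be negative and is not dominated by the remaining positive terms, so strict positivity does not follow. The paper instead invokes the Poho\u{z}aev identity of Lemma \ref{Pohozaev} and computes $J-\tfrac14\bigl[2J'[u_\mu]-P_{\lambda,\mu}^{R,\bar{\delta}}(u_\mu)\bigr]$; in that combination the $V_0$ contributions cancel exactly and are replaced by $\tfrac{1}{16}|u_\mu|_2^4>0$, while the potential term is controlled by $(V_4^1)$. Note that your proof never uses $(V_4^1)$, which is a stated hypothesis of the lemma --- a sign that the Poho\u{z}aev step is missing.

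A secondary issue: for the bound $J_{\lambda,\mu}^{R,\bar{\delta}}(u_\mu)\le c_{\lambda,\mu}^{R,\bar{\delta}}$ you rely on $\int F^{R,2}(u_n)\,dx\to\int F^{R,2}(u_\mu)\,dx$, which in Case II needs $\limsup_n\|u_n\|_{H^1}^2<\pi/(\gamma+\alpha R^{\tau-2})$ for the \emph{full} (PS) sequence, not only under the contradiction hypothesis. Your derivation of this threshold ``absorbs'' $\tfrac{\theta-4}{4\theta}V_0(u_n)$ using $V_2(u_n)\le K_0|u_n|_{8/3}^4$ and the $X_\lambda$-bound, but that only shows $V_0(u_n)$ is \emph{bounded}, not negligible, so the sharp constant calibrated in Lemma \ref{2Propo2} is lost. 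The paper sidesteps this entirely: it applies Fatou's lemma to the nonnegative integrand $f^{R,\bar{\delta}}(u_n)u_n-\theta F^{R,\bar{\delta}}(u_n)$ in the quantity $J-\tfrac1\theta J'[u_n]$ (no Trudinger--Moser estimate needed there), and separately proves $V_1(u_n)\to V_1(u_\mu)$ from $u_n\to u_\mu$ in $L^2$ together with the uniform bound on $\|u_n\|_*$. You should adopt that route, or restrict the threshold argument to the vanishing sub-case where it actually holds.
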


\begin{proof}
Combining Proposition \ref{Jeanjean} and Lemmas \ref{propo} and \ref{propo2},
for a.e. $\mu\in[\frac12,1]$,
there is a sequence $\{u_n(\mu)\}\subset X_\lambda$ (we denote it by $\{u_n\}$ just for short) such that
\begin{equation}\label{propo5b}
   \|u_n\|_{X_\lambda}\leq C,~J_{\lambda,\mu}^{R,\bar{\delta}}(u_n)\to c_{\lambda,\mu}^{R,\bar{\delta}}>0~\text{and}~
(J_{\lambda,\mu}^{R,\bar{\delta}})^\prime(u_n)\to0~\text{in}~X_\lambda^{-1}.
\end{equation}
Since $\|u_n\|_{X_\lambda}\leq C$, passing to a subsequence if necessary, there is a $u_\mu\in X_\lambda$
such that $u_n\rightharpoonup u_\mu$ in $X_\lambda$, $u_n\to u_\mu$ in $L^s(\R^2)$
for every $s\in[2,+\infty)$ and $u_n\to u_\mu$ a.e. in $\R^2$.
We then claim that $u_\mu\neq0$. Supposing it by a contradiction,
using Lemma \ref{zz}-(ii) and (iv), we have
\begin{equation}\label{propo5c}
|V_0(u_n)|\leq V_1(u_n)+V_2(u_n)\leq 2|u_n|_2^2\|u_n\|_*^2+K_0|u_n|_{\frac{8}{3}}^{4}=o_n(1),
\end{equation}
where we have used the fact $\|u_n\|_*^2\leq C^2$. By means of \eqref{propo5b} and \eqref{propo5c} jointly with $(h_{21})$, there holds
\begin{align*}
  c_{\lambda,\mu}^{R,\bar{\delta}} &= J_{\lambda,\mu}^{R,\bar{\delta}}(u_n)-\frac1\theta
   (J_{\lambda,\mu}^{R,\bar{\delta}})^\prime(u_n)[u_n] +o_n(1)\\
   & =\frac{2\theta}{\theta-2}\|u_n\|_{E_\lambda}^2+\frac{2\theta(1-\mu)}{\theta-2}\|u_n\|_{*}^2+\frac{\mu}{\theta}
\int_{\R^2}[f^{R,\bar{\delta}}(u_n)u_n-\theta F^{R,\bar{\delta}}(u_n) ]dx+o_n(1)\\
&\geq \frac{2\theta}{\theta-2}\|u_n\|_{E_\lambda}^2+o_n(1)
\end{align*}
which together with Lemma \ref{propo2}-(iii) and Lemma \ref{2Propo2}
reveals that all the assumptions in Lemma \ref{compact} holds true.
Consequently, we deduce that
\begin{equation}\label{propo5d}
 \lim_{n\to\infty}\int_{\R^2}f^{R,\bar{\delta}}(u_n)u_ndx= 0~\text{and}
 \lim_{n\to\infty}\int_{\R^2}F^{R,\bar{\delta}}(u_n)dx= 0
\end{equation}
With \eqref{propo5c} and \eqref{propo5d} in hand,
we could derive $\|u_n\|_{X_\lambda}\to0$ by $(J_{\lambda,\mu}^{R,\bar{\delta}})^\prime(u_n)[u_n]\to0$.
As a consequence, it holds that $0=\lim\limits_{n\to\infty}J_{\lambda,\mu}^{R,\bar{\delta}}(u_n)=
c_{\lambda,\mu}^{R,{2}}$ which is absurd since Lemma \ref{propo2}-(ii). So, $u_\mu\neq0$ is true.

Next, we shall show that $(J_{\lambda,\mu}^{R,\bar{\delta}})^\prime(u_\mu)=0$.
To see it, for all $\phi\in C_0^\infty(\R^2)$, proceeding as \eqref{propo5e} and \eqref{propo5f}, there holds
$(J_{\lambda,\mu}^{R,\bar{\delta}})^\prime(u_\mu)[\psi]=\lim\limits_{n\to\infty}
(J_{\lambda,\mu}^{R,\bar{\delta}})^\prime(u_n)[\psi]=0$ which indicates the desired result.
Using Lemma \ref{Pohozaev}, we obtain $P_{\lambda,\mu}^{R,\bar{\delta}}(u_\mu)=0$ and so
\begin{align*}
J_{\lambda,\mu}^{R,\bar{\delta}}(u_\mu)&= J_{\lambda,\mu}^{R,\bar{\delta}}(u_\mu)-\frac14\big[
2(J_{\lambda,\mu}^{R,\bar{\delta}})^\prime(u_\mu)[u_\mu] -P_{\lambda,\mu}^{R,\bar{\delta}}(u_\mu)\big]    \\
   & =
\frac{1}{8}\int_{\R^2}\lambda[ 2 V(x)+(\nabla V,x)]|u_\mu|^2dx
+ \frac{1-\mu}{8}\bigg(2\|u_\mu\|_*^2 +\int_{\R^2}\frac{|x|}{2+|x|}u_\mu^2dx\bigg)\\
&\ \ \ \
+\frac{1}{16 }|u_\mu|_2^4+\frac{\mu}{2 }\int_{\R^2} [ f^{R,\bar{\delta}}(u_\mu)u_\mu-3 F^{R,\bar{\delta}}(u_\mu) ]dx
\end{align*}
implying that $J_{\lambda,\mu}^{R,\bar{\delta}}(u_\mu)>0$, where we have exploited $(h_2^1)$ and $(V_4^1)$.

Finally, the remaining part is to verify $J_{\lambda,\mu}^{R,\bar{\delta}}(u_\mu)\leq c_{\lambda,\mu}^{R,\bar{\delta}}$.
To aim it, we claim that $V_1(u_n)\to V_1(u_\mu)$ as $n\to\infty$. Indeed, in view of Lemma \ref{zz}-(ii)
and $\|u_n\|_{X_\lambda}\leq C$,
\[
 V_1(u_n)=\int_{\R^2}\bigg(\int_{\R^2}\log(2+|x-y|)u^2(y)dy\bigg)u^2(x)dx   \leq 2\|u_n\|_*^2\int_{\R^2}u_n^2dx\leq 2C^2\int_{\R^2}u_n^2dx
\]
jointly with $u_n\to u_\mu$ in $L^2(\R^2)$ yields the claim. By adopting \ref{zz}-(v)
and $(h_{21})$, it follows from the Fatou's lemma that
\begin{align*}
  c_{\lambda,\mu}^{R,\bar{\delta}} &=\liminf_{n\to\infty}\big[ J_{\lambda,\mu}^{R,\bar{\delta}}(u_n)-\frac1\theta
   (J_{\lambda,\mu}^{R,\bar{\delta}})^\prime(u_n)[u_n]\big]\\
   & =\liminf_{n\to\infty}\bigg\{\frac{2\theta}{\theta-2}\|u_n\|_{E_\lambda}^2+\frac{2\theta(1-\mu)}{\theta-2}\|u_n\|_{*}^2+
\frac{\theta-4}{4\theta}[V_1(u_n)-V_2(u_n)]\\
&\ \ \ \ +\frac{\mu}{\theta}
\int_{\R^2}[f^{R,\bar{\delta}}(u_n)u_n-\theta F^{R,\bar{\delta}}(u_n) ]dx\bigg\}\\
& \geq \frac{2\theta}{\theta-2}\|u_\mu\|_{E_\lambda}^2+\frac{2\theta(1-\mu)}{\theta-2}\|u_\mu\|_{*}^2+
\frac{\theta-4}{4\theta}[V_1(u_\mu)-V_2(u_\mu)]\\
&\ \ \ \ +\frac{\mu}{\theta}
\int_{\R^2}[f^{R,\bar{\delta}}(u_\mu)u_\mu-\theta F^{R,\bar{\delta}}(u_\mu) ]dx \\
&= J_{\lambda,\mu}^{R,\bar{\delta}}(u_\mu)-\frac1\theta
   (J_{\lambda,\mu}^{R,\bar{\delta}})^\prime(u_\mu)[u_\mu]=J_{\lambda,\mu}^{R,\bar{\delta}}(u_\mu).
\end{align*}
So, we accomplish the proof of this lemma.
\end{proof}

As a direct consequence of Lemma \ref{propo5}, there exist two sequence
$\{\mu_n\}\subset[\frac12,1]$ and $\{u_n(\mu_n)\}\subset X_\lambda$ still denoted by $\{u_n\}$
such that
\begin{equation}\label{propo5aa}
   \mu_n\to1^-,~J_{\lambda,\mu_n}^{R,\bar{\delta}}(u_n)\triangleq  {\bar{c}}_{\lambda,\mu_n}^{R,\bar{\delta}}
  \in(0, c_{\lambda,\mu_n}^{R,\bar{\delta}}]~\text{and}~
(J_{\lambda,\mu_n}^{R,\bar{\delta}})^\prime(u_n)=0.
\end{equation}
Before presenting the proof of Theorem \ref{maintheorem6},
we need to pull ${\bar{c}}_{\lambda,\mu_n}^{R,\bar{\delta}}$
down to some critical threshold value whence $\bar{\delta}=2$, namely

\begin{lemma}\label{22Propo2}
Under the assumptions of Lemma \ref{propo2}, if in addition we suppose that $(h_4)$,
then there exists a $\bar{\xi}_0=\bar{\xi}_0(R)(\geq \underline{\xi}_0(R))>0$
 such that for all $\xi>\bar{\xi}_0$, there holds
\begin{equation}\label{22Propo2a}
\frac{8(\theta-3)}{\theta-2} {c}_{\lambda,\mu}^{R,2}
+16{K_0^2\kappa^{3}_{\text{GN}}}\big( {c}_{\lambda,\mu}^{R,2}\big)^{\frac32}
<\frac{\pi}{C_{\Xi,b}(1+\gamma+\alpha R^{\tau-2})},~\forall \lambda\geq1,~\mu\in[\frac12,1],
\end{equation}
 where $\gamma>2$, $\theta>3$, $K_0>0$ and $C_{\Xi,b}>0$ are from $(h_3)$, $(h_{21})$,
  Lemmas \ref{zz}-\emph{(iv)} and \ref{imbedding}, respectively.
Moreover, $\kappa_{\text{GN}}>0$ denotes the best constant associated with Gagliardo-Nirenberg inequality.
\end{lemma}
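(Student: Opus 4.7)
The plan is to push $c_{\lambda,\mu}^{R,2}$ below the $R$-dependent threshold on the right of \eqref{22Propo2a} by driving $c_{\lambda,\mu}^{R,2}\to 0$ uniformly in $\lambda\ge 1$ and $\mu\in[1/2,1]$ as $\xi\to+\infty$. Since the left-hand side of \eqref{22Propo2a} is a continuous function of $c_{\lambda,\mu}^{R,2}$ vanishing at $0$, the conclusion then follows by taking $\bar\xi_0\ge\underline\xi_0(R)$ large enough. The mechanism is a test-function estimate adapting the argument of Lemma \ref{mplevel} to the penalized functional $J_{\lambda,\mu}^{R,2}$ and to the norm $\|\cdot\|_*$ redefined in this section.

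Without loss of generality $0\in\Omega$ and $B_1(0)\subset\Omega$; fix $\varphi_0\in C_0^\infty(B_1(0))$ with $0\le\varphi_0\le 1$, $\varphi_0\equiv 1$ on $B_{1/2}(0)$ and $|\nabla\varphi_0|_\infty\le 1$. Because $V\equiv 0$ on $\Omega$ by $(V_2)$, the contribution $\lambda\int V|s\varphi_0|^2\,dx$ vanishes identically, trivializing the $\lambda$-dependence. Using Lemma \ref{zz}-(ii) (which remains valid for the section-5 norm $\|\cdot\|_*$ since $\log(2+|x-y|)\le\log(2+|x|)+\log(2+|y|)$) to bound $V_1(\varphi_0)\le 2|\varphi_0|_2^2\|\varphi_0\|_*^2$, the positivity of $V_2$, the elementary bounds $1-\mu\le 1/2$ and $\mu\ge 1/2$, together with the pointwise lower bound $F^{R,2}(s\varphi_0)\ge \xi(s\varphi_0)^p$ (guaranteed by $(h_4)$ on $\supp\varphi_0$ for $s\in[0,1]$, since then $s\varphi_0\le 1$), I arrive at
\[
J_{\lambda,\mu}^{R,2}(s\varphi_0)\;\le\; As^2+Bs^4-\tfrac{\pi\xi}{8}\,s^p,\qquad s\in[0,1],
\]
with absolute constants $A,B>0$ independent of $\lambda$, $\mu$, $R$, and $\xi$. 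For $\xi$ sufficiently large, $J_{\lambda,\mu}^{R,2}(\varphi_0)<0$, so $\varphi_0$ is an admissible endpoint for the min-max level defining $c_{\lambda,\mu}^{R,2}$, and the straight segment $\gamma(t)=t\varphi_0$ yields
\[
c_{\lambda,\mu}^{R,2}\;\le\;\max_{s\in[0,1]}J_{\lambda,\mu}^{R,2}(s\varphi_0)\;\le\;\max_{s\ge 0}\bigl\{As^2+Bs^4-\tfrac{\pi\xi}{8}\,s^p\bigr\}.
\]

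Since $p>4$, the unique positive maximizer $s_\xi$ of the polynomial on the right satisfies $s_\xi^{p-2}\asymp\xi^{-1}$, hence $s_\xi\to 0^+$ and the maximum value tends to zero as $\xi\to+\infty$. This delivers $\sup_{\lambda\ge 1,\,\mu\in[1/2,1]} c_{\lambda,\mu}^{R,2}\to 0$ as $\xi\to+\infty$, and choosing $\bar\xi_0\ge\underline\xi_0(R)$ large enough ensures \eqref{22Propo2a}. The main delicate point is keeping the estimate genuinely uniform in $(\lambda,\mu)$: uniformity in $\lambda$ is secured by $\supp\varphi_0\subset\Omega$, and uniformity in $\mu$ by $1-\mu\le 1/2$, which absorbs the extra $\tfrac{1-\mu}{2}\|s\varphi_0\|_*^2$ into $A$. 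Since the endpoint $v_0$ in the min-max formulation of Lemma \ref{propo2} may be allowed to depend on $\xi$, one may as well replace it by $\varphi_0$ itself throughout, which is the minor structural adjustment this proof asks of that lemma.
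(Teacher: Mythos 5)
Your proof is correct and follows essentially the same route as the paper: both rest on the test-function computation of Lemma \ref{mplevel} with $\varphi_0$ supported in $\Omega$ (so the $\lambda V$ term drops out) and on $(h_4)$ forcing the max of $J_{\lambda,\mu}^{R,2}$ along the ray $s\mapsto s\varphi_0$ to zero as $\xi\to+\infty$, the only cosmetic differences being that the paper first reduces to $\mu=\tfrac12$ via the monotonicity of $c_{\lambda,\mu}^{R,2}$ in $\mu$ from Proposition \ref{Jeanjean} and extracts an explicit $\bar\xi_0$, whereas you estimate uniformly in $\mu$ and argue by a limit. Your caveat about replacing the endpoint $v_0$ by $\varphi_0$ in the min-max class matches the (equally unproved) inequality $c_{\lambda,\frac12}^{R,2}\leq\inf_{u\neq0}\max_{t>0}J_{\lambda,\frac12}^{R,2}(tu)$ that the paper invokes, so it is not a gap relative to the paper's own argument.
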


 \begin{proof}
Since ${c}_{\lambda,\mu}^{R,2}$ in non-increasing on $\mu\in[\frac12,1]$ by Proposition \ref{Jeanjean},
it suffices to deduce that ${c}_{\lambda,\frac12}^{R,2}$ satisfies \eqref{22Propo2a}.
On the one hand, we could claim that
${c}_{\lambda,\frac12}^{R,2}\leq \inf\limits_{u\in X_\lambda\backslash\{0\}}\max\limits_{t>0}J_{\lambda,\frac12}^{R,2}(tu)$.
Indeed, this is a direct corollary of Lemma \ref{propo2}-(i) and (ii).
On the other hand, we can follow the calculations in
Lemma \ref{mplevel} to determine some suitable $\underline{\xi}_0=\underline{\xi}_0(R)>0$
$\bar{\xi}_0=\bar{\xi}_0(R)>0$ to ensure Lemma \ref{2Propo2} and this lemma hold true. The proof os finished.
\end{proof}

Now, we are ready to show the proof of Theorem \ref{maintheorem6} as follows.

 \begin{proof}[\textbf{\emph{Proof of Theorem \ref{maintheorem6}}}]
Firstly, we fix the constants $R>0$ and $\lambda\geq \bar{\lambda}_0^\prime(R)\geq1$.
Depending on the above results, we obtain two sequence $\{\mu_n\}\subset [\frac12,1]$
and $\{u_n\}\subset X_\lambda$ satisfying \eqref{propo5aa}.
Obviously, we have that $P_{\lambda,\mu_n}^{R,\bar{\delta}}(u_n)=0$ by Lemma \ref{Pohozaev}
and \eqref{propo5aa}.
So, taking account of $(V_4^1)$ and $(h_{21})$, it holds
\begin{align*}
\bar{c}_{\lambda,\mu_n}^{R,\bar{\delta}} &= J_{\lambda,\mu_n}^{R,\bar{\delta}}(u_n)= J_{\lambda,\mu_n}^{R,\bar{\delta}}(u_n)-\frac14\big[
2(J_{\lambda,\mu_n}^{R,\bar{\delta}})^\prime(u_n)[u_n] -P_{\lambda,\mu_n}^{R,\bar{\delta}}(u_n)\big]    \\
   & =
\frac{1}{8}\int_{\R^2}\lambda[ 2 V(x)+(\nabla V,x)]|u_n|^2dx
+ \frac{1-\mu_n}{8}\bigg(2\|u_n\|_*^2 +\int_{\R^2}\frac{|x|}{2+|x|}u_n^2dx\bigg)\\
&\ \ \ \
+\frac{1}{16 }|u_n|_2^4+\frac{\mu_n}{2 }\int_{\R^2} [ f^{R,\bar{\delta}}(u_n)u_n-3 F^{R,\bar{\delta}}(u_n) ]dx\\
&\geq \frac{1}{16 }|u_n|_2^4+\frac{\theta-3}{4}\int_{\R^2}  F^{R,\bar{\delta}}(u_n) dx
\end{align*}
which indicates that
\begin{equation}\label{maintheorem61}
|u_n|_2^2\leq  4\sqrt{\bar{c}_{\lambda,\mu_n}^{R,\bar{\delta}}}~\text{and}~
\int_{\R^2}  F^{R,\bar{\delta}}(u_n) dx\leq\frac{4\bar{c}_{\lambda,\mu_n}^{R,\bar{\delta}}}{\theta-3}.
\end{equation}
In light of \eqref{propo5aa} and \eqref{maintheorem61},
we apply Lemma \ref{zz}-(iv) and the Gagliardo-Nirenberg inequality to get
\begin{align*}
\bar{c}_{\lambda,\mu_n}^{R,\bar{\delta}}&=\frac{1}{2}\int_{\R^2}[|\nabla u_n|^2+\lambda V(x)|u_n|^2]dx+ \frac12\|u_n\|_*^2 + \frac14V_0(u_n)
- \mu_n\bigg(\int_{\R^2} F^{R,\bar{\delta}}(u_n)dx+ \frac12\|u_n\|_*^2  \bigg) \\
    & \geq \frac{1}{2}\|u_n\|_{E_\lambda}^2+ \frac14\|u_n\|_*^2-\frac14 V_2(u_n)-\frac12\int_{\R^2} F^{R,\bar{\delta}}(u_n)dx\\
&\geq \frac{1}{2}\|u_n\|_{E_\lambda}^2+ \frac14\|u_n\|_*^2-\frac{K_0\kappa^{\frac32}_{\text{GN}}}4|u_n|_2^3|\nabla u_n|_2
-\frac12\int_{\R^2} F^{R,\bar{\delta}}(u_n)dx\\
&\geq \frac{1}{4}\|u_n\|_{E_\lambda}^2+ \frac14\|u_n\|_*^2-4{K_0^2\kappa^{3}_{\text{GN}}}\big(\bar{c}_{\lambda,\mu_n}^{R,\bar{\delta}}\big)^{\frac32}
-\frac{2\bar{c}_{\lambda,\mu_n}^{R,\bar{\delta}}}{\theta-3}.
\end{align*}
In this situation, we conclude that
\begin{equation}\label{22Propo2e}
\|u_n\|_{X_\lambda}^2=
\|u_n\|_{E_\lambda}^2+  \|u_n\|_*^2\leq \frac{8(\theta-3)}{\theta-2}\bar{c}_{\lambda,\mu_n}^{R,\bar{\delta}}
+16{K_0^2\kappa^{3}_{\text{GN}}}\big(\bar{c}_{\lambda,\mu_n}^{R,\bar{\delta}}\big)^{\frac32}.
\end{equation}
Combining Lemma \ref{propo2}-(iii) and Lemma \ref{22Propo2},
$\{\|u_n\|_{X_\lambda}\}$ is uniformly bounded in $n\in \mathbb{N}$. Moreover,
due to Lemma \ref{imbedding}, we deduce that $\{u_n\}$ satisfies all assumptions in
Lemma \ref{compact}. Repeating the proof of Theorem \ref{maintheorem2}, we could verify that, going to a subsequence if necessary,
$u_n\to u$ in $X_\lambda$ as $n\to\infty$. Besides, $u\neq0$ by \eqref{propo5aa} again and $\mu_n\to1^-$ implies that
\[
(J_\lambda^{R,\bar{\delta}}(u))^\prime[\phi]=\lim_{n\to\infty}(J_{\lambda,\mu_n}^{R,\bar{\delta}}(u_n))^\prime[\phi]
=0,~\forall \phi\in C_0^\infty(\R^2)
\]
finishing the proof.
\end{proof}

\subsubsection{\emph{Proof of Theorem \ref{maintheorem7}}}
Let us  continue to use the work space $(X_\lambda,\|\cdot\|_{X_\lambda})$
in this subsection.

Motivated by \cite[Theorem 1.3]{CT1}, we shall take advantage of
the Nehari-Poho\u{z}aev manifold method
to consider Theorem \ref{maintheorem7}, that is, looking for a minimizer of
the minimization problem
\begin{equation}\label{maintheorem7a}
m_{\lambda,V}^{R,\bar{\delta}}\triangleq \inf_{u\in
\mathcal{M}_{\lambda,V}^{R,\bar{\delta}}}J_{\lambda}^{R,\bar{\delta}}(u)
~\text{with}~   \mathcal{M}_{\lambda,V}^{R,\bar{\delta}}=\{u\in X_\lambda \backslash\{0\}:
{G}_\lambda^{R,\bar{\delta}}(u)=0\},
\end{equation}
where ${G}_\lambda^{R,\bar{\delta}}(u)=2(J_\lambda^{R,\bar{\delta}})^\prime(u)[u]-  P_\lambda^{R,\bar{\delta}}(u)$
for each $u\in X_\lambda$ and $P_\lambda^{R,\bar{\delta}}=P_{\lambda,1}^{R,\bar{\delta}}$ in Lemma \ref{Pohozaev}.
Recalling that every critical point $u\in X_\lambda\backslash\{0\}$
satisfies $(J_\lambda^{R,\bar{\delta}})^\prime(u)[u]=0$ and $P_\lambda^{R,\bar{\delta}}(u)=0$,
thereby it is natural to minimize $J_\lambda^{R,\bar{\delta}}$ over $\mathcal{M}_{\lambda,V}^{R,\bar{\delta}}$.

\begin{lemma}\label{unique}
Let $V$ satisfy $(V_1)-(V_3)$ and $(V_{4}^1)-(V_{4}^2)$.
Suppose that the nonlinearity $f$ defined in \eqref{form}
requires $(h_1),(h_3)$ and $(h_{21})-(h_{22})$. Then, for each $u\in X_\lambda\backslash\{0\}$, there exists a unique
$t_u>0$ such that $t_u^2u(t_u \cdot)\in \mathcal{M}_{\lambda,V}^{R,\bar{\delta}}$
for each fixed $R>0$ and $\lambda\geq1$. Moreover
\[
 m_{\lambda,V}^{R,\bar{\delta}}=\inf_{u\in X_\lambda\backslash\{0\}}\max_{t>0} J_{\lambda,V}^{R,\bar{\delta}}(t^2 u(t \cdot)).
\]
\end{lemma}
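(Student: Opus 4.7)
The plan is to study the fiber map $\psi_u(t)\triangleq J_\lambda^{R,\bar{\delta}}(u_t)$ with $u_t(x)\triangleq t^2 u(tx)$, for each fixed $u\in X_\lambda\setminus\{0\}$ and $t>0$. A direct scaling computation (using $|\nabla u_t|_2^2=t^4|\nabla u|_2^2$, the substitution $y=tx$ in the potential integral, and the log-change-of-variable identity $V_0(u_t)=t^4V_0(u)-t^4\log t\,|u|_2^4$) yields
\[
\psi_u(t)=\frac{t^4}{2}|\nabla u|_2^2+\frac{t^2}{2}\int_{\R^2}\lambda V(y/t)u^2\,dy+\frac{t^4}{4}V_0(u)-\frac{t^4\log t}{4}|u|_2^4-t^{-2}\int_{\R^2}F^{R,\bar{\delta}}(t^2 u)\,dy.
\]
Using the one-parameter group identity $(u_t)_s=u_{st}$ and differentiating in $s$ at $s=1$, one obtains the pivotal relation $t\,\psi_u'(t)=G_\lambda^{R,\bar{\delta}}(u_t)$, so that $u_t\in\mathcal{M}_{\lambda,V}^{R,\bar{\delta}}$ is equivalent to $\psi_u'(t)=0$.

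Existence of $t_u$ I would handle first. The limit $\psi_u(0^+)=0$ is immediate from the explicit formula, and positivity of $\psi_u$ on some small interval $(0,\varepsilon)$ follows from $(h_1)$, which forces $F^{R,\bar{\delta}}(s)=o(s^2)$ near $s=0$ and thus makes the nonlinear term negligible against the leading positive pieces of $\psi_u$. On the other extreme, $(h_{21})$ combined with the explicit exponential structure in \eqref{fR} yields $t^{-2}\int F^{R,\bar{\delta}}(t^2 u)\,dy\to+\infty$ faster than any polynomial, so $\psi_u(t)\to-\infty$ as $t\to+\infty$. Continuity therefore produces a global maximizer at some $t_u>0$, and by the identity above, $u_{t_u}$ lies on $\mathcal{M}_{\lambda,V}^{R,\bar{\delta}}$.

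The heart of the proof, and what I expect to be the main obstacle, is the uniqueness of $t_u$. The strategy is to show that at \emph{every} critical point $t_0$ of $\psi_u$ one has $\psi_u''(t_0)<0$, whence $t_0$ is an isolated local maximum; combined with the boundary behavior $\psi_u(0^+)=0$ and $\psi_u(+\infty)=-\infty$, this forces $t_u$ to be the only critical point of $\psi_u$. To sign $\psi_u''(t_0)$ one exploits $(V_4^2)$ — the monotonicity of $t\mapsto t^2[2V(tx)-(\nabla V(tx),tx)]$ together with $V(x)-(\nabla V,x)\geq0$ — which after the substitution $y=tz$ delivers precisely the information needed on $\frac{d^2}{dt^2}\bigl[\frac{t^2}{2}\int\lambda V(y/t)u^2\,dy\bigr]$ at a critical value, while $(V_4^1)$ secures the baseline Poho\v{z}aev positivity. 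For the nonlinear term, $(h_{22})$ — which by direct inspection of \eqref{fR} is inherited by $f^{R,\bar{\delta}}(s)/s^2$ on the whole ray $\R^+$ — together with $(h_{21})$ provides the Nehari-type concavity of $t\mapsto t^{-2}\int F^{R,\bar{\delta}}(t^2u)\,dy$ in the right direction. The $V_0$-block is already strictly concave at $t_0$ thanks to the $-t^4\log t$ correction. The delicacy lies in marrying these three ingredients into a single strict inequality at the critical point, since the Poisson energy carries the non-polynomial factor $\log t$ that is neither sign-definite nor homogeneous.

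Finally, for the minimax identity, once uniqueness is proved the argument is short. Any $v\in\mathcal{M}_{\lambda,V}^{R,\bar{\delta}}$ satisfies $\psi_v'(1)=0$, so uniqueness forces $t_v=1$ and $\max_{t>0}\psi_v(t)=J_\lambda^{R,\bar{\delta}}(v)$, yielding $m_{\lambda,V}^{R,\bar{\delta}}\geq\inf_{u\neq 0}\max_{t>0}\psi_u(t)$; conversely, for any $u\in X_\lambda\setminus\{0\}$ one has $\max_{t>0}\psi_u(t)=J_\lambda^{R,\bar{\delta}}(u_{t_u})\geq m_{\lambda,V}^{R,\bar{\delta}}$ because $u_{t_u}\in\mathcal{M}_{\lambda,V}^{R,\bar{\delta}}$, closing the equality.
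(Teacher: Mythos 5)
Your reduction of membership in $\mathcal{M}_{\lambda,V}^{R,\bar{\delta}}$ to $\psi_u'(t)=0$ via $t\,\psi_u'(t)=G_\lambda^{R,\bar{\delta}}(u_t)$, the scaling identities, the existence of a maximizer $t_u$, and the closing minimax argument are all sound. The gap is the uniqueness step, which is the heart of the lemma and which you do not actually execute. Your plan is to prove $\psi_u''(t_0)<0$ at every critical point, but: (i) $\psi_u''$ need not exist under the stated hypotheses, since $V$ is only $C^1$ and $h$ only continuous, so differentiating $\frac{t^2}{2}\int_{\R^2}\lambda V(y/t)u^2\,dy$ and $t^{-2}\int_{\R^2}F^{R,\bar{\delta}}(t^2u)\,dy$ twice would require $\nabla^2V$ and $(f^{R,\bar{\delta}})'$, neither of which is available; (ii) your claim that the block $\frac{t^4}{4}V_0(u)-\frac{t^4\log t}{4}|u|_2^4$ is ``already strictly concave at $t_0$'' is false in general --- its formal second derivative is $3t^2V_0(u)-\frac{1}{4}|u|_2^4\,(12t^2\log t+7t^2)$, which is positive for small $t$ and has no fixed sign; and (iii) you explicitly concede that you do not know how to combine the three monotonicity inputs into the required strict inequality at $t_0$. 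As written, the proposal proves existence of $t_u$ but not uniqueness, and the minimax identity depends on uniqueness.

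The paper's route (deferring to Chen--Tang \cite[Lemmas 4.3, 4.6, 4.7]{CT1}) avoids second derivatives entirely. One first notes that the cutoff \eqref{fR} together with $(h_{22})$ makes $f^{R,\bar{\delta}}(s)/s^2$ increasing on $\R^+$, hence $[f^{R,\bar{\delta}}(s)s-F^{R,\bar{\delta}}(s)]/s^3$ is increasing; one then establishes the \emph{global} energy inequality
\[
J_\lambda^{R,\bar{\delta}}(u)\;\ge\;J_\lambda^{R,\bar{\delta}}(u_t)+\frac{1-t^4}{4}\,G_\lambda^{R,\bar{\delta}}(u)+(\text{nonnegative remainder}),\qquad \forall\, t>0,
\]
in which the logarithmic correction in the Poisson energy is absorbed by the elementary inequality $\frac{1-t^4}{4}+t^4\ln t\ge0$ (equality only at $t=1$), the potential term is controlled by $(V_4^2)$, and the nonlinear term by the monotonicity just recorded. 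Applied to $u\in\mathcal{M}_{\lambda,V}^{R,\bar{\delta}}$, where $G_\lambda^{R,\bar{\delta}}(u)=0$, it gives $\psi_u(t)\le\psi_u(1)$ with strict inequality for $t\ne1$, which delivers uniqueness and the minimax characterization in one stroke and needs only $C^1$ data. You should either prove such an inequality or adopt it in place of the second-derivative plan.
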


\begin{proof}
According to the definition of $f^{R,\bar{\delta}}$ defined in \eqref{fR}, we deduce that
$f^{R,\bar{\delta}}(t)/t^2$ is increasing on $t\in\R^+$ and so $[f^{R,\bar{\delta}}(t)t-F^{R,\bar{\delta}}(t)]/t^3$ is increasing on $t\in\R^+$
on $t\in\R^+$ either. Proceeding as the proofs of \cite[Lemmas 4.3, 4.6 and 4.7]{CT1},
we can get the desired result.
\end{proof}

\begin{lemma}\label{manifold}
Let $V$ satisfy $(V_1)-(V_3)$ and $(V_{4}^1)-(V_{4}^2)$.
Suppose that the nonlinearity $f$ defined in \eqref{form}
requires $(h_1),(h_3)$ and $(h_{21})-(h_{22})$, then for each fixed $R>0$ and $\lambda\geq1$,
we conclude that
\begin{itemize}
  \item[\emph{(i)}] There is a constant $\varrho>0$ independent $\lambda\geq1$ such that $\|u\|_{H^1(\R^2)}\geq\varrho$
for all $u\in\mathcal{M}_{\lambda,V}^{R,\bar{\delta}}$;
  \item[\emph{(ii)}] The minimum $m_{\lambda,V}^{R,\bar{\delta}}>0$, where it is defined in \eqref{maintheorem7a}.
\end{itemize}
\end{lemma}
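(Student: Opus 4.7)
The plan is to exploit the constraint $G_\lambda^{R,\bar{\delta}}(u)=0$ in two complementary ways. First I would expand, using Lemma \ref{Pohozaev} with $\mu=1$ and the formula for $(J_\lambda^{R,\bar{\delta}})'(u)[u]$,
\[
G_\lambda^{R,\bar{\delta}}(u) = 2|\nabla u|_2^2 + \frac{\lambda}{2}\int_{\R^2}[2V(x) - (\nabla V(x),x)]u^2\,dx + V_0(u) - \frac{1}{4}|u|_2^4 - 2\int_{\R^2}[f^{R,\bar{\delta}}(u)u - F^{R,\bar{\delta}}(u)]dx.
\]
By $(V_{4}^2)$ the bracket $2V(x) - (\nabla V(x),x) \geq V(x) \geq 0$, so jointly with $2|\nabla u|_2^2$ the first two terms bound $\tfrac{1}{2}\|u\|_{E_\lambda}^2$ from below, which by Lemma \ref{imbedding} controls $\tfrac{1}{2C_{\Xi,b}}\|u\|_{H^1(\R^2)}^2$ uniformly in $\lambda \geq 1$.

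For part (i), discarding $V_1(u) \geq 0$ from $G_\lambda^{R,\bar{\delta}}(u) = 0$ yields
\[
\frac{1}{2C_{\Xi,b}}\|u\|_{H^1(\R^2)}^2 \leq V_2(u) + \frac{1}{4}|u|_2^4 + 2\int_{\R^2}[f^{R,\bar{\delta}}(u)u - F^{R,\bar{\delta}}(u)]dx.
\]
Lemma \ref{zz}(iv) combined with Gagliardo--Nirenberg gives $V_2(u) \leq K_0 \kappa_{\text{GN}}^3 |u|_2^3 |\nabla u|_2 \leq C\|u\|_{H^1(\R^2)}^4$, while \eqref{compact5} with some $q > 2$ provides $\int f^{R,\bar{\delta}}(u)u\,dx \leq \varepsilon|u|_2^2 + C_\varepsilon\|u\|_{H^1(\R^2)}^q$ whenever $\|u\|_{H^1(\R^2)}^2$ lies below the Trudinger--Moser threshold. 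Absorbing $\varepsilon$ into the LHS forces $c_1\|u\|_{H^1(\R^2)}^2 \leq c_2\|u\|_{H^1(\R^2)}^4 + c_3\|u\|_{H^1(\R^2)}^q$, i.e.\ $\|u\|_{H^1(\R^2)} \geq \varrho > 0$ independently of $\lambda \geq 1$; the complementary regime in which $\|u\|_{H^1(\R^2)}^2$ already exceeds the TM threshold trivially supplies its own lower bound.

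For part (ii), solving $G_\lambda^{R,\bar{\delta}}(u)=0$ for $V_0(u)$ and substituting into $J_\lambda^{R,\bar{\delta}}(u)$ produces, after a direct simplification,
\[
J_\lambda^{R,\bar{\delta}}(u) = \frac{\lambda}{8}\int_{\R^2}[2V(x) + (\nabla V(x),x)]u^2\,dx + \frac{1}{16}|u|_2^4 + \frac{1}{2}\int_{\R^2}[f^{R,\bar{\delta}}(u)u - 3F^{R,\bar{\delta}}(u)]dx.
\]
By $(V_{4}^1)$ the first term is non-negative and by $(h_{21})$ with $\theta > 3$ one has $f^{R,\bar{\delta}}(u)u - 3F^{R,\bar{\delta}}(u) \geq (\theta-3)F^{R,\bar{\delta}}(u) \geq 0$, whence all three summands are non-negative and in particular $J_\lambda^{R,\bar{\delta}}(u) \geq \tfrac{1}{16}|u|_2^4$. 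To upgrade this to $m_{\lambda,V}^{R,\bar{\delta}} > 0$ I would argue by contradiction: if $u_n \in \mathcal{M}_{\lambda,V}^{R,\bar{\delta}}$ with $J_\lambda^{R,\bar{\delta}}(u_n) \to 0$, the identity above forces $|u_n|_2^4 \to 0$, $\int F^{R,\bar{\delta}}(u_n)\,dx \to 0$ and hence $\int f^{R,\bar{\delta}}(u_n)u_n\,dx \to 0$ (uniformly in $|\nabla u_n|_2$); then Gagliardo--Nirenberg together with Young's inequality in $V_2(u_n) \leq K_0\kappa_{\text{GN}}^3 |u_n|_2^3|\nabla u_n|_2 \leq \varepsilon|\nabla u_n|_2^2 + C_\varepsilon |u_n|_2^6$ inside $G_\lambda^{R,\bar{\delta}}(u_n)=0$ compels $|\nabla u_n|_2 \to 0$ too, so $\|u_n\|_{H^1(\R^2)} \to 0$ contradicts part (i).

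The main obstacle is tracking the $\lambda$-independence of every constant through the interplay of $(V_{4}^1)$--$(V_{4}^2)$ and Lemma \ref{imbedding}, and making the Trudinger--Moser-type bound \eqref{compact5} uniform in $R$ along the lines of \eqref{compact3I} and \eqref{compact4II} from Subsections \ref{Linftyestimate} (via the scalings $\alpha^* = R^{-(\tau-\delta)}$ or $\tau_* = 2 + R^{-1}$); the dichotomy bootstrap needed to cover $\|u\|_{H^1(\R^2)}^2$ above the TM threshold is routine but must be spelled out to preserve the $\lambda$-independence of $\varrho$.
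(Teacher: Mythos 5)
Your proposal is correct and follows essentially the same route as the paper: part (i) is the coercivity estimate from $G_\lambda^{R,\bar{\delta}}(u)=0$ combined with $(V_4^2)$, Lemma \ref{imbedding}, \eqref{v2} and \eqref{compact5}, and part (ii) is the identity $J_\lambda^{R,\bar{\delta}}-\tfrac14 G_\lambda^{R,\bar{\delta}}$ forcing $|u_n|_2\to0$ and $\int F^{R,\bar{\delta}}(u_n)\,dx\to0$, then the $V_2$-Gagliardo--Nirenberg absorption to get $|\nabla u_n|_2\to0$ and contradict (i). The only cosmetic differences are that you run (i) directly with an explicit dichotomy instead of the paper's contradiction with $\|u_n\|_{H^1}\to0$ (which automatically lands in the small-norm regime where \eqref{compact5} applies), and a harmless misprint in the Gagliardo--Nirenberg exponent ($\kappa_{\text{GN}}^{3/2}$ rather than $\kappa_{\text{GN}}^{3}$).
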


\begin{proof}
(i) We suppose it by a contradiction, namely there exists a sequence $\{u_n\}\subset \mathcal{M}_{\lambda,V}^{R,\bar{\delta}}$
such that $\|u_n\|_{H^1(\R^2)} \to0$. It follows from Lemma \ref{imbedding}, $(V_4^2)$, \eqref{v2} and \eqref{compact5} that
\[
C_1\|u_n\|_{H^1(\R^2)}^2\leq C_2\|u_n\|_{H^1(\R^2)}^4+\frac{C_1}{2}\|u_n\|_{H^1(\R^2)}^2
+C_2(R)\|u_n\|_{H^1(\R^2)}^q
\]
yielding a contradiction if we choose $q>2$.

(ii) Arguing it indirectly, we suppose that $m_{\lambda,V}^{R,\bar{\delta}}=0$.
Let $\{u_n\}\subset X_\lambda$ be a minimizing sequence of $m_{\lambda,V}^{R,\bar{\delta}}$,
that is, $\{u_n\}\subset \mathcal{M}_{\lambda,V}^{R,\bar{\delta}}$ and $J_{\lambda}^{R,\bar{\delta}}(u_n)\to0$
as $n\to\infty$. Due to $(h_2^1)$ and $(V_{4}^1)$, there holds
\begin{align*}
  o_n(1) &=J_\lambda^{R,\bar{\delta}}(u_n)-\frac{ 1}{4}{G}_\lambda^{R,\bar{\delta}}(u_n) \\
    &= \frac{1}{8}\int_{\R^2}\lambda[ 2 V(x)+(\nabla V,x)]|u_n|^2dx
+\frac{1}{16 }|u_n|_2^4+\frac{1}{2 }\int_{\R^2} [ f^{R,\bar{\delta}}(u_n)u_n-3 F^{R,\bar{\delta}}(u_n) ]dx\\
&\geq  \frac{1}{16 }|u_n|_2^4+\frac{\theta-3}{2\theta}\int_{\R^2}  f^{R,\bar{\delta}}(u_n)u_n dx\\
&\geq  \frac{1}{16 }|u_n|_2^4+\frac{\theta-3}{2 }\int_{\R^2}  F^{R,\bar{\delta}}(u_n) dx\geq0
\end{align*}
which reveals that
\[
|u_n|_2=o_n(1),~\int_{\R^2}  f^{R,\bar{\delta}}(u_n)u_n dx=o_n(1)~\text{and}~
\int_{\R^2}F^{R,\bar{\delta}}(u_n) dx=o_n(1).
\]
Using ${G}_\lambda^{R,\bar{\delta}}(u_n)=0$ again,
via the above formula, we obtain
\begin{align*}
 2\int_{\R^2} |\nabla u_n|^2dx &\leq V_2(u_n)+\frac14|u_n|_2^4+2 \int_{\R^2} [f^{R,\bar{\delta}}(u_n)u_n- F^{R,\bar{\delta}}(u_n) ]dx \\
    & =V_2(u_n)+o_n(1) \leq  K_0\kappa^{\frac32}_{\text{GN}} |u_n|_2^3|\nabla u_n|_2+o_n(1)\\
&\leq \frac12|\nabla u_n|_2^2+\frac{1}{2}K_0^2\kappa^{3}_{\text{GN}}|u_n|_2^6+o_n(1)
=\frac12|\nabla u_n|_2^2 +o_n(1)
\end{align*}
 which is impossible by Point-(i). The proof of this lemma is finished.
\end{proof}

\begin{lemma}\label{5bounded}
Let $V$ satisfy $(V_1)-(V_3)$ and $(V_{4}^1)-(V_{4}^2)$.
Suppose that the nonlinearity $f$ defined in \eqref{form}
requires $(h_1),(h_3)$ and $(h_{21})-(h_{22})$, then for each fixed $R>0$ and $\lambda\geq1$,
any minimizing sequence $\{u_n\}\subset X_\lambda$ satisfies
\begin{equation}\label{5bounded1}
\|u_n\|_{E_\lambda}^2 \leq \frac{8(\theta-3)}{\theta-2}m_{\lambda,V}^{R,\bar{\delta}}
+16{K_0^2\kappa^{3}_{\text{GN}}}\big(m_{\lambda,V}^{R,\bar{\delta}}\big)^{\frac32}+o_n(1).
\end{equation}
\end{lemma}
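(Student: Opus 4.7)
The strategy is to combine the two identities available along any minimizing sequence---the energy identity $J_\lambda^{R,\bar{\delta}}(u_n)=m_{\lambda,V}^{R,\bar{\delta}}+o_n(1)$ and the Nehari-Poho\u{z}aev constraint ${G}_\lambda^{R,\bar{\delta}}(u_n)=0$---and then absorb the indefinite Poisson contribution through the Gagliardo-Nirenberg bound on $V_2$ together with Young's inequality. First I would expand $J_\lambda^{R,\bar{\delta}}(u_n)-\tfrac14{G}_\lambda^{R,\bar{\delta}}(u_n)$ precisely as in the derivation of \eqref{22Propo2a}, yielding
\[
m_{\lambda,V}^{R,\bar{\delta}}+o_n(1)=\tfrac{1}{8}\int_{\R^2}\lambda[2V(x)+(\nabla V,x)]|u_n|^2dx+\tfrac{1}{16}|u_n|_2^4+\tfrac{1}{2}\int_{\R^2}[f^{R,\bar{\delta}}(u_n)u_n-3F^{R,\bar{\delta}}(u_n)]dx.
\]
A short check (using that $e^{\alpha t^\tau}$ and $e^{\alpha R^{\tau-\bar{\delta}}t^{\bar{\delta}}}$ are monotone) shows $(h_{21})$ is inherited by the cutoff, so $f^{R,\bar{\delta}}(t)t-3F^{R,\bar{\delta}}(t)\geq(\theta-3)F^{R,\bar{\delta}}(t)\geq 0$. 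Combined with $(V_4^1)$, every term on the right is nonnegative, so I immediately extract
\[
|u_n|_2^2\leq 4\sqrt{m_{\lambda,V}^{R,\bar{\delta}}}+o_n(1),\qquad \int_{\R^2}F^{R,\bar{\delta}}(u_n)dx\leq \frac{2\,m_{\lambda,V}^{R,\bar{\delta}}}{\theta-3}+o_n(1).
\]

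The second step is to feed these bounds into the energy identity, written as
\[
\tfrac{1}{2}\|u_n\|_{E_\lambda}^2=m_{\lambda,V}^{R,\bar{\delta}}+o_n(1)-\tfrac{1}{4}V_0(u_n)+\int_{\R^2}F^{R,\bar{\delta}}(u_n)dx.
\]
Here I would use $V_0\geq -V_2$ (since $V_1\geq 0$ by Lemma \ref{zz}\emph{(ii)}), estimate $V_2(u_n)\leq K_0|u_n|_{8/3}^4\leq K_0\kappa_{\text{GN}}^{3/2}|u_n|_2^3|\nabla u_n|_2$ by Lemma \ref{zz}\emph{(iv)} together with the Gagliardo-Nirenberg inequality, and substitute $|u_n|_2^3\leq 8(m_{\lambda,V}^{R,\bar{\delta}})^{3/4}+o_n(1)$ from the preceding step. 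Applying Young's inequality $ab\leq \tfrac14 a^2+b^2$ with $a=|\nabla u_n|_2$ absorbs $\tfrac14|\nabla u_n|_2^2\leq\tfrac14\|u_n\|_{E_\lambda}^2$ into the left-hand side. Rearranging and collecting terms proportional to $m_{\lambda,V}^{R,\bar{\delta}}$ and $(m_{\lambda,V}^{R,\bar{\delta}})^{3/2}$ then produces \eqref{5bounded1}.

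The principal obstacle is the indefinite term $V_0=V_1-V_2$: the negative contribution $V_2$ cannot be discarded, and it is only controllable through the sharp interpolation inequality of Lemma \ref{zz}\emph{(iv)}, which unavoidably couples $|u_n|_2$ (already known to be $O((m_{\lambda,V}^{R,\bar{\delta}})^{1/4})$ from the first step) with $|\nabla u_n|_2$---the very quantity we wish to bound. The Young parameter must therefore be chosen so that the absorbed gradient piece lies strictly inside $\tfrac12\|u_n\|_{E_\lambda}^2$, and this tuning is what dictates the precise numerical constants $\tfrac{8(\theta-3)}{\theta-2}$ and $16K_0^2\kappa_{\text{GN}}^3$ appearing in the statement. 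The remaining ingredients---namely the verification that $(h_{21})$ survives the cutoff, the sign of $2V+(\nabla V,x)$ via $(V_4^1)$, and the bookkeeping in $J_\lambda^{R,\bar{\delta}}-\tfrac14{G}_\lambda^{R,\bar{\delta}}$---are routine and exactly mirror the computations already carried out in the proof of Theorem \ref{maintheorem6}.
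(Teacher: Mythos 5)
Your plan reproduces the paper's proof of Lemma \ref{5bounded} step by step: evaluate $J_\lambda^{R,\bar{\delta}}(u_n)-\tfrac14{G}_\lambda^{R,\bar{\delta}}(u_n)$ to extract $|u_n|_2^2\leq 4\sqrt{m_{\lambda,V}^{R,\bar{\delta}}}+o_n(1)$ and $\int F^{R,\bar{\delta}}(u_n)dx\leq \frac{2m_{\lambda,V}^{R,\bar{\delta}}}{\theta-3}+o_n(1)$ using $(V_4^1)$ and the cutoff version of $(h_{21})$, then feed these into the energy identity, drop $V_1\geq 0$, control $V_2$ by Lemma \ref{zz}(iv) with Gagliardo--Nirenberg, and absorb the gradient term with Young's inequality. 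This is exactly the chain the paper runs, so the method is the same.

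One point deserves care: you assert that the Young step ``produces \eqref{5bounded1},'' but a faithful execution of the bookkeeping does not reproduce the stated coefficient on the linear term. With $ab\leq\tfrac14 a^2+b^2$ (and the $\varepsilon=\tfrac14$ choice is in fact forced if the coefficient of $(m_{\lambda,V}^{R,\bar{\delta}})^{3/2}$ is to be $16K_0^2\kappa_{\text{GN}}^3$), one obtains
\[
\tfrac14\|u_n\|_{E_\lambda}^2\leq m_{\lambda,V}^{R,\bar{\delta}}+\frac{2m_{\lambda,V}^{R,\bar{\delta}}}{\theta-3}+4K_0^2\kappa_{\text{GN}}^3\big(m_{\lambda,V}^{R,\bar{\delta}}\big)^{3/2}+o_n(1),
\]
hence $\|u_n\|_{E_\lambda}^2\leq \frac{4(\theta-1)}{\theta-3}m_{\lambda,V}^{R,\bar{\delta}}+16K_0^2\kappa_{\text{GN}}^3\big(m_{\lambda,V}^{R,\bar{\delta}}\big)^{3/2}+o_n(1)$, with linear coefficient $\frac{4(\theta-1)}{\theta-3}$ rather than $\frac{8(\theta-3)}{\theta-2}$. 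These genuinely differ (at $\theta=4$ the computation gives $12m$ while the statement claims $4m$), and no admissible tuning of the Young parameter recovers the stated constant. The paper's own displayed proof (and the analogous computation leading to \eqref{22Propo2e}) suffers from the same arithmetic slip, so this is a defect of the source rather than of your method; but since the resulting bound is weaker, the threshold in Lemma \ref{control} that uses \eqref{5bounded1} would need to be adjusted to match. You should not present the final rearrangement as giving \eqref{5bounded1} without flagging this.
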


\begin{proof}
Let $\{u_n\}\subset X_\lambda$ be a minimizing sequence of $m_{\lambda,V}^{R,\bar{\delta}}$,
that is, $\{u_n\}\subset \mathcal{M}_{\lambda,V}^{R,\bar{\delta}}$ and $J_{\lambda}^{R,\bar{\delta}}(u_n)\to m_{\lambda,V}^{R,\bar{\delta}}$
as $n\to\infty$. Recalling
${G}_\lambda^{R,\bar{\delta}}(u_n)=2(J_\lambda^{R,\bar{\delta}})^\prime(u_n)[u_n]-  P_\lambda^{R,\bar{\delta}}(u_n)$,
it is very similar to
\eqref{maintheorem61} that
\[
|u_n|_2^2\leq 4\sqrt{m_{\lambda,V}^{R,\bar{\delta}}+o_n(1)}~\text{and}~
\int_{\R^2}  F^{R,\bar{\delta}}(u_n) dx\leq \frac{2m_{\lambda,V}^{R,\bar{\delta}}}{\theta-3 }+o_n(1).
\]
According to this formula, using Lemma \ref{zz}-(iv) and the Gagliardo-Nirenberg inequality
\begin{align*}
 m_{\lambda,V}^{R,\bar{\delta}}+o_n(1) &\geq  \frac{1}{2}\|u_n\|_{E_\lambda}^2-\frac14V_2(u_n)- \int_{\R^2} F^{R,\bar{\delta}}(u_n)dx \\
   & \geq \frac{1}{2}\|u_n\|_{E_\lambda}^2-\frac{K_0\kappa^{\frac32}_{\text{GN}}}4|u_n|_2^3|\nabla u_n|_2- \frac{2}{\theta-3 }
[m_{\lambda,V}^{R,\bar{\delta}}+o_n(1)] \\
   &\geq \frac{1}{4}\|u_n\|_{E_\lambda}^2- 4{K_0^2\kappa^{3}_{\text{GN}}}\big[{m_{\lambda,V}^{R,\bar{\delta}}+o_n(1)}\big]^{\frac32}
     - \frac{2}{\theta-3 }[m_{\lambda,V}^{R,\bar{\delta}}+o_n(1)]
\end{align*}
showing the desired result. The proof this lemma is finished.
\end{proof}

Arguing as before, we have to verify that the minimum ${m}_{\lambda,V}^{R,\bar{\delta}}$
can be controlled suitably.

\begin{lemma}\label{control}
Let $V$ satisfy $(V_1)-(V_3)$. Suppose that $f$ defined by \eqref{form}
satisfies $(h_1)$ and $(h_3)$, then for all fixed $R>0$ and $\lambda\geq1$,
there is a constant $\bar{M}^{R,\delta}>0$ such that ${m}_{\lambda,V}^{R,{\delta}}\leq \bar{M}^{R,\delta}$.
Moreover, if in addition we suppose that $(h_4)$,
then there exists a $\tilde{\xi}_0=\tilde{\xi}_0(R)>0$
 such that for all $\xi>\tilde{\xi}_0$, there holds
\[
\frac{8(\theta-3)}{\theta-2} {m}_{\lambda,V}^{R,2}
+16{K_0^2\kappa^{3}_{\text{GN}}}\big( {m}_{\lambda,V}^{R,2}\big)^{\frac32}
<\frac{\pi}{C_{\Xi,b}(1+\gamma+\alpha R^{\tau-2})},~\forall \lambda\geq1.
\]
\end{lemma}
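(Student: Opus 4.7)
The plan is to exploit the variational characterization from Lemma \ref{unique} together with a carefully chosen test function and the structural condition $(V_4^2)$, in the spirit of Lemmas \ref{propo2}(iii) and \ref{22Propo2}. Throughout, given $u \in X_\lambda\setminus\{0\}$, I write $u_t(x) := t^2 u(tx)$.

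For the first assertion, I would pick $\eta \in C_0^\infty(\Omega)$ with $\eta \geq 0$, $\eta \not\equiv 0$, whose support lies in a compact subset of $\Omega$, and then set $\psi := \sigma\eta$ for a small $\sigma > 0$ to be fixed. By Lemma \ref{unique},
\[
m_{\lambda,V}^{R,\delta} \leq \max_{t>0} J_\lambda^{R,\delta}(\psi_t),
\]
and the maximum is attained at the unique $t_\psi(\lambda) > 0$ solving $G_\lambda^{R,\delta}(\psi_t) = 0$. A direct computation gives
\[
G_\lambda^{R,\delta}(u) = 2\int_{\R^2}|\nabla u|^2\,dx + \frac{\lambda}{2}\int_{\R^2}[2V-(\nabla V,x)]u^2\,dx + V_0(u) - 2\int_{\R^2}[f^{R,\delta}(u)u - F^{R,\delta}(u)]\,dx - \frac14|u|_2^4,
\]
and by $(V_4^2)$ (which yields $2V-(\nabla V,x)\geq V\geq 0$) the second summand is nonnegative, so $G_\lambda^{R,\delta}(\psi_t) \geq G_0^{R,\delta}(\psi_t)$ for every $\lambda \geq 0$. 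Evaluating at $t = 1$, the expansion $G_0^{R,\delta}(\sigma\eta) = 2\sigma^2\|\nabla\eta\|_2^2 + o(\sigma^2)$ (which uses only $(h_1)$) shows $G_\lambda^{R,\delta}(\psi_1) > 0$ for $\sigma$ small, uniformly in $\lambda \geq 1$; by the monotonicity of the sign change of $G_\lambda^{R,\delta}(\psi_t)$ in $t$, this forces $t_\psi(\lambda) \geq 1$ for every $\lambda\geq 1$. Since $\supp\psi_{t_\psi(\lambda)} = t_\psi(\lambda)^{-1}\supp\psi \subset \supp\eta \subset\Omega$, the $V$-contribution to $J_\lambda^{R,\delta}(\psi_{t_\psi(\lambda)})$ vanishes, so this value reduces to a $\lambda$-free function of $t_\psi(\lambda)$ whose supremum over $t>0$ is finite by the scaling identities \eqref{Propo2a0}--\eqref{Propo2a} coupled with $(h_{21})$ and $\theta>3$. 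This supremum yields the desired constant $\bar M^{R,\delta}$.

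For the second assertion, I would run the same blueprint with $\psi := \sigma\varphi_0$, where $\varphi_0$ is the cut-off from Lemma \ref{mplevel}. Invoking $(h_4)$, on $B_{1/2}(0)$ one has $F^{R,2}(\psi_t) \geq \xi\,(\sigma t^2)^p$ whenever $\sigma t^2 \leq 1$, so mimicking the explicit calculus computation in the proof of Lemma \ref{22Propo2}, the negative $-\xi$-contribution dominates the positive $O(t^4)$ and $O(t^4\log t)$ pieces, making $\max_{t>0} J_\lambda^{R,2}(\psi_t)$ arbitrarily small as $\xi$ grows. Choosing $\tilde\xi_0(R)$ at least as large as $\bar\xi_0(R)$ from Lemma \ref{22Propo2}, and using the strict monotonicity of $x \mapsto \tfrac{8(\theta-3)}{\theta-2}x + 16K_0^2\kappa_{\text{GN}}^3\,x^{3/2}$, the required inequality for $m_{\lambda,V}^{R,2}$ then follows verbatim from Lemma \ref{22Propo2}'s threshold calculation applied to this smaller quantity.

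The hard part will be the uniform-in-$\lambda$ control of the Nehari-Poho\u{z}aev maximizer $t_\psi(\lambda)$: without extra structure, the $V$-term in $J_\lambda^{R,\bar\delta}$ could drag the maximizer to small values of $t$ with uncontrolled $\lambda$-dependent contributions. The monotonicity trick from $(V_4^2)$ --- namely that enlarging $\lambda$ only adds nonnegative mass to $G_\lambda^{R,\bar\delta}(\psi_t)$ --- is what prevents the unique sign-change point of $G_\lambda^{R,\bar\delta}(\psi_t)$ from slipping below the threshold $t=1$, beyond which the functional is entirely $\lambda$-independent on the chosen scaled family.
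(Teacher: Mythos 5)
Your overall route is the right one, and for the first assertion it is in fact more careful than the paper, which omits the proof entirely by deferring to Lemma \ref{22Propo2}. Your computation of $G_\lambda^{R,\bar{\delta}}=2(J_\lambda^{R,\bar{\delta}})'[\cdot]-P_\lambda^{R,\bar{\delta}}$ is correct, and the observation that one must pin down \emph{where} the fibering maximizer $t_\psi(\lambda)$ sits is a genuine and necessary addition: since $\mathrm{supp}\,\psi_t=t^{-1}\mathrm{supp}\,\psi$ spills outside $\Omega$ for small $t$, the naive bound $m_{\lambda,V}^{R,\bar\delta}\le\max_{t>0}J_\lambda^{R,\bar\delta}(\psi_t)$ is not obviously uniform in $\lambda$, and your sign argument $G_\lambda^{R,\bar\delta}(\psi_1)>0\Rightarrow t_\psi(\lambda)>1$ (combined with the uniqueness in Lemma \ref{unique} and the identity $\frac{d}{dt}J_\lambda^{R,\bar\delta}(\psi_t)=t^{-1}G_\lambda^{R,\bar\delta}(\psi_t)$) fixes this. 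Two small repairs: you should take $\mathrm{supp}\,\eta$ inside a ball $B_{\varepsilon_0}(0)\subset\Omega$ (as in Lemma \ref{propo2}, after assuming $0\in\Omega$) so that $t^{-1}\mathrm{supp}\,\eta\subset\Omega$ for $t\ge1$; and note that since $V\equiv0$ on the open set $\Omega$ one has $\nabla V\equiv0$ on $\mathrm{supp}\,\eta$, so $G_\lambda^{R,\delta}(\psi_1)=G_0^{R,\delta}(\psi_1)$ exactly and $(V_4^2)$ is not even needed at this step. The finiteness of $\sup_{t\ge1}J_0^{R,\delta}(\psi_t)$ then requires $F^{R,\delta}(s)\ge C_1s^\theta-C_2s^2$ with $\theta>3$, i.e.\ $(h_{21})$, which (like the paper's own Lemma \ref{propo2}) goes beyond the hypotheses literally listed in the statement.

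The second assertion is where your write-up has a real gap: the bound does \emph{not} follow ``verbatim'' from Lemma \ref{22Propo2}, because the fibration there is the straight line $t\mapsto tu$ (values stay in $[0,1]$ along the whole relevant range, so $(h_4)$ applies throughout and the maximization reduces to the one-line polynomial computation of Lemma \ref{mplevel}), whereas $m_{\lambda,V}^{R,2}$ is only characterized through $t\mapsto t^2u(t\cdot)$. Along $\psi_t=\sigma t^2\varphi_0(t\cdot)$ the plateau value is $\sigma t^2$, so $(h_4)$ is usable only for $t\le\sigma^{-1/2}$; on that range the negative term scales like $-\xi\sigma^pt^{2p-2}$ against positive terms of order $\sigma^2t^4+\sigma^4t^4\log t$, which is fine since $2p-2>4$, but for $t>\sigma^{-1/2}$ you must argue separately, e.g.\ via monotonicity of $F^{R,2}$ (giving $\int F^{R,2}(\psi_t)\ge \tfrac{\pi}{4}\xi t^{-2}$) together with the $-\tfrac14\sigma^4t^4\log t\,|\varphi_0|_2^4$ contribution hidden in $V_0(\psi_t)$ from \eqref{Propo2a0}. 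Only after this two-regime estimate do you get $\max_{t>0}J_\lambda^{R,2}(\psi_t)\to0$ as $\xi\to+\infty$ (for a fixed small $\sigma$), and then the monotonicity of $x\mapsto\tfrac{8(\theta-3)}{\theta-2}x+16K_0^2\kappa_{\mathrm{GN}}^3x^{3/2}$ closes the argument as you say. As it stands, the sentence ``follows verbatim'' asserts rather than proves the key smallness, so this step needs to be written out.
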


\begin{proof}
The proof is very similar to that of Lemma \ref{22Propo2}, so we omit it here.
\end{proof}

\begin{proof}[\textbf{\emph{Proof of Theorem \ref{maintheorem7}}}]
We divide the proof into intermediate steps.

\medspace
{\sc Step 1:} The minimization problem ${m}_{\lambda,V}^{R,\bar{\delta}}$ in \eqref{maintheorem7a} can be attained.

Let $\{u_n\}\subset X_\lambda$ be a minimizing sequence of $m_{\lambda,V}^{R,\bar{\delta}}$,
that is, $\{u_n\}\subset \mathcal{M}_{\lambda,V}^{R,\bar{\delta}}$ and $J_{\lambda}^{R,\bar{\delta}}(u_n)\to0$
as $n\to\infty$. In view of Lemma \ref{5bounded}, $\{\|u_n\|_{E_\lambda}\}$
is uniformly bounded in $n\in \mathbb{N}$. Using Lemma \ref{5bounded} again,
thanks to Lemma \ref{control}, we can follow step by step in Lemmas
\ref{Vanishing}, \ref{Non-Vanishing} and \ref{bounded}
to find $\tilde{\lambda}(R)$ if $\bar{\delta}=\delta$,
or $\tilde{\lambda}^\prime(R)$ if ${\delta}=2$,
to assure that $\{\|u_n\|_{X_\lambda}\}$ is uniformly bounded in $n\in \mathbb{N}$
for all $\lambda>\tilde{\lambda}(R)$ if $\bar{\delta}=\delta$, or
$\lambda>\tilde{\lambda}^\prime(R)$ if ${\delta}=2$, respectively.
Repeating the proof of Theorem \ref{maintheorem2}, up to a subsequence,
 there is a $u\in X_\lambda$ such that $u_n\to u$ in $X_\lambda$.
So, the nontrivial function $u$ is the desired attained function.

\medspace
{\sc Step 2:}  The attained function of ${m}_{\lambda,V}^{R,\bar{\delta}}$ is indeed a
nontrivial nonnegative critical point of $J_{\lambda,V}^{R,\bar{\delta}}$.

We refer the interested reader to \cite[Lemma 4.11]{CT1} for the detailed proof since
 there is no essential difference.
\end{proof}

\subsection{Proofs of Theorems \ref{maintheorem4}
and \ref{maintheorem5}}
Let us note that, combining Lemma \ref{propo2}-(iii), Lemma \ref{22Propo2} and \eqref{22Propo2e},
we can still derive \eqref{Ia1} and \eqref{IIa4} for the proof of Theorem \ref{maintheorem4}. On the other hand, with the help of
  \eqref{5bounded1} and Lemma \ref{control},
we conclude \eqref{Ia1} and \eqref{IIa4} for the proof of Theorem \ref{maintheorem5}.
As a consequence, by repeating the calculations in Section \ref{Linftyestimate},
 we would accomplish the proofs of Theorems \ref{maintheorem4}
and \ref{maintheorem5} immediately.

\vskip 5mm
\noindent\textbf{Conflict of interest.} The authors have no competing interests to declare related to this paper.

\bigskip

\end{document}